\newtheorem{thm}{Theorem}[section]
\newtheorem{prop}[thm]{Proposition}
\newtheorem{lem}[thm]{Lemma}
\newtheorem{cor}[thm]{Corollary}
\theoremstyle{definition}
\newtheorem{ques}[thm]{Question}
\newtheorem{defn}[thm]{Definition}
\theoremstyle{remark}
\newtheorem{remk}[thm]{Remark}
\newtheorem{remks}[thm]{Remarks}
\newtheorem{exm}[thm]{Example}
\newtheorem{exms}[thm]{Examples}
\newtheorem{notat}[thm]{Notation}
\numberwithin{equation}{section}
\newcommand{\thmref}{Theorem~\ref}
\newcommand{\propref}{Proposition~\ref}
\newcommand{\corref}{Corollary~\ref}
\newcommand{\lemref}{Lemma~\ref}
\newcommand{\remref}{Remark~\ref}
\newcommand{\sC}{{\mathcal C}}
\newcommand{\sI}{{\mathcal I}}
\newcommand{\sK}{{\mathcal K}}
\newcommand{\sO}{{\mathcal O}}
\newcommand{\A}{{\mathbb A}}
\newcommand{\F}{{\mathbb F}}
\newcommand{\G}{{\mathbb G}}
\newcommand{\N}{{\mathbb N}}
\renewcommand{\P}{{\mathbb P}}
\newcommand{\Q}{{\mathbb Q}}
\newcommand{\R}{{\mathbb R}}
\newcommand{\W}{{\mathbb W}}
\newcommand{\Z}{{\mathbb Z}}
\newcommand{\fm}{{\mathfrak m}}
\newcommand{\fp}{{\mathfrak p}}
\newcommand{\CH}{{\rm CH}}
\newcommand{\surj}{\twoheadrightarrow}
\newcommand{\inj}{\hookrightarrow}
\newcommand{\red}{{\rm red}}
\newcommand{\Hom}{{\rm Hom}}
\newcommand{\Spec}{{\rm Spec \,}}
\newcommand{\id}{{\operatorname{id}}}
\newcommand{\Sch}{{\operatorname{\mathbf{Sch}}}}
\newcommand{\Sm}{{\mathbf{Sm}}}
\newcommand{\sq}{\square}
\newcommand{\cyc}{{\operatorname{\rm cyc}}}
\newcommand{\ds}{{/\kern-3pt/}}
\renewcommand{\log}{{\operatorname{log}}}
\newcommand{\tr}{{\operatorname{tr}}}
\newcommand{\un}{\underline}
\newcommand{\ov}{\overline}
\renewcommand{\dim}{\text{\rm dim}}
\newcommand{\tuborg}{\left\{\begin{array}{ll}}
\newcommand{\sluttuborg}{\end{array}\right.}
\newcommand{\sfs}{{\rm sfs}}
\newcommand{\ord}{{\rm ord}}
\newcommand{\Tz}{{\rm Tz}}
\newcommand{\TCH}{{\rm TCH}}
\newcommand{\wt}{\widetilde}
\newcommand{\wh}{\widehat}
\newcounter{elno}
\newcounter{elno-abc}   
\newcounter{elno-abc-prime}
\begin{document}
\title{Zero-cycles with modulus and relative $K$-theory}
\author{Rahul Gupta, Amalendu Krishna}
\address{Fakult\"at f\"ur Mathematik, Universit\"at Regensburg, 
93040, Regensburg, Germany.}
\email{Rahul.Gupta@mathematik.uni-regensburg.de}
\address{School of Mathematics, Tata Institute of Fundamental Research,  
1 Homi Bhabha Road, Colaba, Mumbai, 400005, India.}
\email{amal@math.tifr.res.in}


\keywords{Algebraic cycles with modulus, additive higher Chow groups,
relative algebraic $K$-theory}        

\subjclass[2010]{Primary 14C25; Secondary 19E08, 19E15}

\maketitle

\begin{quote}\emph{Abstract.}
  Let $D$ be an effective Cartier divisor on a regular quasi-projective
  scheme $X$ of dimension $d \ge 1$ over a field.
 For an  integer $n \ge 0$, we construct a
cycle class map from the higher Chow groups with modulus
$\{\CH^{n+d}(X|mD, n)\}_{m \ge 1}$ to the relative $K$-groups
$\{K_n(X,mD)\}_{m \ge 1}$ in the category of pro-abelian groups. We show that this
induces a pro-isomorphism between the additive higher Chow groups of
relative 0-cycles and the reduced algebraic $K$-groups of
truncated polynomial rings over a regular
semi-local ring which is essentially of finite type over a characteristic zero
field.

\end{quote}
\setcounter{tocdepth}{1}
\tableofcontents  


\section{Introduction}\label{sec:Intro}
The story of Chow groups with modulus began with the discovery of additive higher 0-cycles by
Bloch and Esnault in \cite{BE1} and \cite{BE2}. Their hope was that these additive 0-cycle groups would serve
as a guide in developing a theory of motivic cohomology with modulus which could describe 
the algebraic $K$-theory
of non-reduced schemes. Recall that Bloch's original higher Chow groups (equivalently, Voevodsky's 
motivic cohomology) overlook the 
difference  between non-reduced and reduced schemes.

Motivated by the work of Bloch and Esnault, a theory of motivic cohomology with modulus was proposed by
Binda and Saito \cite{BS} in the name of `higher Chow groups with modulus'
(recalled in \S~\ref{sec:HCGM}).
The expectation was that one would be able to describe relative algebraic $K$-theory in terms of these 
Chow groups.
The theory of Chow groups with modulus generalized the theory of
additive higher Chow groups defined by Bloch-Esnault and
further studied by R\"ulling \cite{R}, 
Krishna-Levine \cite{KLevine} and Park \cite{Park}. It also
generalized the theory of 0-cycles with modulus of Kerz-Saito \cite{KeS}
and the higher Chow groups of Bloch \cite{Bloch-1}.


Recall that one way to study the algebraic $K$-theory of a non-reduced
(or any singular) scheme is to embed it as a closed subscheme of 
a smooth scheme and study the 
resulting relative $K$-theory. Since there are motivic cohomology groups
which can completely describe the algebraic $K$-theory of a 
smooth scheme, what one needs is a theory of motivic cohomology 
to describe the relative $K$-theory.

The expectation that the higher Chow groups with modulus should be
the candidate for the motivic cohomology 
to describe the relative $K$-theory has generated a lot of
interest in them in past several years. In a recent work \cite{IK}, 
Iwasa and Kai constructed a theory of Chern classes from the relative $K$-theory
to a variant of the higher Chow groups with modulus. In another work
\cite{IK-2}, they proved a Riemann-Roch type theorem showing that
the relative group $K_0$-group  of an affine modulus pair is rationally
isomorphic to a direct sum of Chow groups with modulus. An integral version of
this isomorphism for all modulus pairs
in dimension up to two was earlier proven by Binda and Krishna \cite{BK}.
They also constructed a cycle class map for relative $K_0$-group in all dimensions.

The above results suggest strong connection 
between cycles with modulus and relative $K$-theory.
However, an explicit construction of cycle class maps in full generality or
Atiyah-Hirzebruch type spectral sequences, which may directly connect Chow groups 
with modulus to relative algebraic $K$-theory, remains a challenging 
problem today.

\subsection{Main results and consequences}\label{sec:MR}
The objective of this paper is to investigate the original question of
Bloch and Esnault \cite{BE2} in this subject. Namely,
can 0-cycles with modulus explicitly describe relative $K$-theory in terms of algebraic cycles?
We provide an answer to this question in this paper.
We prove two results.
The first is that there is indeed a
direct connection between 0-cycles with modulus
and relative $K$-theory in terms of an explicit cycle class map. The second is
that in many cases of interest, these 0-cycles with modulus
are strong enough to completely describe the relative algebraic $K$-theory.
More precisely, we prove the following. The terms and notations used
in the statements of these results are explained in the body of the text. 
In particular, we refer to \S~\ref{sec:Rel-K} for relative $K$-theory
and \S~\ref{sec:HCGM} for higher Chow groups with modulus.

\begin{thm}\label{thm:Intro-1}
Let $X$ be a regular quasi-projective variety of pure dimension $d \ge 1$ over a field $k$
and let $D \subset X$ be an effective Cartier divisor. Let $n \ge 0$ be an integer.
Then there is a cycle class map
\begin{equation}\label{eqn:Intro-1-0}
cyc_{X|D} \colon  \{\CH^{n+d}(X|mD,n)\}_m \to \{K_n(X,mD)\}_m
\end{equation}
between pro-abelian groups. This map is covariant functorial
for proper morphisms, and contravariant functorial for flat morphisms of 
relative dimension zero. 
\end{thm}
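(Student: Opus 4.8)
The plan is to pass through the cubical model of the higher Chow groups with modulus and to construct a matching cubical model on the $K$-theory side. Recall that $\CH^{n+d}(X|mD,n)$ is the degree-$n$ homology of the normalized complex $z^{n+d}(X|mD,\bullet)$, whose term in cubical degree $p$ is the free abelian group on integral closed subschemes $Z\subseteq X\times\square^p$ of codimension $n+d$ that meet all faces of $X\times\square^p$ properly and satisfy the modulus condition $\nu^{*}(mD\times\ol{\square}^p)\le\nu^{*}(F_p)$, where $\nu$ is the normalization of the closure $\ol Z$ of $Z$ in $X\times\ol{\square}^p$ and $F_p$ is the sum of the faces at infinity; the differential is the alternating sum of restrictions to the codimension-one faces. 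A first observation is that this modulus inequality forces $Z$ to be disjoint from $D\times\square^p$, since $\nu^{*}(D\times\ol{\square}^p)$ would otherwise have a component over the interior $\square^p$, where $\nu^{*}(F_p)$ is zero. Consequently, for each generator $Z$, the coherent sheaf $\cO_Z$ on $X\times\square^p$ — a perfect complex, as $X\times\square^p$ is regular and $Z$ is of pure codimension — has acyclic derived restriction to $mD\times\square^p$ (as $mD$ and $D$ have the same support), and its class $[\cO_Z]$ therefore lifts to the relative $K_0$-group with supports $K_0^{|Z|}\big(X\times\square^p,\,mD\times\square^p\big)$. Using a functorial Koszul-type resolution of $\cO_Z$, the assignment $Z\mapsto[\cO_Z]$ refines to a point of a relative $K$-theory spectrum $\mathcal{K}_{X|mD}(p)$ attached to $(X\times\square^p,\,mD\times\square^p)$ with supports away from the faces.

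The core step is to promote this to a morphism of cubical spectra from the cubical abelian group $z^{n+d}(X|mD,\bullet)$ to $p\mapsto\mathcal{K}_{X|mD}(p)$. One checks that restricting the chosen resolution of $\cO_Z$ to a codimension-one face $\square^{p-1}\subset\square^p$ reproduces the resolution attached to the face cycle $\partial^{\varepsilon}_i(Z)$ — using that proper intersection with the faces is part of admissibility, and that the trivialization along $mD$ is natural under this restriction — so the assignments commute with the cubical face and degeneracy operators and assemble into a map of cubical spectra. Realizing along the cube direction and taking homotopy groups, the source yields $\CH^{n+d}(X|mD,n)$ in degree $n$ by construction. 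The remaining, and principal, difficulty is to identify the realization $\Tot_p\,\mathcal{K}_{X|mD}(p)$ with relative $K$-theory: since $mD$ is non-reduced, $K(X,mD)$ is \emph{not} $\A^1$-homotopy invariant, so the realization does not simply collapse. Instead one shows that, in the pro-system over $m$, the cubical spectrum $\{\mathcal{K}_{X|mD}(\bullet)\}_m$ realizes pro-equivalently to $\{K(X,mD)\}_m$, and it is precisely here that the quantitative modulus condition is used: the domination of $m\cdot\nu^{*}(D\times\ol{\square}^\bullet)$ by $\nu^{*}(F_\bullet)$ manufactures null-homotopies of the restrictions to the thickenings $mD\times\square^{\bullet}$ that are coherent in the cubical degree and compatible in $m$. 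Assembling these on $\pi_n$ produces $cyc_{X|D}\colon\{\CH^{n+d}(X|mD,n)\}_m\to\{K_n(X,mD)\}_m$.

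Functoriality is then formal at the level of the cubical spectra $\mathcal{K}_{X|mD}(\bullet)$. For a proper morphism $f\colon X'\to X$ of regular quasi-projective schemes with $D':=f^{*}D$ an effective Cartier divisor, proper pushforward of cycles preserves admissibility and the modulus condition, while on the $K$-theory side the proper transfer $Rf_{*}$ on perfect complexes with support — available because perfect and pseudo-coherent complexes coincide on regular schemes — is compatible with the relative structure, so that $cyc_{X|D}\circ f_{*}=f_{*}\circ cyc_{X'|D'}$ in the pro-category. For a flat morphism $f\colon X'\to X$ of relative dimension zero, flat pullback of cycles preserves proper intersection with the faces, the codimension $n+d$, and the modulus condition (the last because the formation of normalizations and of intersection multiplicities commutes with flat base change in this situation), and it matches the flat pullback $f^{*}$ on $\mathcal{K}_{X|mD}(\bullet)$, giving $cyc_{X'|D'}\circ f^{*}=f^{*}\circ cyc_{X|D}$. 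The main obstacle throughout is the spectrum-level content of the second paragraph — making the modulus trivialization functorial in the cube variable and proving the pro-realization statement for the target — both of which require genuinely exploiting the growth control encoded in the modulus condition rather than merely the support condition it implies.
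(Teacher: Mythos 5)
Your proposal follows the Levine--Binda strategy: realize the whole cubical complex $z^{n+d}(X|mD,\bullet)$ as a map into a cubical diagram of relative $K$-theory spectra with supports, and then identify the realization of the target with $\{K(X,mD)\}_m$. This is a genuinely different route from the paper, which defines $cyc_{X|D}$ only on the generators of the $0$-cycle group by an explicit formula (Milnor symbol of the coordinates of a closed point, pushed forward along the finite map to $X$, which lands in $K_n(X,D)$ because the point misses $D$) and then kills boundaries by a direct computation, reduced via normalization and proper pushforward to the case of a relation curve over a regular curve. Unfortunately the two steps you yourself flag as the core of the argument are exactly where it has genuine gaps, and they are the reason the paper avoids this route.

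First, the claim that $Z\mapsto[\sO_Z]$, refined by ``a functorial Koszul-type resolution,'' assembles into a strict map of cubical spectra is not something one merely checks: resolutions cannot in general be chosen strictly compatibly with all face restrictions at once, and the known repair (Levine's homotopy colimit over categories of resolutions plus a transfer argument) is precisely what forces rational coefficients in \cite{Levine-1} and \cite{Binda}, with Binda additionally needing $D_{\red}$ strict normal crossing and a strengthened modulus condition. Your sketch offers no mechanism for carrying this out integrally. Second, the pro-equivalence $\{\Tot_p\,\mathcal{K}_{X|mD}(p)\}_m\simeq\{K(X,mD)\}_m$ is an unproved pro-homotopy-invariance statement for relative $K$-theory over the cube relative to all faces, and the mechanism you propose for it does not cohere: the spectrum $\mathcal{K}_{X|mD}(p)$ as you define it sees a cycle $Z$ only through the disjointness $|Z|\cap(D\times\square^p)=\emptyset$, so the quantitative inequality $\nu^*(mD\times\ol{\square}^p)\le\nu^*(F_p)$ cannot ``manufacture null-homotopies'' in a spectrum that has already forgotten it. In the paper the quantitative condition enters in a completely different and checkable way: for a relation curve $W$ with symbol $g=\{g_1,\ldots,g_{n+1}\}$, the inequality $\sum_j\ord_{x_i}(g_j-1)\ge(n+1)m_i\ge m_i+n$ places $g$ in $(1+\fm_i^{m_i+n})K^M_n(F)$, hence in the relative improved Milnor $K$-group $\wh{K}^M_{n+1}(A,I)$ (\lemref{lem:Milnor-0}, \lemref{lem:Milnor-2}, \lemref{lem:Curve-Milnor}), whose boundary dies in $K_n(X,D)$. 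This is also why the paper's pro-map is not strict (level $(n+1)m$ maps to level $m$); your construction, if correct, would yield a strict level-wise map, a stronger statement that should make you suspicious. Finally, the functoriality is not formal: already the existence of the proper pushforward on relative $K$-theory needs a tor-independence argument (\lemref{lem:Fin-tor-K-thy}), and its $K_*(X)$-linearity is proved via the double $S_X$ (\lemref{lem:Proj-rel}).
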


For those interested in the precise variation in the modulus in the definition
of $cyc_{X|D}$, we actually prove that for every pair of integers $m \ge 1$ and $n\geq 0$, there exists a
cycle class map 
$ \CH^{n+d}(X|(n+1)mD,n) \to K_n(X,mD)$ such that going to pro-abelian groups,  we get 
the cycle class map of \thmref{thm:Intro-1}.
For a general divisor $D \subset X$, we do not expect that 
the cycle class map that we construct in \thmref{thm:Intro-1} will exist
without increasing the modulus. 
However, if we use rational coefficients, then the 
usage of pro-abelian groups can indeed be avoided, as
the following result shows.
In this paper, we use this improved version in the proof of 
\thmref{thm:Intro-2}.

\begin{thm}\label{thm:Intro-1-Q}
Let $X$ be a regular quasi-projective variety of pure dimension $d \ge 1$ over a field $k$
and let $D \subset X$ be an effective Cartier divisor. Let $n \ge 0$ be an integer.
Then there is a cycle class map
\begin{equation}\label{eqn:Intro-1-0-Q}
cyc_{X|D} \colon \CH^{n+d}(X|D,n)_{\Q} \to K_n(X,D)_{\Q}.
\end{equation}

This map is covariant functorial for proper morphisms, and contravariant
functorial for flat morphisms of relative dimension zero. 
Furthermore, it coincides with the map  ~\eqref{eqn:Intro-1-0} on the generators
of $\CH^{n+d}(X|D,n)$. 
\end{thm}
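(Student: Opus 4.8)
The plan is to derive \thmref{thm:Intro-1-Q} from the sharpened form of \thmref{thm:Intro-1} stated just after it: for every $m \ge 1$ there is a cycle class map $cyc_m \colon \CH^{n+d}(X|(n+1)mD, n) \to K_n(X, mD)$, compatible with weakening the modulus on the source and restricting it on the target. Taking $m = 1$ gives $cyc_1 \colon \CH^{n+d}(X|(n+1)D, n) \to K_n(X, D)$, and the only obstruction to \thmref{thm:Intro-1-Q} is the discrepancy between the modulus $(n+1)D$ appearing here and the modulus $D$ wanted in~\eqref{eqn:Intro-1-0-Q}. The cleanest way to remove it is to show that the chain-level construction underlying \thmref{thm:Intro-1}, which naturally produces a map of complexes out of the cycle complex of $X|D$ carrying a \emph{variable} modulus that grows with the cubical degree, can be replaced, after tensoring with $\Q$, by a genuine map of complexes out of the complex with the \emph{constant} modulus $D$.

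In more detail, the construction for \thmref{thm:Intro-1} gives a morphism from $Z^{n+d}(X|D, \bullet)$, with cubical degree $j$ assigned the modulus $(j+1)D$ (up to the normalization chosen there; the growth is exactly what is needed so that the codimension-one faces land in the correct groups), to a complex $\mathcal{K}^\bullet$ whose homology is $K_\ast(X, D)$. The assertion is that, after inverting the integers (in fact it suffices to invert $(n+1)!$, equivalently the binomial coefficients produced by the cubical face and degeneracy operators), this morphism is chain-homotopic, in a way compatible with the face structure, to one defined on the constant-modulus complex $Z^{n+d}(X|D,\bullet)_\Q$; one writes the obstruction to trivializing the modulus-growth terms as an explicit coboundary and kills it using divisibility. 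Passing to homology in degree $n$, this rational chain map yields $cyc_{X|D}$ on all of $\CH^{n+d}(X|D, n)_\Q$ at once, with no surjectivity or moving-lemma input required. Since the modified and the original chain maps agree on the subcomplex of cycles already satisfying the stronger modulus, $cyc_{X|D}$ restricts to $cyc_1\otimes\Q$ there, which is precisely the statement that it coincides with~\eqref{eqn:Intro-1-0} on the classes of individual admissible cycles; equivalently, $cyc_1 \otimes \Q$ factors through the canonical map $\CH^{n+d}(X|(n+1)D, n)_\Q \to \CH^{n+d}(X|D, n)_\Q$.

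Functoriality is then inherited. Proper pushforward and flat pullback of relative dimension zero act on the cycle complexes with modulus (via the admissibility and modulus compatibilities recalled in \S~\ref{sec:HCGM}) and on relative $K$-theory (\S~\ref{sec:Rel-K}), they commute with the modulus-weakening maps and with the $K$-theoretic restriction maps $K_n(X, mD) \to K_n(X, D)$, and rationalization is exact; so the commutative squares expressing covariance for proper morphisms and contravariance for flat morphisms of relative dimension zero descend from the corresponding squares for $cyc_1$ in \thmref{thm:Intro-1}.

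The main obstacle is the rational extension to the constant-modulus complex, i.e.\ showing that the modulus growth forced in \thmref{thm:Intro-1} is an integral phenomenon that disappears over $\Q$: one must pin down the correction terms cubical degree by cubical degree, verify the chain-homotopy identities, and bound the denominators that appear by $(n+1)!$. This is the only place where the rational coefficients are genuinely used and where the constant $n+1$ enters the statement. A secondary, bookkeeping-type difficulty is to keep the normalization of the modulus in intermediate degrees consistent, so that the output is $\CH^{n+d}(X|D,n)_\Q$ with modulus exactly $D$ rather than with an inconvenient shift.
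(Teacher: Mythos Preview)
Your proposal rests on a misconception about the construction. In this paper $cyc_{X|D}$ is \emph{not} obtained as a chain map from the cycle complex to a complex computing $K_*(X,D)$; it is defined directly as a homomorphism $z^{d+n}(X|D,n)\to K_n(X,D)$ on the free group of admissible closed points (see \S\ref{sec:Generators}), and the entire content of \thmref{thm:Intro-1} is the verification that this map kills $\partial\bigl(z^{d+n}(X|(n+1)D,n+1)\bigr)$. There is no ambient chain map to homotope, no ``variable-modulus complex'' in the paper's setup, and hence no obstruction class to write as a coboundary. Your assertion that the needed correction terms exist and have denominators bounded by $(n+1)!$ is never substantiated, and your attribution of the factor $n+1$ to cubical face/degeneracy combinatorics is incorrect: in the paper the shift comes from an iterated Milnor $K$-theory identity in a discrete valuation ring (\lemref{lem:Milnor-1}(2)), which loses one unit of modulus per step.

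The paper's proof of \thmref{thm:Intro-1-Q} (given as \thmref{thm:Intro-1-ratl}) is much more direct. One simply reruns the proof of \thmref{thm:Intro-1} and asks where the hypothesis $W\in z^{d+n}(X|(n+1)D,n+1)$ rather than $W\in z^{d+n}(X|D,n+1)$ is actually used. The reduction to a regular curve with $\ov{W}\cong X$ goes through unchanged; the only place the stronger modulus is invoked is in the last step of \lemref{lem:Curve-Milnor}, where one needs the Milnor symbol $g$ to lie in $\wh{K}^M_{n+1}(A,I)$. Integrally this requires \lemref{lem:Milnor-2}, which costs a shift by $n$ in the modulus. With $\Q$-coefficients one instead uses \lemref{lem:Milnor-2-Q-coeff}, whose proof exploits that in $K^M_2(F)_\Q$ one may write $\{1+u_0\pi^{i+m},\pi\}=\tfrac{1}{i+m}\{1+u_0\pi^{i+m},\pi^{i+m}\}$ and then apply the Steinberg relation; this removes the shift entirely. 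Functoriality and agreement on generators are then immediate, since the map on generators is literally the same formula as in \eqref{eqn:cycle-3}. This is what you should do: locate the single Milnor $K$-theory lemma that forces the modulus growth and replace it by its rational refinement.
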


We now address as to why the cycle class maps of
Theorems~\ref{thm:Intro-1} and ~\ref{thm:Intro-1-Q} 
should be non-trivial and what we expect of these maps.
Recall that the relative $K$-theory $K_n(X,mD)$ has Adams operations
(e.g., see \cite{Levine-5} for their construction).
From our construction, we expect the map ~\eqref{eqn:Intro-1-0-Q} to be 
injective in the pro-setting, with image $\{K_n(X,mD)_{\Q}^{(d+n)}\}_m$.
Here, $K_n(X,mD)_{\Q}^{(d+n)}$ is $(d+n)$-th eigen-space of the
Adams operations.
When $D = \emptyset$, the cycle class map $cyc_X := cyc_{X|\emptyset}$ 
is not new and it
was constructed by Levine \cite{Levine-1} by a different method. 
He also showed that
in this special case, $cyc_X$ is indeed injective with
image $K_n(X)_{\Q}^{(d+n)}$.

When $X = \Spec(k)$ and $D = \emptyset$, the cycle 
class map $cyc_X$ coincides with Totaro's map 
$\CH^n(k,n) \to K^M_n(k) \to K_n(k)$ \cite{Totaro}.
Totaro showed that the map $\CH^n(k,n) \to K^M_n(k)$ is an 
isomorphism and one knows that the canonical map
$K^M_n(k)_{\Q} \to K_n(k)^{(n)}_{\Q}$ is an isomorphism. 
The remaining part of this paper is devoted to showing that
$cyc_{X|D}$ is in fact an isomorphism with integral coefficients
for the modulus pair $(\A^1_R, \{0\})$, where $R$ is 
a regular semi-local ring essentially of finite type over a 
characteristic zero field.

We make some further remarks on the past works on the cycle class map for 
0-cycles with modulus. Following Levine's strategy,
Binda \cite{Binda} showed that there is a cycle class map to relative $K$-theory
provided one makes the following changes: replace the higher Chow group with modulus by a variant of it
(which imposes a stronger version of the modulus condition,
originally introduced in \cite{KP-1}), assume that 
$D_{\rm red}$ is a strict normal
crossing divisor, and assume rational coefficients.
\thmref{thm:Intro-1} imposes none of these conditions.
If $D \subset X$ is a regular divisor, a cycle class map was
defined in \cite[Theorem~1.5]{KP*} using the stable $\A^1$-homotopy theory.


\vskip .3cm

We now describe our results about the cycle class map of \thmref{thm:Intro-1}
for the modulus pair $(\A^1_R, \{0\})$.
Recall that in case of the higher $K$-theory of a smooth
scheme $X$, the cycle-class map $\CH^{n+d}(X,n) \to K_n(X)$
from the 0-cycle group can not be expected to describe all of 
$K_n(X)$ (even with rational coefficients). However, 
we show in our next result that the cycle class map
of \thmref{thm:Intro-1} is indeed enough to describe all of 
the (integral) relative $K$-theory of nilpotent extensions of smooth schemes,
if we work in the category of pro-abelian groups instead of the
usual category of abelian groups. This demonstrates a remarkable 
feature of relative $K$-theory which is absent in the usual
$K$-theory.

Before we state the precise result, recall that the additive higher Chow groups
are special cases of higher Chow groups with modulus. More precisely,
for an equi-dimensional scheme $X$, the additive higher Chow group $\TCH^p(X,n+1;m)$ is same thing
as the higher Chow group with modulus $\CH^p(X \times \A^1_k| X \times {(m+1)}\{0\},n)$ for $m,n, p \ge 0$.
To understand the reason for the shift in the value of $n$, we need to recall that the additive higher
Chow groups are supposed to compute the relative $K$-theory of truncated polynomial extensions
and one knows that the connecting homomorphism $\partial \colon K_{n+1}({X[t]}/{(t^{m+1})}, (x)) \to
K_n(X \times \A^1_k, X \times (m+1)\{0\})$ is an isomorphism when $X$ is regular. Under this dichotomy,
we shall use the notation $cyc_{X}$ for $cyc_{\A^1_X|(X \times \{0\})}$ whenever we use the language of
additive higher Chow groups. In particular, for a ring $R$, we shall write $cyc_R$ for $cyc_{\A^1_R|\{0\}}$ 
while using
additive higher Chow groups. 

Let $R$ now be a regular semi-local ring which is essentially of finite type over a characteristic zero field.
Recall that there is a canonical map $K^M_*(R) \to K_*(R)$ from
the Milnor to the Quillen $K$-theory of $R$.
For $n \ge 1$, the group $\TCH^n(R, n;m)$ is not a 0-cycle group if $\dim(R) \ge 1$.
Hence, \thmref{thm:Intro-1} does not give us a cycle class map for this group. However, using this theorem
for fields and various other deductions, we can in fact prove an improved version of
\thmref{thm:Intro-1}. Namely, we can avoid the usage of pro-abelian groups for the existence of the cycle 
class map with integral coefficients.

\begin{thm}\label{thm:Intro-2}
  Let $R$ be a regular semi-local ring which is essentially of finite type over a characteristic zero
  field.
Let $m \ge 0$ and $n \ge 1$ be two integers. Then the following hold. 
\begin{enumerate}
\item
  There exists a cycle class map
  \[
    cyc^M_{R} \colon \TCH^n(R, n;m) \to K^M_n({R[x]}/{(x^{m+1})}, (x)).
  \]
\item
  The composite map
  \[
    cyc_R \colon
\TCH^n(R, n;m) \xrightarrow{cyc^M_R} K^M_n({R[x]}/{(x^{m+1})}, (x)) \to K_n({R[x]}/{(x^{m+1})}, (x))
  \]
coincides with the map of \thmref{thm:Intro-1} when $R$ is a field.
\item
$cyc^M_R$ and $cyc_R$ are natural in $R$.
\item
  $cyc^M_R$ is an isomorphism.
\item
  The map 
  \[
    cyc_R \colon \{\TCH^{n}(R,n;m)\}_m \to \{K_n({R[x]}/{(x^{m+1})}, (x))\}_m
  \]
  of pro-abelian groups is an isomorphism.
\end{enumerate}
\end{thm}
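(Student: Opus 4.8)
The plan for part (1) is to define $cyc^M_R$ directly on the standard generators of $\TCH^n(R,n;m)$, imitating the Nesterenko--Suslin--Totaro description of the isomorphism $\CH^n(k,n)\xrightarrow{\sim}K^M_n(k)$ and building the modulus into the first slot of the symbol. A generator is represented by an integral closed $Z\subseteq\A^1_R\times\sq^{n-1}$ that is finite and surjective over a component of $\Spec R$ --- by the moving lemma, cycles finite over $\Spec R$ suffice to compute $\TCH^n(R,n;m)$ --- satisfies the modulus condition along $\{0\}\times\sq^{n-1}$, and meets the faces properly; its coordinates yield units $u_1,\dots,u_{n-1}\in\mathcal{O}(Z)^{\times}$ from the $\sq^{n-1}$-factor and a $1$-unit $u_0\in 1+x\,\mathcal{O}(Z)[x]/(x^{m+1})$ built from the $\A^1$-coordinate, and I set $cyc^M_R([Z])=\Nm_{\mathcal{O}(Z)/R}\{u_0,u_1,\dots,u_{n-1}\}$ using the norm map in Milnor $K$-theory. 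Well-definedness amounts, as in Totaro's argument, to checking that the image of the cubical boundary of a generator in cube-degree $n+1$ vanishes; the codimension-one face maps translate into the linearity and Steinberg relations in relative Milnor $K$-theory (verified after passing to generic points, which reduces to the field case), and the one new point is that the modulus condition keeps $u_0$ a $1$-unit, so the symbol stays in the relative subgroup. Part (3), naturality in $R$, is then immediate from compatibility of flat pullback of cycles with transfers in Milnor $K$-theory.

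For part (2), take $R=k$ a field. Via the identification $\TCH^n(k,n;m)=\CH^n(\A^1_k|(m+1)\{0\},n-1)$ and the connecting isomorphism $\partial$, this is a $0$-cycle group in the range of \thmref{thm:Intro-1}, so I must check that $(K^M_n\to K_n)\circ cyc^M_k$ agrees with the cycle class map there. Both maps are additive and defined on the same generators; unwinding the construction of the map of \thmref{thm:Intro-1} on a single closed point shows that it, too, is the Steinberg symbol of the coordinates of that point, so the comparison reduces to the classical identification of a $0$-cycle on $\sq^n$ --- or on $\A^1\times\sq^{n-1}$ with modulus --- with its symbol in $K_n$, i.e.\ Totaro's theorem and its modulus refinement.

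For part (4) --- the heart of the matter --- the plan is to reduce to fields and then use explicit computations. Both functors $R\mapsto\TCH^n(R,n;m)$ and $R\mapsto K^M_n(R[x]/(x^{m+1}),(x))$ are unramified on regular semilocal rings essentially of finite type over a characteristic-zero field: for additive higher Chow groups of $0$-cycles this is the Gersten-type analysis of R\"ulling and R\"ulling--Saito, and for relative Milnor $K$-theory it follows from Kerz's Gersten conjecture for Milnor $K$-theory together with its truncated-polynomial variant. Since $cyc^M$ is a morphism of such functors, it is enough to prove it is an isomorphism over every finitely generated field extension of the base. For a field $k$, R\"ulling's theorem identifies $\TCH^n(k,n;m)$ with the big de Rham--Witt group $\W_m\Omega^{n-1}_k$; I would identify $K^M_n(k[x]/(x^{m+1}),(x))$ with the same group by constructing a $d\log$-type map out of relative Milnor $K$-theory and showing it is inverse, on the standard generators, to the symbol map, and then check that $cyc^M_k$ realizes exactly the composite of these two identifications. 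I expect the real difficulty of the whole theorem to be concentrated here: getting precise control of $K^M_n(k[x]/(x^{m+1}),(x))$, equivalently proving surjectivity of $cyc^M_k$ (relative Milnor $K$-theory of a truncated polynomial ring is generated by symbols, each realized by an explicit admissible cycle) and injectivity of $cyc^M_k$ (via the $d\log$ left inverse).

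Finally, part (5) follows from part (4) once one knows that the canonical map $\{K^M_n(R[x]/(x^{m+1}),(x))\}_m\to\{K_n(R[x]/(x^{m+1}),(x))\}_m$ is an isomorphism of pro-abelian groups. By the same unramifiedness reduction --- now applied to relative Quillen $K$-theory, which is a homology theory with supports on regular semilocal schemes (and whose relative part is computed by relative cyclic homology through Goodwillie's theorem) --- this reduces to a field $k$, where $\{K_n(k[x]/(x^{m+1}),(x))\}_m$ is pro-isomorphic to $\{\W_m\Omega^{n-1}_k\}_m$ by the known pro-computation of relative $K$-theory of truncated polynomial rings, compatibly with $cyc_k=(K^M_n\to K_n)\circ cyc^M_k$ and R\"ulling's isomorphism. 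Here the pro-system is genuinely needed: for a fixed $m$ the group $K_n(k[x]/(x^{m+1}),(x))$ has non-symbolic summands --- the $\Omega^{n-3}_k,\Omega^{n-5}_k,\dots$ pieces coming from Hochschild homology --- which lie outside the image of $cyc^M_k$ but form a pro-zero subsystem, so only in the limit does the symbol map become an isomorphism. Verifying this pro-vanishing, and the compatibility of all the identifications involved, is the secondary obstacle; modulo it, $cyc_R=(K^M_n\to K_n)\circ cyc^M_R$ is an isomorphism of pro-abelian groups, which is part (5).
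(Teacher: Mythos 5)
Your high-level architecture is right (a symbol map on generators, identification of both sides with de Rham--Witt/differential forms, pro-vanishing of the non-symbolic part of relative $K$-theory), but the two steps you yourself flag as ``the real difficulty'' are exactly where the proof lives, and the methods you sketch for them do not go through as stated. First, well-definedness of $cyc^M_R$ and the injectivity half of (4). You propose to check that boundaries die by a direct Totaro-style computation inside relative Milnor $K$-theory, and to prove injectivity via a ``$d\log$-type'' left inverse. The problem is that the relevant degree-$(n-1)$ logarithmic form $\log(u_0)\,d\log(b_1)\wedge\cdots\wedge d\log(b_{n-1})$ does not extend to a map on all of $K^M_n(R[x]/(x^{m+1}))$, while the relative group $K^M_n(R[x]/(x^{m+1}),(x))$ is defined as a kernel, not by generators and relations; so there is no a priori well-defined $d\log$ left inverse on it. Moreover, a direct boundary computation in Milnor $K$-theory at a \emph{fixed} modulus $m$ runs into the loss-of-modulus phenomenon that already forces \thmref{thm:Intro-1} to replace $mD$ by $(n+1)mD$ (integrally) or to pass to $\Q$-coefficients. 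The paper's route is different: it uses \thmref{thm:Intro-1-Q} to get the map into Quillen $K$-theory (everything in sight is a $\Q$-vector space in characteristic zero), shows on explicit generators (produced by \lemref{lem:Milnor-surj}) that the image lies in the Milnor subgroup, and then needs the nontrivial fact that $K^M_n(R_m,(x))\to K_n(R_m,(x))$ is \emph{injective} with image the top Adams eigenspace --- this rests on Goodwillie's theorem, Cathelineau's $\lambda$-ring compatibility, Soul{\'e}'s computation of the $\gamma$-filtration and Stienstra's torsion result (\lemref{lem:M-Q-C}), after which the explicit presentation $K^M_n(R_m,(x))\cong tR_m\otimes_R\Omega^{n-1}_R$ (\lemref{lem:M-Q-C-0}) is proved by a hands-on computation with $\wt{\Omega}^*_{R_m}$. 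None of this is replaced by your sketch.

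Second, for (5) you correctly identify that the non-top $\lambda$-pieces of $\wt{HC}_{n-1}(R_m)$ must be pro-zero, but you leave this unproved and implicitly lean on ``the known pro-computation of relative $K$-theory of truncated polynomial rings,'' which in the literature is a positive-characteristic (Hesselholt--Madsen) statement; the characteristic-zero pro-vanishing is \propref{prop:Pro-vanish}, proved via an Artin--Rees theorem in $K$-theory applied to the subring $R[t^2,t^3]\subset R[t]$ together with Connes' periodicity sequence and the injectivity of $d$ on $tR_m\otimes_R\Omega^{n}_R$. Finally, for the passage from fields to semi-local rings you invoke a Gersten/unramifiedness reduction for both functors; the paper instead uses the sfs-moving lemma of \cite{KP-3} to define $cyc_R$ on cycles finite over $\Spec(R)$, reduces well-definedness to the fraction field only through the \emph{injectivity} $\Omega^*_R\inj\Omega^*_F$ (via Goodwillie and Hochschild--Kostant--Rosenberg), and then reruns the field computation verbatim using $\TCH^n(R,n;m)\cong\W_m\Omega^{n-1}_R$ from \cite{KP-2}. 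Your fuller unramifiedness reduction would require Gersten resolutions for $\TCH^n(-,n;m)$ and for relative Milnor $K$-theory of truncated polynomial rings that the paper neither needs nor establishes.
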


In other words, \thmref{thm:Intro-2} (5) says that the relative $K$-theory of truncated polynomial rings can 
indeed
be completely described by the relative 0-cycles over $R$ (the cycles in $\TCH^n(R,n;m)$ have relative
dimension zero over $R$). This shows that the additive Chow groups defined by Bloch-Esnault \cite{BE2} and
R{\"u}lling \cite{R} are indeed the relative $K$-groups, at least in characteristic zero.
This was perhaps the main target of the introduction of additive higher Chow groups by
Bloch and Esnault.

By the works of several authors (see \cite{EM} and \cite{Kerz09} for regular semi-local rings and
\cite{NS} and \cite{Totaro} for fields), it is now well known that the motivic cohomology of a
regular semi-local ring in the equal bi-degree (the Milnor range) coincides with its Milnor $K$-theory.
\thmref{thm:Intro-2} (4) says that this isomorphism
also holds for truncated polynomial rings over such rings.
This provides a concrete evidence that if one could extend Voevodsky's theory of motives to the theory of `non-$\A^1$-invariant' motives over so-called
fat points (infinitesimal extensions of spectra of fields), then
the underlying motivic cohomology groups must be the additive higher Chow groups
(see \cite{KP-5}).

\vskip .3cm

It should be remarked that the objective of \thmref{thm:Intro-2} is not to compute the relative $K$-groups.
There are already known computations of these by many authors (e.g., see
\cite{Good} and \cite{Hesselholt-tower}).
Instead, the above result addresses the question whether these relative (Milnor or Quillen)
$K$-groups could be described by additive 0-cycles.

\vskip .3cm

\thmref{thm:Intro-2} has following consequences. The first corollary below is in fact part of our
proof of \thmref{thm:Intro-2}.

\begin{cor}\label{cor:Mil-Qui}
Let $R$ be a regular semi-local ring which is essentially of finite type over a characteristic zero
field.
Let $n \ge 0$ be an integer. Then the canonical map
\[
\{K^M_n({R[x]}/{(x^{m})}, (x))\}_m   \to \{K_n({R[x]}/{(x^{m})}, (x))\}_m
\]
of pro-abelian groups is an isomorphism.
In particular, $\{K_n({R[x]}/{(x^{m})}, (x))^{(p)}\}_m = 0$ for $p \neq n$.
\end{cor}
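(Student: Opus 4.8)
The plan is to read the corollary off from \thmref{thm:Intro-2}; I note at once a mild circularity, since in the paper the corollary is extracted \emph{during} the proof of that theorem rather than deduced from it, so any genuinely self-contained proof must go through the same computations. Begin with the degenerate case $n=0$. Here $K^M_0(R[x]/(x^m),(x))=0$, because $K^M_0$ of every ring is $\Z$ and is unchanged by the surjection $R[x]/(x^m)\surj R$; and $K_0(R[x]/(x^m),(x))=0$, because the section $R\to R[x]/(x^m)$ splits the fibre sequence $K(R[x]/(x^m),(x))\to K(R[x]/(x^m))\to K(R)$, so $K_0(R[x]/(x^m))\cong K_0(R)\oplus K_0(R[x]/(x^m),(x))$, while $K_0(R[x]/(x^m))\xrightarrow{\ \sim\ }K_0(R)$ as $(x)$ is nilpotent. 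Hence both pro-systems vanish for $n=0$ and the statement is trivial. Assume $n\ge 1$ henceforth.

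For $n\ge 1$, recall from \thmref{thm:Intro-2}(2) that for each $m$ the map $cyc_R$ is \emph{by definition} the composite
\[
\TCH^n(R,n;m)\xrightarrow{\ cyc^M_R\ }K^M_n\bigl(R[x]/(x^{m+1}),(x)\bigr)\xrightarrow{\ \iota\ }K_n\bigl(R[x]/(x^{m+1}),(x)\bigr),
\]
where $\iota$ is the canonical map from Milnor to Quillen $K$-theory. By \thmref{thm:Intro-2}(4) each $cyc^M_R$ is an isomorphism, hence so is the induced map of pro-abelian groups; by \thmref{thm:Intro-2}(5) the map $cyc_R$ of pro-abelian groups is an isomorphism. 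Cancelling $cyc^M_R$ shows that $\iota$ induces an isomorphism of pro-abelian groups
\[
\{K^M_n(R[x]/(x^{m+1}),(x))\}_m\xrightarrow{\ \sim\ }\{K_n(R[x]/(x^{m+1}),(x))\}_m,
\]
which, after relabelling the index, is precisely the canonical map of the corollary (both towers carry the transition maps induced by the surjections $R[x]/(x^{m'})\surj R[x]/(x^m)$, so the relabelling is harmless).

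For the final assertion, the Adams operations act on the relative groups $K_*(R[x]/(x^m),(x))$ (see \cite{Levine-5}), yielding after $\otimes\,\Q$ the weight decomposition $K_n(R[x]/(x^m),(x))_\Q=\bigoplus_p K_n(R[x]/(x^m),(x))^{(p)}$. The image of $\iota$ lies in the weight-$n$ summand: relative Milnor $K$-theory in degree $n$ (for the nilpotent, hence radical, ideal $(x)$) is generated by symbols that are $n$-fold products of classes of $K_1$, so its image lies in $F^n$ of the $\gamma$-filtration, on which $\psi^k$ acts rationally by $k^n$ — the same weight computation alluded to after \thmref{thm:Intro-1-Q}. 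Rationalizing the isomorphism just proved, $\{K_n(R[x]/(x^m),(x))_\Q\}_m$ is then concentrated in weight $n$, i.e.\ $\{K_n(R[x]/(x^m),(x))^{(p)}\}_m=0$ for $p\ne n$.

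The main obstacle is therefore not this deduction, which is purely formal, but the inputs \thmref{thm:Intro-2}(4) and (5): the construction of $cyc^M_R$ and the proof that it, and the pro-map $cyc_R$, are isomorphisms. For a proof of the corollary logically prior to \thmref{thm:Intro-2} — which is how the paper arranges it — one would instead compute the two towers directly: the relative Milnor $K$-theory of truncated polynomial rings over $R$ via symbols, and the relative Quillen $K$-theory via Goodwillie's comparison with cyclic homology together with a big de Rham--Witt computation of the relevant homology (cf.\ \cite{R}); matching these at the level of pro-systems is where the real work lies.
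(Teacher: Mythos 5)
Your deduction of the corollary from \thmref{thm:Intro-2}(4) and (5) is formally valid as logic, but it is circular with respect to the paper's architecture, and you correctly sense this at the outset. In the paper the corollary is \corref{cor:Milnor-Quillen-main}, proved in \S~\ref{sec:char-0-*}, and it is an \emph{input} to \thmref{thm:Intro-2}(5): the proof of \thmref{thm:Chow-Milnor-Final} is literally ``combine \thmref{thm:Chow-Milnor-R} and \corref{cor:Milnor-Quillen-main}.'' So cancelling $cyc^M_R$ against $cyc_R$ gives you back exactly the statement that was used to prove one of your two hypotheses; nothing new is established. Your handling of the $n=0$ case and of the re-indexing $R[x]/(x^m)$ versus $R_m=R[t]/(t^{m+1})$ is fine, and your argument for the ``in particular'' clause (relative Milnor symbols land in $F^n_\gamma$, hence in Adams weight $n$) matches the paper's \lemref{lem:M-Q-C}.

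The genuine gap is that you never supply the independent argument, and your closing sketch misses its crux. The paper's proof (for $n\ge 3$; $n\le 2$ is Kerz) runs: Goodwillie's trace identifies $\wt{K}_n(R_m)$ with $\wt{HC}_{n-1}(R_m)=\oplus_{i=1}^{n-1}\wt{HC}^{(i)}_{n-1}(R_m)$ compatibly with $\lambda$-structures (Cathelineau, Corti\~nas--Weibel); Lemmas~\ref{lem:Adams} and \ref{lem:M-Q-C} (via Stienstra, Gorchinskiy--Tyurin, Soul\'e) show $\wt{K}^M_n(R_m)$ maps isomorphically onto the top piece $\wt{HC}^{(n-1)}_{n-1}(R_m)\cong \wt{\Omega}^{n-1}_{R_m}/d\wt{\Omega}^{n-2}_{R_m}$; and the whole statement then reduces to the pro-vanishing $\{\wt{HC}^{(i)}_{n-1}(R_m)\}_m=0$ for $1\le i\le n-2$. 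That last step is \propref{prop:Pro-vanish}, which is the real content: it uses the $\lambda$-decomposition of relative Hochschild homology into Andr\'e--Quillen pieces, the vanishing $D^{(i)}_{n-i}(R[t])=0$ for the regular ring $R[t]$, and an Artin--Rees-type argument through the conductor square for $R[t^2,t^3]\subset R[t]$ (from \cite{Krishna-0}), followed by Connes' periodicity and the injectivity of $d$ on $tR_m\otimes_R\Omega^{n-2}_R$ (\lemref{lem:Inj-*}). Your mention of ``a big de Rham--Witt computation'' points at the wrong tool for the Quillen side; no de Rham--Witt input enters this corollary, and without the pro-vanishing of the lower-weight cyclic homology the statement is not proved.
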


\vskip .3cm

Let $R$ be any regular semi-local ring containing $\Q$. Then the N{\'e}ron-Popescu desingularization
theorem says that $R$ is a direct limit of regular semi-local rings $\{R_i\}$, where each
$R_i$ is essentially of finite type over $\Q$ (see \cite[Theorem~1.1]{Swan}).
One knows from \cite[Lemma~1.17]{R} that if each $\{K^M_n({R_i[x]}/{(x^{m})}, (x))\}_{n, m \ge 1}$ is a restricted 
Witt complex  over $R_i$, then $\{K^M_n({R[x]}/{(x^{m})}, (x))\}_{n, m \ge 1}$ is a restricted
Witt-complex over ${\underset{i \ge 1}\varinjlim} \ R_i = R$ (see \cite[Definition~1.14]{R} for the definition
of a restricted Witt-complex).
On the other hand, it was shown in \cite[Theorem~1.2]{KP-4} that each
collection $\{\TCH^n(R_i,n;m)\}_{n, m \ge 1}$ is a
restricted Witt-complex over $R_i$.
We therefore obtain our next consequence of \thmref{thm:Intro-2}.

\begin{cor}\label{cor:Milnow-Witt}
 Let $R$ be a regular semi-local ring containing $\Q$.
 Then the relative Milnor $K$-theory $\{K^M_n({R[x]}/{(x^{m})}, (x))\}_{n, m \ge 1}$ is a restricted 
Witt-complex  over $R$.
\end{cor}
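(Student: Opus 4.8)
\emph{Proof plan.} The strategy is to transport the restricted Witt-complex structure from additive higher Chow groups to relative Milnor $K$-theory along the cycle class map of \thmref{thm:Intro-2}, and then to pass to a filtered colimit. First I would reduce to the case where $R$ is essentially of finite type over $\Q$: by the N\'eron--Popescu desingularization theorem \cite[Theorem~1.1]{Swan} one may write $R = \varinjlim_i R_i$ as a filtered colimit of regular semi-local rings $R_i$, each essentially of finite type over $\Q$, with flat transition maps, and by \cite[Lemma~1.17]{R} it then suffices to equip each system $\{K^M_n(R_i[x]/(x^m),(x))\}_{n,m\ge 1}$ with a restricted Witt-complex structure over $R_i$ for which the maps induced by the transition maps $R_i \to R_j$ are morphisms of restricted Witt-complexes.

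For $R$ essentially of finite type over a characteristic-zero field, \cite[Theorem~1.2]{KP-4} asserts that the bigraded system $\{\TCH^n(R,n;m)\}_{n,m\ge 1}$ is a restricted Witt-complex over $R$, with its product, differential, Frobenius and Verschiebung operators, projection maps, and structure map out of the big Witt vectors of $R$. By \thmref{thm:Intro-2}(4), $cyc^M_R\colon \TCH^n(R,n;m) \xrightarrow{\ \sim\ } K^M_n(R[x]/(x^{m+1}),(x))$ is an isomorphism for all $n\ge 1$ and $m\ge 0$; re-indexing the Witt length by $m\mapsto m-1$ turns this into a bidegree-wise isomorphism between $\{\TCH^n(R,n;m-1)\}_{n,m\ge1}$ and $\{K^M_n(R[x]/(x^m),(x))\}_{n,m\ge1}$. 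Transporting the operators through these isomorphisms equips the latter system with the structure of a restricted Witt-complex over $R$. Moreover, by \thmref{thm:Intro-2}(3) the maps $cyc^M_{R_i}$ are natural in $R_i$, so a transition map $R_i \to R_j$ induces a map of relative Milnor $K$-groups that corresponds under $cyc^M$ to the functorial pullback on additive higher Chow groups, which is a morphism of restricted Witt-complexes by \cite[Theorem~1.2]{KP-4}. Hence the transition maps on the Milnor $K$-theory side are morphisms of restricted Witt-complexes, and \cite[Lemma~1.17]{R} applies to give the statement for $R = \varinjlim_i R_i$.

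The step I expect to require genuine work --- and which I would expect to have been settled already when $cyc^M_R$ is constructed in the body of the paper --- is the upgrade from a bidegree-wise isomorphism of abelian groups to an isomorphism of restricted Witt-complexes, i.e., the compatibility of $cyc^M_R$ with the operations appearing in the definition of a restricted Witt-complex: the truncation maps $R[x]/(x^{m+1}) \twoheadrightarrow R[x]/(x^m)$, the Frobenius and Verschiebung induced by $x \mapsto x^r$, the products, and the map from $\mathbb{W}_m(R)$. On the additive Chow side these operations are defined geometrically, via finite correspondences associated with the covering $\A^1 \to \A^1$, $t \mapsto t^r$, and via restriction to and norm from the fibres over the origin; matching them with the ring-theoretic operations on relative Milnor $K$-theory goes through the explicit description of $cyc^M_R$ on Nesterenko--Suslin/Totaro-type generators together with the multiplicativity and the proper-pushforward and flat-pullback functoriality of the cycle class map recorded in \thmref{thm:Intro-1}. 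I note, however, that for the bare assertion that the system \emph{is} a restricted Witt-complex only compatibility with the truncation maps and with the transition maps $R_i \to R_j$ is strictly needed, the remaining operations being obtained for free by transport of structure; so even without the full compatibility the corollary follows formally from \thmref{thm:Intro-2} and \cite[Lemma~1.17]{R}.
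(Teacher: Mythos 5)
Your proposal is correct and follows essentially the same route as the paper: reduce to rings essentially of finite type over $\Q$ via N\'eron--Popescu and \cite[Lemma~1.17]{R}, invoke \cite[Theorem~1.2]{KP-4} for the restricted Witt-complex structure on $\{\TCH^n(R_i,n;m)\}_{n,m}$, and transport it through the isomorphism $cyc^M_{R_i}$ of \thmref{thm:Intro-2}(4), using the naturality in (3) to handle the transition maps. Your closing observation that only compatibility with truncation and transition maps is strictly needed for the bare statement is a fair reading of what the paper's argument actually uses.
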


In \cite[Chapter~II]{Bloch-IHES}, Bloch had shown (without using the terminology of Witt-complex)
that if $R$ is a regular local
ring containing a field of characteristic $p > 2$, then 
the subgroup of the relative Quillen $K$-theory of truncated polynomial rings over $R$,
generated by Milnor symbols (the symbolic $K$-theory
in the language of Bloch), has the structure of a restricted Witt-complex.
The above corollary extends the result of Bloch to characteristic zero.

\vskip .3cm

The last consequence of \thmref{thm:Intro-2} is the following.
Park and {\"U}nver \cite{PU} proposed a definition of motivic cohomology of truncated polynomial ring
${k[x]}/{(x^{m})}$ over a field. They showed that these motivic cohomology in the Milnor range coincide with
the Milnor $K$-theory of ${k[x]}/{(x^{m})}$ when $k$ is a characteristic zero field.
\thmref{thm:Intro-2} implies  that the Milnor range (relative) motivic cohomology
of Park-{\"U}nver coincides with the additive higher Chow groups.

\subsection{Comments and questions}\label{sec:Comments}
We make a couple of remarks related to the above results.

(1) Since \thmref{thm:Intro-1} is characteristic-free, one would expect the
same to be true for \thmref{thm:Intro-2} and
\corref{cor:Mil-Qui} as well. 
Our remark is that \thmref{thm:Intro-2} and
\corref{cor:Mil-Qui} are indeed true in all characteristics $\neq 2$.
Since the techniques of our proofs in positive characteristics 
are different from the present paper, they are presented
in \cite{GK}.

(2) Our second remark is actually a question. Recall that Chow groups with modulus are supposed to be the
motivic cohomology to describe the relative $K$-theory, just as Bloch's higher Chow groups describe
$K$-theory. Analogous to Bloch's Chow groups, the ones with modulus exist in all bi-degrees.
However, as we explained earlier,
\thmref{thm:Intro-2} says that the 0-cycles groups with modulus are often enough to describe all
of relative $K$-theory in the setting of pro-abelian groups. One can therefore ask the following.

\begin{ques}\label{ques:Future}
  Let $R$ be a regular semi-local ring essentially of finite type over a perfect field. 
Let $n, p \ge 1$ be two
  integers such that $n \neq p$. Is $\{\TCH^p(R,n;m)\}_m = 0$ ?
\end{ques}

Note that this question is consistent with the second part of
\corref{cor:Mil-Qui}.
Note also that it is already shown in \cite{KP-3} that the answer to this 
question is yes when $p > n$. So the open case is when $p < n$.
We hope to address this question in a future work.
Reader may recall that when $p < n/2$, the additive version of the
deeper Beilinson-Soul{\'e} vanishing conjecture
says that $\TCH^p(R,n;m)$ should vanish for every $m \ge 1$.

\subsection{An outline of the paper}\label{sec:Outline}
We end this section with a brief outline of the layout of this text.
In sections \ref{sec:Prelim} and ~\ref{sec:Milnor-K}, we set up our notations,
recollect the main objects of study and prove some
intermediate results. In \S~\ref{sec:CCM}, we define the cycle class map on the group of generators of
0-cycles. Our definition of the cycle class map is {\sl a priori} completely different from the
one in \cite{Binda} and \cite{Levine-1}. The novelty of the new construction is that it is very
explicit in nature and, therefore, it becomes possible to check that it factors through the 
rational equivalence. We also prove in this section that the cycle class map is natural for suitable proper and flat morphisms. 
One can check that this map does coincide with more abstractly defined maps of 
\cite{Binda} and \cite{Levine-1}
on generators. But we do not discuss this in this paper (see however \S~\ref{sec:Coincide}
for a sketch of this).

We break the proof of \thmref{thm:Intro-1} into two steps. In \S~\ref{sec:curves}, we prove it for a very 
specific type of curves using the results of \S~\ref{sec:Prelim}.
This is the technical part of the proof of \thmref{thm:Intro-1}.
It turns out that the general case can be reduced to the above
type using the results of \S~\ref{sec:Proj-F}.
This is done in \S~\ref{sec:Prf-1}. The idea that we have to increase the modulus for factoring
the cycle class map through the rational equivalence is already evident in the technical results of
\S~\ref{sec:Pre-Milnor}.

Sections~\ref{sec:char-0} and ~\ref{sec:Rel-Milnor**} 
constitute the heart of the proof of \thmref{thm:Intro-2}. In \S~\ref{sec:char-0}, we
provide some strong relations between the additive 0-cycles, relative Milnor $K$-theory and
the big de Rham-Witt complex. In particular, we show that it suffices to know the image of
certain very specific 0-cycles under the cycle class map in order to show that it factors through the
relative Milnor $K$-theory of a truncated polynomial ring (see \lemref{lem:Milnor-surj}).
In \S~\ref{sec:Rel-Milnor**}, we give an explicit description of the relative Milnor
$K$-theory in terms of the module of K{\"a}hler differentials (see \lemref{lem:M-Q-C-0}).
This allows us to establish the isomorphism between the 
additive higher Chow groups of 0-cycles and the relative Milnor $K$-theory.

To pass to the Quillen $K$-theory, we prove a vanishing theorem (see \propref{prop:Pro-vanish})
using some results of
\cite{Krishna-0}.
This allows us to show that the additive 0-cycle
groups for fields are isomorphic to the relative $K$-theory in the setting of pro-abelian groups.
In \S~\ref{sec:local}, we extend the results of \S~\ref{sec:char-0-*} to regular semi-local rings
using the main results of \cite{KP-2}. 
The last section is the appendix which contains some auxiliary results on the relation between 
Milnor and Quillen $K$-theory of fields. These results are used in the main proofs.

\section{The relative $K$-theory and cycles with modulus}\label{sec:Prelim}
In this section, we fix our notations and prove some basic results in relative
algebraic $K$-theory. We shall also recall the definition of the higher Chow groups
with modulus.

\subsection{Notations}\label{sec:Notations}
We shall in general work with schemes over an arbitrary base field $k$. We shall specify further 
conditions on $k$ as and when it is required. We let $\Sch_k$ denote the category of separated
finite type schemes over $k$. Recall that $X \in \Sch_k$ is called regular if $\sO_{X,x}$ is a
regular local ring for all points $x \in X$. We let $\Sm_k$ denote the full subcategory of $\Sch_k$
consisting of regular schemes. 
For $X, Y \in \Sch_k$, we shall denote the product $X \times_k Y$ simply by $X \times Y$.
For any point $x \in X$, we shall let $k(x)$ denote the residue field of $x$.
For a reduced scheme $X \in \Sch_k$, we shall let $X^N$ denote the normalization of $X$.
For $p \ge 0$, we shall denote the set of codimension $p$ points of a scheme $X$ by $X^{(p)}$.
For an affine scheme $X \in \Sch_k$, we shall let $k[X]$ denote the coordinate ring of $X$.

We shall let $\ov{\square}$ denote the projective space $\P^1_k = {\rm Proj}(k[Y_0, Y_1])$ and let
$\square = \ov{\square} \setminus \{1\}$. We shall let $\A^n_k  = \Spec(k[y_1, \ldots , y_n])$ be the
open subset of $\ov{\square}^n$, where $(y_1, \ldots, y_n)$ denotes the coordinate system of 
$\ov{\square}^n$ with $y_j = {Y^j_1}/{Y^j_0}$.  
Given a rational map $f \colon X \dashedrightarrow \ov{\square}^n$ in $\Sch_k$ and a
point $x \in X$ lying in the
domain of definition of $f$, we shall let
$f_i(x) = (y_i \circ f)(x)$, where $y_i \colon \ov{\square}^n \to \ov{\square}$ is the $i$-th projection.
For any $1 \le i \le n$ and $t \in \ov{\square}(k)$, we let $F^t_{n,i}$ denote the closed subscheme of 
$\ov{\square}^n$ given by $\{y_i = t\}$. We let $F^t_n = \stackrel{n}{\underset{i =1}\sum} \ F^t_{n,i}$.

By a closed pair $(X,D)$ in $\Sch_k$, we shall mean a closed immersion $D \inj X$ in $\Sch_k$,
where $X$ is reduced and $D$ is an effective Cartier divisor on $X$.
We shall write $X \setminus D$ as $X^o$.
We shall say that $(X,D)$ is a modulus pair if $X^o \in \Sm_k$.
If $(X,D)$ is a closed pair, we shall let $mD \subset X$ be the closed subscheme defined by the sheaf of
ideals $\sI^m_D$, where $D$ is defined by the sheaf of ideals $\sI_D$.

All rings in this text will be commutative and Noetherian. For such a ring $R$ and an
integer $m \ge 0$, we shall let $R_m = {R[t]}/{(t^{m+1})}$ denote the truncated polynomial 
algebra over $R$. We shall write $\Spec(R[t_1, \ldots , t_n])$ as $\A^n_R$.
The tensor product $M \otimes_{\Z} N$ will be denoted simply as $M \otimes N$.
Tensor products over other bases will be explicitly indicated.

\subsection{The category of pro-objects}\label{sec:Pro}
By a pro-object in a category $\sC$, we shall mean a sequence of objects $\{A_m\}_{m \ge 0}$
together with a map $\alpha^A_m \colon A_{m+1} \to A_m$ for each $m \ge 0$. We shall write this object
often as $\{A_m\}$. We let ${\rm Pro}(\sC)$ denote the category of pro-objects in $\sC$ with the
morphism set given by
\begin{equation}\label{eqn:Pro-0}
\Hom_{{\rm pro}(\sC)}(\{A_m\}, \{B_m\}) = {\underset{n}\varprojlim} \ 
{\underset{m}\varinjlim}\ \Hom_{\sC}(A_m, B_n).
\end{equation}
   
In particular,  giving a morphism $f$ as above is equivalent to finding a function  
$\lambda: \N \to \N$, a map $f_n: A_{\lambda(n)} \to B_n$ for each $n \ge 0$ such that 
for each $n' \ge n$, there exists $l \ge \lambda(n), \lambda(n')$ so that the diagram
\begin{equation}\label{eqn:Pro-1}
\xymatrix@C.8pc{
A_l \ar[r] \ar[dr] & A_{\lambda(n')} \ar[r]^-{f_{n'}} & B_{n'} \ar[d] \\
& A_{\lambda(n)} \ar[r]^-{f_n} & B_n}
\end{equation}
is commutative, where the unmarked arrows are the structure maps of $\{A_m\}$ and $\{B_m\}$.
We shall say that $f$ is strict if $\lambda$ is the identity function.
If $\sC$ admits all sequential limits, we shall denote the limit of $\{A_m\}$ by 
${\underset{m}\varprojlim} \ A_m \in \sC$. If $\sC$ is an abelian category, then
so is ${\rm Pro}(\sC)$. We refer the reader to \cite[Appendix~4]{AM} for further details about
${\rm Pro}(\sC)$.

\subsection{The relative algebraic $K$-theory}\label{sec:Rel-K}
Given a closed pair $(X,D)$ in $\Sch_k$, we let $K(X,D)$ be the homotopy fiber of the
restriction map between the Thomason-Trobaugh non-connective algebraic $K$-theory spectra $K(X) \to K(D)$.
We shall let $K_i(X)$ denote the homotopy groups of $K(X)$ for $i \in \Z$.
We similarly define $K_i(X,D)$.
We shall let $K^D(X)$ denote the homotopy fiber of the restriction map
$K(X) \to K(X \setminus D)$. Note that $K^D(X)$ does not depend on the subscheme structure of
$D$ but $K(X,D)$ does.
Note also that if $D' \subset X$ is another  closed subscheme such that $D \cap D' = \emptyset$,
then $K^D(X)$ is canonically homotopy equivalent to the
homotopy fiber $K^D(X,D')$ of the restriction map $K(X,D') \to K(X \setminus D, D')$.

If $(X,D)$ is a closed pair, we have the canonical 
restriction map $K(X, (m+1)D) \to K(X,mD)$.
In particular, this gives rise a pro-spectrum $\{K(X,mD)\}$ and a level-wise homotopy fiber
sequence of pro-spectra
\begin{equation}\label{eqn:Pro-spectra}
\{K(X,mD)\} \to K(X) \to \{K(mD)\}.
\end{equation}

If $X = \Spec(R)$ is affine and $D = V(I)$, we shall often write $K(X,mD)$ as $K(R, I^m)$ and
$K(X)$ as $K(R)$. For a ring $R$, we shall let $\wt{K}(R_m)$ denote the reduced
$K$-theory of $R_m$, defined as the
homotopy fiber of the augmentation map $K(R_m) \to K(R)$. Observe that there exists a
canonical decomposition $K(R_m) \cong \wt{K}(R_m) \times K(R)$.

Suppose that $R$ is a regular semi-local ring. Let
$f(t) \in R[t]$ be a polynomial such that 
$f(0) \in R^{\times}$ and let $Z = V((f(t))) \subset \A^1_R$
be the closed subscheme defined by $f(t)$. Since $Z \cap \{0\} = \emptyset$, the composite
map $K^Z(\A^1_R) \to K(\A^1_R) \to K((m+1)\{0\})$ is null-homotopic for all $m \ge 0$.
Hence, there is a factorization $K^Z(\A^1_R) \to K(\A^1_R, (m+1)\{0\}) \to K(\A^1_R)$.
Let $[\sO_Z]$ denote the fundamental class of $Z$ in $K^Z_0(\A^1_R)$
(see \cite[Exercise~5.7]{TT}). Note that $Z$ may not be reduced or irreducible.
Let $\alpha_Z$ denote the image of $[\sO_Z]$ under the map
$K^Z_0(\A^1_R) \to K_0(\A^1_R, (m+1)\{0\})$.
Let $\partial_n \colon \wt{K}_n(R_m) \to K_{n-1}(\A^1_R, (m+1)\{0\})$ denote the connecting homomorphism
obtained by considering the long exact homotopy groups sequence associated to 
~\eqref{eqn:Pro-spectra}. The homotopy invariance of $K$-theory on $\Sm_k$ shows that this
map is an isomorphism. For $g(t) \in R[t]$, let $\ov{g(t)}$ denote its image in $R_m$.

\begin{lem}\label{lem:Elem-0}
Given $Z = V((f(t)))$ as above, we have
\[
\alpha_Z = \partial_1(\ov{(f(0))^{-1} f(t)}).
\]
\end{lem}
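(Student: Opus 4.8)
The plan is to reduce the identity to a statement about $K_1$ of $R_m$ computed via units, using the localization sequence and the explicit description of the connecting map. First I would recall that since $f(0) \in R^\times$, the scheme $Z = V((f(t)))$ is disjoint from $\{0\}$, so $K^Z(\A^1_R)$ sits over $K(\A^1_R,(m+1)\{0\})$, and the fundamental class $[\sO_Z] \in K^Z_0(\A^1_R)$ maps to $\alpha_Z$. The key computational input is that the localization sequence for the open immersion $\A^1_R \setminus Z \hookrightarrow \A^1_R$ identifies $K^Z_0(\A^1_R)$ with $K_0$ of the category of coherent sheaves supported on $Z$; since $R$ is regular semi-local and $f(t)$ is a single non-zero-divisor, $\sO_Z = \sO_{\A^1_R}/(f(t))$ has the two-term Koszul resolution $0 \to \sO_{\A^1_R} \xrightarrow{f(t)} \sO_{\A^1_R} \to \sO_Z \to 0$, so $[\sO_Z]$ is the class of this complex, i.e. the image of the unit $f(t) \in \Gamma(\A^1_R \setminus Z, \sO^\times)$ (wait — more precisely, of the automorphism it induces on the localized module) under the boundary map $K_1(\A^1_R \setminus Z) \to K^Z_0(\A^1_R)$.

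Next I would carry out the comparison with the boundary for the pair. There is a commutative diagram relating the two localization/fiber sequences: the one for $(\A^1_R, \A^1_R \setminus Z)$ and the one for $(\A^1_R, (m+1)\{0\})$, linked because $Z \cap \{0\} = \emptyset$. Concretely, restriction to a punctured neighborhood of $\{0\}$ sends the class of the unit $f(t)$ on $\A^1_R \setminus Z$ to the class of its restriction near $\{0\}$; but "$K$-theory near $\{0\}$ relative to $(m+1)\{0\}$" is exactly $\wt K(R_m)$ after the standard homotopy-invariance identification, via $\partial_n$. Under this, a unit $u \in \Gamma \mapsto$ its image $\bar u \in (R_m)^\times = \wt K_1(R_m)$. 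So I would chase $f(t)$ through: its class in $K_1$ of the punctured neighborhood, normalized so that it lies in the relative $K_1$, is $\overline{(f(0))^{-1}f(t)}$ — the factor $(f(0))^{-1}$ is forced because we must kill the image of $f$ under restriction to $\{0\}$ itself (where $f(0) \in R^\times$), i.e. we land in the reduced/relative group only after dividing by $f(0)$. Then $\partial_1$ of this element is, by the very construction of $\partial_1$ as the composite of these boundary maps, equal to $\alpha_Z$.

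The main obstacle I expect is making the identification "$K$-theory supported near $\{0\}$ $=$ $\wt K(R_m)$ via $\partial$" fully precise and checking that the normalization constant is exactly $(f(0))^{-1}$ and not, say, some other unit: this requires carefully tracking how the fundamental class $[\sO_Z]$, defined via the Koszul complex on all of $\A^1_R$, restricts and then transits through the homotopy-invariance isomorphism and the connecting map $\partial_1$. A clean way to pin down the constant is to test on the simplest case $f(t) = 1 - at$ with $a$ a unit or nilpotent-adjacent parameter, where $Z = V(1-at)$ and the computation in $\wt K_1(R_m) = (R_m)^\times$ can be done by hand: one checks $\overline{(f(0))^{-1}f(t)} = \overline{1 - at}$ and matches it against $\alpha_Z$ computed from the explicit projective resolution, then invokes multiplicativity/additivity of both sides in $f$ (both $\alpha_{V(fg)} = \alpha_{V(f)} + \alpha_{V(g)}$ and $\overline{(fg)(0)^{-1}fg} = \overline{f(0)^{-1}f} + \overline{g(0)^{-1}g}$, the latter in the additive notation for $K_1$) together with the fact that $R[t]$-units of the relevant form are generated by linear ones after passing to $R_m$, or more robustly that both sides are determined by the class of $f(t)$ modulo $t^{m+1}$ and the divisor-theoretic content. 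I would present the linear case as the crux and deduce the general case by this reduction.
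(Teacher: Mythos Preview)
Your approach is essentially correct and is genuinely different from the paper's. The paper works entirely inside Milnor's explicit model for relative $K_0$: it forms the double $\Lambda = \{(a,b)\in R[t]\times R[t]\mid a-b\in(t^{m+1})\}$, recalls that $\partial_1(u)=[M(u)]-[\Lambda]$ for the rank-one projective $M(u)=\{(x,y)\mid u\ov{x}=\ov{y}\}$, and then simply writes down a short exact sequence $0\to\Lambda\to M(\ov{g(t)})\to R[t]/(g(t))\to 0$ with $g(t)=f(0)^{-1}f(t)$. This is a two-line direct computation once the model is set up. Your route instead compares two fiber sequences --- the localization sequence for $Z\subset\A^1_R$ and the relative sequence for $(\A^1_R,(m+1)\{0\})$ --- and chases the unit $f(t)$ through the resulting commutative square of connecting maps. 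That is more conceptual and avoids the double construction entirely; the paper's argument is shorter but model-dependent.

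One correction: your worry about the normalization constant, and the proposed fix via reduction to linear factors, is both unnecessary and shaky (a general $f$ with $f(0)\in R^\times$ need not split into linear factors over $R$). The clean way to see the constant is that in your comparison square the bottom connecting map $K_1(R_m)\to K_0(\A^1_R,(m+1)\{0\})$ kills the image of $K_1(\A^1_R)\cong K_1(R)$ (two consecutive maps in a fiber sequence compose to zero). Since $\ov{f(t)}=f(0)\cdot\ov{f(0)^{-1}f(t)}$ in $R_m^\times$ and $f(0)\in R^\times$ lies in that image, you get $\partial(\ov{f(t)})=\partial_1(\ov{f(0)^{-1}f(t)})$ directly, with no case analysis needed.
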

\begin{proof}
  Since $f(0) \in R^{\times}$, we note that $Z = V((f(0))^{-1} f(t))$.
  We let $g(t) =  (f(0))^{-1} f(t)$
so that $g(0) = 1$ and therefore $\ov{g(t)} \in \wt{K}_1(R_m)$.
We let $\Lambda = \{(a,b) \in R[t] \times R[t]| a - b \in (t^{m+1})\}$
be the double of $R[t]$ along
the ideal $(t^{m+1})$ as in \cite[Chapter~2]{Milnor}. Let $p_1: \Lambda \to R[t]$
be the first projection.
Then recall from \cite[Chapter~6]{Milnor} that $K_0(R[t], (t^{m+1})) \cong
{\rm Ker}((p_1)_{\#}: K_0(\Lambda) \to K_0(R[t]))$ and \cite[Chapter~3]{Milnor} shows
that $\partial_1(u) = [M(u)] - [\Lambda] \in K_0(R[t], (t^{m+1}))$, where
$M(u)$ is the rank one projective $\Lambda$-module given by
$M(u) = \{(x,y) \in R[t] \times R[t]| u \ov{x} = \ov{y} \ \mbox{in} \ R_m\}$ for any $u \in R^{\times}_m$.

Let $u =\ov{g(t)}$ and let $M= M(\ov{g(t)})$.  Let $p_2:M \to  {R[t]}/{(g(t))} $ denote the composition of the second projection $M \to R[t]$ with the surjection $R[t] \surj R[t]/(g(t))$.  It is then easy to see that the sequence
\[
0 \to \Lambda \xrightarrow{\theta} M \xrightarrow{p_2} {R[t]}/{(g(t))} \to 0
\]
is a short exact sequence of $\Lambda$-modules if we let
$\theta((a,b)) = (a, b g(t)) \in M$.
In particular, we get $[\sO_Z] = [V((g(t)))] = [M] - [\Lambda] = \partial_1(\ov{g(t)})$.
This proves the lemma. 
\end{proof}

\subsection{The projection formula for relative $K$-theory}\label{sec:Proj-F}
Let $(X,D)$ be a modulus pair in $\Sch_k$ and let $S_X$ be the double of $X$ along $D$.
Recall from \cite[\S~2.1]{BK} that $S_X$ is the pushout $X \amalg_D X$ of the diagram of
schemes $X \hookleftarrow D \inj X$. On each affine open subset $U \subset X$,
the double is the spectrum $S_U$ of the ring 
$\{(a, b) \in \sO_U(U) \times \sO_U(U) | a-b \in \sI_D(U)\}$, where $\sI_D \subset \sO_X$ is the ideal sheaf of $D$. 
We have two inclusions $\iota_\pm \colon X \inj S_X$ and a projection $p \colon S_X \to X$
such that $p \circ \iota_\pm = {\rm id}_X$.
In particular, there is a canonical decomposition $K(S_X) \cong
K(S_X, X_-) \times K(X)$.
There is an inclusion of modulus pairs $(X, D) \inj (S_X, X_-)$, with respect to the 
embedding $X_+ \inj S_X$. This yields the pull-back map
$\iota^*_+ \colon K(S_X, X_-) \to K(X,D)$. 

We now let $u \colon Z \inj X$ be a closed immersion such that $Z \cap D = \emptyset$.
This gives rise to a closed embedding $Z \inj X \xrightarrow{\iota_+} S_X$ such that
$Z \cap D = Z \cap X_- = \emptyset$.
Since $Z \cap D = \emptyset$, the push-forward map (which exists because
$Z \subset X_{\rm reg}$) $u_* \colon K(Z) \to K(X)$ composed with the restriction $K(X) \to K(D)$ is
null-homotopic. Hence, there is a canonical factorization
$K(Z) \to K(X,D) \to K(X)$ of the push-forward map.
We shall denote the map $K(Z) \to K(X,D)$ also by $u_*$. It is clearly functorial in $(X,D)$
and $Z$. Recall also that $K(Z)$ and $K(X,D)$ are module spectra over the ring spectrum $K(X)$
(e.g., see \cite[Chapter~3]{TT}). We shall need to know the following result about
the map $u_*$ in the proof of \lemref{lem:Factor}.

\begin{lem}\label{lem:Proj-rel}
  The push-forward map $u_* \colon K_*(Z) \to K_*(X,D)$ is $K_*(X)$-linear.
\end{lem}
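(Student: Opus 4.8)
The plan is to reduce the statement to the projection formula for Thomason--Trobaugh $K$-theory of the closed pair on the level of the supported $K$-theory spectrum $K^Z(X)$, and then transport it across the canonical factorization $K(Z) \xrightarrow{\sim} K^Z(X) \to K(X,D)$. First I would recall that, since $Z \subset X_{\rm reg}$, the push-forward $u_* \colon K(Z) \to K(X)$ factors canonically through $K^Z(X)$, and in fact the resulting map $K(Z) \to K^Z(X)$ is a homotopy equivalence (absolute purity / dévissage for the regular closed immersion $u$, as in \cite[Chapter~3]{TT}). Under this equivalence, the $K(X)$-module structure on $K(Z)$ (obtained by transporting along $u_*$ using the $K(Z)$-module structure and the ring map $u^* \colon K(X) \to K(Z)$, i.e. the projection formula $u_*(u^*(a)\cdot x) = a\cdot u_*(x)$ on the absolute level) matches the evident $K(X)$-module structure on $K^Z(X)$ coming from the fact that $K^Z(X)$ is a module over the ring spectrum $K(X)$.

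Next I would observe that, because $Z \cap D = \emptyset$, the composite $K^Z(X) \to K(X) \to K(D)$ is null-homotopic (the support $Z$ is disjoint from $D$, so the restriction of any $Z$-supported class to $D$ vanishes), which is precisely what produces the canonical lift $K^Z(X) \to K(X,D)$ sitting over $K^Z(X) \to K(X)$; the map $u_*\colon K(Z) \to K(X,D)$ of the text is the composite $K(Z) \xrightarrow{\sim} K^Z(X) \to K(X,D)$. Now the map $K^Z(X) \to K(X,D)$ is a map of $K(X)$-module spectra: $K(X,D)$ is a $K(X)$-module (homotopy fiber of the $K(X)$-linear restriction $K(X) \to K(D)$, with $K(D)$ a $K(X)$-module via $K(X) \to K(D)$), the map $K^Z(X) \to K(X)$ is $K(X)$-linear, and the chosen null-homotopy $K^Z(X) \to K(X) \to K(D)$ can be taken $K(X)$-linearly (it comes from the disjointness $Z \cap D = \emptyset$, which is compatible with the module structures). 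Hence the induced map on the homotopy fibers over $K(X)$ is $K(X)$-linear. Passing to homotopy groups, $u_* \colon K_*(Z) \to K_*(X,D)$ is a morphism of $K_*(X)$-modules, which is the assertion.

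The one point that needs a little care — and which I expect to be the main obstacle — is the compatibility of the null-homotopies and module structures at the spectrum level: one wants the factorization $K(Z) \to K(X,D) \to K(X)$ to be realized through a genuinely $K(X)$-linear (or at least $K(X)$-module-compatible after passing to homotopy) null-homotopy of $K^Z(X) \to K(D)$, so that the induced map on homotopy fibers is $K(X)$-linear. This is where invoking the module structure of $K^Z(X)$ and $K(X,D)$ over the ring spectrum $K(X)$ from \cite[Chapter~3]{TT} does the work: restriction to an open complement and restriction to a disjoint closed subscheme are both $K(X)$-linear, so all the fiber sequences in play ($K^Z(X) \to K(X) \to K(X\setminus Z)$ and $K(X,D) \to K(X) \to K(D)$) are fiber sequences of $K(X)$-modules, and the comparison map between them is automatically $K(X)$-linear. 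Since for the intended application (the proof of \lemref{lem:Factor}) one only needs $K_*(X)$-linearity of $u_*$ on homotopy groups, it is in fact enough to check the projection formula $u_*(a\cdot x) = a\cdot u_*(x)$ for $a \in K_*(X)$ and $x \in K_*(Z)$ directly, using that the absolute projection formula for $u_* \colon K_*(Z) \to K_*(X)$ holds and that the lift to $K_*(X,D)$ is induced by a $K_*(X)$-linear map of long exact sequences; this bypasses any delicate $\infty$-categorical bookkeeping.
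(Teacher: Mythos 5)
Your strategy is genuinely different from the paper's and is viable in outline, so let me first compare. The paper does not argue with the fiber sequence $K^Z(X) \to K(X) \to K(X\setminus Z)$ directly; it passes to the double $S_X = X \amalg_D X$, where the relative theory $K(S_X, X_-)$ becomes a direct factor of the absolute theory $K(S_X)$. This converts the projection formula for the relative push-forward $v_* \colon K(Z) \to K(S_X,X_-)$ into the standard projection formula for the composite $K(Z) \to K(S_X)$, with no need to discuss module structures on homotopy fibers at all; the remaining work (done via excision and a factorization through $G(Z)$) is to check that $u_*$ agrees with $\iota_+^*\circ v_*$. Your route keeps everything on $X$ and argues that $K^Z(X) \to K(X,D)$ is the map induced on horizontal homotopy fibers by the $K(X)$-linear square comparing $K(X) \to K(X\setminus Z)$ with $K(X) \to K(D)$ (using $D \subset X\setminus Z$). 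That is a legitimate and arguably more direct alternative, provided the module structures and the comparison are really produced at the level of spectra (the Thomason--Trobaugh pairings are strict enough for this); the paper's detour through the double is precisely a device for avoiding that coherence discussion.

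Two specific points in your write-up need repair. First, $K(Z) \to K^Z(X)$ is not a homotopy equivalence in general: excision and d\'evissage give $K^Z(X) \simeq K^Z(X^o) \simeq G(Z)$ (using that $X^o$ is regular), but $K(Z) \to G(Z)$ is an equivalence only when $Z$ itself is regular, whereas the lemma allows an arbitrary closed $Z$ with $Z\cap D=\emptyset$ (for instance a non-reduced one); nor is $u$ assumed to be a regular immersion. Fortunately you do not need the equivalence, only that $K(Z)\to K^Z(X)$ is a $K(X)$-module map, which holds because the projection formula $u_*(u^*(a)\otimes x)\simeq a\otimes u_*(x)$ is already an equivalence of complexes supported on $Z$. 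Second, the closing ``bypass'' does not work: knowing the projection formula for $u_*\colon K_*(Z)\to K_*(X)$ together with the $K_*(X)$-linearity of $K_*(X,D)\to K_*(X)$ only shows that $u_*(u^*(a)x)-a\,u_*(x)$ dies in $K_*(X)$, i.e.\ lies in the image of $K_{*+1}(D)$; since $K_*(X,D)\to K_*(X)$ is not injective, this does not force the element to vanish. Some spectrum-level (or split-injection) input is unavoidable, which is exactly why the paper introduces $S_X$.
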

\begin{proof}
  Since $Z \subset S_X \setminus X_-$, we also have the push-forward map
  $v_* \colon K(Z) \to K(S_X, X_-)$, where we let $v = \iota_+ \circ u$.
  Suppose we know that $u_* = \iota^*_+ \circ v_* \colon K(Z) \xrightarrow{v_*} K(S_X, X_-) 
  \xrightarrow{\iota^*_+} K(X,D)$ and the lemma holds for $v_*$.
  Then for any $\alpha \in K_*(X)$ and $\beta \in K_*(Z)$,
  we get
  \[
    u_*(u^*(\alpha) \beta) = \iota^*_+(v_*(v^*p^*(\alpha) \beta)) =
    \iota^*_+(p^*(\alpha) v_*(\beta)) = (p \circ \iota_+)^*(\alpha) u_*(\beta) = \alpha u_*(\beta).
    \]
    We thus have to show the following.
    \begin{enumerate}
    \item
      The lemma holds for the inclusion $Z \inj S_X$, and
    \item
      $u_* = \iota^*_+ \circ v_*$.
    \end{enumerate}
    
    To prove (1), we can use that the map $K_*(S_X, X_-) \to K_*(S_X)$ is a split inclusion
    (as we saw above). Using this and the fact that $K(S_X, X_-) \to K(S_X)$ is
    $K(S_X)$-linear, it suffices to prove (1) for the composite push-forward map
    $v_* \colon K(Z) \to K(S_X)$. But we already saw above that $K(Z)$ is a module spectrum over $K(S_X)$.
    
    We now prove (2).
    By the definition of the push-forward maps to the relative $K$-theory, we have
    factorizations
    \begin{equation}\label{eqn:Proj-rel-0}
      \xymatrix@C.8pc{
        K(Z) \ar[r] \ar@{=}[d] & K^Z(S_X, X_-) \ar[r] \ar[d]^-{\iota^*_+} &
        K(S_X, X_-) \ar[d]^-{\iota^*_+} \\
        K(Z) \ar[r] & K^Z(X, D) \ar[r] & K(X, D),}
    \end{equation}
    such that the square on the right is commutative and the top (resp. bottom) composite arrow
    is $v_*$ (resp. $u_*$). Hence, it suffices to show that the left square is commutative.

    For showing this, we use the diagram
\begin{equation}\label{eqn:Proj-rel-1}
      \xymatrix@C.8pc{   
K(Z) \ar[r] \ar@{=}[d] & K^Z(S_X, X_-) \ar[r]^-{\cong} \ar[d]^-{\iota^*_+} &
        K^Z(S_X \setminus X_-) \ar[d]^-{\iota^*_+} \\
        K(Z) \ar[r] & K^Z(X, D) \ar[r]^-{\cong} & K^Z(X \setminus D),}
    \end{equation}
    where the horizontal arrows on the right are the restriction maps induced by the
    open immersions of modulus pairs. In particular, the square on the right is
    commutative. The right horizontal arrows are homotopy equivalences by the
    excision theorem. Hence, it suffices to show that the composite square in
    ~\eqref{eqn:Proj-rel-1} commutes.

    To see this, we note that the composite horizontal arrows in
    ~\eqref{eqn:Proj-rel-1} have the factorizations:
    
\begin{equation}\label{eqn:Proj-rel-2}
      \xymatrix@C.8pc{  
K(Z) \ar[r] \ar@{=}[d] & G(Z) \ar[r] \ar@{=}[d] & K^Z(S_X \setminus X_-) \ar[d]^-{\iota^*_+}_-{\cong} \\
K(Z) \ar[r] & G(Z) \ar[r]  & K^Z(X \setminus D),}
\end{equation}
where $G(Z)$ is the $K$-theory of pseudo-coherent complexes on $Z$
(\cite[Chapter~3]{TT}) and $K(Z) \to G(Z)$ is the canonical map.
We are now done because the square on the right in ~\eqref{eqn:Proj-rel-2} clearly commutes.
\end{proof}

\subsection{The 0-cycles with modulus}\label{sec:HCGM}
Let $k$ be a field and let $(X,D)$ be an equi-dimensional closed pair 
in $\Sch_k$ of dimension $d \ge 1$. We recall the definition of
the higher Chow groups with modulus from \cite{BS} or \cite{KP}. For any integers 
$n, p \ge 0$, we let
$\un{z}^p(X|D, n)$ denote the free abelian group on the set of integral closed 
subschemes of $X \times \square^n$ of codimension $p$ satisfying the following.
\begin{enumerate}
\item
$Z$ intersects $X \times F$ properly for each face $F \subset \square^n$.
\item
If $\ov{Z}$ is the closure of $Z$ in $X \times \ov{\square}^n$ and 
$\nu: \ov{Z}^N \to X \times \ov{\square}^n$
is the canonical map from the normalization of $\ov{Z}$, then the inequality 
(called the modulus condition)
\[
\nu^*(D \times \ov{\square}^n) \le \nu^*(X \times F^1_n)
\] 
holds in the set of Weil divisors on $\ov{Z}^N$.
\end{enumerate}

An element of the group $\un{z}^p(X|D, n)$ will be called an admissible cycle.
It is known that $\{n \mapsto \un{z}^p(X|D, n)\}$ is a cubical abelian group (see \cite[\S~1]{KLevine}). 
We denote this by $\un{z}^p(X|D, *)$.
We let $z^p(X|D, *) = \frac{\un{z}^p(X|D, *)}{\un{z}^p_{\rm degn}(X|D, *)}$, where
$\un{z}^p_{\rm degn}(X|D, *)$ is the degenerate part of the cubical abelian group 
$\un{z}^p(X|D, *)$. For $n \ge 0$, we let 
\[
\CH^p(X|D, n) = H_n(z^p(X|D, *))
\]
and call them the higher Chow groups with modulus of $(X,D)$.
The direct sum 
\begin{equation}\label{eqn:0-cycles}
\CH_0(X|D, *): = {\underset{n \ge 0}\oplus} \ \CH^{d+n}(X|D, n) = 
{\underset{n \ge 0}\oplus} \ \CH_{-n}(X|D, n)
\end{equation}
is called the {\sl higher Chow group of 0-cycles with modulus}. 
The subject of this paper is
to study the relation between $\CH_0(X|D, *)$ and the relative $K$-theory $K_*(X,D)$.

We recall for the reader that the groups $\CH^p(X|D, *)$ satisfy the flat pull-back and 
the proper push-forward 
properties under certain conditions. We refer the reader to \cite{BS} or \cite{KP} 
for these and other properties of the Chow groups with modulus.

\section{The Milnor $K$-theory}\label{sec:Milnor-K}
Recall that for a semi-local ring $R$, the Milnor $K$-group $K^M_i(R)$ is defined to be 
the $i$-th graded piece of the graded Milnor $K$-theory $\Z$-algebra $K^M_*(R)$.
The latter is
defined to be the quotient of the tensor algebra $T_*(R^{\times})$ by the two-sided graded 
ideal generated by homogeneous elements $\{a \otimes (1-a)| a, 1-a \in R^{\times}\}$.
The image of an element $a_1 \otimes \cdots \otimes a_n \in T_n(R^{\times})$ in $K^M_n(R)$
is denoted by the Milnor symbol $\un{a} = \{a_1,  \ldots , a_n\}$.
If $I \subset R$ is an ideal, the relative Milnor $K$-theory $K^M_i(R,I)$ is defined to be
the kernel of the natural surjection $K^M_n(R) \to K^M_n(R/I)$.
It follows from \cite[Lemma~1.3.1]{Kato-Saito} that $K^M_n(R,I)$ is generated by Milnor symbols
$\{a_1, \ldots, a_n\}$, where $a_i \in {\rm Ker}(R^{\times} \to (R/I)^{\times})$ for some $1 \le i \le n$,
provided $R$ is a finite product of local rings.

The product structures on the Milnor and Quillen $K$-theories yield 
a natural graded ring homomorphism $\psi_R \colon K^M_*(R) \to K_*(R)$.
If $I \subset R$ is an ideal, we have a natural  
isomorphism $K^M_1(R,I) \cong \wh{K}_1(R,I)$, where
$\wh{K}_*(R,I)$ is the group ${\rm Ker}(K_*(R) \to K_*(R/I))$. Using the module structure on
$\wh{K}_*(R,I)$ over $K_*(R)$ and the ring homomorphism $K^M_*(R) \to K_*(R)$, we obtain a natural
graded $K^M_*(R)$-linear map $\psi_{R|I} \colon K^M_*(R,I) \to \wh{K}_*(R,I)$.
The cup product on Milnor $K$-theory yields maps $K^M_n(R) \otimes K^M_{n'}(R,I) \to K^M_{n+n'}(R,I)$.
In the sequel, we shall loosely denote the image of this map also by
$K^M_n(R)K^M_{n'}(R,I)$ (e.g, see \lemref{lem:Milnor-0}).

\subsection{The improved Milnor $K$-theory}\label{sec:IMK}
If $R$ is a semi-local ring whose residue fields are not infinite, 
then the Milnor $K$-theory $K^M_*(R)$ does
not have good properties. For example, the Gersten conjecture does not hold even if $R$ is a regular
local ring containing a field.
If $R$ is a finite product of local rings containing a field, Kerz \cite{kerz10} defined an improved 
version of Milnor $K$-theory,
which is denoted as $\wh{K}^M_*(R)$. This is a graded commutative ring
and there is natural map of graded commutative rings $\eta^R: K^M_*(R) \to \wh{K}^M_*(R)$.
For an ideal $I \subset R$, we let $\wh{K}^M_*(R,I) = {\rm Ker}(\wh{K}^M_*(R) \to \wh{K}^M_*(R/I))$.
We thus have a natural map $K^M_*(R,I) \to \wh{K}^M_*(R,I)$.
We state some basic facts about $\wh{K}^M_*(R)$  in the following result and
refer the reader to \cite{kerz10} for proofs.

\begin{prop}\label{prop:Kerz-finite}
Let $R$ be a finite product of local rings containing a field. Then the map
$\eta^R: K^M_*(R) \to \wh{K}^M_*(R)$ has following properties.
\begin{enumerate}
\item
$\eta^R$ is surjective.
\item
$\eta^R_n$ is an isomorphism for all $n \ge 0$ if $R$ is a field.
\item
$\eta^R_n$ is an isomorphism for $n \le 1$.
\item
$\eta^R_n$ is an isomorphism for all $n$ if each residue fields of $R$ are infinite.
\item
The natural map $K^M_n(R) \to K_n(R)$ factors through $\eta^R_n$.
\item
The map $\wh{K}^M_2(R) \to K_2(R)$ is an isomorphism.
\item
The Gersten conjecture holds for $\wh{K}^M_n(R)$.
\end{enumerate}
\end{prop}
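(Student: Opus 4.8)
This is a compilation of standard results from Kerz's paper \cite{kerz10}, so the "proof" here is really a matter of locating the right statements and checking that the relative versions and the comparison with Quillen $K$-theory follow formally. The plan is to take each of the seven items in turn. For (1), the surjectivity of $\eta^R \colon K^M_*(R) \to \wh{K}^M_*(R)$ is essentially built into the construction: $\wh{K}^M_*(R)$ is defined (in the finite-product-of-local-rings case) so that it receives a surjection from $K^M_*(R)$, either via the explicit generators-and-relations description or via the definition as an image inside a product over the infinite-residue-field case. For (2), when $R$ is a field there are no issues with small residue fields, so $\eta^R_n$ is the identity/an isomorphism in every degree — this is how the improved theory is normalized. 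For (3), the degrees $n=0$ and $n=1$ are trivial: $K^M_0 = \Z = \wh{K}^M_0$ and $K^M_1(R) = R^\times = \wh{K}^M_1(R)$ by construction. For (4), when all residue fields of $R$ are infinite, the obstruction to good behavior of $K^M_*$ disappears and $\eta^R$ is an isomorphism in all degrees; this is one of the main comparison results of \cite{kerz10}.

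For the remaining items I would invoke the deeper input from \cite{kerz10}. Item (7), the Gersten conjecture for $\wh{K}^M_n$, is the central theorem of that paper (for regular local rings containing a field), and I would simply cite it. Item (5), the factorization of the natural map $K^M_n(R) \to K_n(R)$ through $\eta^R_n$, follows because the map to Quillen $K$-theory is compatible with the Bass–Tate / product-structure relations that are quotiented out in passing from $K^M$ to $\wh{K}^M$; more precisely, one checks that the symbols killed by $\eta^R$ already map to zero in $K_n(R)$ (this uses that $K_n$ does satisfy the relevant rigidity/continuity properties), or one quotes the compatibility directly from \cite{kerz10}. Item (6), the isomorphism $\wh{K}^M_2(R) \xrightarrow{\ \sim\ } K_2(R)$, is a theorem of Kerz refining the classical computation of $K_2$ of a local ring (Dennis–Stein, van der Kallen): the point is that in degree $2$ the improved Milnor $K$-group agrees with $K_2$ even over finite residue fields, whereas naive $K^M_2$ need not.

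The relative statements are not listed separately in the proposition, but since $\wh{K}^M_*(R,I)$ is defined as $\ker(\wh{K}^M_*(R) \to \wh{K}^M_*(R/I))$, each surjectivity/isomorphism assertion for $\eta^R$ restricts and corestricts along the augmentation ideals. Concretely, from the commutative square relating $K^M_*(R) \to K^M_*(R/I)$ and $\wh{K}^M_*(R) \to \wh{K}^M_*(R/I)$ via $\eta^R$ and $\eta^{R/I}$, a diagram chase (surjectivity of the vertical maps plus the snake lemma) yields surjectivity of $K^M_*(R,I) \to \wh{K}^M_*(R,I)$ and isomorphisms in the degrees where $\eta$ is an isomorphism on both $R$ and $R/I$.

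The main obstacle is not any single step — each is a citation — but rather making sure the hypotheses of \cite{kerz10} match what is needed here: in particular, Kerz's results are stated for local rings (or finite products thereof) containing a field, and some of the sharper statements, like (6) and (7), require regularity. Since the applications in this paper take $R$ to be a regular semi-local ring essentially of finite type over a field (or a localization/henselization thereof), all hypotheses are met, but one should state the proposition with the "finite product of local rings containing a field" hypothesis exactly as Kerz does, and flag that (7) additionally needs regularity. So the write-up is essentially: "these are \cite[Propositions~X, Y, Z and Theorem~W]{kerz10}," with a one-line diagram-chase remark for the relative versions.
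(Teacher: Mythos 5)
Your proposal is correct and takes essentially the same route as the paper, which gives no argument of its own and simply refers the reader to Kerz's paper \cite{kerz10} for all seven items. Your added commentary on why each item holds and the diagram-chase remark for the relative versions are accurate but go beyond what the paper records.
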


We now let $R$ be a regular semi-local ring (not necessarily a product of local rings) 
containing a field. Let $F$ denote the total quotient ring (a product of fields) of $R$.
Recall from \cite[\S~1]{Kato} that there is a (Gersten) complex of abelian groups
\begin{equation}\label{eqn:Gersten}
  K^M_n(F) \to {\underset{{\rm ht}(\fp) = 1}\oplus} \ K^M_{n-1}(k(\fp)) \to \cdots \to
{\underset{{\rm ht}(\fp) = n-1}\oplus} \ K^M_{1}(k(\fp)) \to 
{\underset{{\rm ht}(\fp) = n}\oplus} 
\ K^M_{0}(k(\fp)).
\end{equation}

We let $\wh{K}^M_n(R)$ denote the kernel
of the boundary map
\[
 \partial: K^M_n(F) \to 
 {\underset{{\rm ht}(\fp) = 1}\oplus} \ K^M_{n-1}(k(\fp))
\]
in ~\eqref{eqn:Gersten}.
For any $X \in \Sch_k$, the  improved Milnor $K$-theory Zariski sheaf $\wh{\sK}^M_{n,X}$
was defined in \cite{kerz10} whose stalk at a point $x \in X$ is $\wh{K}^M_n(\sO_{X,x})$.
As ~\eqref{eqn:Gersten} gives rise to a resolution of $\wh{K}^M_n(R_\fp)$ for every prime ideal
$\fp \subset R$ by \propref{prop:Kerz-finite} (7), it follows that  $\wh{K}^M_n(R)$ coincides with the
group of global sections
of the sheaf $\wh{\sK}^M_{n,X}$ on $X = \Spec(R)$.

Since the composite map $K^M_n(R) \to K^M_n(F) \to K^M_{n-1}(k(\fp))$ is well known to be zero
for every height one prime ideal $\fp \subset R$, it follows from the definition of
$\wh{K}^M_n(R)$ and the Gersten resolution of Quillen $K$-theory that 
there are natural maps
\begin{equation}\label{eqn:Mil-Q}
  K^M_n(R) \to \wh{K}^M_n(R) \xrightarrow{\psi_R} K_n(R).
\end{equation}

Suppose now that $R$ is a regular semi-local integral domain of dimension one containing 
a field and 
$I \subset R$ is an ideal of height one. Then $R/I$ is a finite product of 
Artinian local rings.
In particular, the improved Milnor $K$-theory $\wh{K}^M_*(R/I)$ is defined.
We can write $R/I = \stackrel{r}{\underset{i =1}\prod} {R_{\fm_i}}/{IR_{\fm_i}}$,
where $\fm_1, \ldots , \fm_r$ are the minimal primes of $I$.
We thus have the canonical maps
\begin{equation}\label{eqn:Milnor-semi-local}
\wh{K}^M_n(R) \inj \stackrel{r}{\underset{i =1}\prod} \wh{K}^M_n(R_{\fm_i}) \surj
\stackrel{r}{\underset{i =1}\prod} \wh{K}^M_n({R_{\fm_i}}/{IR_{\fm_i}}) \xleftarrow{\cong}
\wh{K}^M_n(R/I),
\end{equation}
where the first arrow is induced from the definition of $\wh{K}^M_n(R)$ and the 
Gersten resolutions of the improved Milnor $K$-theory of the localizations  of $R$.
We define the relative improved Milnor $K$-group $\wh{K}^M_n(R,I)$ as the kernel of
the composite map. Note that this agrees with the relative improved Milnor 
$K$-groups defined earlier if $R$ is a product of local rings.

Note that ~\eqref{eqn:Milnor-semi-local} also shows that the diagram
\begin{equation}\label{eqn:Milnor-semi-local-0}
\xymatrix@C.8pc{
K^M_n(R) \ar[r] \ar[d] & \wh{K}^M_n(R) \ar[r] \ar[d] & K_n(R) \ar[d] \\
K^M_n(R/I) \ar[r] & \wh{K}^M_n(R/I) \ar[r] & K_n(R/I)}
\end{equation}
commutes.
We therefore get the 
canonical maps of relative $K$-theories
\begin{equation}\label{eqn:Milnor-semi-local-1}
K^M_*(R,I) \to \wh{K}^M_*(R,I) \to \wh{K}_*(R,I),
\end{equation}
where recall that $\wh{K}_*(R,I) = {\rm Ker}(K_*(R) \to K_*(R/I))$.

\subsection{Some results on Milnor-$K$-theory}\label{sec:Pre-Milnor}
We shall need few results on the Milnor $K$-theory of discrete valuation rings.
For a discrete valuation ring $R$ with field of fractions $F$, we shall let 
$\ord: F^{\times} \to \Z$ denote the valuation map. 
We begin with the following elementary computation in Milnor $K$-theory.
We shall use the additive notation for the group operation  of the Milnor $K$-theory. 

\begin{lem}\label{lem:Elem-Milnor}
  Let $R$ be a semi-local integral domain with field of fractions $F$.
  Let $a,b, s, t$ be non-zero elements of $R$ such that $1 + as, 1 + bt \neq 0$.
  Then we have the following identity in $K^M_2(F)$.
\begin{equation}\label{eqn:Milnor-1}
  \{1+as, 1+ bt\} =
  \left\{
    \begin{array}{ll}
      - \{1 + \frac{ab}{1 + as} st, - as(1 + bt)\} & \mbox{if $1 + (1+bt)as \neq 0$} \\
      0, & \mbox{otherwise}.
    \end{array}
    \right.
\end{equation}
\end{lem}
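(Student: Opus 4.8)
The plan is to verify the identity ~\eqref{eqn:Milnor-1} by a direct manipulation of Milnor symbols in $K^M_2(F)$, using only the Steinberg relation $\{u, 1-u\} = 0$ (for $u, 1-u \in F^\times$), bilinearity, and the standard consequences $\{u, -u\} = 0$ and $\{u, v\} = -\{v, u\}$. First I would treat the generic case, where $1 + (1+bt)as \neq 0$, so that all the symbols appearing below are defined. The key algebraic observation is the Steinberg-type identity obtained from the decomposition
\[
\frac{1+(1+bt)as}{1+as} = 1 + \frac{bt \cdot as}{1+as},
\]
which lets one write $\left\{1 + \tfrac{abst}{1+as},\ -\tfrac{abst}{1+as}\right\} = 0$ and, after clearing denominators and using bilinearity, relate $\{1+as, \cdot\}$ and $\{1+bt, \cdot\}$ to the single symbol on the right-hand side of ~\eqref{eqn:Milnor-1}.

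More concretely, I would start from the Steinberg relation applied to the element $u = \frac{(1+as)(1+bt)}{1+(1+bt)as}$ (one checks $1 - u = \frac{-abst}{1+(1+bt)as}$, up to rearrangement), expand $\{u, 1-u\} = 0$ using bilinearity into a sum of symbols of the form $\{1+as, \ast\}$, $\{1+bt, \ast\}$, $\{1+(1+bt)as, \ast\}$, and symbols involving only $-1$, $a$, $b$, $s$, $t$ with the product $-as(1+bt)$. A second Steinberg relation, this time on $\frac{1+(1+bt)as}{1+as}$ together with its complement $\frac{-abst}{1+as}$, would eliminate the auxiliary symbol $\{1+(1+bt)as, \ast\}$. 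Combining the two relations and collecting terms should leave exactly $\{1+as, 1+bt\} = -\{1 + \tfrac{ab}{1+as}st,\ -as(1+bt)\}$. Throughout I would use $\{x, -x\} = 0$ freely to convert $\{x, y\}$ into $-\{y, x\}$-type rearrangements and to absorb sign factors.

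For the degenerate case $1 + (1+bt)as = 0$: here the hypothesis forces $(1+bt)as = -1$, so $1+as$, $1+bt$, $a$, $s$ are all units and $-as(1+bt) = 1$. Then the claimed right-hand side $-\{1 + \tfrac{ab}{1+as}st, 1\} = 0$ automatically, so I only need to show $\{1+as, 1+bt\} = 0$ under this constraint. From $(1+bt)as = -1$ one gets $1+bt = \frac{-1}{as}$, hence $\{1+as, 1+bt\} = \{1+as, -1\} - \{1+as, as\} = \{1+as, -1\} - \{1+as, a\} - \{1+as, s\}$; meanwhile writing $as = \frac{-1}{1+bt}$ and using that $1+as = \frac{bt}{1+bt}$ (a computation from the constraint) reduces everything to symbols that cancel via Steinberg and $\{x,-x\}=0$. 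This bookkeeping is straightforward once the substitutions are made.

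The main obstacle I anticipate is purely organizational rather than conceptual: choosing the right pair of auxiliary Steinberg relations so that the unwanted symbols (especially those involving $1+(1+bt)as$) cancel cleanly, and then carefully tracking the many sign conventions and the ``$\{x,-x\}=0$'' simplifications without error. There is also a minor subtlety in checking that all elements being fed into $\{-,-\}$ are genuinely in $F^\times$ under the stated hypotheses — e.g. that $1 + \tfrac{ab}{1+as}st = \frac{1+(1+bt)as}{1+as} \neq 0$, which is exactly the case distinction in the lemma — so I would record these nonvanishing checks explicitly before manipulating symbols. No deep input is needed; the lemma is a finite identity in $K^M_2(F)$, and the proof is a guided computation.
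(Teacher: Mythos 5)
Your overall strategy --- bilinearity plus the Steinberg relation, pivoting on the identity $1 + \tfrac{ab}{1+as}st = \tfrac{1+(1+bt)as}{1+as}$ --- is exactly the paper's, and your degenerate case is fine (the paper does it in one line: $(1+bt)as=-1$ gives $1+bt = (-as)^{-1}$, so $\{1+as,1+bt\} = -\{1+as,-as\} = 0$). But in the generic case your concrete plan contains a computational error and never reaches the actual cancellation. For your first auxiliary element $u = \frac{(1+as)(1+bt)}{1+(1+bt)as}$ one has
\[
1-u \;=\; \frac{(1+as+abst)-(1+as+bt+abst)}{1+(1+bt)as} \;=\; \frac{-bt}{1+(1+bt)as},
\]
not $\frac{-abst}{1+(1+bt)as}$ as you claim. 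So the Steinberg relation you propose to ``start from'' is not the one you think it is; expanding it produces symbols in $-bt$ rather than in $-as(1+bt)$, and the assertion that combining it with your second relation ``should leave exactly'' the desired identity is both unverified and, with this $u$, false as described. Since the entire content of the lemma is this cancellation, leaving it as organizational bookkeeping is a genuine gap.

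The repair is that no auxiliary $u$ is needed at all: expand the right-hand side directly. Writing $1+\tfrac{ab}{1+as}st = \tfrac{1+(1+bt)as}{1+as}$ and using bilinearity in both slots,
\[
\Bigl\{1+\tfrac{ab}{1+as}st,\; -as(1+bt)\Bigr\}
= \{1+(1+bt)as, -as\} + \{1+(1+bt)as, 1+bt\} - \{1+as, -as\} - \{1+as, 1+bt\}.
\]
The first two terms combine to $\{1+(1+bt)as,\, -(1+bt)as\}$, which is $\{1-w,w\}$ with $w=-(1+bt)as$ and hence vanishes, and $\{1+as,-as\}$ vanishes for the same reason with $w=-as$; what remains is $-\{1+as,1+bt\}$, which is the lemma. (A minor further point: in the degenerate case $1+\tfrac{ab}{1+as}st = 0$, so the symbol $\{1+\tfrac{ab}{1+as}st,\,1\}$ you invoke is not defined --- the lemma simply declares the value to be $0$ there, so that remark should be dropped.)
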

\begin{proof}
Suppose first that $1 + (1 + bt)as = 0$. Then we have
\[
  \{1+as, 1+ bt\} = \{1 + as, (-as)^{-1}\} = - \{1 + as, -as\} = 0.
  \]

Otherwise, we write
\begin{equation}\label{eqn:Milnor-2}
  \begin{array}{lll}
    \{1 + \frac{ab}{1 + as} st, - as(1 + bt)\} & = &
\{1 + \frac{ab}{1 + as} st, - as\} + \{1 + \frac{ab}{1 + as} st, (1 + bt)\} \\
& = & \{1 + \frac{ab}{1 + as} st, - as\} + \{1 + (1+ bt)as, 1 + bt\} \\
& & - \{1 + as, 1 + bt\} \\
& = &  \{1 + (1 + bt)as, - as\} - \{1 + as, -as\} \\
&  &  + \{1 + (1 + bt)as, 1 + bt\} - \{1 + as, 1 + bt\} \\
& = & \{1 + (1 + bt)as,  -(1 + bt)as\} - \{1 + as, -as\} \\
&   &  - \{1 + as, 1 + bt\} \\
& = & \{1 - u, u\} - \{1 - v, v\} - \{1 + as, 1 + bt\} \\
& = & - \{1 + as, 1 + bt\},
  \end{array}
\end{equation}
where we let $u = - (1 + bt)as$ and $v = -as$.
This proves the lemma.
\end{proof}

\begin{lem}\label{lem:Milnor-0}
Let $R$ be a discrete valuation ring with maximal ideal $\fm$ and field of fractions $F$.
For $m, n \ge 1$, let $K^M_n(F, m)$ denote the subgroup of $K^M_n(F)$ generated by
Milnor symbols $\{y_1, \ldots , y_n\} \in K^M_n(F)$ such that $\stackrel{n}{\underset{i =1}\sum} 
{\ord}(y_i - 1) \ge m$. Then for any $n \ge 0$, we have
\[
K^M_{n+1}(F,m) \subseteq (1 + \fm^m)K^M_n(F).
\]
\end{lem}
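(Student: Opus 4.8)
The plan is to reduce to a single generator and then use the two-variable identity of \lemref{lem:Elem-Milnor} to push all of the ``modulus mass'' of the symbol into one slot.

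Since $(1+\fm^m)K^M_n(F)$ is a subgroup of $K^M_{n+1}(F)$, it suffices to show that every generating symbol $\sigma_0 = \{y_1, \ldots, y_{n+1}\}$ with $\sum_{i=1}^{n+1}\ord(y_i-1)\ge m$ lies in it. If some $y_i$ equals $1$ then $\sigma_0 = 0$, so I may assume all $y_i \ne 1$ and put $m_i := \ord(y_i-1)\in\Z$. If $m_{i_0}\ge m$ for some $i_0$, then $y_{i_0}\in 1+\fm^m$ and $\sigma_0 = \pm\{y_{i_0}\}\cdot\{y_j : j\ne i_0\}$ is visibly in $(1+\fm^m)K^M_n(F)$. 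Otherwise all $m_i < m$; since $\sum_i m_i\ge m$ while the indices with $m_i\le 0$ contribute nonpositively, at least two of the $m_i$ are $\ge 1$, and after permuting (which changes $\sigma_0$ only by a sign) I may assume $m_1,m_2\ge 1$. In particular, for $n=0$ only the first reduction is ever needed.

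The heart of the argument is then an induction on the integer $\nu(\sigma) := \sum_{i\ge 2}\max(\ord(z_i-1),0)\ge 0$, carried out over all symbols $\sigma = \{z_1,\ldots,z_{n+1}\}$ with every $z_i\ne 1$ satisfying (i) $\sum_i\ord(z_i-1)\ge m$ and (ii) $\ord(z_1-1)\ge 1$; the claim is that all such $\sigma$ lie in $(1+\fm^m)K^M_n(F)$. If $\ord(z_1-1)\ge m$ I extract the first slot as above and am done (this covers the base case $\nu=0$, since $\nu(\sigma)=0$ together with (i) forces $\ord(z_1-1)\ge m$). Otherwise (i) gives $\sum_{i\ge 2}\ord(z_i-1)\ge m-\ord(z_1-1)\ge 1$, so some $z_j$ with $j\ge 2$ has $\ell_j:=\ord(z_j-1)\ge 1$. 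Writing $z_1 = 1+v$ and $z_j = 1+v_j$ (so $v,v_j\in\fm$) and applying \lemref{lem:Elem-Milnor} with $as=v$, $bt=v_j$, $s=t=1$ — the hypothesis $1+(1+bt)as = 1+z_jv\ne 0$ holds because $\ord(z_jv)=\ord(v)\ge 1$ makes $z_jv\in\fm$ — I obtain
\[
\{z_1,z_j\} = -\{1+\frac{v v_j}{z_1},\, -v z_j\}.
\]
Substituting this subsymbol into $\sigma$ yields (up to sign and a permutation) a symbol $\sigma'$ whose first slot is $z_1' = 1+\tfrac{v v_j}{z_1}$ with $\ord(z_1'-1)=\ord(z_1-1)+\ell_j\ge 2$, whose $j$-th slot is $-vz_j$ with $\ord(-vz_j-1)=\ord(-(1+z_jv))=0$, and with all other slots unchanged (one checks along the way that $z_1'$ and $-vz_j$ are units $\ne 1$, so $\sigma'$ is again a legitimate symbol with all entries $\ne 1$). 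Then $\sigma'$ still satisfies (ii), and it satisfies (i) because the $\ell_j$ removed from slot $j$ is exactly what is added to slot $1$; meanwhile $\nu(\sigma')=\nu(\sigma)-\ell_j<\nu(\sigma)$. By induction $\sigma'\in(1+\fm^m)K^M_n(F)$, hence so is $\sigma$. Applying this to $\sigma_0$, which satisfies (i) and, after the initial permutation, (ii), completes the proof.

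The only delicate point is the bookkeeping inside the inductive step: one must check simultaneously that a merge preserves (i) — which is what guarantees the procedure can only terminate in the favourable case $\ord(z_1-1)\ge m$ and never gets stuck with no admissible index $j$ — and that it preserves (ii), which is precisely what forces $\ord(-vz_j-1)=0$ and hence the strict drop in $\nu$. Everything else is routine tracking of the valuations of $v$, $v_j$, $z_1$, and $1+z_jv$; the one genuinely nontrivial identity, the merge itself, is handed to us by \lemref{lem:Elem-Milnor}.
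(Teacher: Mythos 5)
Your proof is correct and takes essentially the same route as the paper's: both reduce to a single generator, dispose immediately of any slot already lying in $1+\fm^m$, and otherwise invoke the merge identity of \lemref{lem:Elem-Milnor} to transfer the valuation $\ord(z_j-1)\ge 1$ of a second slot into the first, the non-degeneracy hypothesis $1+(1+bt)as\neq 0$ holding for exactly the reason you give. The only difference is bookkeeping: the paper inducts on the length of the symbol, peeling off the degree-one factor $-as(1+bt)$ and recursing on the shorter symbol $\{y_2',y_3,\ldots\}$, whereas you keep the symbol at full length and induct on the total positive valuation outside the first slot --- both terminations being driven by the same strict gain of $\ell_j\ge 1$ per merge.
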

\begin{proof}
Note that for $n = 0$, we actually have $K^M_1(F, m) = 1 + \fm^m$ and this is obvious from the
definition of $K^M_1(F, m)$.
We shall prove $n \ge 1$ case by induction on $n$. We let $\pi$ denote a uniformizing parameter of $R$.
We can write $y_i = 1 + u_i \pi^{m_i}$ for some $u_i \in R^{\times}$ and $m_i \in \Z$ for $1 \le i \le n$.
We first observe that if $m_i \ge m$ for some $i \ge 1$, then $y_i = 1+ u_i\pi^{m_i} \in 1 + \fm^m$ and 
we are done.

We now assume that $n =1$. In this case, if some $m_i \le 0$, then we must have that some $m_j \ge m$ 
and we are done as above. We can therefore assume that $0 < m_1, m_2 < m$.
In this case, \lemref{lem:Elem-Milnor} says that $\{y_1, y_2\}$ is either zero or it is
$- \{1 + u_1u_2 y_1^{-1} \pi^{m_1 + m_2}, - u_1 y_2 \pi^{m_1}\}$.
Since $m_1 \ge 0$, we see that $y^{-1}_1 \in R^{\times}$. In particular,
$1 + u_1 u_2 y^{-1}_1 \pi^{m_1 + m_2} \in 1 + \fm^{m_1 + m_2}$. We therefore get
$\{y_1, y_2\} \in (1 + \fm^m)K^M_1(F)$.

If $n \ge 2$, we have must have $m_i \ge 0$ for some $1 \le i \le n$ as $m \ge 0$.
Since the permutation of coordinates of a Milnor symbol only changes its sign in the Milnor $K$-group,
we can assume that $m_1 \ge 0$ so that $y_1 \in R^{\times}$.
We can now write $\{y_1, \ldots , y_n\} = \{y_1, y_2\} \cdot \{y_3, \ldots , y_n\}$.
We have seen before that the term $\{y_1, y_2\}$ is either zero or we have
\[
\begin{array}{lll}
\{y_1, \ldots , y_n\} & = & 
\{1 + u_1\pi^{m_1}, 1 + u_2\pi^{m_2}\} \cdot \{y_3, \ldots, y_n\} \\
& = & - \{1 + u_1u_2 y_1^{-1} \pi^{m_1 + m_2}, - u_1 \pi^{m_1} y_2\}\cdot \{y_3, \ldots , y_n\} \\
& = & \{- u_1 y_2 \pi^{m_1}\} \cdot \{1 + u_1u_2 y_1^{-1} \pi^{m_1 + m_2}, y_3, \ldots, y_n\}.
\end{array}
\]
Since $y_1 \in R^{\times}$, it follows that $y'_2:= 1 + u_1u_2 y_1^{-1} \pi^{m_1 + m_2} \in 1 + \fm^{m_1 + m_2}$.
In particular, we see that $\ord(y'_2 - 1) + \stackrel{n}{\underset{i =3}\sum} \ord(y_i -1) \ge 
\stackrel{n}{\underset{i = 1}\sum} m_i \ge m$. Hence, the induction hypothesis implies that
$\{y'_2, y_3, \ldots , y_n\} \in (1 + \fm^m)K^M_{n-1}(F)$.
This implies that
$\{y_1, \ldots , y_n\} = \{- u_1 y_2 \pi^{m_1}\} \cdot \{y'_2, y_3, \ldots , y_n\} 
\in (1 + \fm^m)K^M_n(F)$.
This finishes the proof.
\end{proof}

\begin{lem}\label{lem:Milnor-1}
Let $R$ be a discrete valuation ring containing a field. Let $\fm$ and $F$ denote the
maximal ideal and the field of fractions of $R$, respectively. Then the following 
hold for every integer $n \ge 0$.
\begin{enumerate}
\item
$(1 + \fm)K^M_n(F) \subseteq \wh{K}^M_{n+1}(R)$.
\item
$(1 + \fm^{m +n})K^M_n(F) \subseteq (1 + \fm^m)\wh{K}^M_n(R)$ for all $m \ge 1$.
\end{enumerate}
\end{lem}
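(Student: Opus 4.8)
The plan is to reduce both statements to the elementary computation of \lemref{lem:Milnor-0}, combined with the defining property of $\wh{K}^M_n(R)$ as the kernel of the first Gersten boundary map out of $K^M_n(F)$. For (1), I would first recall from \propref{prop:Kerz-finite}~(7) (applied to the localizations of $R$) that $\wh{K}^M_{n+1}(R)$ is exactly the subgroup of $K^M_{n+1}(F)$ killed by $\partial\colon K^M_{n+1}(F)\to \bigoplus_{\operatorname{ht}(\fp)=1}K^M_n(k(\fp))$; since $R$ is a discrete valuation ring, there is only one height-one prime, namely $\fm$, so the condition is that the single tame symbol $\partial_\fm$ vanishes. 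A typical generator of $(1+\fm)K^M_n(F)$ has the form $\{1+u\pi,\, y_1,\dots,y_n\}$ with $u\in R^\times$ and $y_i\in F^\times$; I would compute its tame symbol and show it vanishes. The clean way is to note that $K^M_n(F)$ is generated by symbols in which at most one entry has negative valuation, so by multilinearity it suffices to treat symbols $\{1+u\pi, y_1,\dots,y_n\}$ where each $y_i$ is either a unit or $\pi$. When all $y_i$ are units, $\partial_\fm$ annihilates the whole symbol since every entry is a unit. When one $y_i=\pi$, the tame symbol picks out $\pm\{\overline{1+u\pi},\overline{y_1},\dots,\widehat{\overline{y_i}},\dots,\overline{y_n}\}$ in $K^M_{n-1}(k(\fp))$, and since $1+u\pi\equiv 1$ in the residue field, $\overline{1+u\pi}=1$, so this residue symbol is zero. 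Hence $\partial_\fm=0$ and the element lies in $\wh{K}^M_{n+1}(R)$.

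For (2), I would combine (1) with \lemref{lem:Milnor-0}. Take a generator $\{z_1,\dots,z_n\}\in (1+\fm^{m+n})K^M_n(F)$, say $z_1\in 1+\fm^{m+n}$ and $z_2,\dots,z_n\in F^\times$. As in the proof of \lemref{lem:Milnor-0}, using that $K^M_n(F)$ is generated by symbols with at most one entry of negative valuation and that permuting entries only changes signs, I may assume $z_2,\dots,z_n$ each have nonnegative valuation except possibly one of them, and I reduce to the case where $z_2,\dots,z_n$ lie in $R^\times$ after splitting off the factor $\{\pi\}$ wherever an entry is a unit times a power of $\pi$ — more precisely, write $z_i = v_i \pi^{a_i}$ and expand multilinearly so that each remaining symbol has the shape $\{z_1, w_2,\dots,w_n\}$ with $z_1\in 1+\fm^{m+n}$ and each $w_j$ either a unit in $R$ or equal to $\pi$. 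For the symbols with some $w_j=\pi$, repeated application of \lemref{lem:Milnor-0}'s key move (the identity from \lemref{lem:Elem-Milnor}) lets one trade such a symbol for $\{\text{unit}\}\cdot\{1+(\text{something in }\fm^{m+n}),\dots\}$, lowering the number of $\pi$-entries; iterating, one arrives at symbols of the form $\{1+u\pi^{m+n-j}, w_2',\dots,w_n'\}$ with $0\le j\le n-1$ and all $w_i'\in R^\times$. Such a symbol lies in $(1+\fm^{m+n-j})K^M_{n-1}(R^\times)\cdot\{w_2'\}$; peeling off entries one at a time and using part (1) repeatedly (each peel of a unit entry costs one in the power-of-$\fm$ exponent, matching the $+n$), one lands inside $(1+\fm^m)\wh{K}^M_n(R)$.

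The main obstacle I anticipate is bookkeeping rather than conceptual: keeping track of exactly how the $\fm$-adic exponent drops each time one either splits off a $\{\pi\}$ factor via \lemref{lem:Elem-Milnor} or peels a unit entry off via part (1), and verifying that the total drop over $n$ steps is at most $n$, so that starting from $1+\fm^{m+n}$ one still ends in $1+\fm^m$. One must also be careful that the inductive rewriting stays inside $K^M_\bullet(F)$ and only at the end invokes membership in $\wh{K}^M_\bullet(R)$, since intermediate symbols may have entries outside $R$. A secondary technical point is justifying the reduction to symbols with at most one negative-valuation entry and then to unit entries plus copies of $\{\pi\}$; this is standard for Milnor $K$-theory of a discrete valuation ring but should be stated carefully. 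Once the exponent accounting is pinned down, both parts follow formally.
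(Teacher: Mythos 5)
Your part (1) is correct but takes a genuinely different route from the paper. You use the Gersten-kernel description of $\wh{K}^M_{n+1}(R)$ and check directly that the tame symbol $\partial_\fm$ kills every generator $\{1+u\pi,y_1,\ldots,y_n\}$ after reducing to entries that are units or $\pi$; since $\overline{1+u\pi}=1$ in the residue field, the residue symbol dies. This is shorter than the paper's argument, which instead proves the stronger statement $(1+\fm)K^M_1(F)\subseteq(1+\fm)\wh{K}^M_1(R)$ and bootstraps by induction on $n$ through products $\wh{K}^M_1(R)\wh{K}^M_n(R)\subseteq\wh{K}^M_{n+1}(R)$. What the paper's version buys is an explicit decomposition that feeds the induction for part (2); your version buys brevity but only certifies membership in the kernel, which suffices for (1) as stated.

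For part (2) there is a genuine gap. Membership in $(1+\fm^m)\wh{K}^M_n(R)$ cannot be checked by a boundary condition; one must explicitly rewrite the symbol, and the entire difficulty is concentrated in the single integral identity that converts $\{1+a\pi^{k},\pi\}$ into an element of $(1+\fm^{k-1})\wh{K}^M_1(R)$. The Steinberg relation only gives $k\{1+a\pi^{k},\pi\}=-\{1+a\pi^{k},-a\}$, and you cannot divide by $k$ integrally; the paper resolves this by setting $t=-a\pi^{k-1}$ and factoring $1-\pi t=(1+v't)(1+v''t)$ with both factors in $1+\fm^{k-1}$, which is where the exponent drops by exactly one. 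Your proposal attributes this step to the identity of \lemref{lem:Elem-Milnor} via \lemref{lem:Milnor-0}, but that identity concerns symbols $\{1+as,1+bt\}$ of two $1$-units and does not apply to a $\pi$-entry (feeding $\pi=1+(\pi-1)$ into it just produces another symbol with a $\pi$-power entry and goes in circles). Without this identity the claimed trade ``$\pi$-entry for a unit entry, losing one power of $\fm$'' is unsupported. Your bookkeeping is also inverted: peeling a \emph{unit} entry costs nothing (a symbol $\{1+u\pi^{k},w_1,\ldots,w_n\}$ with all $w_i\in R^{\times}$ already lies in $(1+\fm^{k})\wh{K}^M_n(R)$, so no appeal to part (1) is needed at the end); it is precisely the $\pi$-entries that each cost one power, and the budget of $n$ comes from there.
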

\begin{proof}
We shall prove the lemma by induction on $n$. As the base case $n = 0$ trivially follows, 
we shall assume
that $n \ge 1$.
Suppose we show that
\begin{equation}\label{eqn:Milnor-1-0}
(1 + \fm)K^M_1(F) \subseteq (1 + \fm) \wh{K}^M_1(R) \subset \wh{K}^M_2(R).
\end{equation} 
We will then have 
\[
(1 + \fm)K^M_n(F) \subseteq (1 + \fm) \wh{K}^M_1(R)K^M_{n-1}(F) =
\wh{K}^M_1(R) (1 + \fm)K^M_{n-1}(F)  \hspace*{4cm}
\]
\[
\hspace*{9cm}
{\subseteq}^{1}  \wh{K}^M_1(R) \wh{K}^M_{n}(R) \subseteq 
\wh{K}^M_{n+1}(R),
\]
where ${\subseteq}^1$ holds by induction on $n$. This will prove (1).
 
Similarly, suppose we show for all $m \ge 1$ that
\begin{equation}\label{eqn:Milnor-1-1}
(1 + \fm^{m+1})K^M_1(F) \subseteq (1 + \fm^{m}) \wh{K}^M_1(R) \subset \wh{K}^M_2(R).
\end{equation}
Then for any $m \ge 1$ and $n \ge 2$, we will have
\[
(1 + \fm^{m +n})K^M_n(F) \subseteq  (1 + \fm^{m+n-1}) \wh{K}^M_1(R) K^M_{n-1}(F) =
\wh{K}^M_1(R) (1 + \fm^{m+n-1}) K^M_{n-1}(F) \hspace*{4cm}
\]
\[  
\hspace*{6cm}
{\subseteq}^{1} \wh{K}^M_1(R) (1 + \fm^m) \wh{K}^M_{n-1}(R) \subseteq  
(1 + \fm^m)   \wh{K}^M_{n}(R),
\]
where ${\subseteq}^1$ holds by induction on $n$. This will prove (2).
We are therefore left with showing ~\eqref{eqn:Milnor-1-0} and ~\eqref{eqn:Milnor-1-1} 
in order to prove the lemma.

To prove ~\eqref{eqn:Milnor-1-0}, we let $\pi$ be a uniformizing parameter of $R$.
For $j \in \Z$ and $u, v \in R^{\times}$, we then have 
\[
\begin{array}{lll}
\{1 + u\pi, v\pi^j\} & = & \{1 + u\pi, v\} + \{1 + u\pi, \pi^j\} \\
& = & \{1 + u\pi, v\} + j\{1 + u\pi, \pi\} \\ 
& = &  \{1 + u\pi, v\} - j \{1 + u\pi, -u\},
\end{array}
\]
where the last equality holds because $\{1 + u\pi, - u\pi\} = 0$.
It follows that $\{1 + u\pi, v\pi^j\} \in (1 + \fm)\wh{K}^M_1(R)$.
If $i \ge 2$, then $\{1 + u\pi^i, v\pi^j\} \in (1 + \fm^{i-1})\wh{K}^M_1(R) 
\subseteq (1 + \fm)\wh{K}^M_1(R)$
by ~\eqref{eqn:Milnor-1-1}. It remains therefore to prove ~\eqref{eqn:Milnor-1-1}.

We now fix $m \ge 1, j \in \Z, a \in R$ and $ u \in R^{\times}$.
We consider the element $\{1 + a \pi^{m+1}, u \pi^j\} \in (1 + \fm^{m+1})K^M_1(F)$.
We set 
\[
t = - a \pi^m, \ v' = (1 + t(-1-\pi))^{-1} \ {\rm and} \ v'' = -1 - \pi.
\]
With these notations, it is clear that $1 + v't, 1 + v''t \in (1 + \fm^m)$ and
$(1 + v't)(1 + v''t) = 1 - \pi t$. In $K^M_2(F)$, we now compute
\[
\begin{array}{lll} 
\{1 + a \pi^{m+1}, u \pi^j\} & = & \{1  + a \pi^{m+1}, u\} +  
j \{1 + a \pi^{m+1}, \pi\} \\
& = &  \{1  + a \pi^{m+1}, u\} +  j \{1 + (a \pi^{m}) \pi, \pi\} \\
& = &  \{1  + a \pi^{m+1}, u\}  - j \{1 + (a \pi^{m}) \pi, - (a \pi^m)\} \\
& = & \{1  + a \pi^{m+1}, u\}  - j\{1 -\pi t, t\} \\
& = &  \{1  + a \pi^{m+1}, u\} - j\{(1 + v't)(1 + v''t), t\} \\
& = &  \{1  + a \pi^{m+1}, u\} - j\{1 + v't, t\} - j\{1 + v''t, t\} \\
& = & \{1  + a \pi^{m+1}, u\} + j\{1 + v't, -v'\} + j\{1 + v''t, -v''\} \\

& \in & (1 + \fm^m) \wh{K}^M_1(R).
\end{array}
\] 
This proves ~\eqref{eqn:Milnor-1-1} and completes the proof of the lemma.
\end{proof}

Let us now assume that $R$ is a regular semi-local integral domain of dimension one containing a field.
Let $\{\fm_1, \ldots , \fm_r\}$ be the set of maximal ideals of $R$ and let $F$ be the field of 
fractions of $R$. 
If $\un{m} = (m_1, \ldots , m_r)$ is an $r$-tuple of positive integers, 
we shall write the relative improved Milnor $K$-theory $\wh{K}^M_*(R, \fm^{m_1}_1 \cdots \fm^{m_r}_r)$
(see  ~\eqref{eqn:Milnor-semi-local}) as $\wh{K}^M_*(R, \un{m})$.
For $n \ge 1$, we let $K^M_n(F, \un{m}, {\rm sum}) =
\stackrel{r}{\underset{i =1}\cap} (1 + \fm^{m_i}_iR_{\fm_i})K^M_{n-1}(F)$.
We let $\un{m +n} = (m_1+n, \ldots , m_r + n)$.

\begin{lem}\label{lem:Milnor-2}
  For every integer $n \ge 1$, we have $K^M_n(F, \un{m+n}, {\rm sum}) 
\subseteq \wh{K}^M_n(R, \un{m})$.
\end{lem}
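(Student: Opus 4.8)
The plan is to reduce everything to the discrete valuation ring case and then invoke \lemref{lem:Milnor-1}(2). Fix $n \ge 1$ and take $\alpha \in K^M_n(F, \un{m+n}, {\rm sum}) = \bigcap_{i=1}^r (1 + \fm_i^{m_i+n}R_{\fm_i})K^M_{n-1}(F)$. First I would recall two structural facts from the set-up of \S\ref{sec:IMK}: since $R$ is a one-dimensional domain, its height-one primes are exactly the maximal ideals $\fm_1,\dots,\fm_r$, each $R_{\fm_i}$ is a discrete valuation ring containing a field with maximal ideal $\fn_i := \fm_i R_{\fm_i}$ and fraction field $F$, and $\wh{K}^M_n(R) = \bigcap_{i=1}^r \wh{K}^M_n(R_{\fm_i})$ inside $K^M_n(F)$; moreover, since $IR_{\fm_i} = \fn_i^{m_i}$ for $I = \fm_1^{m_1}\cdots\fm_r^{m_r}$, the description ~\eqref{eqn:Milnor-semi-local} identifies $\wh{K}^M_n(R,\un m)$ with the kernel of the composite $\wh{K}^M_n(R) \inj \prod_i \wh{K}^M_n(R_{\fm_i}) \to \prod_i \wh{K}^M_n(R_{\fm_i}/\fn_i^{m_i})$.

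Next I would work one index $i$ at a time. Writing $m_i + n = (m_i+1) + (n-1)$ with $m_i + 1 \ge 1$ and $n - 1 \ge 0$, \lemref{lem:Milnor-1}(2) applied to the DVR $R_{\fm_i}$ (with its ``$n$'' taken to be $n-1$ and its ``$m$'' taken to be $m_i+1$) yields
\[
\alpha \in (1 + \fn_i^{m_i+n})K^M_{n-1}(F) \subseteq (1 + \fn_i^{m_i+1})\wh{K}^M_{n-1}(R_{\fm_i}).
\]
Since $1 + \fn_i^{m_i+1} \subseteq R_{\fm_i}^\times = \wh{K}^M_1(R_{\fm_i})$ and $\wh{K}^M_\bullet(R_{\fm_i})$ is a graded ring, this already shows $\alpha \in \wh{K}^M_n(R_{\fm_i})$. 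Furthermore $1 + \fn_i^{m_i+1} \subseteq 1 + \fn_i^{m_i}$ maps to $1$ in $(R_{\fm_i}/\fn_i^{m_i})^\times$, so writing $\alpha$ as a finite sum of products $u\cdot\beta$ with $u \in 1 + \fn_i^{m_i+1}$ and $\beta \in \wh{K}^M_{n-1}(R_{\fm_i})$, each term dies under $\wh{K}^M_n(R_{\fm_i}) \to \wh{K}^M_n(R_{\fm_i}/\fn_i^{m_i})$; hence so does $\alpha$.

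Finally, letting $i$ range over $1,\dots,r$, the first conclusion gives $\alpha \in \bigcap_i \wh{K}^M_n(R_{\fm_i}) = \wh{K}^M_n(R)$, and the second gives that $\alpha$ maps to $0$ in every factor $\wh{K}^M_n(R_{\fm_i}/\fn_i^{m_i})$; together these say exactly that $\alpha \in \wh{K}^M_n(R,\un m)$. The argument is essentially formal once \lemref{lem:Milnor-1}(2) is available, so I do not expect a genuine obstacle; the only points needing care are getting the exponent bookkeeping $m_i+n \mapsto m_i+1$ right when invoking that lemma, and remembering that membership in $\wh{K}^M_n(R,\un m)$ bundles two conditions (lying in $\wh{K}^M_n(R)$ and vanishing modulo each $\fn_i^{m_i}$), both of which fall out of the same local computation.
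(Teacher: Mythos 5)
Your proposal is correct and follows essentially the same route as the paper: localize at each $\fm_i$, apply \lemref{lem:Milnor-1}(2) to the DVR $R_{\fm_i}$ to land in $(1+\fn_i^{m_i+1})\wh{K}^M_{n-1}(R_{\fm_i})\subseteq \wh{K}^M_n(R_{\fm_i})$, intersect over $i$ using the Gersten description of $\wh{K}^M_n(R)$, and observe that units congruent to $1$ mod $\fn_i^{m_i}$ kill the symbol in each $\wh{K}^M_n(R_{\fm_i}/\fn_i^{m_i})$. The only cosmetic difference is that the paper packages this last vanishing via the commutative diagram ~\eqref{eqn:Milnor-2-0} and the isomorphism $\phi$, whereas you argue directly from the definition of $\wh{K}^M_n(R,\un{m})$ as a kernel; both are the same computation.
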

\begin{proof}
  If $R$ is local, then \lemref{lem:Milnor-1} says that
  $K^M_n(F, m+n, {\rm sum}) \subseteq (1 + \fm^m) \wh{K}^M_{n-1}(R) 
\subseteq \wh{K}^M_n(R, \fm^m)$.
  In particular, the lemma holds if $R$ is local.

  In general, let $y \in K^M_n(F, \un{m+n}, {\rm sum})$. Then it is clear that
  $y \in (1 + \fm^{m_i+n}R_{\fm_i})K^M_{n-1}(F)$ for each $1 \le i \le r$.
  It follows by \lemref{lem:Milnor-1} that $y \in (1 + \fm^m_iR_{\fm_i}) 
\wh{K}^M_{n-1}(R_{\fm_i}) 
\subseteq \wh{K}^M_n(R_{\fm_i})$
for each $i$. We conclude from ~\eqref{eqn:Gersten} and \propref{prop:Kerz-finite}
that $y \in \wh{K}^M_n(R)$.

  We now consider the diagram
  \begin{equation}\label{eqn:Milnor-2-0}
    \xymatrix@C.8pc{
      \wh{K}^M_n(R) \ar[rr] \ar[dd]_-{\Delta_R} \ar[dr]^-{\pi} & & K^M_n(F) 
\ar[dd]^-{\Delta_F} \\
      & \wh{K}^M_n({R}/{\fm^{m_1}_1 \cdots \fm^{m_r}_r}) \ar[dd]^>>>>>>>>>{\phi} & \\
      \stackrel{r}{\underset{i =1}\oplus} \wh{K}^M_n({R_{\fm_i}}) 
\ar[dr]_{\oplus_i \pi_i}  \ar[rr] & & 
      \stackrel{r}{\underset{i =1}\oplus} {K}^M_n(F) \\
      & \stackrel{r}{\underset{i =1}\oplus} 
\wh{K}^M_n({R_{\fm_i}}/{\fm^{m_i}_i}), & }
  \end{equation}
 where $\phi$ is the last in the sequence of arrows in ~\eqref{eqn:Milnor-semi-local}.

  This diagram is clearly commutative. It follows from the case of local 
rings shown above that
  $(\oplus_i \pi_i) \circ \Delta_R(y) = 0$. Since $\phi$ is an isomorphism, 
we conclude that $\pi(y) = 0$, which
  is what we wanted to show. This finishes the proof.
\end{proof}

\begin{lem}\label{lem:Milnor-2-Q-coeff}
  With notations as in \lemref{lem:Milnor-2}, we have $K^M_n(F, \un{m}, {\rm sum})_{\Q} 
\subseteq \wh{K}^M_n(R, \un{m})_{\Q}$.
\end{lem}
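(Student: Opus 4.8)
The plan is to reduce the statement to the case where $R$ is a discrete valuation ring and then to prove the following local assertion: if $R$ is a DVR containing a field, with maximal ideal $\fm$ and fraction field $F$, then $(1+\fm^m)K^M_{n-1}(F)_{\Q}\subseteq \wh{K}^M_n(R,\fm^m)_{\Q}$ for all $m,n\ge 1$. Granting this, the global case follows along the lines of the proof of \lemref{lem:Milnor-2}. Let $\xi\in K^M_n(F,\un m,{\rm sum})=\bigcap_{i}(1+\fm_i^{m_i}R_{\fm_i})K^M_{n-1}(F)$. For each $i$ we have $\xi\in(1+\fm_iR_{\fm_i})K^M_{n-1}(F)\subseteq \wh{K}^M_n(R_{\fm_i})$ by \lemref{lem:Milnor-1}(1), so $\xi\in\bigcap_i\wh{K}^M_n(R_{\fm_i})=\wh{K}^M_n(R)$ by the Gersten description of $\wh{K}^M_n(R)$ attached to \eqref{eqn:Gersten}. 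Applying the local assertion to each DVR $R_{\fm_i}$ shows that the image of $\xi$ in $\wh{K}^M_n(R_{\fm_i}/\fm_i^{m_i}R_{\fm_i})_{\Q}$ vanishes for every $i$; since $\wh{K}^M_n(R,\un m)$ is the kernel of $\wh{K}^M_n(R)\to\prod_i\wh{K}^M_n(R_{\fm_i}/\fm_i^{m_i}R_{\fm_i})$ by \eqref{eqn:Milnor-semi-local}, this yields $\xi\in\wh{K}^M_n(R,\un m)_{\Q}$.

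For the local assertion, I would first observe that $(1+\fm^m)K^M_{n-1}(F)\subseteq\wh{K}^M_n(R)$ already with integral coefficients, again by \lemref{lem:Milnor-1}(1). It then remains to see that every generator $\{a,y_2,\dots,y_n\}$ with $a\in 1+\fm^m$ and $y_j\in F^\times$ dies in $\wh{K}^M_n(R/\fm^m)_{\Q}$. We may assume $a\ne 1$, and write $a=1+c\pi^v$ with $v={\rm ord}(a-1)\ge m$, $c\in R^\times$ and $\pi$ a uniformizer. The Steinberg relation applied to $-c\pi^v$ gives $\{a,-c\}+v\{a,\pi\}=0$, hence in $K^M_2(F)_{\Q}$
\[
\{a,\pi\}=-\frac{1}{v}\,\{a,-c\}.
\]
Writing $y_j=u_j\pi^{w_j}$ with $u_j\in R^\times$ and expanding $\{a,y_2,\dots,y_n\}$ multilinearly, one gets a $\Z$-linear combination of symbols $\{a,z_2,\dots,z_n\}$ with each $z_j\in\{u_j,\pi\}$; applying the displayed identity repeatedly (after permuting coordinates to bring each occurrence of $\pi$ next to $a$) converts every $\pi$ into $-c$ at the cost of a rational coefficient. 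Thus $\{a,y_2,\dots,y_n\}$ lies in the $\Q$-span of symbols $\{a,w_2,\dots,w_n\}$ with all $w_j\in R^\times$ and $a\in 1+\fm^m$. Each such symbol has trivial tame symbol, hence lies in $\wh{K}^M_n(R)$, and reduces to $\{1,\ov{w_2},\dots,\ov{w_n}\}=0$ in $\wh{K}^M_n(R/\fm^m)$; therefore it lies in $\wh{K}^M_n(R,\fm^m)$. This proves the local assertion.

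The only place where rational coefficients are used is the division by $v={\rm ord}(a-1)$ in the displayed identity; this is exactly what is responsible for the increase of the modulus from $\un m$ to $\un{m+n}$ in the integral statement \lemref{lem:Milnor-2}, where uniformizers are instead cleared by the more delicate computation in the proof of \lemref{lem:Milnor-1}. The remaining ingredients — the multilinear expansion, the tame-symbol vanishing for unit symbols, and the passage from the DVRs $R_{\fm_i}$ to $R$ via \eqref{eqn:Milnor-semi-local} — are routine and parallel the proof of \lemref{lem:Milnor-2}. I do not expect a genuine obstacle here; the point that needs care is the bookkeeping in the local reduction, namely that after clearing all uniformizers the resulting unit symbols retain an entry in $1+\fm^m$ (not merely in $1+\fm$), so that they land in the relative group $\wh{K}^M_n(R,\fm^m)$ and not just in $\wh{K}^M_n(R)$.
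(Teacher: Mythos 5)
Your proposal is correct and follows essentially the same route as the paper: the reduction to the discrete valuation ring case via \eqref{eqn:Milnor-semi-local} is identical, and your key identity $\{a,\pi\}=-\tfrac{1}{v}\{a,-c\}$ with $v={\rm ord}(a-1)$ is exactly the paper's displayed computation $\{1+u_0\pi^{i+m},\pi\}=-\tfrac{1}{i+m}\{1+u_0\pi^{i+m},-u_0\}$, obtained from the same Steinberg relation and the same division by the valuation. The only cosmetic difference is that the paper packages the local step as the containment $(1+\fm^m)K^M_n(F)_{\Q}\subseteq(1+\fm^m)\wh{K}^M_n(R)_{\Q}$ and reduces to $n=1$ by the induction of \lemref{lem:Milnor-1}, whereas you clear all uniformizers at once by multilinear expansion; the content is the same.
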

\begin{proof}
 The reduction from the semi-local ring $R$ to it being a local ring (dvr) goes through
exactly as in the proof of \lemref{lem:Milnor-2} without any change.
So the proof of the lemma is eventually reduced to showing the following improved version of
\lemref{lem:Milnor-1} (2) for every pair of integers $n \ge 0$ and $m \ge 1$:
\begin{equation}\label{eqn:Milnor-1-mod}
  (1 + \fm^m)K^M_n(F)_{\Q} \subseteq (1 + \fm^m) \wh{K}^M_n(R)_{\Q}.
\end{equation}

This inclusion is obvious for $n = 0$. To prove this for $n  \ge 1$, an easy induction
(see ~\eqref{eqn:Milnor-1-1} in the proof of \lemref{lem:Milnor-1})
reduces to the case $n =1$. We now let $\pi$ be a uniformizing parameter of $R$ and let
$\{1 + a\pi^m, u\pi^j\} \in (1 + \fm^m) K^M_1(F)_{\Q}$, where $u \in R^{\times}, a \in R$ and
$j \in \Z$. If $ a= 0$, there is nothing to show and so we can write
$a = u_0\pi^i$ with $u_0 \in R^{\times}$ and $i \ge 0$.
We then get the following in $K^M_2(F)_{\Q}$.
\[
  \begin{array}{lll}
    \{1 + a\pi^m, u\pi^j\} & = & \{1 + u_0\pi^{i+m}, u\pi^j\} \\
                           & = & \{1 + u_0\pi^{i+m}, u\} + j \{1 + u_0\pi^{i+m}, \pi\} \\
                           & = & \{1 + u_0\pi^{i+m}, u\} + \frac{j}{i + m} \{1 + u_0\pi^{i+m}, \pi^{i + m}\} \\
                           & = & \{1 + u_0\pi^{i+m}, u\} - \frac{j}{i + m} \{1 + u_0\pi^{i+m}, -u_0\}. \\
\end{array}
  \]
  Since the last term clearly belongs to $(1 + \fm^m) \wh{K}^M_1(R)_{\Q}$, we conclude the proof of
  ~\eqref{eqn:Milnor-1-mod} and hence of the lemma. 
\end{proof}

\section{The cycle class map}\label{sec:CCM}
In this section, we shall define the cycle 
class map on the
group of 0-cycles with modulus and prove a very special case of \thmref{thm:Intro-1}. 
The final proof of this theorem 
will be done by the end of the next section. We fix an arbitrary field $k$.

\subsection{The map $cyc_{X|D}$ on generators}\label{sec:Generators}
Let $(X,D)$ be a modulus pair in $\Sch_k$ of dimension $d \ge 1$. Let $n \ge 0$ 
be an integer. 
We begin by defining the cycle class map $cyc_{X|D}$ on the
group $z^{d+n}(X|D, n)$.
Let $Z \in X \times \square^n$ be an admissible closed point. Since $Z$ is a closed point, 
we have that
$Z = \Spec(k(Z))$. We let $p_{\ov{\square}^n} \colon X \times \ov{\square}^n \to \ov{\square}^n$ 
and $p_X \colon X \times \ov{\square}^n \to X$ denote the projection maps.
We let $f \colon Z \to X$ denote the projection map. It is clear that $f$ is a 
finite map
and its image is a closed point $x \in X$ which does not lie in $D$. We thus have a 
factorization $Z \to \Spec(k(x)) \to X^o \to X$ of $f$.
The latter is actually a map of modulus pairs 
$f \colon (Z, \emptyset) \to (X,D)$.
Hence, it induces the proper push-forward 
$f_* \colon \CH_q(Z, *) \to \CH_q(X|D, *)$, where
$\CH_q(Z, *)$ are Bloch's higher Chow groups of $Z$ \cite{Bloch-1}.

Now, the closed point $Z \in X \times \square^n$ defines a unique 
$k(Z)$-rational point
(which we also denote by $Z$) in $\square^n_{Z}$ such that the composite 
projection map
$Z \inj \square^n_{Z} \to Z$ is identity. Furthermore, $[Z] \in z^{d+n}(X|D, n)$ 
is the image of $[Z] \in z^n(Z, n)$ under the push-forward map $f_*$.
Since $Z \inj \square^n_{Z}$ does not meet any face of $\square^n$, it follows that 
$y_i(Z) \in k(Z)^{\times}$ for every $1 \le i \le n$, where $y_i \colon 
\ov{\square}^n_Z \to \ov{\square}_Z$ is the
projection to the $i$-th factor.
In particular, $\{y_1(Z), \ldots , y_n(Z)\}$ is a well-defined element of 
$K^M_n(k(Z))$. We let 
\begin{equation}\label{eqn:cycle-0}
cyc^M_Z([Z]) = \{y_1(Z), \ldots , y_n(Z)\} \in K^M_n(k(Z)) \ \ {\rm and} 
\end{equation}
\begin{equation}\label{eqn:cycle-1}
cyc_Z([Z]) = \psi_Z \circ cyc^M_Z([Z]) \in K_n(Z), 
\end{equation}
where recall that $\psi_Z \colon K^M_*(k(Z)) \to K_*(k(Z)) = K_*(Z)$ is
the canonical map from the Milnor to the Quillen $K$-theory.

We next recall from \S~\ref{sec:Proj-F} that as $x = f(Z) \in X^o$ (which is regular),
the finite map $f$ defines a map of spectra $f_* \colon K(Z) \to K(X,D)$ such that
the composite map $K(Z) \to K(X,D) \to K(X)$ is the usual push-forward map.
The same holds for the inclusion  $\iota^x: \Spec(k(x)) \inj X$.
We let
\begin{equation}\label{eqn:cycle-3} 
cyc_{X|D}([Z]) = f_* \circ cyc_Z([Z]) \in K_n(X,D).
\end{equation}
Extending this linearly, we obtain our cycle class map

\begin{equation}\label{eqn:cycle-4} 
cyc_{X|D} \colon z^{d+n}(X|D,n) \to K_n(X,D).
\end{equation}

If $Z$ is an admissible closed point as above and $x =  f(Z)$, 
then we have a commutative diagram
\begin{equation}\label{eqn:cycle-2}
\xymatrix@C.8pc{
K^M_n(k(Z)) \ar[d]_-{N_{Z/x}} \ar[r]^-{\psi_Z} & K_n(k(Z)) 
\ar[d]^-{T_{{k(Z)}/k}} \ar[dr]^-{f_*} & \\ 
K^M_n(k(x)) \ar[r]^-{\psi_x} & K_n(k(x)) \ar[r]^-{\iota^x_*} & K_n(X, D),}
\end{equation}
where $N_{Z/x}$ is the Norm map between the Milnor $K$-theory of fields \cite{BT} 
(see also \cite{Kerz09})
and the right vertical arrow is the transfer (push-forward) map between the 
Quillen $K$-theory of fields. The square on the left commutes by \lemref{lem:L6}.
We can therefore write 
\[
\begin{array}{lll}
cyc_{X|D}([Z]) & = & f_* \circ cyc_Z([Z]) \\
& =  & f_* \circ \psi_Z \circ cyc^M_Z([Z]) \\
& = & f_* \circ \psi_Z (\{y_1(Z), \ldots , y_n(Z)\}) \\
& = & \iota^x_* \circ \psi_x \circ N_{Z/{\{x\}}}(\{y_1(Z), \ldots , y_n(Z)\}).
\end{array}
\]

It is clear from the definition that for any integer $m \ge 1$, 
there is a commutative diagram
\begin{equation}\label{eqn:cycle-5}
\xymatrix@C.8pc{
z^{d+n}(X|(m+1)D,n) \ar[r]^-{cyc_{X|D}} \ar[d] & K_n(X,(m+1)D) \ar[d] \\
z^{d+n}(X|mD,n) \ar[r]^-{cyc_{X|D}} & K_n(X,mD),}
\end{equation}
where the vertical arrows are the canonical restriction maps.
We therefore have a strict map of pro-abelian groups
\begin{equation}\label{eqn:cycle-6} 
cyc_{X|D} \colon \{z^{d+n}(X|mD,n)\}_m \to \{K_n(X,mD)\}_m.
\end{equation}

\vskip .3cm

We next prove that the map $cyc_{X|D}$ is covariant with respect to proper morphisms of modulus pairs and contravariant for the flat morphisms of modulus pairs which are of relative dimension $0$. Note that these are the only general cases where the functoriality of the cycle class map makes sense.

\vskip .3cm

\subsection{Naturality for flat morphisms}

Let $(Y,E)$ and $(X,D)$ be modulus pairs.
Let $h : Y \to X$ be a flat morphisms of relative dimension $0$ such that $E = h^*(D)$. 
Recall from \cite[Proposition~2.12]{KP} that we have a pull-back map $h^*: z^{d+n}(X| D, n) \to z^{d+n}(Y|E, n)$ such that 
$h^*([Z]) = [W]$, where $W= (h\times {\rm id}_{\square^n})^{-1}(Z)$ and $Z$  a closed point in $ X \setminus D \times \sq^n$.

\begin{lem} \label{lem:flat-pull-back}
With notations as above, the following diagram commutes: 
\begin{equation}\label{eqn:Pull-Back-1}
\xymatrix@C.8pc{
z^{d+n}(X|D,n) \ar[r]^-{cyc_{X|D}} \ar[d]^{h^*} & K_n(X,D) \ar[d]^{h^*} \\
z^{d+n}(Y|E,n) \ar[r]^-{cyc_{Y|E}} & K_n(Y,E).}
\end{equation}
\end{lem}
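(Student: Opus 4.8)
The plan is to reduce the statement to the naturality of the various constituent maps that define $cyc_{X|D}$, namely the Milnor symbol construction, the map $\psi$ from Milnor to Quillen $K$-theory, and the push-forward to relative $K$-theory. First I would unwind the definitions: for an admissible closed point $Z \subset (X\setminus D)\times\square^n$ with image $x = f(Z) \in X^o$, the cycle $h^*([Z])$ is the fundamental cycle of $W = (h\times\id_{\square^n})^{-1}(Z)$, a finite scheme over its image $W' = h^{-1}(x)$ in $Y^o$. Since $h$ has relative dimension zero and $E = h^*(D)$, the point $W'$ is a zero-dimensional closed subscheme of $Y^o = Y\setminus E$, disjoint from $E$, so $W$ genuinely is an admissible zero-cycle and the right-hand route of the diagram is defined. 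Because everything is additive, it suffices to check commutativity on the generator $[Z]$.

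Next I would split $cyc_{X|D}$ into its three layers and check each is compatible with $h^*$. At the level of Milnor symbols: the coordinate functions $y_i$ are pulled back along $h\times\id_{\square^n}$, so $y_i(W)$ is the image of $y_i(Z)$ under $k(Z)\otimes_{k(x)} k(x') \to$ (coordinate ring of $W$ over $W'$); hence $cyc^M$ commutes with restriction, modulo the fact that $W$ may be non-reduced or have several components, which forces one to work with the zero-cycle (i.e.\ the cycle-theoretic pullback with multiplicities) rather than naively with $\Spec$ of a field. This is exactly the point where I expect the main subtlety: one must match the cycle-theoretic intersection multiplicities appearing in $h^*[Z] = [W]$ with the behaviour of transfers/norm maps in $K$-theory under the flat base change $k(x) \to k[W']$, i.e.\ a compatibility of the form $h^* \circ (\iota^x)_* = \sum (\iota^{x'_j})_* \circ (\text{base-change})$ summed over the points $x'_j$ of $W'$ with multiplicities. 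For $\psi$ (Milnor to Quillen), naturality is immediate since $\psi$ is induced by the ring structure and commutes with any pullback of $K$-theory spectra.

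For the push-forward layer, the key input is the base-change / projection-type compatibility for the maps $f_*\colon K(Z)\to K(X,D)$ constructed in \S\ref{sec:Proj-F}: one needs that the square relating $f_*$ for $Z\to X$ and the analogous map for $W\to Y$ commutes with the flat pullbacks $h^*$ on $K(X,D)\to K(Y,E)$ and on $K(Z)\to K(W)$. This follows from the analogous statement for the absolute push-forwards $K(Z)\to K(X)$ (flat base change in Thomason--Trobaugh $K$-theory, using that $Z, x$ lie in the regular locus, together with Tor-independence of $Z \to X \leftarrow Y$ guaranteed by flatness of $h$ and codimension considerations) and the fact that the factorization through the homotopy fiber $K(X,D)$ is canonical and hence preserved. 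I would then assemble these pieces: $h^* cyc_{X|D}([Z]) = h^* f_* \psi_Z cyc^M_Z([Z])$, push the $h^*$ past $f_*$ using the base-change square (getting a sum over the points of $W'$), past $\psi$, and finally identify the result with $cyc_{Y|E}([W]) = cyc_{Y|E}(h^*[Z])$ using the compatibility of the norm map with the restriction of Milnor symbols along $k(x)\to k[W']$.

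The hard part will be the interface between \emph{cycle-theoretic} pullback (multiplicities in $h^*[Z]=[W]$, possibly with $W$ non-reduced or reducible) and the \emph{$K$-theoretic} base change (behaviour of $f_*$ and of the norm map $N_{Z/x}$ under $k(x)\to k[W']$); concretely one wants a clean statement that for the Cartesian square $W \to Z$, $W' \to x$ with $x$ and $W'$ in the regular loci, $h^*\circ (\iota^x)_*\circ N_{Z/x} = \big(\sum_j (\iota^{x'_j})_*\circ N_{W_j/x'_j}\big)\circ h^*$ on $K^M_n$, which is essentially the statement that the Gysin/transfer formalism for finite flat maps of regular schemes is compatible with flat base change — standard, but requiring care with multiplicities. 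Everything else is formal bookkeeping with the layered definition of $cyc_{X|D}$.
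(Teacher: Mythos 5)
Your outline identifies the right reduction (additivity lets you work on a single generator $[Z]$, then peel off the three layers of $cyc_{X|D}$) and correctly isolates the genuine difficulty: reconciling the cycle-theoretic multiplicities in $h^*[Z]=\sum_i m_i[W_i]$ with the $K$-theoretic flat pullback. Where you diverge from the paper is in how that reconciliation is carried out. You propose to do it on the Milnor side, via a base-change formula for the norm maps $N_{Z/x}$ along $k(x)\to k[W']$ together with compatibility of $\psi$ and of $(\iota^x)_*$ with flat pullback. The paper never touches the norms in this argument: it applies flat base change (\cite[Proposition~3.18]{TT}) to the \emph{entire} push-forward $K(Z)\to G(Z)\to K^Z(X\setminus D)\cong K^Z(X,D)\to K(X,D)$ at once, landing in $G(W)$ for the full scheme-theoretic fiber $W$ (possibly non-reduced and reducible), and then disposes of the multiplicities purely inside $G$-theory via the identity $[\sO_W]=\sum_i m_i\, g_{i*}[\sO_{W_i}]$ in $G_0(W)$ and the projection formula, i.e.\ $b=g_*\bigl(\sum_i m_i\, g_i^*(b)\bigr)$ for $b$ in the image of $K_n(W)$. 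On the Milnor-symbol layer the paper's observation is simply that $y(W_i)=y(Z)$ under the inclusion $k(Z)\hookrightarrow k(W_i)$, so the symbol restricts with no norm involved. The paper's route buys a cleaner statement: one never has to formulate the norm base-change formula in the presence of non-reduced fibers, and all multiplicities are absorbed by a single $G_0$ identity. Your route is viable in principle, but to complete it you would need to state the transfer/base-change compatibility with the correct multiplicities (your displayed formula omits the $m_j$) and to separate it cleanly from the $K$-theoretic base change for $(\iota^x)_*$, since as written your key identity mixes the Milnor and Quillen levels.
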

\begin{proof}
Let $Z$ be a closed point in $z^{d+n}(X| D, n)$ and let $[W] = \sum_{i=1}^r m_i [W_i] \in z^{d+n}(Y|E, n)$, where $W_i$ are irreducible components of the inverse image scheme $W$ with multiplicities $m_i$. 
 Let $f^Z: Z \to X$, $f^W: W \to Y$ and $f^{W_i}: W_i \to Y$ denote the respective projections. Let
$y(Z) = cyc^M_Z([Z])$ and $y(W_i)= cyc^M_{W_i}([W_i])$ as in \eqref{eqn:cycle-0}.
We then have to show that 
\begin{equation}\label{eqn:Pull-Back-2}
h^* \circ f^Z_* \circ \psi_Z (y(Z)) = \sum_{i=1}^r m_i  (\ f^{W_i}_*\circ  \psi_{W_i} (y(W_i))). 
\end{equation}
Consider the following diagram: 
\begin{equation}\label{eqn:Pull-Back-3}
      \xymatrix@C.8pc{  
K(Z) \ar[r] \ar[d]^{h^*}& G(Z) \ar[r] \ar[d]^{h^*}  & K^Z(X\setminus D) \ar[d]^{h^*}  &  K^Z(X, D) \ar[l]_-{\cong} \ar[r] \ar[d]^{h^*}
& K(X, D) \ar[d]^{h^*}  \\
K(W) \ar[r] & G(W) \ar[r]  & K^W(Y \setminus E) &  K^W(Y, E) \ar[l]_-{\cong} \ar[r] & K(Y,E).}
\end{equation}
Since $h$ is flat, it follows that all the squares in  ~\eqref{eqn:Pull-Back-3} commute. Indeed, since the canonical map $K(-) \to G(-)$ respects flat pull-back, the left-most square in ~\eqref{eqn:Pull-Back-3} commutes. The middle left square commutes by \cite[Proposition~3.18]{TT} and the middle right square commutes because each map is a pull-back map. Lastly, the right-most square in ~\eqref{eqn:Pull-Back-3} commutes by the definition of the left arrow in the square. As discussed in \S~\ref{sec:Proj-F}, 
the composition of the top horizontal arrows is the push forward map $f^Z_*$ and the composition of the top horizontal arrows is the push forward map $f^W_*$.  

It then suffices to show that 
\begin{equation}\label{eqn:Pull-Back-4}
 f^W_* \circ h^* \circ \psi_Z (y(Z)) = \sum_{i=1}^r m_i  (\ f^{W_i}_*\circ  \psi_{W_i} (y(W_i))). 
\end{equation}
 Since $W = (h \times {\rm id}_{\square^n})^{-1}(Z)$, we have $y(W_i)= y(Z)$ for each $1\leq i \leq r$ under the injective map $k(Z)^{\times}  \inj k(W_i)^{\times}$. It then follows that 
 $\psi_{W_i} (y(W_i)) = h_i^*  \psi_Z (y(Z))  \in K_n(W_i)$, where $h_i: W_i \to Z$ is the induced map. Note that $h_i = h \circ g_i$, where $g_i : W_i \inj W$ denotes the inclusion of the irreducible component $W_i$ into $W$. We are therefore reduced to show that 
 \begin{equation}\label{eqn:Pull-Back-5}
 f^W_* \circ h^* \circ \psi_Z (y(Z)) = \sum_{i=1}^r m_i  (\ f^{W_i}_*\circ g_i^* \circ h^* \circ \psi_Z (y(Z))).
 \end{equation}
We shall actually show that  for all $a \in K_n(W)$, we have
 \begin{equation}\label{eqn:Pull-Back-6}
 f^W_* (a) = \sum_{i=1}^r m_i \ f^{W_i}_* \circ g_i^* (a).
 \end{equation}
 Observe that the equality ~\eqref{eqn:Pull-Back-5} follows from  ~\eqref{eqn:Pull-Back-6} with $ a= h^* \circ \psi_Z (y(Z))$. To show ~\eqref{eqn:Pull-Back-6}, consider the diagram:
\begin{equation}\label{eqn:Pull-Back-7}
      \xymatrix@C.8pc{  
 \coprod_{i=1}^r G(W_i) \ar[r]^-{\cong} \ar[rd]^{(g_{i *})_i}&  G(W_{\red}) \ar[r]  \ar[d]^{g_*} & K^W(Y \setminus E) \ar@{=}[d] &  K^W(Y, E) \ar[l]_-{\cong} \ar[r] \ar@{=}[d]& K(Y,E)  \ar@{=}[d]\\
K(W) \ar[r] & G(W) \ar[r]  & K^W(Y \setminus E) &  K^W(Y, E) \ar[l]_-{\cong} \ar[r] & K(Y,E).}
\end{equation}
 Since $W_{\red} = \coprod_i W_i$, it follows that the push-forward map $ \coprod_{i=1}^r G(W_i) \to G(W_{\red})$ is an isomorphism and the left triangle in ~\eqref{eqn:Pull-Back-7} commutes. 
Observe that the left-most square commutes because all arrow in the square are (compatible) push-forward maps. As before, the composition of the top horizontal arrows on $G(W_i)$ is the push-forward map $f^{W_i}_*$. Let $b \in G_n(W)$ be the image of $a\in K_n(W)$ under the map $K_n(W) \to G_n(W)$ induced by the bottom left arrow in ~\eqref{eqn:Pull-Back-7}.
The equality ~\eqref{eqn:Pull-Back-6} then follows if we show that 
\begin{equation}\label{eqn:Pull-Back-8}
b = g_* (\sum_{i=1}^r m_i \ g^*_i (b) ) \in G_n(W).
\end{equation}
The equality ~\eqref{eqn:Pull-Back-8} however
follows from the following calculation:
\begin{eqnarray*}
 g_* (\sum_{i=1}^r m_i \ g^*_i (b) ) &= & \sum_{i=1}^r m_i \ g_{i *}\circ g_i^* (b)\\
 & =^1 & \sum_{i=1}^r m_i \ g_{i *} ( g_i^* (b) [\sO_{W_i}])\\
 & =^2 & b \left( \sum_{i=1}^r m_i  \ g_{i *}( [\sO_{W_i}]) \right)\\
 & =^3 & b [\sO_W] = b,
\end{eqnarray*}
where $=^1$ follows because for each $i$, we have $[\sO_{W_i}] = 1 \in G_0(W_i)$, $=^2$ follows from the projection formula  \cite[Proposition~3.17]{TT}  for $G$-theory because $a \in K_n(W)$ and $=^3$ follows as $1 =  [\sO_W] = \sum_{i=1}^r m_i  \ g_{i *}( [\sO_{W_i}]) \in G_0(W)$. This completes the proof of the lemma. 
\end{proof}

\vskip .3cm

\subsection{Naturality for proper morphisms}
Let $(Y,E)$ and $(X,D)$ be modulus pairs such that $X$ and $Y$ are regular schemes over $k$ of pure dimension $d_X$ and $d_Y$. 
Let $h: Y \to X$ be a proper morphisms such that $E = h^*(D)$. By \cite[Proposition~2.10]{KP}, we have a proper push-forward map $h_*: z^{d_Y+n}(Y|E, n) \to z^{d_X +n}(X|D, n)$ such that $h_*([z])=[k(z):k(w)] [w]$, where $w = (h \times {\rm id}_{\square^n}) (z)$ and $z$ is a closed point in  $Y \setminus E \times \sq^n$. The existence of the push-forward map $h_* : K(Y, E) \to K(X,D)$ follows from the following lemma.

\begin{lem}\label{lem:Fin-tor-K-thy}
The map $h$ induces a proper push-forward map $h_* \colon K(Y, E) \to K(X,D)$ 
of relative  $K$-theory spectra.
\end{lem}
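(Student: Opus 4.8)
The plan is to construct $h_*$ by the same strategy used for the absolute case of proper pushforward in $K$-theory, carefully tracking the relative structure. First I would observe that since $h \colon Y \to X$ is a proper morphism of regular (hence Noetherian, finite Krull dimension) schemes, it is a perfect morphism, so by \cite[Chapter~3]{TT} there is a pushforward map $h_* \colon G(Y) \to G(X)$ on $K$-theory of pseudo-coherent complexes, and because $X$ and $Y$ are regular the canonical maps $K(Y) \xrightarrow{\sim} G(Y)$ and $K(X) \xrightarrow{\sim} G(X)$ are homotopy equivalences; composing gives the absolute pushforward $h_* \colon K(Y) \to K(X)$. The same applies to the restriction $h_E \colon E \to D$ — note $E = h^*(D)$ is an effective Cartier divisor on $Y$, but $E$ and $D$ need not be regular, so here one must use $G$-theory genuinely and the fact that $h_E$ is still proper and of finite Tor-dimension (it is the base change of the perfect morphism $h$ along $D \hookrightarrow X$), which gives $h_{E*} \colon G(E) \to G(D)$. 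To get a map on relative $K$-theory I would then check that the square
\begin{equation}\label{eqn:Fin-tor-square}
\xymatrix@C1.2pc{
K(Y) \ar[r] \ar[d]_-{h_*} & K(E) \ar[d]^-{h_{E*}} \\
K(X) \ar[r] & K(D)
}
\end{equation}
commutes up to coherent homotopy, where the horizontal arrows are the restriction maps; this is exactly the statement that pushforward commutes with (derived) restriction to a Cartier divisor, i.e.\ base-change compatibility for the perfect morphism $h$ along $D \hookrightarrow X$, which is part of the formalism of \cite[\S~3]{TT}. Passing to homotopy fibers of the horizontal maps then yields the desired $h_* \colon K(Y,E) \to K(X,D)$.

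The one subtlety I would be careful about is that $K(Y) \to K(E)$ factors as $K(Y) \to K(Y) \simeq G(Y) \to G(E) \leftarrow K(E)$ — but $K(E) \to G(E)$ is \emph{not} an equivalence when $E$ is singular. So to make the square \eqref{eqn:Fin-tor-square} literally commute with the homotopy fiber being $K(X,D) = \fib(K(X) \to K(D))$ (which is defined using $K(D)$, not $G(D)$), I would instead argue that the relative term only sees the difference, and this difference is detected after applying $- \otimes^{\mathbf{L}}$, i.e.\ work with the two commuting squares
\[
\xymatrix@C1.2pc{
K(Y) \ar[r] \ar[d] & G(Y) \ar[d] & & G(X) \ar[d] & K(X) \ar[l] \ar[d] \\
K(E) \ar[r] & G(E) & & G(D) & K(D) \ar[l]
}
\]
and use that $h_*$ on $G$-theory sits in the middle; the point is that the homotopy fiber of $K(X) \to K(D)$ maps to the homotopy fiber of $G(X) \to G(D)$, and one builds $h_*$ on relative spectra by the universal property of the fiber once one has commutativity of the composite diagram involving $G$-theory. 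This is precisely the kind of diagram-chase that was already carried out for the inclusion maps in \lemref{lem:Proj-rel} (see \eqref{eqn:Proj-rel-1} and \eqref{eqn:Proj-rel-2}), and the argument here is its functorial analogue for a general proper $h$.

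The main obstacle I anticipate is not conceptual but bookkeeping: ensuring that the pushforward on $G$-theory is genuinely compatible with the \emph{derived} pullback of the Cartier divisor $D$ — that is, that $\mathbf{L}i_D^* \circ h_* \simeq h_{E*} \circ \mathbf{L}i_E^*$ as maps $G(Y) \to G(D)$, where $i_D, i_E$ are the closed immersions. Since $D$ is an effective Cartier divisor, $\mathbf{L}i_D^*$ is the two-term Koszul complex $[\mathcal{O}_X(-D) \to \mathcal{O}_X]$ restricted, and the compatibility is the projection-formula/base-change statement $h_*(-) \otimes^{\mathbf{L}} \mathcal{O}_X/\mathcal{O}_X(-D) \simeq h_*(- \otimes^{\mathbf{L}} \mathcal{O}_Y/\mathcal{O}_Y(-E))$ using $h^*\mathcal{O}_X(-D) = \mathcal{O}_Y(-E)$; this is \cite[Proposition~3.17, 3.18]{TT} in the derived setting. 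Once that identification is in hand, the homotopy fiber construction is automatic and functoriality of $h_*$ in $(X,D)$ follows from functoriality of the ingredients. I would present this as: (i) recall $h_* \colon G(Y) \to G(X)$ and $h_{E*} \colon G(E) \to G(D)$ exist since $h$ is perfect; (ii) invoke base change along the Cartier divisor for the square; (iii) take homotopy fibers, using regularity of $X, Y$ to identify $K$ with $G$ on the total spaces; (iv) note the construction is manifestly functorial. $\qed$
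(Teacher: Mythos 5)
Your overall strategy --- construct the absolute push-forwards, verify that the square formed with the restrictions to $E$ and $D$ commutes via base change along the Cartier divisor, then pass to homotopy fibers --- is exactly the strategy of the paper, and the mechanism you identify ($\mathbf{L}i_D^*\circ h_* \simeq h_{E*}\circ \mathbf{L}i_E^*$, governed by the Koszul resolution of $\sO_D$ and the identity $h^*\sI_D=\sI_E$) is the right one. The paper packages this as the single statement that $Y$ and $D$ are tor-independent over $X$: since $D$ is a Cartier divisor the only possibly non-trivial term is ${\rm Tor}^1_{\sO_X}(\sO_Y,\sO_D)$, which is the $\sI_D$-torsion subsheaf of $\sO_Y$ and vanishes because $Y$ is integral and $E=h^*(D)$ is a proper divisor on $Y$. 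You should make this vanishing explicit rather than leaving it inside the phrase ``base-change compatibility,'' since it is the only point where a hypothesis (that $E$ is a genuine effective Cartier divisor on the integral scheme $Y$) actually gets used, and it is simultaneously what makes your Koszul complex $[\sO_Y(-E)\to\sO_Y]$ a resolution of $\sO_E$.

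The one genuine gap is in your handling of the $K$-versus-$G$ issue on $E$ and $D$. You construct $h_{E*}$ only on $G$-theory and then propose to build the map of relative spectra ``by the universal property of the fiber'' after mapping everything into $\fib(G(X)\to G(D))$. But $K(X,D)$ is by definition the fiber of $K(X)\to K(D)$, and since $D$ may be singular or non-reduced the map $K(D)\to G(D)$ is not an equivalence; a null-homotopy of the composite $K(Y,E)\to K(X)\to K(D)\to G(D)$ does not lift to a null-homotopy of $K(Y,E)\to K(D)$, so this construction does not land in $K(X,D)$. The fix is an observation you already have in hand but do not exploit: tor-independence of $Y$ and $D$ over $X$ implies that $E\to D$ is itself a proper morphism of finite tor-dimension, so by \cite[3.16.4]{TT} the push-forward exists directly on the $K$-theory of perfect complexes as a map $K(E)\to K(D)$, and the base-change theorem for tor-independent squares gives the commutativity of the square of $K$-theory spectra with no detour through $G$-theory; taking fibers of the horizontal restriction maps then yields $h_*\colon K(Y,E)\to K(X,D)$. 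This is precisely how the paper's proof proceeds.
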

\begin{proof}
Since $Y$ is regular, we can assume with out loss of generality that $Y$ is integral. 
Since $X$ is regular, the map $h$ has finite tor-dimension. 
By \cite[3.16.4]{TT}, we have a push-forward map $f_* \colon K(Y) \to K(X)$. Observe that if $E= \emptyset$, then we have a push-forward map $f_* \colon K(Y) \to K(X,D)$. We can therefore assume that $E \neq \emptyset$. 
To prove the lemma, it suffices to show that $Y$ and $D$
are tor-independent over $X$. This will in particular imply that 
$E \to D$ also has finite tor-dimension.
For tor-independence, we note that  $D$ is an effective Cartier divisor.
Hence, the only 
possible non-trivial tor term can be ${\rm Tor}^1_{\sO_X}(\sO_{Y}, \sO_D)$.
But this is same as the $\sI_{D}$-torsion subsheaf of $\sO_{Y}$. Since
$Y$ is integral, this torsion subsheaf is non-zero if and only if
the ideal $\sI_E$ is zero. But this can not happen as $E$ is a proper
divisor on $Y$. This finishes the proof.
\end{proof}  

\begin{remk}\label{rem:Fin-tor-K-thy-1}
Observe that \lemref{lem:Fin-tor-K-thy} is true for a general integral scheme (may not be regular) $Y$ over a regular scheme $X$. 
\end{remk}

We now prove that the map $cyc_{X|D}$ in \eqref{eqn:cycle-4} commutes with the push-forward map.

\begin{lem}\label{lem:Push-forward}
For a cycle $\alpha \in z^{d_Y+n}(Y|E, n)$, we have 
$cyc_{X|D} \circ h_*(\alpha) = h_* \circ cyc_{Y|E}(\alpha)$.
\end{lem}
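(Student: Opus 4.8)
\textbf{Proof proposal for Lemma~\ref{lem:Push-forward}.}

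The plan is to reduce to the case of a finite morphism and then to the universal situation of a single admissible closed point, where the statement becomes a compatibility between push-forwards in $K$-theory and the norm map in Milnor $K$-theory. First I would reduce to $\alpha = [z]$ for a single admissible closed point $z \in Y \setminus E \times \sq^n$ by linearity, since both sides of the asserted identity are additive in $\alpha$ and the push-forward $h_*$ on cycles is defined on generators. Write $w = (h \times \id_{\square^n})(z)$, so $w$ is a closed point of $X \setminus D \times \sq^n$, and put $\kappa = [k(z):k(w)]$; then $h_*([z]) = \kappa [w]$. Let $f^z \colon z \to Y$ and $f^w \colon w \to X$ be the projections (closed immersions followed by the structure maps of the modulus pairs), and let $g \colon z \to w$ be the induced finite morphism of residue-field spectra, so that $h \circ f^z = f^w \circ g$. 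Unwinding the definitions in \eqref{eqn:cycle-0}--\eqref{eqn:cycle-3}, the left-hand side is $f^w_* \circ \psi_w \circ \kappa \cdot cyc^M_w([w])$ and the right-hand side is $h_* \circ f^z_* \circ \psi_z \circ cyc^M_z([z])$.

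The key step is then to verify the diagram chase assembling the following three facts. First, the two Milnor symbols are compatible under the norm: $N_{z/w}(cyc^M_z([z])) = N_{z/w}(\{y_1(z),\dots,y_n(z)\})$, and since each $y_i(z)$ is pulled back from $y_i(w) \in k(w)^\times$ along $k(w) \inj k(z)$ (because $w$ is the image of $z$ under the coordinate-preserving projection), the projection formula for the Milnor norm map gives $N_{z/w}(cyc^M_z([z])) = \kappa \cdot cyc^M_w([w])$ in $K^M_n(k(w))$. Second, $\psi$ intertwines the Milnor norm with the Quillen transfer, i.e. $\psi_w \circ N_{z/w} = T_{k(z)/k(w)} \circ \psi_z$; this is exactly the left square of \eqref{eqn:cycle-2}, which the paper attributes to \lemref{lem:L6}. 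Third, and this is where functoriality of push-forward in relative $K$-theory enters, I would show that the composite $K(z) \xrightarrow{g_*} K(w) \xrightarrow{f^w_*} K(X,D)$ agrees with $K(z) \xrightarrow{f^z_*} K(Y,E) \xrightarrow{h_*} K(X,D)$, where $g_*$ is the Quillen transfer $T_{k(z)/k(w)}$, $f^z_*, f^w_*$ are the maps to relative $K$-theory from \S\ref{sec:Proj-F}, and $h_*$ is the map of \lemref{lem:Fin-tor-K-thy}. Granting these three facts, the chain of equalities $h_* f^z_* \psi_z cyc^M_z([z]) = f^w_* g_* \psi_z cyc^M_z([z]) = f^w_* \psi_w N_{z/w} cyc^M_z([z]) = f^w_* \psi_w (\kappa \cdot cyc^M_w([w])) = \kappa \cdot f^w_* \psi_w cyc^M_w([w]) = cyc_{X|D}(\kappa [w]) = cyc_{X|D} h_*([z])$ finishes the proof.

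The main obstacle will be the third fact: the functoriality/associativity of push-forward into relative $K$-theory, namely $f^w_* \circ g_* = h_* \circ f^z_*$ as maps of spectra (or at least on homotopy groups). The subtlety is that $f^z_*$ and $f^w_*$ are not the honest Thomason--Trobaugh push-forwards but their lifts through the relative $K$-theory fiber, constructed via the factorizations $K(z) \to K^z(Y \setminus E) \xleftarrow{\cong} K^z(Y,E) \to K(Y,E)$ as in \S\ref{sec:Proj-F}, and similarly $h_*$ is obtained from \lemref{lem:Fin-tor-K-thy} using the tor-independence of $Y$ and $D$ over $X$. To reconcile them I would build the obvious three-by-something commutative diagram of spectra with columns given by these fiber-sequence factorizations and rows given by $g_*, f^z_*, h_*, f^w_*$, using: (i) the usual functoriality of push-forward for the composable finite-tor-dimension morphisms $z \to Y \to X$ and $z \to w \to X$ (\cite[3.16]{TT}); (ii) compatibility of push-forward with restriction to the open complements, which forces commutativity with the excision equivalences $K^z(Y,E) \xrightarrow{\cong} K^z(Y\setminus E)$; and (iii) the observation that all supports in sight ($z$, $w$) are disjoint from the relevant divisors, so every push-forward lands in the correct fiber. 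This is the same style of argument already used in the proof of \lemref{lem:Proj-rel} and in \eqref{eqn:Proj-rel-2}, so I expect it to go through by an essentially identical diagram chase; the only genuinely new input is checking the relevant tor-independences so that all the push-forwards exist and compose, which follows as in \lemref{lem:Fin-tor-K-thy} since $X$ is regular. The remaining two facts (projection formula for the Milnor norm and the $\psi$-compatibility) are quoted from the literature and from \lemref{lem:L6}, so they require no further work here.
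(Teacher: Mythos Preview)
Your proof is correct and follows essentially the same route as the paper: both reduce to a single closed point, invoke the projection formula for the Milnor norm together with \lemref{lem:L6}, and use the commutativity of the push-forward square $h_* \circ f^z_* = f^w_* \circ g_*$ in relative $K$-theory. The paper packages all of this into the single commutative diagram~\eqref{eqn:0-C-map-4} and runs the same chain of equalities; your discussion of the third fact (associativity of the relative push-forwards) is in fact more explicit than the paper's, which simply asserts the diagram commutes.
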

\begin{proof}
We can assume $\alpha$ is represented by a closed point 
$z \in z^{d_Y+n}(Y|E, n)$. We set 
$w = (h \times {\rm id}_{\square^n})(z) \in X \times \square^n$ and 
$x = p_X(w)$.
The compatibility between norm maps in the Milnor $K$-theory of fields and
push-forward maps in the Quillen $K$-theory (see \lemref{lem:L6}) yields a 
commutative diagram
\begin{equation}\label{eqn:0-C-map-4}
\xymatrix@C2pc{
K^M_n(k(z)) \ar[r]^{N_{z/w}} \ar[d]_-{\psi_z} \ar@/_3pc/[dd]_{{\iota_z}_*} & 
K^M_n(k(w)) \ar[d]^-{\psi_w} \ar@/^3pc/[dd]^{{\iota_w}_*} \\
K_n(k(z)) \ar[r]_{h_*} \ar[d]_{p_*} & K_n(k(w)) \ar[d]^-{p_*} \\
K_n(Y,E) \ar[r]_{h_*} & K_n(X,D).}
\end{equation} 

Using this commutative diagram, we get
\[
\begin{array}{lll}
cyc_{X|D} \circ h_*([z]) & = & cyc_{X|D}([k(z): k(w)] [w]) \\
& = & {\iota_w}_*(\{y_1(w), \cdots , y_n(w)\}^{[k(z): k(w)]}) \\
& {=}^1 & {\iota_w}_* \circ {\iota_{z/w}}_* \circ \iota^*_{z/w} 
(\{y_1(w), \cdots , y_n(w)\}) \\
& {=}^2 & {\iota_w}_* \circ N_{z/w} (\{y_1(z), \cdots , y_n(z)\}) \\
& = & h_* \circ {\iota_z}_* (\{y_1(z), \cdots , y_n(z)\}) \\
& = & h_* \circ cyc_{{Y}|E}([z]).
\end{array} 
\]

In this set of equalities, recall our notation (preceding \lemref{lem:Curve-boundary})  that
$y_i(z)$ is the image of $z$ under the $i$-th projection $\Spec(k(z)) \to \square_{k(z)}$.
In particular, $y_i(z) \in k(z)^{\times}$ for all $1 \le i \le n$.
The coordinates $y_i(w) \in k(w)^{\times}$ have similar meaning.
The map $\iota_{z/w}: \Spec(k(z)) \to \Spec(k(w))$ is the projection.
The equality ${=}^1$ is a consequence of the projection formula for the Milnor $K$-theory
associated to the resulting inclusion $k(w) \inj k(z)$. The equality ${=}^2$ follows from
the fact that $y_i(z) = y_i(w) = \iota^*_{z/w}(y_i(w))$ via the inclusion $k(w) \inj k(z)$
for all $1 \le i \le n$. 
This proves the lemma.
\end{proof}

\vskip .3cm

We end this section with some comments below
on $cyc_{X|D}$ in the non-modulus case.

\subsection{Agreement with Levine's map}\label{sec:Coincide}
We had mentioned in \S~\ref{sec:Intro} that for Bloch's higher Chow groups of 0-cycles,
a cycle class map to the ordinary $K$-theory of a regular variety was constructed by
Levine \cite{Levine-1} with rational coefficients. Binda \cite{Binda} constructed 
such a map in the modulus setting and his map is identical to that of Levine by definition.
It is not hard to check that the map
$cyc_{X|D}$ coincides with Levine's map when $D = \emptyset$ (note that
$z^{d+n}(X|D,n) \subseteq z^{d+n}(X,n)$).
In particular, it turns out that Levine's cycle class map for the ordinary 0-cycles exists
with integral coefficients. We give a sketch of this agreement and leave the
details for the reader.

Let $n \ge 0$ be an integer. Since $X$ is regular, the homotopy invariance implies that 
the multi-relative $K$-theory exact sequence (see \cite[\S~1]{Levine-1}) yields an isomorphism
\begin{equation}\label{eqn:Levine*-0}
\theta_n \colon K_n(X) \xrightarrow{\cong} K_0(X \times \square^n, \partial \square^n),
\end{equation}
where $K(X \times \square^n, \partial \square^n)$ is the iterated
multi-relative $K$-theory of $X \times \square^n$
relative to all codimension one faces.

Let $Z \subset X \times \square^n$ be a closed point. Then Levine's cycle class map $cyc^L_X([Z])$
is the image of $1 \in K_0(Z)$ under the composition
\begin{equation}\label{eqn:Levine*-1}
K_0(Z) \cong K^Z_0(X \times \square^n, \partial \square^n)
\to K_0(X \times \square^n, \partial \square^n) \xleftarrow{\theta_n} K_n(X).
\end{equation}

Let $f \colon Z \to X$ denote the projection map (which is finite). We then have
a finite map of multi-closed pairs $f \colon (Z \times \square^n, \partial \square^n) \to
(X \times \square^n, \partial \square^n)$. Since the relative $K$-theory fiber sequence
commutes with finite push-forward for regular schemes (see 
\lemref{lem:Fin-tor-K-thy}),
and since $cyc^L_X([Z])$ is the image of
$[Z] \in  K_0(Z \times \square^n, \partial \square^n)$ under the push-forward map $f_*$,
we can assume that $X = \Spec(k)$ and $f \colon Z \to \Spec(k)$ is the identity map.

When $n = 0$, the agreement of $cyc_k([Z])$ and $cyc^L_k([Z])$ is immediate.
When $n \ge 1$ and if we follow our notation of ~\eqref{eqn:cycle-0}, then
we see that $y_i(Z) = a_i \in k^{\times}$ for each $1 \le i \le n$.
In particular, we have $cyc_k([Z]) = \{a_1, \ldots , a_n\} \in K_n(k)$.
One therefore has to show that
if $z = (a_1, \ldots, a_n) \in (k^{\times})^n$ is a $k$-rational point in $\square^n$,
then the class $[k(z)] \in K_0(\square^n, \partial \square^n)$ coincides 
with $\theta_n(\{a_1, \dots , a_n\})$. But this is an elementary exercise in $K$-theory using
repeated application of relative $K$-theory exact sequence. For $n =1$, it already follows from
a straightforward generalization of \lemref{lem:Elem-0} 
(where we replace $(t^{m+1})$ by any ideal of $R[t]$) with identical proof.
We leave it to the reader to check the details for $n \ge 2$.

\vskip .3cm

\section{The case of regular curves}\label{sec:curves}

The goal now is to show that $cyc_{X|D}$ kills the rational equivalence
if we allow the modulus to vary along $\{mD\}_{m \ge 1}$. In this section, we shall prove a very 
special case of this. The proof of \thmref{thm:Intro-1} will be reduced to this case 
in the next section. 
We consider the following situation. Let $n \ge 0$ be an integer.
We let $X$ be a regular connected curve over $k$ and let $D \subset X$ be
an effective Cartier divisor. Let $\ov{W} \subset X \times \ov{\square}^{n+1}$ be a closed subscheme
such that the following hold.
\begin{enumerate}
\item
The composite map $\ov{W} \xrightarrow{\nu} X \times \ov{\square}^{n+1} \xrightarrow{p_X} X$ is an 
isomorphism. 
\item
$W = \ov{W} \cap (X \times \square^{n+1})$ is an admissible cycle on $X \times \square^{n+1}$ with 
modulus $(n+1)D$. That is, $[W] \in z^{n+1}(X|(n+1)D, n+1)$.
\end{enumerate}

Let $\partial = \stackrel{n+1}{\underset{i =1}\sum} (-1)^i(\partial^{\infty}_i - \partial^0_i):
z^{n+1}(X|E, n+1) \to z^{n+1}(X|E,n)$ be the boundary map in the cycle complex with modulus
for an effective divisor $E \subset X$.
We want to prove the following result in this subsection.

\begin{prop}\label{prop:Curve-vanish}
The class $cyc_{X|D}(\partial W)$ dies in $K_n(X,D)$ under the 
cycle class map
\[
z^{n+1}(X|D, n) \xrightarrow{cyc_{X|D}} K_n(X,D).
\]
\end{prop}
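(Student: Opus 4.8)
The plan is to make the geometry completely explicit. Since $p_X \colon \ov{W} \to X$ is an isomorphism, the subscheme $W$ is the graph of a morphism $g = (g_1, \ldots, g_{n+1}) \colon X^o \to \square^{n+1}$, or more precisely $\ov{W}$ is the graph of a rational map $X \dashrightarrow \ov{\square}^{n+1}$, where $g_i = y_i \circ \nu \circ (p_X)^{-1} \in k(X)^{\times}$. The faces $\partial^{\epsilon}_i W$ are then supported on the divisors $(g_i - \epsilon)$ on $X$ (for $\epsilon \in \{0, \infty\}$, interpreting $\infty$ via the pole divisor of $g_i$), and a closed point $x$ appearing in $\partial^{\epsilon}_i W$ carries the coordinates $y_j(x) = g_j(x) \in k(x)^{\times}$ for $j \neq i$. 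So by the formula following \eqref{eqn:cycle-2}, $cyc_{X|D}(\partial W)$ is an explicit alternating sum, over $i$ and over the points of $(g_i = 0)$ and $(g_i = \infty)$, of pushforwards of Milnor symbols $\{g_1(x), \ldots, \widehat{g_i(x)}, \ldots, g_{n+1}(x)\}$ into $K_n(X,D)$.

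The key step is to recognize this alternating sum as a tame-symbol / Weil-reciprocity expression. Introduce the function field $F = k(X)$ and the Milnor symbol $\{g_1, \ldots, g_{n+1}\} \in K^M_{n+1}(F)$. For each closed point $x \in X$, which is the same as a height-one prime $\fp_x$ of the semilocal ring $\sO_{X,x}$, there is a tame symbol $\partial_x \colon K^M_{n+1}(F) \to K^M_n(k(x))$, and the content of the admissibility/modulus condition (2) should be exactly that $\{g_1, \ldots, g_{n+1\}}$ lies in the relative improved Milnor group and that the sum $\sum_x (\iota^x)_* \partial_x \{g_1,\ldots,g_{n+1}\}$, pushed into $K_n(X,D)$, equals $cyc_{X|D}(\partial W)$ up to sign — this uses that the residue of $\{g_1,\ldots,g_{n+1}\}$ at $x$ is computed by the leading coefficients of the $g_j$, which are precisely the $g_j(x)$ at points where exactly one $g_i$ has a zero or pole. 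Then Weil reciprocity (or rather: the fact that $\{g_1,\ldots,g_{n+1}\}$ comes from $K^M_{n+1}(\sO_{X \setminus D})$ and the Gersten complex \eqref{eqn:Gersten} is a complex) forces the total residue to vanish as an element of $K_n(X) $, and the modulus condition upgrades this to vanishing in the \emph{relative} group $K_n(X,D)$.

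Concretely I would proceed as follows. First, reduce to $X$ affine and semilocal (the curve is regular, so this is harmless: rational equivalence and the cycle class map are both Zariski-local in the appropriate sense, and one can work over the semilocalization at the finitely many points in the supports of the $g_i$, $g_i - 1$, and $D$). Second, express $\partial W$ explicitly via the divisors of $g_i - \epsilon$ and identify $cyc_{X|D}(\partial W)$ with $\sum_i (-1)^i \sum_{x} \iota^x_* \psi_x \bigl(\ord_x(g_i)\cdot\{g_1(x),\ldots,\widehat{g_i(x)},\ldots\}\bigr)$ via \eqref{eqn:cycle-2}; up to a universal sign this is $\iota_* \psi \bigl(\partial^{M}\{g_1,\ldots,g_{n+1}\}\bigr)$ where $\partial^M$ is the total tame-symbol map into $\bigoplus_x K^M_n(k(x))$. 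Third, invoke that the composite of $\partial^M$ with the inclusion $K^M_{n+1}(\sO_{X^o}) \hookrightarrow K^M_{n+1}(F)$ — here using that $g_i, g_i - 1 \in \sO_{X^o}^{\times}$ away from $D$, which is exactly condition (1)(2) restricted to the open part — is zero into the part of the sum supported on $X^o$; what remains is supported on $D$. Fourth — and this is where the modulus condition (2), i.e. $[W] \in z^{n+1}(X|(n+1)D, n+1)$, does its real work — use \lemref{lem:Milnor-1} and \lemref{lem:Milnor-2} to show that the residual terms, which a priori live over the points of $D$, actually lie in the relative improved Milnor group $\wh K^M_n(\sO_{X,D})$ before pushforward, so that after applying $\psi$ and $\iota_*$ they land in the image of the relative $K$-theory $K_n(X,D) \to K_n(X)$ and in fact assemble to zero there by the reciprocity already used; the inequality $\nu^*(D \times \ov{\square}^{n+1}) \le \nu^*(X \times F^1_{n+1})$ translates precisely into the $\ord$-inequalities on $g_i - 1$ that feed \lemref{lem:Milnor-1}.

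The main obstacle will be step four: keeping precise track of the interplay between (a) the modulus condition, stated as a divisor inequality on the normalization $\ov W^N \cong X$, (b) the resulting valuation estimates $\sum_i \ord_x(g_i - 1) \ge (n+1)\cdot \ord_x(D)$ at points of $D$ (note the factor $n+1$, which is why the modulus must be inflated), and (c) the Milnor-$K$ computations of \S\ref{sec:Pre-Milnor} that convert these estimates into membership in $(1 + \fm^m)\wh K^M_n$. I expect that the factor $n+1$ in the modulus is forced exactly here: each application of the identity in \lemref{lem:Elem-Milnor} to split off a factor of the form $1 + (\text{stuff})$ costs one unit of valuation per coordinate, and there are $n+1$ coordinates, so one needs $(n+1)D$-admissibility to guarantee the output lies in the $D$-relative group. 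The reciprocity/vanishing in $K_n(X)$ itself is standard (it is the statement that the Gersten complex \eqref{eqn:Gersten} composed with $\psi$ computes no nontrivial class, equivalently Bloch–Quillen plus properness), so the genuinely new input is the relative refinement, which is why this proposition is singled out as the technical heart of \thmref{thm:Intro-1}.
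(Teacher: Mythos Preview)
Your overall strategy --- form the Milnor symbol $g = \{g_1,\ldots,g_{n+1}\} \in K^M_{n+1}(F)$, identify $cyc_{X|D}(\partial W)$ with the pushforward of its tame residues, and use the modulus condition via \lemref{lem:Milnor-1} and \lemref{lem:Milnor-2} --- is exactly right, and matches the paper's approach. But steps 3 and 4 have the support analysis backwards, and this is a genuine gap.

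The functions $g_i$ are \emph{not} units on $X^o$: their zeros and poles are precisely the boundary points $\partial^\epsilon_i W$, and these all lie in $X^o$ (the modulus condition on the $0$-cycles $\partial^\epsilon_i W$ forces them off $D$). So $g$ does not come from $K^M_{n+1}(\sO_{X^o})$, and $\partial^M(g)$ is supported \emph{on} $X^o$, not on $D$. At a point $z \in D$ the modulus condition gives some $g_i - 1 \in \fm_z$, and then \lemref{lem:Milnor-1}(1) says $g \in (1+\fm_z)K^M_n(F) \subset \wh K^M_{n+1}(\sO_{X,z})$, so $\partial^M_z(g)=0$. Thus there are no ``residual terms over $D$'' to analyze; the whole residue already lives over $X^o$, and Weil reciprocity in $K_n(X)$ does not by itself lift to $K_n(X,D)$.

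What actually does the work is the observation you stated correctly in your second paragraph but then abandoned: the symbol $g$ itself lies in the \emph{relative} group $\wh K^M_{n+1}(A,I)$, where $A=\sO_{X,D}$ and $I$ is the ideal of $D$. This is where the full $(n+1)D$ modulus is spent: the inequality $\sum_j \ord_z(g_j-1)\ge (n+1)m_z$ feeds \lemref{lem:Milnor-0} to put $g$ into $(1+\fm_z^{m_z+n})K^M_n(F)$ at each $z\in D$ (using $(n+1)m_z\ge m_z+n$), and then \lemref{lem:Milnor-2} assembles these into $\wh K^M_{n+1}(A,I)$. Once you have this, the conclusion comes from the \emph{relative} localization sequence
\[
\wh K_{n+1}(A,I) \xrightarrow{\partial^Q_A} \bigoplus_{z\in X^o} K_n(k(z)) \xrightarrow{(f^z_*)} K_n(X,D),
\]
together with the compatibility of Milnor and Quillen boundaries (the back face of the cube in \lemref{lem:Curve-Milnor}). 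Your intuition about why the factor $n+1$ appears is correct; it is just that the valuation loss happens when pushing $g$ into the relative Milnor group of $A$, not when handling residues at $D$.
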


\vskip .3cm

We shall prove this proposition in several steps. We begin with the following description of the
cycle class map on various boundaries of $W$. Let $F$ denote the function field of $X$.
Let $g$ denote the Milnor symbol $\{g_1, \ldots , g_{n+1}\} \in K^M_{n+1}(F)$, where
$g_i: \ov{W} \to \ov{\square}$ is the $i$-th projection for $1 \le i \le n+1$.
Note that this symbol is well defined because no $g_i$ can be identically zero by the admissibility
of $W$. For any closed point $z \in \ov{W}$, let $\ord_z: F^{\times} \to \Z$ denote the valuation 
associated
to the discrete valuation ring $\sO_{\ov{W}, z}$. We let
$\partial^M_z: K^M_{i+1}(F) \to K^M_i(k(z))$ denote a boundary map in the Gersten complex 
~\eqref{eqn:Gersten}. 
We let $\fm_z$ denote the maximal ideal of the local ring $\sO_{\ov{W}, z}$.
A symbol $\{a_1, \ldots , \wh{a_i}, \ldots , a_n\}$ will mean the one obtained from
$\{a_1, \ldots , a_n\}$ by omitting $a_i$. 
For any point $z \in X \times \square^{n+1}$, let $y_i \colon \square^{n+1}_{k(z)} \to \square_{k(z)}$
denote the projection map to $i$-th factor.
Let $f^z \colon \Spec(k(z)) \to X$ denote the projection to $X$.

\begin{lem}\label{lem:Curve-boundary}
For $1 \le i \le n+1$, we have 
\[
cyc_{X|D}(\partial^0_i W) = {\underset{z \in \partial^0_i W}\sum} 
\ord_z(g_i) f^z_* \circ \psi_z(\{y_1(z), \ldots , \wh{y_i(z)}, \ldots , y_{n+1}(z)\}),
\]
\[
cyc_{X|D}(\partial^{\infty}_i W) = {\underset{z \in \partial^\infty_i W}\sum} 
\ord_z({1}/{g_i}) f^z_* \circ \psi_z(\{y_1(z), \ldots , \wh{y_i(z)}, \ldots , y_{n+1}(z)\}).
\]
\end{lem}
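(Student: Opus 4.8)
The plan is to unwind both sides of the claimed identity using the explicit definition of $cyc_{X|D}$ from \eqref{eqn:cycle-3} together with the standard description of the intersection with a face divisor in the cycle complex. First I would fix $i$ and analyze $\partial^0_i W$. By hypothesis~(1), the projection $p_X$ restricts to an isomorphism $\ov{W} \xrightarrow{\sim} X$, so $\ov{W}$ is itself a regular curve with function field $F$, and each coordinate function $g_i \colon \ov{W} \to \ov{\square}$ is a nonzero rational function on $\ov{W}$, hence an element of $F^\times$. The face $\partial^0_i W = W \cap \{y_i = 0\}$ is, as a cycle, exactly the divisor of zeros of $g_i$ restricted to the open part $X \times \square^n$: explicitly, $[\partial^0_i W] = \sum_{z} \ord_z(g_i)\,[z]$, the sum running over closed points $z$ of $\ov{W}$ at which $g_i$ vanishes (equivalently, $z \in \partial^0_i W$), with $\ord_z$ the valuation of the DVR $\sO_{\ov{W},z} \cong \sO_{X, f^z(z)}$. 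This is the standard fact that the restriction of an admissible cycle to a codimension-one face is given by the relevant order-of-vanishing coefficients; I would cite \cite{KLevine} or \cite{KP} for it.

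Next I would compute $cyc_{X|D}$ of each constituent closed point $z$ appearing above. Such a $z$ lies in $X \times \square^{n+1}$ with $y_i(z) = 0$, but it need not be admissible on its own; however $cyc_{X|D}$ is defined $\Z$-linearly on $z^{n+1}(X|D,n)$ and the point $z$, viewed now inside $X \times \square^n$ after dropping the $i$-th coordinate, is an admissible closed point of $z^{n+1}(X|D,n)$ (its remaining coordinates $y_j(z)$, $j \ne i$, are units in $k(z)$ since $W$ met no face other than $\{y_i=0\}$ there). Applying \eqref{eqn:cycle-0}--\eqref{eqn:cycle-3}, the class of this point under $cyc_{X|D}$ is $f^z_* \circ \psi_z(\{y_1(z), \ldots, \wh{y_i(z)}, \ldots, y_{n+1}(z)\})$. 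Multiplying by the multiplicity $\ord_z(g_i)$ and summing over $z \in \partial^0_i W$ yields the first formula. For $\partial^\infty_i W = W \cap \{y_i = \infty\}$ the argument is identical except that the relevant local function is $1/g_i$ rather than $g_i$, since a pole of $g_i$ of order $r$ is a zero of $1/g_i$ of order $r$; this produces the coefficient $\ord_z(1/g_i)$ and gives the second formula.

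The main obstacle, and the only genuinely delicate point, is the matching of coordinates: one must check that for a closed point $z$ in the face $\partial^0_i W$ (resp.\ $\partial^\infty_i W$), the values $y_j(z) \in k(z)^\times$ used in the Milnor symbol in the statement agree with the images of the coordinate functions $g_j$ on $\ov{W}$ under the residue/specialization map $\sO_{\ov{W},z} \to k(z)$ for $j \ne i$ — i.e.\ that "the coordinates of the boundary point" and "the residues of the coordinate functions of $W$" are literally the same elements of $k(z)^\times$. This is where hypothesis~(1) is used in an essential way: since $\ov{W} \to X$ is an isomorphism, $\ov{W}$ is one-dimensional and each $g_j$ is a genuine rational function, so at a point $z$ where $g_i$ has a zero or pole the other $g_j$ are units in $\sO_{\ov{W},z}$ and their residues are exactly $y_j(z)$. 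Once this bookkeeping is pinned down the lemma follows formally; I would present it as the content of a short paragraph rather than a separate sublemma. No use of the modulus condition is needed here — it enters only later, when one shows the total boundary class $cyc_{X|D}(\partial W)$ vanishes.
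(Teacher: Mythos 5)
Your proposal is correct and follows essentially the same route as the paper: the paper likewise reduces the lemma to the cycle-level identities $\partial^0_i W = \sum_z \ord_z(g_i)[z]$ and $\partial^\infty_i W = \sum_z \ord_z(1/g_i)[z]$, observes that admissibility of $W$ forces $y_j(z)\neq 0$ for $j\neq i$ so the Milnor symbols are defined, and then applies the definition of $cyc_{X|D}$ on generators. Your extra paragraph on matching the residues of the $g_j$ with the coordinates $y_j(z)$ is the same bookkeeping the paper leaves implicit.
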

\begin{proof}
We should first observe that the admissibility of $W$ implies that if $z \in \partial^{t}_i W$
for $t \in \{0, \infty\}$, then we must have $y_j(z) \neq 0$ for all $j \neq i$. In particular, 
the element $\{y_1(z), \ldots , \wh{y_i(z)}, \ldots , y_{n+1}(z)\} \in K^M_n(k(z))$ is well defined.
By the definition of $cyc_{X|D}$, it suffices to show that for $1 \le i \le n+1$, we have
\begin{equation}\label{eqn:Curve-boundary-0}
\partial^0_i W  = {\underset{z \in \partial^0_i W}\sum}  \ord_z(g_i) [z] \ \ {\rm and} \ \
\partial^\infty_i W  = {\underset{z \in \partial^\infty_i W}\sum}  \ord_z({1}/{g_i}) [z].
\end{equation}
But this is an immediate consequence of the definition of the intersection product of an integral 
cycle with the faces of $X \times \square^n$.
\end{proof}

\begin{lem}\label{lem:Curve-Milnor}
Suppose that $n \ge 1$ and $\partial^M_X(g) \in {\underset{z \in X^o}\oplus} K^M_n(k(z))$
under the Milnor boundary map $\partial^M_X \colon K^M_{n+1}(F) \to 
{\underset{z \in X^{(1)}}\oplus} K^M_n(k(z))$.
Then
\begin{equation}\label{eqn:Curve-Milnor-ex}
{\underset{z \in X^o}\sum} f^z_* \circ \psi_z \circ \partial^M_{X} (g) = 0.
\end{equation}
\end{lem}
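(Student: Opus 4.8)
The plan is to deduce the identity from the exactness of the localization (Gersten) sequence in algebraic $K$-theory, run first on the regular curve $X$ and then on the double $S_X$ of $X$ along $D$ in order to land in the relative group $K_n(X,D)$.

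First I would record the compatibility of the Milnor-to-Quillen map with boundary maps: for every closed point $z\in X$, whose local ring $\mathcal{O}_{X,z}$ is a discrete valuation ring, the $K$-theory residue $\partial^Q_z\colon K_{n+1}(F)\to K_n(k(z))$ coming from the localization sequence satisfies $\partial^Q_z\circ\psi_F=\pm\,\psi_{k(z)}\circ\partial^M_z$. (One can quote that $K^M_*$ and $K_*$ are cycle modules and $\psi$ a morphism of cycle modules, or check this on symbols using \lemref{lem:Elem-0} and its evident extension replacing $(t^{m+1})$ by an arbitrary ideal.) Hence $\alpha:=\psi_F(g)\in K_{n+1}(F)$ has vanishing $K$-residue at every point of $|D|$ by the hypothesis, and its residue at $z\in X^o$ equals $\psi_z(\partial^M_z g)$.

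Next, the reciprocity step on $X$. Since $X$ is a regular curve, the localization fiber sequence, taken in the limit over dense opens, gives that the composite $K_{n+1}(F)\xrightarrow{\partial^Q}\bigoplus_{z\in X_{(0)}}K_n(k(z))\xrightarrow{\iota_*}K_n(X)$ vanishes, where $\iota^z\colon\Spec k(z)\to X$. Applying this to $\alpha$ and discarding the terms over $|D|$, which are zero by the previous paragraph, yields $\sum_{z\in X^o}\iota^z_*\,\psi_z(\partial^M_z g)=0$ in $K_n(X)$. To upgrade this to a statement in $K_n(X,D)$ I would pass to the double: by \S~\ref{sec:Proj-F}, for $z\in X^o$ the relative push-forward $f^z_*\colon K_n(k(z))\to K_n(X,D)$ factors as $K_n(k(z))\xrightarrow{v^z_*}K_n(S_X,X_-)\xrightarrow{\iota_+^*}K_n(X,D)$, and $K(S_X)\simeq K(S_X,X_-)\times K(X)$. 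I then run the localization sequence on the reduced (but singular) curve $S_X$, whose total quotient ring is $F\times F$, applied to the class $(\alpha,\alpha)\in K_{n+1}(F\times F)$. Its residue at a smooth point $z_\pm$ of $S_X$ lying over $z\in X^o$ is again $\psi_z(\partial^M_z g)$, because the local ring of $S_X$ there is $\mathcal{O}_{X,z}$; while its residue at a node $z_0\in|D|$ \emph{vanishes}, and this is exactly where the hypothesis is used: $\partial^M_{z_0}g=0$ means $g\in\wh{K}^M_{n+1}(\mathcal{O}_{X,z_0})$, hence (as $\eta$ is surjective for $\mathcal{O}_{X,z_0}$, \propref{prop:Kerz-finite}) $g$ is a sum of Milnor symbols of units of $\mathcal{O}_{X,z_0}$, whose diagonal entries are units of the nodal local ring $\mathcal{O}_{S_X,z_0}=\mathcal{O}_{X,z_0}\times_{\mathcal{O}_{D,z_0}}\mathcal{O}_{X,z_0}$; thus $(\alpha,\alpha)$ extends over a neighbourhood of $z_0$ in $S_X$ and has no residue there. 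Exactness of the localization sequence on $S_X$ then gives $\sum_{z\in X^o}\bigl(v^{z_+}_*+v^{z_-}_*\bigr)\psi_z(\partial^M_z g)=0$ in $K_n(S_X)$, and I would conclude by combining this with the branch-swapping symmetry of $S_X$ and the product decomposition of $K(S_X)$ to extract $\iota_+^*\bigl(\sum_{z\in X^o}v^z_*\psi_z(\partial^M_z g)\bigr)=0$ in $K_n(X,D)$.

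The step I expect to be the real obstacle is this last one: transporting the reciprocity identity from the singular scheme $S_X$ down to the relative group $K_n(X,D)$, i.e.\ disentangling the contributions of the two branches and controlling the difference between $K$- and $G$-theory at the nodes. The hypothesis that $g$ has no residue along $|D|$ is precisely what makes the nodal contributions disappear; the same mechanism, reused in the proof of \propref{prop:Curve-vanish}, is the source of the enlargement of the modulus needed there (compare \S~\ref{sec:Pre-Milnor}).
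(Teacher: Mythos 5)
You never use the modulus condition on $W$, and without it the statement you are proving is actually false. The displayed hypothesis ($\partial^M_X(g)$ supported on $X^o$) only says that $g$ is unramified along $D$, i.e.\ $g\in\wh{K}^M_{n+1}(A)$ for $A=\sO_{X,D}$; but the conclusion lives in the \emph{relative} group $K_n(X,D)$, and the exact sequence the paper extracts from the localization/relativization diagram, $\wh{K}_{n+1}(A,I)\xrightarrow{\partial^Q_A}\bigoplus_{z\in X^o}K_n(k(z))\xrightarrow{(f^z_*)}K_n(X,D)$ (see ~\eqref{eqn:Curve-Milnor-2}), shows that what is needed is the much stronger statement $g\in\wh{K}^M_{n+1}(A,I)$, i.e.\ that $g$ dies in $\wh{K}^M_{n+1}(D)$ and not merely that it extends over $D$. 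This is exactly where the standing assumption $W\in z^{n+1}(X|(n+1)D,n+1)$ enters the paper's proof: the modulus condition gives $\sum_j\ord_x(g_j-1)\ge(n+1)m_x$ at each $x\in D$, and \lemref{lem:Milnor-0} together with \lemref{lem:Milnor-2} then place $g$ in $\wh{K}^M_{n+1}(A,I)$. A concrete counterexample to your version: take $k=\Q(\lambda)$, $X=\A^1_k=\Spec(k[s])$, $D=V(s^2)$, $n=1$, $g=\{1+s,\lambda\}$. Both entries are units at $s=0$, so $\partial^M_0(g)=0$ and your hypotheses hold; the only residue is $\lambda\in K^M_1(k(z))$ at $z=V(1+s)$. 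Any $\gamma\in K_2(A)$ with the same residues differs from $\{1+s,\lambda\}$ by a constant class in $K_2(k)$, so its image in $\wt{K}_2(k[s]/(s^2))\cong\Omega^1_{k}$ is $d\log\lambda\neq 0$; hence $\lambda\cdot[z]$ is not in the image of $\partial^Q_A$ on $K_2(A,I)$ and $f^z_*(\lambda)\neq 0$ in $K_1(X,D)$. (Consistently, $\ord_0(s)+\ord_0(\lambda-1)=1<4$, so the modulus condition for $2D$ fails.) Your closing remark that the modulus enlargement belongs only to the proof of \propref{prop:Curve-vanish} is therefore backwards: it is needed inside this very lemma.

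Beyond that, the route through the double has two further problems even where your hypotheses do suffice. First, $S_X$ is a singular curve, so the localization sequence you want to run on it is exact in $G$-theory, not $K$-theory; the resulting relation lives in $G_n(S_X)$ and there is no map back to $K_n(S_X)$ to project onto the summand $K(S_X,X_-)$. Second, even granting a relation in $K_n(S_X)$, the minus-branch terms $v^{z_-}_*(\beta_z)$ do not lie in the summand $K_n(S_X,X_-)$; their components there are $v^{z_-}_*(\beta_z)-p^*\iota^z_*(\beta_z)$, and after applying $\iota^*_+$ these are only determined modulo $\ker(K_n(X,D)\to K_n(X))$ --- which is precisely the part of the relative group that carries the content of the lemma. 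What you recover unconditionally is only the non-relative reciprocity $\sum_z\iota^z_*(\beta_z)=0$ in $K_n(X)$. The paper avoids the double entirely here: it uses the genuinely relative localization fiber sequence $\coprod_{z\in X^o}K(k(z))\to K(X,D)\to K(A,I)$ and the comparison cube between the Milnor and Quillen Gersten complexes, reducing everything to the single algebraic input $g\in\wh{K}^M_{n+1}(A,I)$.
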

\begin{proof}
  Suppose first that $D = \emptyset$. In this case, we have a diagram
  \begin{equation}\label{eqn:CV-ex}
    \xymatrix@C.8pc{
      K^M_{n+1}(F) \ar[r]^-{\partial^M_X} \ar[d]_-{\psi_F} & 
{\underset{z \in X^{(1)}}\oplus} K^M_n(k(z))
      \ar[d]^-{(\psi_z)_z}
      & \\
      K_{n+1}(F) \ar[r]^-{\partial^Q_X}  & {\underset{z \in X^{(1)}}\oplus} K_n(k(z))
      \ar[r]^-{(f^z_*)_z} & K_n(X).}
    \end{equation}

    The Gersten complex for Milnor $K$-theory canonically maps to the Gersten 
complex for the
    Quillen $K$-theory by \lemref{lem:L1}. 
In particular, the square in the above diagram is commutative. The bottom 
row is
    exact by Quillen's localization sequence and a limit argument. The lemma follows 
immediately from this diagram.

    We now let $D = m_1x_1 + \cdots + m_r x_r$, where $x_1, \ldots , x_r$ are distinct closed points of 
$X$ and $m_1, \ldots , m_r$ are positive integers. Let $A = \sO_{X,D}$ be the 
semi-local ring of $X$ at $D$ and let
    $I$ denote the ideal of $D$ inside $\Spec(A)$. The localization and
    relativization sequences give us the commutative diagram of homotopy fiber sequences
    \begin{equation}\label{eqn:Curve-Milnor-0}
      \xymatrix@C.8pc{
        {\underset{z \in X^o}\amalg} K(k(z)) \ar@{=}[r] \ar[d] & 
{\underset{z \in X^o}\amalg} K(k(z)) 
\ar[d] & \\
        K(X,D) \ar[r] \ar[d] & K(X) \ar[r] \ar[d] & K(D) \ar@{=}[d] \\
        K(A, I) \ar[r] & K(A) \ar[r] & K(D).}
      \end{equation}

      The associated homotopy groups long exact sequences yield the commutative diagram of 
exact sequences of abelian groups
      \begin{equation}\label{eqn:Curve-Milnor-1}
        \xymatrix@C.8pc{
          K_{n+1}(A,I) \ar[r]^-{\partial^Q_A} \ar[d] & 
{\underset{z \in X^o}\oplus} K_n(k(z)) \ar@{=}[d]
          \ar[r]^-{(f^z_*)_z} & K_n(X,D) \ar[d] \\
          K_{n+1}(A) \ar[r]^-{\partial^Q_A} \ar[d]_-{u^*} & 
{\underset{z \in X^o}\oplus} K_n(k(z)) 
          \ar[r]^-{(f^z_*)_z} & K_n(X) \\
          K_{n+1}(D), & & }
        \end{equation}
        where $u \colon D \inj \Spec(A)$ is the inclusion map.
        It follows from this diagram that there is an exact sequence
        \begin{equation}\label{eqn:Curve-Milnor-2}
          \wh{K}_{n+1}(A,I) \xrightarrow{\partial^Q_{A}} 
{\underset{z \in X^o}\oplus} K_n(k(z)) \xrightarrow{(f^z_*)_z} K_n(X,D).
        \end{equation}

        In order to compare this with the Milnor $K$-theory, we consider the diagram
        \begin{equation}\label{eqn:Curve-Milnor-3}
        \xymatrix@C.8pc{
          \wh{K}^M_{n+1}(A) \ar[dr] \ar[dd]_-{\psi_A} \ar[rr]^-{\partial^M_A} & &
          {\underset{z \in X^o}\oplus} K^M_n(k(z))
          \ar[dd]^->>>>>{(\psi_z)_z} \ar@{^{(}->}[dr] & \\
          & K^M_{n+1}(F) \ar[dd]^->>>>>>>{\psi_F} 
\ar[rr]^>>>>>>>>>{\partial^M_X}
          & &  {\underset{z \in X}\oplus} K^M_n(k(z)) \ar[dd]^-{(\psi_z)_z} \\
          K_{n+1}(A) \ar[dr] \ar[rr]^>>>>>>{\partial^Q_A}  & & 
{\underset{z \in X^o}\oplus} 
K_n(k(z)) \ar@{^{(}->}[dr] & \\
          & K_{n+1}(F) \ar[rr]^{\partial^Q_X} & & {\underset{z \in X}\oplus} K_n(k(z)).}
    \end{equation}

    The map $\psi_A$ comes from ~\eqref{eqn:Mil-Q}. 
    By the definition of $\wh{K}^M_{n+1}(A)$, we know that the composite map
    $\wh{K}^M_{n+1}(A) \to K^M_{n+1}(F) \xrightarrow{\partial^M_z} K^M_n(k(z))$ is 
zero for all closed points
    $z \in D$. Hence, the composite $\wh{K}^M_{n+1}(A) \to K^M_{n+1}(F) 
\xrightarrow{\partial^M_X}
    {\underset{z \in X}\oplus} K^M_n(k(z))$ factors through the map denoted by $\partial^M_A$ 
in the above diagram. In particular, the top face of ~\eqref{eqn:Curve-Milnor-3}
commutes. Exactly the same reason shows that
    the bottom face also commutes. Furthermore, $\partial^Q_A$ is same as the boundary map 
    in the bottom row of ~\eqref{eqn:Curve-Milnor-1}. The left and the right faces clearly commute and
    so does the front face by \lemref{lem:L1}. A diagram chase shows that the back face of 
~\eqref{eqn:Curve-Milnor-3} commutes too.
    
In order to show ~\eqref{eqn:Curve-Milnor-ex},
we consider our final diagram
\begin{equation}\label{eqn:Curve-Milnor-4}  
\xymatrix@C.8pc{
& \wh{K}^M_{n+1}(A) \ar[dr]^-{\partial^M_A} \ar[dd]^>>>>>>>>{\psi_A} & & \\
\wh{K}^M_{n+1}(A,I) \ar@{^{(}->}[ur] \ar[rr]^>>>>>>{\partial^M_A} 
\ar[dd]_-{\psi_{(A,I)}} & & 
{\underset{z \in X^o}\oplus} K^M_n(k(z)) \ar[dd]^-{(\psi_z)_z} \\
&  K_{n+1}(A) \ar[dr]^-{\partial^Q_A} & & \\
\wh{K}_{n+1}(A,I) \ar@{^{(}->}[ur] \ar[rr]^-{\partial^Q_A} & &
{\underset{z \in X^o}\oplus} K_n(k(z)) \ar[r]^-{(f^z_*)_z} & K_n(X,D).}
\end{equation}
The left face of this diagram commutes by 
~\eqref{eqn:Milnor-semi-local-1} and we just showed above that the right face commutes. 
In particular, the bottom face also commutes.
Furthermore, the bottom row is same as the exact sequence ~\eqref{eqn:Curve-Milnor-2}.
Our assertion will therefore follow if we can show that $g \in \wh{K}^M_{n+1}(A,I)$
provided $W \in z^{n+1}(X|(n+1)D,n+1)$.

Suppose now that $W \in z^{n+1}(X|(n+1)D,n+1)$. By the definition of the modulus 
condition (see \S~\ref{sec:HCGM}), it means that 
$\stackrel{n+1}{\underset{j = 1}\sum} \ord_{x_i}(g_j - 1) \ge (n+1)m_i$ for each
$1 \le i \le r$. 
Since $n, m_i \ge 1$, we must have $(n+1)m_i \ge m_i + n$ for each $1 \le i \le r$. 
This implies that $g = \{g_1, \ldots , g_{n+1}\} \in K^M_{n+1}(F, m_i + n)$
for every $1 \le i \le r$ in the notations of \lemref{lem:Milnor-0}. 
If we now apply \lemref{lem:Milnor-0} with $R = A_{\fm_i}$, it follows that
$g \in (1 + \fm_i^{m_i +n})K^M_n(F)$ for every $1 \le i \le r$. 
In other words, $g$ lies in the intersection 
$\stackrel{r}{\underset{i =1} \cap} (1 + \fm_i^{m_i + n})K^M_n(F)$ as an element 
of $K^M_{n+1}(F)$. It follows from \lemref{lem:Milnor-2} that 
$g \in \wh{K}^M_{n+1}(A, \un{m}) = \wh{K}^M_{n+1}(A,I)$.
This finishes the proof of the lemma.
\end{proof}

{\bf{Proof of \propref{prop:Curve-vanish}:}} 
We assume first that $n = 0$. In this case, we will show the stronger assertion that 
$cyc_{X|D}(\partial W)$ dies in $K_n(X,D)$ if $W \in z^{n+1}(X|D, n+1)$.
Let $A$ be the semi-local ring and $I \subset A$ the ideal as in the proof of
\lemref{lem:Curve-Milnor}.
By ~\eqref{eqn:Curve-Milnor-0}, we have an exact sequence
\begin{equation}\label{eqn:mod-k}
K_1(A,I) \xrightarrow{\partial^Q_A} {\underset{z \in X^o}\oplus} K_0(k(z))
\xrightarrow{(f^z_*)_z} K_0(X,D) \to 0.
\end{equation}
Comparing this with the exact sequence
\[
K_1(A) \xrightarrow{\partial^Q_A} {\underset{z \in X^o}\oplus} K_0(k(z))
\xrightarrow{(f^z_*)_z} K_0(X),
\]
we see that we can replace $K_1(A,I)$ by $\wh{K}_1(A,I)$ in ~\eqref{eqn:mod-k}.  
But then, it is same as the exact sequence
\[
(1 + I)^{\times} \xrightarrow{\partial^M_A} z^1(X|D, 0) 
\xrightarrow{cyc_{X|D}}  K_0(X,D) \to 0.
\]
Moreover, one knows that ${\rm Coker}(\partial^M_A) \cong \CH^1(X|D, 0)$
(e.g, see \cite[\S~2]{Krishna-1}). We therefore showed
that $cyc_{X|D}: \CH^1(X|D,0) \to K_0(X,D)$ is actually an isomorphism.

We now assume for the remaining part of the proof that $n \ge 1$.
As before, let $g_i: \ov{W} \to \ov{\square}$ denote the projections and let 
$f = p_X \circ \nu \colon \ov{W}
\to X$ be the projection to $X$. We also recall the element $g = \{g_1, \ldots , g_{n+1}\} 
\in K^M_{n+1}(F)$.
Our task is to show that
\begin{equation}\label{eqn:Curve-vanish-0}
cyc_{X|D}(\partial W) = \stackrel{n+1}{\underset{i =1}\sum}
(-1)^i cyc_{X|D}(\partial^{\infty}_i W - \partial^0_i W) = 0 \ \ {\rm in} \ \ K_n(X,D)
\end{equation}
if $W$ satisfies the modulus condition for $(n+1)D$.

Our idea is to compute the cycle class of $\partial W$ in terms of the cycle class
of the Milnor boundary $\partial^M_X(g)$. In order to do this, we consider in general
a closed point $z \in \ov{W}$. 
If $z \in \ov{W} \cap F^1_n$, then we must have $g_i(z) = 1$ for some $1 \le i \le n+1$.
This means that $g_i -1 \in \fm_z$. \lemref{lem:Milnor-1} then implies that $\partial^M_z(g) = 0$.
If $z \in W \setminus \partial^{\{0, \infty\}}_i W$ for all $1 \le i \le n+1$, then we must have
$g_i \in \sO^{\times}_{\ov{W}, z}$ for all $1 \le i \le n+1$ and hence $\partial^M_z(g) = 0$.
If $z \in \partial^0_i W$ for some $1 \le i \le n+1$, then $z \notin \partial^t_j W$ unless 
$(t, j) = (0,i)$.
This implies that $g_j \in \sO^{\times}_{\ov{W}, z}$ for all $j \neq i$.
Furthermore, the image of $g_j$ under the map $\sO^{\times}_{\ov{W}, z} \surj k(z)^{\times}$ 
is simply $y_j(z)$. By the definition of the boundary map in the Gersten complex for the 
Milnor $K$-theory (e.g., see \cite{BT}), we therefore have
\[
\partial^M_z(g) = (-1)^i \ord_z(g_i) \{y_1(z), \ldots , \wh{y_i(z)}, 
\ldots , y_{n+1}(z)\} \in K^M_n(k(z)).
\]
We have the same expression for $\partial^M_z(g)$ if $z \in \partial^\infty_i W$ for some 
$1 \le i \le n+1$.

If we identify $\ov{W}$ with $X$ via $f$ so that $W \subseteq X^o = X \setminus D$,
it follows from the above computation of
$\{\partial^M_z(g)| z \in \ov{W}^{(1)}\}$ and the comparison of ~\eqref{eqn:Curve-vanish-0} and
\lemref{lem:Curve-boundary} that the two things hold. Namely,
\begin{enumerate}
\item
The image of $g$ under the Milnor boundary 
$K^M_{n+1}(F) \xrightarrow{\partial^M_{X}} {\underset{z \in X}\oplus} K^M_n(k(z))$
lies in the subgroup ${\underset{z \in \partial W}\oplus} K^M_n(k(z)) \subset
{\underset{z \in X^o}\oplus} K^M_n(k(z))$.
\item  
The element $\partial^M_X(g)$ maps to $cyc_{X|D}(\partial W)$ under the composition of maps
\begin{equation}\label{eqn:Curve-vanish-1}
{\underset{z \in X^o}\oplus} K^M_n(k(z)) \xrightarrow{(\psi_z)_z}
{\underset{z \in X^o}\oplus} K_n(k(z)) \xrightarrow{(f^z_*)_z} K_n(X,D).
\end{equation}
\end{enumerate}
The proposition is therefore reduced to showing that 
${\underset{z \in X^o}\sum} f^z_* \circ \psi_z \circ \partial^M_{X} (g) = 0$
if $W$ lies in $z^{n+1}(X|(n+1)D, n+1)$.
But this follows at once from \lemref{lem:Curve-Milnor}.
$\hfill\square$

\section{Proof of \thmref{thm:Intro-1}}\label{sec:Prf-1}
In this section, we shall prove \thmref{thm:Intro-1} using the case of 
regular curves.
So let $k$ be any field. Let $X$ be a regular quasi-projective variety of 
pure dimension $d \ge 1$
over $k$ and let $D \subset X$ be an effective Cartier divisor.
We fix an integer $n \ge 0$.
In ~\S~\ref{sec:Generators}, we constructed the cycle class map
\[
cyc_{X|D} \colon z^{d+n}(X|D, n) \to K_n(X,D).
\]
The naturality statements in \thmref{thm:Intro-1} follow from  \lemref{lem:Push-forward} and \lemref{lem:flat-pull-back}. To prove \thmref{thm:Intro-1}, it therefore suffices to show the following.

\begin{prop}\label{prop:Zero-boundary}
Let $W \subset z^{d+n}(X|(n+1)D, n+1)$ be an integral cycle.
Then the image of $W$ under the composition 
\[
z^{d+n}(X|(n+1)D, n+1) \inj z^{d+n}(X|D, n+1)  \xrightarrow{\partial} z^{d+n}(X|D, n) 
\xrightarrow{cyc_{X|D}} K_n(X,D)
\]
is zero.
\end{prop}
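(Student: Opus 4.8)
The plan is to reduce \propref{prop:Zero-boundary} to the curve case \propref{prop:Curve-vanish} by a normalization-and-graph construction followed by proper push-forward. Since $W$ is an integral closed subscheme of $X \times \square^{n+1}$ of codimension $d+n$, and $X \times \square^{n+1}$ has pure dimension $d+n+1$, the cycle $W$ has dimension $1$. Let $\ov{W}$ be the closure of $W$ in $X \times \ov{\square}^{n+1}$ and let $\nu \colon C := \ov{W}^N \to X \times \ov{\square}^{n+1}$ be the map from the normalization of $\ov{W}$; then $C$ is an integral normal curve, hence a regular connected curve over $k$. Write $g = p_X \circ \nu \colon C \to X$ and $q = p_{\ov{\square}^{n+1}} \circ \nu \colon C \to \ov{\square}^{n+1}$ for the two projections. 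The map $g$ is proper since $\nu$ is finite and $p_X \colon X \times \ov{\square}^{n+1} \to X$ is proper. The modulus condition for $W$ reads $\nu^*((n+1)D \times \ov{\square}^{n+1}) \le \nu^*(X \times F^1_{n+1})$; on the open locus of $C$ lying over $X \times \square^{n+1}$ the right-hand side vanishes, hence so does the left, and therefore that locus, and in particular the generic point of $C$, is disjoint from $g^{-1}(D)$. Thus $g$ does not contract $C$ into $D$, so $E := g^*(D)$ is an effective Cartier divisor and $(C,E)$ is a modulus pair with $C$ regular.

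Next I would bring $(C,E)$ into the form demanded by \propref{prop:Curve-vanish}. Let $\Gamma = (\id_C, q) \colon C \inj C \times \ov{\square}^{n+1}$ be the graph of $q$, put $\ov{W}' = \Gamma(C)$ and $W' = \ov{W}' \cap (C \times \square^{n+1})$. Since $p_C \circ \Gamma = \id_C$, the composite $\ov{W}' \to C \times \ov{\square}^{n+1} \xrightarrow{p_C} C$ is an isomorphism, which is hypothesis (1). Because $\ov{W}' \cong C$ is already normal, its normalization map is $\Gamma$ itself, and the defining conditions of $[W'] \in z^{n+1}(C|(n+1)E, n+1)$ — proper intersection of $W'$ with all faces of $\square^{n+1}$, together with $\Gamma^*((n+1)E \times \ov{\square}^{n+1}) \le \Gamma^*(C \times F^1_{n+1})$ — are expressed entirely through the coordinate map $q$ and through $E = g^*(D) = \nu^*(D \times \ov{\square}^{n+1})$. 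Since the $\ov{\square}^{n+1}$-component of $\nu$ is exactly $q$, one checks $\Gamma^*(C \times F^1_{n+1}) = \nu^*(X \times F^1_{n+1})$ and $\Gamma^*((n+1)E \times \ov{\square}^{n+1}) = \nu^*((n+1)D \times \ov{\square}^{n+1})$, so these conditions are literally the admissibility and modulus condition already known for $W$. Hence $[W'] \in z^{n+1}(C|(n+1)E, n+1)$, and \propref{prop:Curve-vanish} yields $cyc_{C|E}(\partial W') = 0$ in $K_n(C,E)$.

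Finally I would descend along $g$. As $g \colon C \to X$ is proper with $E = g^*(D)$, \lemref{lem:Fin-tor-K-thy} provides a push-forward $g_* \colon K(C,E) \to K(X,D)$, \cite[Proposition~2.10]{KP} provides a proper push-forward $g_* \colon z^{1+n}(C|E, *) \to z^{d+n}(X|D, *)$, and these are compatible with the cycle class maps by \lemref{lem:Push-forward}. The morphism $g \times \id_{\square^{n+1}}$ restricts to a (finite) birational morphism $W' \to W$, both being the image of $\nu$ with the same function field, so $(g \times \id)_*[W'] = [W]$ in $z^{d+n}(X|D, n+1)$; since proper push-forward is a morphism of cubical complexes it commutes with $\partial$, giving $g_*(\partial W') = \partial W$ in $z^{d+n}(X|D, n)$. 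Therefore
\[
cyc_{X|D}(\partial W) = cyc_{X|D}\big(g_*(\partial W')\big) = g_*\big(cyc_{C|E}(\partial W')\big) = 0,
\]
which proves the proposition. The only point requiring real care is the second step: one must verify that replacing the ambient variety $X \times \ov{\square}^{n+1}$ by $C \times \ov{\square}^{n+1}$ preserves face-admissibility and, crucially, the modulus inequality on the nose. Note that the factor $n+1$ is not spent here — it is consumed inside \propref{prop:Curve-vanish} through \lemref{lem:Milnor-0} and \lemref{lem:Milnor-2} — so the passage from $W$ to $W'$ is an equivalence of conditions rather than a weakening.
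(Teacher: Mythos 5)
Your proof is correct and follows essentially the same route as the paper's: normalize $\ov{W}$, use the graph embedding to realize the cycle over the regular curve $C=\ov{W}^N$ with modulus $E=g^*(D)$, check that admissibility and the modulus inequality transfer verbatim, apply \propref{prop:Curve-vanish}, and push forward using \lemref{lem:Fin-tor-K-thy} and \lemref{lem:Push-forward}. The only deviation is that the paper first disposes of the case where $W$ lies over a closed point of $X$ (\lemref{lem:Easy-case}, via Weil reciprocity) so as to work only with a finite projection $\ov{W}^N \to X$, whereas you absorb that case into the general argument by invoking properness throughout; this is legitimate, since \lemref{lem:Fin-tor-K-thy} and \lemref{lem:Push-forward} are stated for proper morphisms and \propref{prop:Curve-vanish} (through \lemref{lem:Curve-Milnor}) explicitly covers $E=\emptyset$.
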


\vskip .3cm

We shall prove this proposition in several steps.
Let $\ov{W} \subset X \times \ov{\square}^{n+1}$ be the closure of $W$ and let 
$\nu \colon \ov{W}^N \to X \times \ov{\square}^{n+1}$ be the map induced on the 
normalization of $\ov{W}$.       
We begin with a direct proof of one easy case of the proposition 
as a motivating step.

\begin{lem}\label{lem:Easy-case}
Suppose that $W$ lies over a closed point of $X$. 
Then the assertion of \propref{prop:Zero-boundary} holds.
\end{lem}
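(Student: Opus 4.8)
The plan is to reduce the statement to a computation entirely over a field, where it becomes the assertion of \propref{prop:Curve-vanish} (or rather its $n=0$ analogue combined with the general mechanism). Suppose $W$ lies over a closed point $x \in X$; since $W$ is an admissible cycle with modulus $(n+1)D$, and $W \subset X \times \square^{n+1}$ meets every face properly, the support of $W$ maps to $x$, and in particular $x \notin D$ (otherwise the modulus condition forces $W$ to meet $X \times F^1_{n+1}$, contradicting properness, or $W$ would be empty). So we may factor everything through the closed immersion $\iota^x \colon \Spec(k(x)) \inj X^o \inj X$, which is a map of modulus pairs $(\Spec(k(x)), \emptyset) \to (X,D)$. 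First I would observe that $W$ is the pushforward under $\iota^x \times \id_{\square^{n+1}}$ of a cycle $W_x \in z^{n+1}(k(x), n+1)$ (here $d$ drops out because $\Spec(k(x))$ is a point). By the naturality of $\partial$ under pushforward and \lemref{lem:Push-forward}, we have
\[
cyc_{X|D}(\partial W) = cyc_{X|D}(\iota^x_*(\partial W_x)) = \iota^x_* \circ cyc_{k(x)}(\partial W_x) \in K_n(X,D),
\]
so it suffices to show $cyc_{k(x)}(\partial W_x) = 0$ in $K_n(k(x))$.

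Next I would invoke the curve case directly. Since $\Spec(k(x))$ is a point, $z^{n+1}(k(x), n+1) = z^{1+n}(\Spec(k(x)) \,|\, \emptyset, n+1)$ fits the hypotheses of \S~\ref{sec:curves} with the curve $X$ there replaced by $\Spec(k(x))$ and the divisor $D$ there replaced by $\emptyset$: we need only check conditions (1) and (2) preceding \propref{prop:Curve-vanish}. Condition (2) with $E = \emptyset$ is automatic (the modulus condition over the empty divisor is vacuous), and if $W_x$ is not already of the form $\ov{W} \to X$-isomorphic we simply decompose it into its irreducible components and handle each; for a single integral cycle $\ov{W_x} \subset \Spec(k(x)) \times \ov{\square}^{n+1}$ the projection to $\Spec(k(x))$ is automatically an isomorphism onto its (one-point) image after restricting to that component's image. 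Then \propref{prop:Curve-vanish} applied with $D = \emptyset$ — for which, inspecting its proof, the $n=0$ case is the isomorphism statement and the $n \ge 1$ case reduces via \lemref{lem:Curve-Milnor} to the $D = \emptyset$ diagram \eqref{eqn:CV-ex}, which needs no modulus hypothesis at all — gives $cyc_{k(x)}(\partial W_x) = 0$. (Concretely, $cyc_{k(x)}(\partial W_x)$ is the image of $\partial^M_{\Spec k(x)}$ of the Milnor symbol $g = \{g_1,\dots,g_{n+1}\} \in K^M_{n+1}(k(x)(W_x))$ under the pushforward in \eqref{eqn:Curve-vanish-1}, and the Gersten complex for Milnor $K$-theory maps compatibly to that for Quillen $K$-theory by \lemref{lem:L1}, whose bottom row is exact by Quillen localization.)

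The only real subtlety — and I expect it to be the main obstacle to writing this cleanly — is bookkeeping the identification of $W$ (a cycle living in the ambient $X \times \square^{n+1}$ and supported over $x$) with a cycle $W_x$ genuinely on $\Spec(k(x)) \times \square^{n+1}$, and checking that this identification is compatible with both the boundary operator $\partial$ and the definition of $cyc$ on generators in \S~\ref{sec:Generators}. This is essentially the flat (in fact finite) base-change compatibility built into the very definition of $cyc_{X|D}([Z]) = f^Z_* \circ \psi_Z \circ cyc^M_Z([Z])$: for a generator $Z$ over $x$, the map $f^Z \colon Z \to X$ already factors as $Z \to \Spec(k(x)) \to X$, so $cyc_{X|D}([Z]) = \iota^x_* \bigl( cyc_{k(x)}([Z]) \bigr)$ by functoriality of pushforward in $K$-theory. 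Extending linearly over the finitely many points of $|W|$ (all equal to $x$) and using that $\partial$ commutes with this factorization componentwise finishes the reduction. Once that is in place, the lemma is immediate from \propref{prop:Curve-vanish}.
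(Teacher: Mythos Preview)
Your reduction to $X = \Spec(k(x))$ and $D = \emptyset$ is correct and matches the paper. The gap is in the second step: you cannot invoke \propref{prop:Curve-vanish} with $X$ replaced by $\Spec(k(x))$. That proposition is stated for a regular \emph{curve} $X$ together with the hypothesis that the projection $\ov{W} \to X$ is an \emph{isomorphism}; here $\Spec(k(x))$ is zero-dimensional and $\ov{W_x}$ is a curve, so the projection collapses a curve to a point and is never an isomorphism. Your sentence ``the projection to $\Spec(k(x))$ is automatically an isomorphism onto its (one-point) image'' confuses surjectivity with being an isomorphism of schemes. (Incidentally the codimension is $n$, not $n+1$: one has $W_x \in z^n(k(x), n+1)$.)

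What actually works --- and what the paper does --- is to run the mechanism of \propref{prop:Curve-vanish} with the curve $\ov{W_x}^N$ (the normalization of the closure of $W_x$ in $\ov{\square}^{n+1}_{k(x)}$), not with $\Spec(k(x))$. One forms the Milnor symbol $g = \{g_1,\dots,g_{n+1}\} \in K^M_{n+1}(k(W_x))$ and takes the Gersten boundary $\partial^M_{\ov{W_x}^N}$ along the closed points of $\ov{W_x}^N$; compatibility of the Milnor and Quillen Gersten complexes (\lemref{lem:L1}) reduces the vanishing of the pushforward to $K_n(k(x))$ to the Quillen side, and the resulting statement is precisely the Weil reciprocity theorem for the proper regular curve $\ov{W_x}^N$ over $k(x)$. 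Your parenthetical has exactly this shape, but writes $\partial^M_{\Spec k(x)}$ where it must be $\partial^M_{\ov{W_x}^N}$, and appeals to the exactness in~\eqref{eqn:CV-ex} (which concerns the curve $X$, not a point). Once the correct curve is in play your outline becomes the paper's proof; as written, the invocation of \propref{prop:Curve-vanish} is vacuous.
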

\begin{proof}
In this case,  the modulus condition implies that
such a closed point must lie in $X^o$. In other words, there is a closed point
$x \in X^o$ such that $W \in z^n(\Spec(k(x)), n+1) \subset z^{d+n}(X|D, n+1)$.
Using the commutative diagram (see the construction of $cyc_{X|D}$ in \S~\ref{sec:Generators})
\begin{equation}\label{eqn:Zero-boundary-0}
\xymatrix@C.8pc{
z^n(\Spec(k(x)), n+1) \ar[r]^-{\partial} \ar[d] & 
z^n(\Spec(k(x)), n) \ar[r]^-{cyc_x} \ar[d] & K_n(k(x)) \ar[d] \\
z^{d+n}(X|D, n+1) \ar[r]^-{\partial} & z^{d+n}(X|D, n) \ar[r]^-{cyc_{X|D}} & K_n(X,D),}
\end{equation}
it suffices to show that $cyc_x(\partial W) = 0$ in $K_n(k(x))$.
We can thus assume that $X = \Spec(k)$ and $D = \emptyset$.

We let $F$ denote the function field of $W$ and let 
$g = \{g_1, \ldots , g_{n+1}\} \in K^M_{n+1}(F)$
denote the Milnor symbol given by the projection maps 
$g_i \colon \ov{W} \to \ov{\square}$.
Following the proof of \propref{prop:Curve-vanish}, our assertion is equivalent to 
showing that the composition
\[
K^M_{n+1}(F) \xrightarrow{\partial^M_{\ov{W}}} 
{\underset{z \in \ov{W}^{(1)}}\oplus} K^M_n(k(z))
\to {\underset{z \in \ov{W}^{(1)}}\oplus} K_n(k(z)) \xrightarrow{(f^z_*)_z} K_n(k)
\]
kills $g$. Arguing as in ~\eqref{eqn:CV-ex}, it suffices to show that
the composite map
\[
K_{n+1}(F) \xrightarrow{\partial^Q_{\ov{W}}} 
 {\underset{z \in \ov{W}^{(1)}}\oplus} K_n(k(z)) \xrightarrow{(f^z_*)_z} K_n(k)
\]
kills $g$. Since this map is same as the composite map
\[
K_{n+1}(F) \xrightarrow{\partial^Q_{\ov{W}^N}} 
 {\underset{z \in {(\ov{W}^N)}^{(1)}}\oplus} K_n(k(z)) \xrightarrow{(f^z_*)_z} K_n(k),
\]
we need to show that $g$ dies under this map. But this is the well known 
Weil reciprocity theorem in algebraic $K$-theory 
(e.g., see \cite[Chapter~IV, Theorem~6.12.1]{Weibel-1}).
\end{proof}

We now proceed to the proof of the general case of \propref{prop:Zero-boundary}.
In view of \lemref{lem:Easy-case}, we can assume that the projection map
$f: \ov{W}^N \to X$ is finite.

This gives rise to a Cartesian square
\begin{equation}\label{eqn:Cyc-mod-*2}
\xymatrix@C.8pc{
\ov{W}^N \ar@{^{(}->}[dr]^{\phi} \ar@/_2pc/[ddr]_{{\rm id}} \ar@/^1pc/[drr] & & \\
& \ov{W}^N \times \ov{\square}^{n+1} \ar[r] \ar[d]_{p'} &
X \times \ov{\square}^{n+1} \ar[d]^{p_X} \\
& \ov{W}^N \ar[r]_{f} & X.}
\end{equation}

Note that $\phi$ is a closed immersion. Using the finiteness of $f$ and 
admissibility of $W$,
it is evident that that 
$W^N = \phi(\ov{W}^N) \cap (\ov{W}^N \times \square^{n+1})$
is an admissible cycle on $\ov{W}^N \times \square^{n+1}$.
In other words, it intersects the faces of $\ov{W}^N \times \square^{n+1}$ properly,
and satisfies the modulus $(n+1)E$, if we let $E = f^*(D) \subsetneq \ov{W}^N$. 
Notice that a consequence
of the modulus condition for $W$ is that $E$ is a proper Cartier divisor on $\ov{W}^N$.
Since $f$ is finite and $E = f^*(D)$, we have a push-forward map
$f_* \colon z^{n+1}(\ov{W}^N|(n+1)E, *) \to z^{n+d}(X|(n+1)D, *)$ 
(see \cite[Proposition~2.10]{KP}).
Since $(f \times {\rm id}_{\square^n})$ takes $W^N$ to $W$ and since $W^N \to W$ 
is the normalization map, we see that $f_*([W^N]) = [W] \in z^{n+d}(X|(n+1)D, n+1)$.
In particular, we get
\begin{equation}\label{eqn:PF-boundary}
  f_*(\partial W^N) = \partial (f_*([W^N])) = \partial W.
  \end{equation}

\vskip.3cm

{\bf{Proof of \propref{prop:Zero-boundary}:}}
In view of \lemref{lem:Easy-case}, we can assume that $f \colon \ov{W}^N \to X$ is finite.
Using ~\eqref{eqn:PF-boundary} and \lemref{lem:Push-forward}, we have that 
\[
cyc_{X|D} (\partial W) = cyc_{X|D} \circ   f_*(\partial W^N)  = f_* \circ cyc_{\ov{W}^N|E}(\partial W^N).
\]

We can therefore  assume that $X$ is a regular curve 
and $W \in z^{n+1}(X|(n+1)D, n+1)$ is an integral cycle such that the map $p_X: \ov{W} \to X$ is an 
isomorphism.
We can now apply \propref{prop:Curve-vanish} to finish the proof.
$\hfill\square$

\begin{remk}\label{remk:Fin-tor-0}
  We remark that throughout the proof of \thmref{thm:Intro-1}, it is only in 
\lemref{lem:Fin-tor-K-thy} where
  we need to assume that $X$ is regular everywhere (see \remref{rem:Fin-tor-K-thy-1}). One would like to 
believe that
  for a proper map of modulus pairs $f \colon (Y, f^*(D)) \to (X,D)$, 
there exists a push-forward map
  $f_* \colon K_*(Y, f^*(D)) \to K(X,D)$. But we do not know how to prove it.
  \end{remk}

\subsection{The cycle class map with rational coefficients}\label{sec:Ratl}
If we work with rational coefficients, we can prove the following improved version of
\thmref{thm:Intro-1}. This may not be very useful in positive characteristic.
However, one expects it to have many consequences in characteristic zero. The reason for
this is that the relative algebraic $K$-groups of nilpotent ideals are known be
$\Q$-vector spaces in characteristic zero.
The proofs of \thmref{thm:Intro-2} and its corollaries in this paper are crucially based on
this improved version.

\begin{thm}\label{thm:Intro-1-ratl}
Let $X$ be a regular quasi-projective variety of pure dimension $d \ge 1$ over a field $k$
and let $D \subset X$ be an effective Cartier divisor. Let $n \ge 0$ be an integer.
Then there is a cycle class map
\begin{equation}\label{eqn:Intro-1-ratl-0}
cyc_{X|D} \colon  \CH^{n+d}(X|D,n)_{\Q} \to K_n(X, D)_{\Q}.
\end{equation}
\end{thm}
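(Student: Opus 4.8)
The statement to prove is \thmref{thm:Intro-1-ratl}: the existence of a rational cycle class map $cyc_{X|D}\colon \CH^{n+d}(X|D,n)_\Q \to K_n(X,D)_\Q$, without the pro-system. The starting point is the integral map on generators $cyc_{X|D}\colon z^{d+n}(X|D,n) \to K_n(X,D)$ from \eqref{eqn:cycle-4}, which is already constructed, together with its naturality for proper and flat morphisms (\lemref{lem:Push-forward}, \lemref{lem:flat-pull-back}). Tensoring the target with $\Q$, it suffices to show that the induced map $z^{d+n}(X|D,n)_\Q \to K_n(X,D)_\Q$ kills the image of the boundary $\partial\colon z^{d+n}(X|D,n+1)_\Q \to z^{d+n}(X|D,n)_\Q$, i.e.\ it factors through $\CH^{n+d}(X|D,n)_\Q$. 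So the whole game is to upgrade \propref{prop:Zero-boundary} from ``boundaries of cycles with modulus $(n+1)D$'' to ``boundaries of cycles with modulus $D$'', at the cost of inverting primes.

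\textbf{Key steps.} First I would reduce, exactly as in the proof of \propref{prop:Zero-boundary}, to the case of a regular curve: given an integral cycle $W \in z^{d+n}(X|D,n+1)$, pass to the normalization $\ov W^N$ of its closure, and using \lemref{lem:Easy-case} we may assume the projection $f\colon \ov W^N \to X$ is finite; then by \eqref{eqn:PF-boundary} and \lemref{lem:Push-forward} we have $cyc_{X|D}(\partial W) = f_*\circ cyc_{\ov W^N|E}(\partial W^N)$ with $E = f^*(D)$, so we are reduced to $X$ a regular curve and $p_X\colon \ov W \to X$ an isomorphism. At this point I would re-run the proof of \propref{prop:Curve-vanish} verbatim up to the step that required the modulus $(n+1)D$: we form the Milnor symbol $g = \{g_1,\dots,g_{n+1}\} \in K^M_{n+1}(F)$, identify $cyc_{X|D}(\partial W)$ with the image of $\partial^M_X(g)$ under \eqref{eqn:Curve-vanish-1}, and use \lemref{lem:Curve-Milnor} to conclude $\sum_{z} f^z_*\circ\psi_z\circ\partial^M_X(g)=0$ once we know $g \in \wh K^M_{n+1}(A,I)_\Q$, where $A = \sO_{X,D}$ and $I$ is the ideal of $D$. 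The crucial input is the replacement of \lemref{lem:Milnor-0}/\lemref{lem:Milnor-2} by \lemref{lem:Milnor-2-Q-coeff}: since $W$ now only satisfies the modulus condition for $D$, i.e.\ $\sum_j \ord_{x_i}(g_j-1) \ge m_i$ for each maximal ideal $\fm_i$ of $A$, we get $g \in (1+\fm_i^{m_i})K^M_n(F)_\Q$ for all $i$, hence $g \in K^M_{n+1}(F,\un m,\mathrm{sum})_\Q$, and \lemref{lem:Milnor-2-Q-coeff} gives $g \in \wh K^M_{n+1}(A,\un m)_\Q = \wh K^M_{n+1}(A,I)_\Q$. The rest of the diagram chase in \lemref{lem:Curve-Milnor} (diagrams \eqref{eqn:Curve-Milnor-3}, \eqref{eqn:Curve-Milnor-4}) goes through after tensoring with $\Q$ since all the maps there are already defined integrally.

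\textbf{Main obstacle.} The one genuinely new ingredient is precisely \lemref{lem:Milnor-2-Q-coeff}, which removes the shift from $m+n$ to $m$ by using that in $K^M_2(F)_\Q$ one can write $\{1+u_0\pi^{i+m}, \pi\} = \tfrac{1}{i+m}\{1+u_0\pi^{i+m},\pi^{i+m}\}$ and absorb the $\pi^{i+m}$ into a Steinberg relation; this is why rational coefficients are essential and why, as noted after \thmref{thm:Intro-1}, one cannot avoid the pro-system integrally for a general divisor. Everything else is a rational-coefficients reprise of \secref{sec:curves} and \secref{sec:Prf-1}. Finally, the naturality claims in the statement (covariance for proper morphisms, contravariance for flat morphisms of relative dimension zero) and the compatibility with \eqref{eqn:cycle-4} on generators are immediate from the corresponding integral statements \lemref{lem:Push-forward} and \lemref{lem:flat-pull-back} by tensoring with $\Q$, together with the fact that $\CH^{n+d}(X|D,n)_\Q$ is generated by the classes of admissible closed points.
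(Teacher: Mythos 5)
Your proposal is correct and follows essentially the same route as the paper: reduce via push-forward to the case of a regular curve with $\ov{W}\cong X$, re-run the argument of \propref{prop:Curve-vanish} and \lemref{lem:Curve-Milnor} with modulus $D$ in place of $(n+1)D$, and observe that the only step that breaks is the final containment $g\in\wh{K}^M_{n+1}(A,I)$, which is repaired rationally by substituting \lemref{lem:Milnor-2-Q-coeff} for \lemref{lem:Milnor-2}. You have also correctly isolated the computation $\{1+u_0\pi^{i+m},\pi\}=\frac{1}{i+m}\{1+u_0\pi^{i+m},\pi^{i+m}\}$ as the precise point where inverting integers is indispensable (note only \lemref{lem:Milnor-2} is replaced; \lemref{lem:Milnor-0} is still invoked as before, now with exponent $m_i$ instead of $m_i+n$).
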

\begin{proof}
  We shall only indicate where do we use rational coefficients in the proof of
  \thmref{thm:Intro-1} to achieve this improvement as rest of the proof is
  just a repetition. Since we work with rational coefficients, we shall
  ignore the subscript $A_{\Q}$ in an abelian group $A$ in this proof and treat $A$ as a
  $\Q$-vector space.

  As we did before, we need to prove \propref{prop:Zero-boundary} with
  $W \in z^{d+n}(X|D, n+1)$. 
We can again reduce the proof of this proposition to the case when $X$ is a regular curve 
and $W \in z^{n+1}(X|D, n+1)$ is an integral cycle such that the map $p_X: \ov{W} \to X$ is an 
isomorphism. We thus have to prove \propref{prop:Curve-vanish} with
$W \in z^{n+1}(X|D, n+1)$. In turn, this is reduced to proving \lemref{lem:Curve-Milnor}
when $W \in z^{n+1}(X|D, n+1)$. However, a close inspection shows that the proof of
\lemref{lem:Curve-Milnor} works in the present case too with no change until its
last step where we need to use \lemref{lem:Milnor-2-Q-coeff} instead of \lemref{lem:Milnor-2}.
\end{proof}

\subsection{Chow groups and $K$-theory with compact support}\label{sec:Comact-supp}
Let $X$ be a quasi-projective scheme of pure dimension $d$ over a field $k$ 
and let $\ov{X}$ be
a proper compactification of $X$ such that $\ov{X} \setminus X$ is supported 
on an effective Cartier divisor $D$. Recall from \cite[Lemma~2.9]{BS} that
$\CH^p(X,n)_c := {\underset{m \ge 1}\varprojlim} \CH^p(\ov{X}|mD,n)$ is independent 
of the choice of  $\ov{X}$
and is called the higher Chow group of $X$ with compact support.
One can similarly define the algebraic $K$-theory with compact support by
$K_n(X)_c := {\underset{m \ge 1}\varprojlim} K_n(\ov{X}, mD)$.
It follows from \cite[Theorem~A]{KST} that this is also independent of the choice 
of $\ov{X}$. As a consequence of \thmref{thm:Intro-1}, we get

\begin{cor}\label{cor:Compact-cycle}
  Let $X$ be a regular quasi-projective scheme of pure dimension $d$ over a field 
admitting resolution of
  singularities. Then there exists a cycle class map
  \[
    cyc_X \colon \CH^{n+d}(X, n)_c \to K_n(X)_c.
  \]
  \end{cor}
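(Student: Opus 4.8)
The plan is to deduce \corref{cor:Compact-cycle} from \thmref{thm:Intro-1} by passing to the inverse limit over the modulus. First I would fix a proper compactification $\ov{X}$ of $X$ with $\ov{X} \setminus X$ supported on an effective Cartier divisor $D$; since $X$ is regular and admits resolution of singularities, such a $\ov{X}$ exists (and by the cited independence results of \cite{BS} and \cite{KST}, the limits below do not depend on this choice). Applying \thmref{thm:Intro-1} to the pair $(\ov{X}, D)$ gives a morphism of pro-abelian groups
\[
cyc_{\ov{X}|D} \colon \{\CH^{n+d}(\ov{X}|mD, n)\}_m \to \{K_n(\ov{X}, mD)\}_m .
\]

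Next I would apply the inverse-limit functor $\varprojlim_m$ to this morphism of pro-systems. Here I must be slightly careful: the cycle class map of \thmref{thm:Intro-1} is a priori only a morphism in $\Pro(\Ab)$, realized on representatives by the explicit (strict) maps $\CH^{n+d}(\ov{X}|(n+1)mD, n) \to K_n(\ov{X}, mD)$ described in the text after \thmref{thm:Intro-1}. But a morphism in $\Pro(\Ab)$ between two sequential pro-systems induces a well-defined homomorphism on the inverse limits $\varprojlim_m \CH^{n+d}(\ov{X}|mD, n) \to \varprojlim_m K_n(\ov{X}, mD)$, because $\varprojlim$ over a cofinal reindexing agrees with the original limit; concretely, the reindexing $m \mapsto (n+1)m$ is cofinal in $\N$, so $\varprojlim_m \CH^{n+d}(\ov{X}|(n+1)mD, n) \cong \varprojlim_m \CH^{n+d}(\ov{X}|mD, n)$, and composing with the structure maps gives the desired map to $\varprojlim_m K_n(\ov{X}, mD)$. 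By definition these limits are $\CH^{n+d}(X, n)_c$ and $K_n(X)_c$, respectively, so this produces the claimed map $cyc_X \colon \CH^{n+d}(X,n)_c \to K_n(X)_c$.

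Finally I would note that well-definedness (independence of $\ov{X}$) is immediate: the source and target are independent of $\ov{X}$ by \cite[Lemma~2.9]{BS} and \cite[Theorem~A]{KST}, and for two compactifications one can dominate both by a third (again using resolution of singularities), so the functoriality of $cyc_{\ov{X}|D}$ for proper morphisms in \thmref{thm:Intro-1} forces the limit maps to agree. I do not expect any real obstacle here — the entire content is already packaged in \thmref{thm:Intro-1}; the only point requiring a sentence of care is that taking $\varprojlim$ of a non-strict pro-morphism still yields a map of limits, which is handled by the cofinality of $m \mapsto (n+1)m$ as above. Thus the corollary follows formally.
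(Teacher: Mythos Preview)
Your proposal is correct and is exactly the argument the paper intends: the corollary is stated as an immediate consequence of \thmref{thm:Intro-1}, obtained by choosing a regular compactification $\ov{X}$ (this is where resolution of singularities is needed, as the paper notes in \remref{remk:Fin-tor-0}) and passing to the inverse limit over $m$. Your extra care about cofinality of $m\mapsto (n+1)m$ and independence of $\ov{X}$ via proper functoriality is a correct elaboration of details the paper leaves implicit.
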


  We remark that even if $\CH^{n+d}(X, n)_c$ is defined without resolution of
  singularities, this condition is needed in \corref{cor:Compact-cycle} because
  the usage of \thmref{thm:Intro-1} requires that $X$ admits regular compactifications
  (see \remref{remk:Fin-tor-0}).

  \section{Milnor $K$-theory, 0-cycles and
  de Rham-Witt complex}\label{sec:char-0}
Our next goal is to show that the cycle class map of \thmref{thm:Intro-1} 
completely describes the relative $K$-theory of truncated polynomial
rings in terms of additive 0-cycles in characteristic zero.
We shall give a precise formulation of our main result for fields in
\S~\ref{sec:char-0-*} and for semi-local rings in \S~\ref{sec:local}.
In this section, we prove some results on the
connection between {\sl a priori} three different objects: the additive 0-cycles, 
the relative Milnor $K$-theory and the de Rham-Witt forms.
These results will form one of the two keys steps in showing that
the cycle class map $cyc_k$ (see ~\eqref{eqn:Add-1}) factors through the
Milnor $K$-theory in characteristic zero.

\subsection{The additive 0-cycles}\label{sec:Add-0}
To set up the notations, let $k$ be a field of any characteristic. Let $m \ge 0$ 
be an integer.
Recall that the additive higher Chow groups $\TCH^p(X, *;m)$ 
of $X \in \Sch_k$ with modulus $m$ are defined so that there are
canonical isomorphisms
$\Tz^p(X, n+1;m) \ {\cong}  \ z^p(X \times \A^1_k|X \times (m+1)\{0\}, n)$ and

\begin{equation}\label{eqn:Add-0-*}
\TCH^p(X, n+1;m) \xrightarrow{\cong} \CH^p(X \times \A^1_k|X \times (m+1)\{0\}, n),
\end{equation}
where the term on the right are the Chow groups with modulus defined in 
\S~\ref{sec:HCGM}.
Using a similar isomorphism between the relative $K$-groups,  
\thmref{thm:Intro-1} provides a commutative diagram of pro-abelian groups
\begin{equation}\label{eqn:Add-1}
\xymatrix@C.8pc{
\{\TCH^{d+n+1}(X, n+1;m)\}_m \ar[r]^-{cyc_X} \ar[d]_-{\cong} &  
\{K_{n+1}({X[t]}/{(t^{m+1})}, (t))\}_m 
\ar[d]^-{\cong} \ar[d]_-{\partial} \\
\{\CH^{d+1+n}(X \times \A^1_k|X \times (m+1)\{0\}, n)\}_m 
\ar[r]^-{cyc_{{\A^1_X}|X \times \{0\}}} &
\{K_n(X \times \A^1_k, X \times (m+1)\{0\})\}_m}
\end{equation}
for an equi-dimensional regular scheme $X$ of dimension $d$ and integer $n \ge 0$.

\subsection{Connection with de Rham-Witt complex}\label{sec:DRham}
Let $k$ be a field with ${\rm char}(k) \neq 2$.
Let $R$ be a regular semi-local ring which is essentially of finite type
over $k$. Let $m, n \ge 1$ be two integers. Let $\W_m\Omega^*_R$ be the big 
de Rham-Witt complex of Hesselholt and Madsen (see \cite[\S~1]{R}).
We shall let $\un{a} = (a_1, \ldots , a_m)$ denote a general element of $\W_m(R)$.
Recall from \cite[Appendix]{R} that there is an isomorphism of
abelian groups $\gamma \colon \W(R) \xrightarrow{\cong} (1 + tR[[t]])^{\times}$
(with respect to addition in $\W(R)$ and multiplication in $R[[t]]$)
such that $\gamma(\un{a}) = \gamma((a_1, \ldots )) =
\stackrel{\infty}{\underset{i = 1}\prod} (1- a_it^i)$.
This map sends ${\rm Ker}(\W(R) \surj \W_m(R))$ isomorphically onto the subgroup
$(1 + t^{m+1}k[[t]])^{\times}$ and hence there is a canonical isomorphism of abelian groups
\begin{equation}\label{eqn:Witt-ps}
  \gamma_m \colon \W_m(R) \xrightarrow{\cong} \frac{(1 + tR[[t]])^{\times}}{(1 + t^{m+1}R[[t]])^{\times}}.
\end{equation}
Under this isomorphism, the Verschiebung map $V_r \colon \W_m(R) \to \W_{mr +r -1}(R)$
corresponds to the map on the unit groups induced by the $R$-algebra homomorphism
${R[[t]]}/{(t^{m+1})} \to {R[[t]]}/{(t^{rm+r})}$ under which $t \mapsto t^r$.
Recall also that there is a restriction ring homomorphism
$\xi^R_0 \colon \W_{m+1}(R) \surj \W_m(R)$ as part of the Witt-complex structure.
We shall often use the notation $V_r$ also for the composition
$\W_m(R) \xrightarrow{V_r} \W_{mr + r-1}(R) \stackrel{(\xi^R_0)^{r-1}}{\surj} \W_{mr}(R)$.
With this interpretation of the Verschiebung map, every element
$\un{a} \in \W_m(R)$ has a unique presentation
\begin{equation}\label{eqn:Sum-Witt}
  \un{a} = \stackrel{m}{\underset{i =1}\sum} V_i([a_i]_{ \lfloor{m/i}\rfloor}),
  \end{equation}
  where for a real number $x \in \R_{\ge 0}$, one writes
    $\lfloor x \rfloor$ for the greatest integer not bigger than $x$ 
  and $[ \ ]_{\lfloor x \rfloor} \colon R \to \W_{\lfloor x \rfloor}(R)$ for the Teichm{\"u}ller map
  $[a]_{\lfloor x \rfloor} = (a, 0, \ldots , 0)$.

It was shown in \cite[Theorem~7.10]{KP-4} that $\{\TCH^*(R,*;m)\}_{m \ge 1}$
is a pro-differential graded algebra which has the structure of a
restricted Witt-complex over $R$ in the sense of \cite[Definition~1.14]{R}.
Using the universal property of $\{\W_m\Omega^*_R\}_{m \ge 1}$ as the 
universal restricted Witt-complex over $R$, one gets a functorial morphism of
restricted Witt-complexes
\begin{equation}\label{eqn:Witt-Chow}
  \tau^R_{n,m} \colon \W_m \Omega^{n-1}_R \to \TCH^n(R, n ; m).
\end{equation}

It was shown in \cite[Theorem~1.0.2]{KP-2} that this map is an isomorphism.
When $R$ is a field, this isomorphism was shown earlier by R{\"u}lling \cite{R}.
We shall use this isomorphism throughout the remaining part of this paper
and consequently, will usually make no distinction between the source and target of this map.

\subsection{Connection with Milnor  $K$-theory}\label{sec:Milnor*}
Continuing with the above notations, we have another set of maps
\begin{equation}\label{eqn:Milnor-Chow-TCH}
  \xymatrix@C.8pc{
& \TCH^1(R,1;m)  \otimes \CH^{n-1}(R, n-1) \ar[dr]^-{\psi^R_{n,m}} & \\   
\W_m(R) \otimes K^M_{n-1}(R) \ar[ur]^-{\tau^R_{1,m} \otimes \nu^R_{n-1}} & & 
\TCH^n(R,n;m).}
\end{equation}

Here,
$\nu^R_n \colon K^M_{n}(R) \to \CH^n(R,n)$ is the semi-local ring 
analog of the Milnor-Chow homomorphism of Totaro \cite{Totaro}.
It takes a Milnor symbol $\{b_1, \ldots, b_{n-1}\}$ to the graph of
the function $(b_1, \ldots , b_{n-1}) \colon \Spec(R) \to \square^{n-1}$.
A combination of the main results \cite{EM} and \cite{Kerz09} implies that this
map is an isomorphism.
The map $\psi^R_{n,m}$ is given by the action of higher Chow groups on the 
additive higher Chow groups, shown in \cite{KLevine}. It takes cycles
$\alpha \in \TCH^i(R,n;m)$ and $\beta \in \CH^{j}(R, n')$ to
$\Delta^*_R(\alpha \times \beta) \in \TCH^{i+j}(R, n+n';m)$, where
$\Delta_R \colon \Spec(R) \times \A^1_k \times \square^{n+n'-1} \to \Spec(R) \times \Spec(R)
\times \A^1_k\times \square^{n+n'-1}$
is the diagonal on $\Spec(R)$ and identity on $\A^1_k \times \square^{n+n'-1}$.

\begin{lem}\label{lem:Milnor-Chow-TCH-0}
  For $\un{a} = (a_1, \ldots , a_m) \in \W_m(R)$ and
  $\un{b} = \{b_1, \ldots , b_{n-1}\} \in K^M_{n-1}(R)$, one has
$\psi^R_{n,m} \circ (\tau^R_{1,m} \otimes \nu^R_{n-1})(\un{a} \otimes \un{b}) = [Z]$,
where $Z \subset \Spec(R) \times \A^1_k \times \square^{n-1} \cong \A^1_R \times_R \square^{n-1}_R$
is the closed subscheme given by
\begin{equation}\label{eqn:Milnor-Chow-TCH-1}
Z = \{(t, y_1, \ldots, y_n)| \stackrel{m}{\underset{i =1}\prod} 
(1-a_it^i) = y_1-b_1 = \cdots = y_{n-1} - b_{n-1} = 0\}.
\end{equation}
\end{lem}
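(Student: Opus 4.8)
The plan is to unwind all three maps in the diagram ~\eqref{eqn:Milnor-Chow-TCH-TCH} and track an explicit cycle representative through each of them, reducing everything to the two known ingredients: R\"ulling's isomorphism $\tau^R_{1,m}$ between $\W_m(R)$ and $\TCH^1(R,1;m)$ (via the power-series description ~\eqref{eqn:Witt-ps}), and the Milnor--Chow homomorphism $\nu^R_{n-1}$ which, by construction, sends $\{b_1,\dots,b_{n-1}\}$ to the graph cycle $\{y_1 = b_1,\dots,y_{n-1}=b_{n-1}\}$ in $\square^{n-1}_R$. So first I would record that $\nu^R_{n-1}(\un b) = [\Gamma]$, where $\Gamma \subset \Spec(R)\times\square^{n-1}$ is the closed subscheme cut out by $y_1-b_1 = \cdots = y_{n-1}-b_{n-1} = 0$.

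The next step is to identify $\tau^R_{1,m}(\un a) \in \TCH^1(R,1;m)$ with the additive $1$-cycle on $\Spec(R)\times\A^1_k\times\square^0$; under the identification $\TCH^1(R,1;m)\cong \CH^1(\A^1_R|(m+1)\{0\},0) = \mathrm{coker}(\partial)$ and the chain of isomorphisms through $\W_m(R)$, the element $\un a = (a_1,\dots,a_m)$ corresponds, via $\gamma_m$ of ~\eqref{eqn:Witt-ps}, to the unit $\prod_{i=1}^m(1-a_it^i)$, hence to the $0$-cycle on $\A^1_R$ defined by the vanishing of $\prod_{i=1}^m(1-a_it^i)$. (Here one uses that the restriction maps and the Verschiebung-sum presentation ~\eqref{eqn:Sum-Witt} are all compatible under $\tau^R_{1,m}$ because it is a morphism of restricted Witt-complexes; this is precisely what makes the product formula $\prod(1-a_it^i)$ appear rather than a single factor.) This cycle is exactly the $t$-locus $\{\prod_{i=1}^m(1-a_it^i) = 0\}$ in $\A^1_R$.

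Then I would compute the image under $\psi^R_{n,m}$. By definition $\psi^R_{n,m}(\alpha\otimes\beta) = \Delta_R^*(\alpha\times\beta)$, where $\Delta_R$ is the diagonal on $\Spec(R)$ and the identity on $\A^1_k\times\square^{n-1}$. Taking $\alpha = \tau^R_{1,m}(\un a)$, living on $\A^1_R$, and $\beta = \nu^R_{n-1}(\un b) = [\Gamma]$, living on $\square^{n-1}_R$, the external product $\alpha\times\beta$ is supported on $\{\prod(1-a_it^i)=0\}\times_k\Gamma$ inside $\Spec(R)\times\A^1_k\times\Spec(R)\times\square^{n-1}$, and pulling back along the diagonal on $\Spec(R)$ gives the closed subscheme of $\A^1_R\times_R\square^{n-1}_R$ cut out by $\prod_{i=1}^m(1-a_it^i) = y_1-b_1 = \cdots = y_{n-1}-b_{n-1} = 0$, which is exactly the scheme $Z$ of ~\eqref{eqn:Milnor-Chow-TCH-1}. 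One should check along the way that this intersection is proper (so that $\Delta_R^*$ is represented by the naive scheme-theoretic intersection with the stated multiplicity one) and that $Z$ satisfies the modulus condition for $(m+1)\{0\}$ — both follow because $\Gamma\to\Spec(R)$ is an isomorphism and the factor $\prod(1-a_it^i)$ has a unit constant term, so no component of $Z$ meets $\{t=0\}$, and the intersection with each face of $\square^{n-1}$ is governed by that of $\Gamma$, which is admissible.

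The main obstacle, and the step deserving the most care, is the identification in the second paragraph: matching R\"ulling's $\tau^R_{1,m}$ on a general Witt vector $\un a$ with the power-series/$0$-cycle $\prod_{i=1}^m(1-a_it^i)$. For Teichm\"uller elements $[a]_m$ this is essentially the definition, but for a general $\un a$ one must invoke that $\tau^R_{1,m}$ respects the Verschiebung operators $V_i$ and the restriction maps $\xi^R_0$, translate the decomposition ~\eqref{eqn:Sum-Witt} through the isomorphism $\gamma_m$ (under which $V_i$ becomes $t\mapsto t^i$), and verify that the resulting product of cycle classes in $\TCH^1(R,1;m)$ agrees with the single $0$-cycle defined by the product of the polynomials. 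This is a bookkeeping argument with the Witt-complex structure maps on both sides, and it is where the bulk of the verification lies; once it is in hand, the rest is the formal manipulation of external products and diagonal pullback described above.
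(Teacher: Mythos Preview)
Your proposal is correct and follows essentially the same approach as the paper. The paper's proof is very brief: it observes that, given the description of $\psi^R_{n,m}$ as diagonal pullback of an external product, the only thing to check is that $\tau^R_{1,m}(\un{a})$ is the $0$-cycle $V\bigl(\prod_{i=1}^m(1-a_it^i)\bigr)$ on $\A^1_R$, and for this it simply cites \cite[Proposition~7.6]{KP-4}, where this identification is built into the construction of the restricted Witt-complex structure on $\{\TCH^*(R,*;m)\}_{m\ge 1}$. What you flag as the ``main obstacle'' --- verifying this identification for a general Witt vector via the Verschiebung decomposition ~\eqref{eqn:Sum-Witt} --- is precisely the content being cited, so your more detailed unpacking is faithful to the paper's argument.
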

\begin{proof}
  We let $f(t) = \stackrel{m}{\underset{i =1}\prod} (1-a_it^i)$.
  In view of the description of $\psi^R_{n,m}$, we only have to show that
  $\tau^R_{1,m}(\un{a}) = V(f(t))$.
  But this is a part of the definition of the restricted
  Witt-complex structure on $\{\TCH^*(R,*;m)\}_{m \ge 1}$
over $R$  (see \cite[Proposition~7.6]{KP-4}).
  \end{proof}

Let $\phi^R_{n,m} \colon \W_m(R) \otimes K^M_{n-1}(R) \to \W_m\Omega^{n-1}_R$ be the
  unique map such that $\tau^R_{n,m} \circ \phi^R_{n,m} =
  \psi^R_{n,m} \circ (\tau^R_{1,m} \otimes \nu^R_{n-1})$.
  The following lemma describes the map $\phi^R_{n,m}$.

 \begin{lem}\label{lem:Milnor-Witt-desc}
   For any  $\un{a} \in \W_m(R)$ and $\un{b} = \{b_1, \ldots , b_{n-1}\} \in K^M_{n-1}(R)$, we have
   \[
     \phi^R_{n,m}(\un{a} \otimes \un{b}) = \un{a} d\log([b_1]) \wedge \cdots \wedge d\log([b_{n-1}]).
   \]
 \end{lem}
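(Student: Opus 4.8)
The plan is to reduce the identity for $\phi^R_{n,m}$ to a computation purely inside the big de Rham--Witt complex by exploiting the uniqueness built into its definition, together with the description of $\psi^R_{n,m}\circ(\tau^R_{1,m}\otimes\nu^R_{n-1})$ in terms of the explicit cycle $Z$ from \lemref{lem:Milnor-Chow-TCH-0} and the isomorphism $\tau^R_{n,m}$ of \cite[Theorem~1.0.2]{KP-2}. Since $\tau^R_{n,m}$ is an isomorphism, the element $\phi^R_{n,m}(\un{a}\otimes\un{b})\in\W_m\Omega^{n-1}_R$ is characterized by $\tau^R_{n,m}(\phi^R_{n,m}(\un{a}\otimes\un{b})) = [Z]$, so it suffices to verify that the right-hand side $\un{a}\, d\log([b_1])\wedge\cdots\wedge d\log([b_{n-1}])$ maps to $[Z]$ under $\tau^R_{n,m}$.

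First I would treat the case $\un{a} = V_i([a]_{\lfloor m/i\rfloor})$ a single Verschiebung of a Teichm\"uller element, since by \eqref{eqn:Sum-Witt} every element of $\W_m(R)$ is a finite sum of such, and all the maps in sight are additive in $\un{a}$. For such $\un{a}$, \lemref{lem:Milnor-Chow-TCH-0} combined with \cite[Proposition~7.6]{KP-4} tells us that $\tau^R_{1,m}(\un{a}) = V(f(t))$ with $f(t) = 1 - at^i$ (or the appropriate product), and then $\psi^R_{n,m}\circ(\tau^R_{1,m}\otimes\nu^R_{n-1})(\un{a}\otimes\un{b})$ is the cycle $Z$ cut out by $f(t) = y_1 - b_1 = \cdots = y_{n-1} - b_{n-1} = 0$. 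On the de Rham--Witt side, I would use the compatibility of $\tau^R_{n,m}$ with the restricted Witt-complex structure (it is a morphism of restricted Witt-complexes by construction, via the universal property) to compute $\tau^R_{n,m}$ of $V_i([a])\,d\log([b_1])\wedge\cdots\wedge d\log([b_{n-1}])$: the $d\log([b_j])$ terms correspond under $\nu^R$ and the product structure to the graph cycles $\{y_j = b_j\}$, and the Verschiebung/Teichm\"uller term $V_i([a])$ corresponds to $\tau^R_{1,m}(V_i([a])) = V(1 - at^i)$. Matching these against the generators of the additive higher Chow groups then yields exactly $[Z]$.

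The key inputs I would invoke are: the universal property of $\{\W_m\Omega^*_R\}_m$ as the universal restricted Witt-complex, which is what makes $\tau^R_{n,m}$ and hence $\phi^R_{n,m}$ well-defined; the explicit description of the Milnor--Chow homomorphism $\nu^R_n$ as sending $\{b_1,\dots,b_n\}$ to the graph of $(b_1,\dots,b_n)$; the formula $\tau^R_{1,m}(\un{a}) = V(f(t))$ from \cite[Proposition~7.6]{KP-4}; and the description of the higher-Chow action $\psi^R_{n,m}$ via the diagonal pullback. Assembling these, one reduces to the identity
\[
\tau^R_{n,m}\bigl(\un{a}\, d\log([b_1])\wedge\cdots\wedge d\log([b_{n-1}])\bigr)
= \psi^R_{n,m}\bigl(\tau^R_{1,m}(\un{a})\otimes\nu^R_{n-1}(\un{b})\bigr),
\]
which holds because $\tau^R$ is multiplicative for the Witt-complex/additive-cycle products and sends $d\log([b])$ to $\nu^R$ of the corresponding symbol.

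The main obstacle I anticipate is bookkeeping in the multiplicativity step: one must check carefully that the product on $\{\W_m\Omega^*_R\}_m$ (wedge and the multiplication pairing with $\W_m\Omega^0 = \W_m$) is transported by $\tau^R_{n,m}$ to the pairing $\psi^R_{n,m}$ on additive higher Chow groups coming from the $\CH$-module structure of \cite{KLevine}, and that the $d\log$ of a Teichm\"uller lift of a unit is sent to the graph cycle of that unit --- i.e.\ that $\tau^R$ intertwines $d\log\circ[\cdot]$ with $\nu^R$ on symbols. This is essentially forced by the universal property and the fact that both sides are restricted Witt-complex morphisms agreeing on the generating symbols $d\log([b])$ and on $\W_m(R) = \W_m\Omega^0_R$, but spelling out that the universal property does the job (rather than needing an independent cycle-level computation) is the delicate point. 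Once that is granted, the single-Verschiebung case follows by the formula $\tau^R_{1,m}(V_i([a])) = V(1-at^i)$, and the general case follows by additivity and \eqref{eqn:Sum-Witt}.
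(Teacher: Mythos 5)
Your proposal is correct and follows essentially the same route as the paper: reduce via \eqref{eqn:Sum-Witt} and additivity to $\un{a}=V_i([a_i]_{\lfloor m/i\rfloor})$, use that $\tau^R_{n,m}$ is a morphism of restricted Witt-complexes together with the cycle-level descriptions of Verschiebung (power-map pullback), the differential (anti-diagonal, giving $d(Z(1-b_jt))=Z(1-b_jt,\,y_j-b_j)$) and the product, to identify $\tau^R_{n,m}(\un{a}\,d\log([b_1])\wedge\cdots\wedge d\log([b_{n-1}]))$ with the cycle $Z$ of \lemref{lem:Milnor-Chow-TCH-0}, and then invoke the bijectivity of $\tau^R_{n,m}$. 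The "delicate bookkeeping" you flag is exactly the explicit computation the paper carries out, including the cancellation $Z(1-b^{-1}t)\cdot Z(1-bt)=Z(1-t)=\tau^R_{1,m}(1)$.
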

 \begin{proof}
For an ideal $I = (f_1, \ldots , f_r) \subset R[t, y_1, \ldots , y_{n-1}]$,
we let $Z(f_1, \ldots, f_r)$ denote the closed subscheme of
$\Spec(R[t, y_1, \ldots , y_{n-1}])$ defined by $I$.
We let $\un{a} \in \W_m(R)$ and $\un{b} = \{b_1, \ldots , b_{n-1}\} \in K^M_{n-1}(R)$.
We write $b = b_1 \cdots b_{n-1} \in R^{\times}$.
Then we have by ~\eqref{eqn:Sum-Witt},
\begin{equation}\label{eqn:Milnor-surj*-1}
  \tau^R_{n,m}(\un{a} d\log([b_1]) \wedge \cdots \wedge d\log([b_{n-1}]))  = \hspace*{7cm}
\end{equation}
\[
  \hspace*{6cm}
\stackrel{m}{\underset{i = 1}\sum}  \tau^R_{n,m}(V_i([a_i]_{\lfloor m/i \rfloor})
d\log([b_1]) \wedge \cdots \wedge d\log([b_{n-1}])).
\]

We now recall that $\tau^R_{n,m}$ is a part of the morphism of restricted Witt-complexes.
In particular, we have for each $1 \le i \le m$
\begin{equation}\label{eqn:Milnor-surj*-2}
  \tau^R_{n,m}(V_i([a_i]_{\lfloor m/i \rfloor})d\log([b_1]) \wedge \cdots \wedge d\log([b_{n-1}]))
  = \hspace*{5cm}
  \end{equation}
  \[
    \hspace*{4cm}  V_i(Z(1 - a_i t))(\stackrel{n-1}{\underset{j =1}\prod} Z(1-b^{-1}_it))
(d(Z(1 - b_1t)) \wedge \cdots \wedge d(Z(1 - b_{n-1}t)))
\]
\[
\hspace*{4cm} {=}^1 Z(1 - a_i t^i)(\stackrel{n-1}{\underset{j =1}\prod} Z(1-b^{-1}_it))
(d(Z(1 - b_1t)) \wedge \cdots \wedge d(Z(1 - b_{n-1}t))),
\]
\[
  \hspace*{4cm} {=}^2 Z(1 - a_i t^i)Z(1 - b^{-1}t)
  (d(Z(1 - b_1t)) \wedge \cdots \wedge d(Z(1 - b_{n-1}t))),
  \]
where ${=}^1$ follows from the fact the Verschiebung map on the
additive higher Chow groups
is induced by the pull-back through the power map
$\pi_r \colon \A^1_R \to \A^1_R$, given
by $\pi_r(t) = t^r$ (see \cite[\S~6]{KP-4}).
The equality ${=}^2$ follows from the
fact that the product in $\TCH^1(R, 1;m)$ is induced by the
multiplication map $\mu \colon \A^1_R \times_R \A^1_R \to \A^1_R$
(see \cite[\S~6]{KP-4}).

Since the differential of the additive higher Chow groups is induced by the
anti-diagonal map $(t, \un{y}) \mapsto (t, t^{-1}, \un{y})$, we see that
$d(Z(1 - b_it)) = Z(1 -b_it, y_i - b_i)$.
In particular, we get
\[
  d(Z(1 - b_1t)) \wedge \cdots \wedge d(Z(1 - b_{n-1}t)) =
  Z(1 - bt, y_1 - b_1, \ldots , y_{n-1} - b_{n-1}).
\]
As $Z(1 - b^{-1}t) \cdot Z(1 - bt) = Z(1-t) = \tau^R_{1,m}(1)$
is the identity element for the differential graded algebra structure on $\TCH^*(R,*;m)$, 
we therefore get
\[
  \tau^R_{n,m}(V_i([a_i]_{\lfloor m/i \rfloor})d\log([b_1]) \wedge \cdots \wedge d\log([b_{n-1}])) 
= Z(1 - a_i t^i, y_1 - b_1, \ldots , y_{n-1} - b_{n-1}).
\]
Combining this with ~\eqref{eqn:Milnor-surj*-1}, we get
\begin{equation}\label{eqn:Milnor-surj*-3}  
 \tau^R_{n,m}(\un{a} d\log([b_1]) \wedge \cdots \wedge d\log([b_{n-1}]))
  = \hspace*{7cm}
  \end{equation}
  \[
    \hspace*{7cm}                                                                                    
    Z(\stackrel{m}{\underset{i =1}\prod} (1 - a_it^i), y_1 - b_1, \ldots , y_{n-1} - b_{n-1}).
  \]
Since $\tau^R_{n,m}$ is an isomorphism, we now conclude the proof by applying
\lemref{lem:Milnor-Chow-TCH-0}.
\end{proof}

  \subsection{Additive 0-cycles in characteristic zero}\label{sec:Gen-0}
 We  shall assume in this subsection that the
  base field $k$ has characteristic zero.
  As above, we let $R$ be a regular semi-local ring  which is essentially of finite type
  over $k$ and $m,n \ge 1$ two integers. For any $\un{b} = \{b_1, \ldots , b_{n-1}\}
  \in K^M_{n-1}(R)$, we let $b = b_1 \cdots b_{n-1} \in R^{\times}$.
  Under our assumption on ${\rm char}(k)$, we can prove the following result
  which is the first key step for showing that
the cycle class map $cyc_k$ (see ~\eqref{eqn:Add-1}) factors through the
Milnor $K$-theory in characteristic zero.

  \begin{lem}\label{lem:Milnor-surj}
    The map
    \[
      \psi^R_{n,m} \circ (\tau^R_{1,m} \otimes \nu^R_{n-1}) \colon
    \W_m(R) \otimes K^M_{n-1}(R) \to \TCH^n(R,n;m)
  \]
  is surjective. In particular, $\psi^R_{n,m}$ is surjective.
\end{lem}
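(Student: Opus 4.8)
The plan is to show surjectivity by combining the structural isomorphism $\tau^R_{n,m}\colon \W_m\Omega^{n-1}_R \xrightarrow{\cong} \TCH^n(R,n;m)$ of \cite{KP-2} with an explicit generation statement for $\W_m\Omega^{n-1}_R$ in terms of the image of $\phi^R_{n,m}$. Since $\tau^R_{n,m}$ is an isomorphism, it suffices to prove that the composite $\phi^R_{n,m}\colon \W_m(R)\otimes K^M_{n-1}(R)\to \W_m\Omega^{n-1}_R$ is surjective; indeed, by the defining relation $\tau^R_{n,m}\circ\phi^R_{n,m}=\psi^R_{n,m}\circ(\tau^R_{1,m}\otimes\nu^R_{n-1})$ and the description in \lemref{lem:Milnor-Witt-desc}, surjectivity of $\phi^R_{n,m}$ immediately gives surjectivity of the map in the statement, and hence of $\psi^R_{n,m}$.

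First I would reduce to generators of the de Rham-Witt complex. By the presentation \eqref{eqn:Sum-Witt}, every element of $\W_m(R)$ is a sum of terms $V_i([a]_{\lfloor m/i\rfloor})$ with $a\in R$, so it is enough to hit elements of the form $V_i([a]_{\lfloor m/i\rfloor})\,d\log[b_1]\wedge\cdots\wedge d\log[b_{n-1}]$ in $\W_m\Omega^{n-1}_R$ with $a\in R$ and $b_j\in R^{\times}$; by \lemref{lem:Milnor-Witt-desc} these all lie in the image of $\phi^R_{n,m}$. It therefore remains to show that such elements generate $\W_m\Omega^{n-1}_R$ as an abelian group. Here I would invoke the explicit generators-and-relations description of the big de Rham-Witt complex of a ring essentially of finite type over a characteristic-zero field: in characteristic zero one has $\W_m\Omega^*_R\cong \prod_{i}\Omega^*_{R}$-type decompositions (via the $V_i$ and $dV_i$ pieces), and each graded piece $\Omega^{n-1}_R$ of a regular semi-local ring $R$ over a field is generated by $a\,d\log b_1\wedge\cdots\wedge d\log b_{n-1}$ with $b_j\in R^\times$, because $R^\times$ generates $\Omega^1_R$ up to $d\log$ (using that $R$ is semi-local, so $\Omega^1_R$ is generated by $df=f\,d\log f$ for $f$ ranging over a set of generators of $R$, which can be taken to be units after translation, together with bilinearity and the Leibniz rule to reduce wedge powers). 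The key technical input is that the $dV_i$-contributions are also captured: one uses the Witt-complex identity relating $dV_i$ to $V_i$ and the differential, together with the fact that $d$ of the generators is again expressible through $d\log$ of units, so no genuinely new generators arise.

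The step I expect to be the main obstacle is precisely controlling the $dV_i$ part of $\W_m\Omega^{n-1}_R$ — that is, verifying that forms of the type $dV_i([a])\wedge(\text{stuff})$, or more generally the image of the $d$-operator on lower Verschiebung pieces, are accounted for by the span of the explicit symbols $\un{a}\,d\log[b_1]\wedge\cdots\wedge d\log[b_{n-1}]$. This requires carefully unwinding the restricted Witt-complex relations (Leibniz rule, $F_iV_i=i$, $dV_i$–$V_id$ commutation formulas) in characteristic zero, where denominators are available, to rewrite every $dV_i$-term as a $\Z$-linear combination of $V_j$-terms multiplied by $d\log$'s of units; once this bookkeeping is done the surjectivity is formal. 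An alternative, cleaner route — which I would pursue if the direct computation becomes unwieldy — is to appeal directly to R\"ulling's description \cite{R} of $\W_m\Omega^*_k$ for fields together with a base-change/limit argument along the N\'eron–Popescu presentation of $R$, reducing the semi-local case to the field case where the generation statement is classical.
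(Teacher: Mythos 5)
Your overall strategy coincides with the paper's: both arguments reduce, via the isomorphism $\tau^R_{n,m}$ and \lemref{lem:Milnor-Witt-desc}, to showing that the elements $\phi^R_{n,m}(\un{a}\otimes\un{b})=\un{a}\,d\log([b_1])\wedge\cdots\wedge d\log([b_{n-1}])$ span $\W_m\Omega^{n-1}_R$, and both finish with the same semi-local trick: writing $b=b_1+b_2$ with $b_1,b_2\in R^{\times}$ gives $a\,db=ab_1\,d\log(b_1)+ab_2\,d\log(b_2)$, so $\Omega^{n-1}_R$ is generated by forms $a\,d\log(b_1)\wedge\cdots\wedge d\log(b_{n-1})$ with $b_j\in R^\times$.

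The one step you leave open --- ``controlling the $dV_i$ part'' --- is where your write-up does not go through as stated, but the obstacle you flag is a phantom. In characteristic zero, R\"ulling's Ghost map $\zeta^R_{n-1,m}\colon \W_m\Omega^{n-1}_R\xrightarrow{\cong}\prod_{j=1}^m\Omega^{n-1}_R$ is an isomorphism onto the \emph{plain} product: there are no separate $dV_i$-summands in this decomposition, so no bookkeeping with the Witt-complex relations ($F_iV_i=i$, the $dV_i$--$V_id$ formulas, etc.) is required. The paper's proof simply computes the Ghost components of $\phi^R_{n,m}(\un{a}\otimes\un{b})$; the $j$-th component is $\frac{1}{j^{n-1}}\,w_j(\un{a})\,d\log(b_1)\wedge\cdots\wedge d\log(b_{n-1})$, where $w_j(\un{a})$ denotes the $j$-th ghost coordinate. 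Since $\zeta^R_{0,m}\colon\W_m(R)\to\prod_j R$ is itself an isomorphism in characteristic zero and the scalars $1/j^{n-1}$ are invertible, one may prescribe each component independently, and surjectivity becomes exactly the component-wise statement about $R\otimes K^M_{n-1}(R)\to\Omega^{n-1}_R$ settled by the sum-of-units lemma. In particular your preliminary reduction to the elements $V_i([a_i])\,d\log[b_1]\wedge\cdots\wedge d\log[b_{n-1}]$ is unnecessary, and your fallback through the field case and N\'eron--Popescu is not needed either: the $V_i$/$dV_i$ filtration you are worried about is the relevant structure only in positive characteristic.
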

\begin{proof}
  Let $\phi^R_{n,m} \colon \W_m(R) \otimes K^M_{n-1}(R) \to \W_m\Omega^{n-1}_R$ be the
  unique map such that $\tau^R_{n,m} \circ \phi^R_{n,m} =
  \psi^R_{n,m} \circ (\tau^R_{1,m} \otimes \nu^R_{n-1})$.
  The lemma is then equivalent to showing that $\phi^R_{n,m}$ is surjective.
  
  For $\un{a} \in \W_m(R)$ and $\un{b} = \{b_1, \cdots , b_{n-1}\} \in K^M_{n-1}(R)$,
 it follows from \lemref{lem:Milnor-Witt-desc} that
${\phi}^R_{n,m}(\un{a} \otimes \un{b}) = \un{a} d\log([b_1])
\wedge \cdots \wedge d\log([b_{n-1}])$.
We shall now use that the ground field has characteristic zero.
Let $p >m$ be a prime. It then  follows from
\cite[Theorem~1.11, Remark~1.12]{R} that there is a canonical (Ghost)
isomorphism
\begin{equation}\label{eqn:Milnor-surj*-22}
  \zeta^R_{r,m} \colon \W_m\Omega^{r}_R \xrightarrow{\cong} \stackrel{m}{\underset{i =1}\prod}
  \Omega^r_R
\end{equation}
such that 
\[
 \zeta^R_{r,m}(x dy_1\cdots dy_r) = \left(\frac{1}{j^{r}}F_j(x) dF_j (y_1) \cdots dF_j(y_r)\right)_{1\leq j \leq m},
\]
where $x, y_i \in \W_m(R)$ and $F_j(y_i)$ means its restriction to $\W_1\Omega^r_R$ via the restriction
map of the de Rham-Witt complex. In particular, $\zeta^R_{0,m}(x) = ({\underset{d|j}\sum}
\ dx^{j/d}_d)_{1 \le j \le m}$ is the classical Ghost map, where $x = (x_1, \ldots , x_m) \in \W_m(R)$.

It follows that the following diagram commutes: 
\begin{equation}
\xymatrix@C.8pc{
\W_m(R) \otimes K^M_{n-1}(R) \ar[r]^-{\phi^R_{n,m}} \ar[d]^-{\cong}_-{\zeta^R_{0,m} \otimes \id} & \W_m\Omega^{n-1}_R \ar[d]_-{\cong}^-{ \zeta^R_{n-1,m}}\\
(\stackrel{m}{\underset{i =1}\prod}
 R) \otimes K^M_{n-1}(R) \ar[r] &  \stackrel{m}{\underset{i =1}\prod} \Omega^{n-1}_R,
}
\end{equation}
where the bottom arrow is defined component-wise so that
 $(a_i )_{i} \otimes \{b_1, \dots, b_{n-1}\}$ maps to 
 $(\frac{1}{i^{n-1}}  a_i d\log(b_1) \wedge \cdots \wedge d\log(b_{n-1}))_i $. 
Since we are working with characteristic zero field, it then suffices to  show that the map $R \otimes K^M_{n-1}(R) \to \Omega^{n-1}_R$, given by
  $a \otimes \un{b} \mapsto ad\log(b_1) \wedge \cdots \wedge d\log(b_{n-1})$, is surjective.
By an iterative procedure, it suffices to prove this surjectivity when $n =2$.

We now let $a, b \in R$. By \lemref{lem:Local*} below, we can write $b = b_1 + b_2$, 
where $b_1, b_2 \in R^{\times}$.
We then get $a db = adb_1 + adb_2 = ab_1 d\log (b_1) + ab_2 d\log(b_2)$. 
Since $\Omega^1_R$ is generated by the universal derivations of the elements of $R$ as an $R$-module, 
we are done.

\end{proof}

\begin{lem}\label{lem:Local*}
  Let $R$ be a semi-local ring which contains an infinite field $k$. 
Then every element $a \in R$ can be
  written as $a = u_1 + u_2$, where $u_1\in k^{\times}$ and  $u_2 \in R^{\times}$.
\end{lem}
\begin{proof}
Let $M = \{\mathfrak{m}_1, \cdots , \mathfrak{m}_r\}$ denote the set of all 
maximal ideals of $R$.
Fix $a \in R$. Suppose that there exists $u \in k^{\times}
\subseteq R^{\times}$ such that  $a + u \in R^{\times}$. Then we are done.
Otherwise, every element $u \in k^{\times}$ has the property that
$a+u \in \mathfrak{m}_i$ for some $i$. Since $k$ is infinite and $M$ is
finite, there are two distinct elements $u_1, u_2 \in k^{\times}$ such that
$a+ u_1$ and $a+u_2$ both belong to a maximal ideal $\mathfrak{m}_j$.
Then we get $u_1 - u_2 \in \mathfrak{m}_j$. But $u_1 \neq u_2$ in $k$
implies that $u_1 - u_2 \in k^{\times}$ and this forces $\mathfrak{m}_j = R$,
a contradiction. We conclude that there must exist $u \in k^{\times}$ 
such that $a+u \in R^{\times}$.
\end{proof}

\section{The relative Milnor $K$-theory}\label{sec:Rel-Milnor**} 
In this section, we shall prove our second key step (see \lemref{lem:M-Q-C-0})
to show the factorization of the cycle class map through the relative Milnor
$K$-theory and prove the isomorphism of the resulting map.

\subsection{Recollection of relative Hochschild and cyclic homology}\label{sec:HC-*}
In the next two sections, we shall use Hochschild, Andr{\'e}-Quillen and cyclic homology of commutative 
rings as our tools. We refer to \cite{Loday} for their definitions and some properties that
we shall use. While using a specific result from \cite{Loday}, we shall mention the
exact reference.

Let $R$ be a commutative ring. For an integer $m \ge 0$, recall from \S~\ref{sec:Notations}
that the truncated polynomial algebra ${R[t]}/{(t^{m+1})}$ is denoted by $R_m$. 
Throughout our discussion of truncated polynomial algebras, we shall
make no distinction between the variable $t \in R[t]$ and its image in $R_m$.

If $k \subset R$ is a subring, we shall use the notation $HH_*(R|k)$  for
the Hochschild homology of $R$ over $k$. Similarly, $D^{(q)}_*(R|k)$ for $q \ge 0$ and $HC_*(R|k)$
will denote the Andr{\'e}-Quillen and cyclic homology of $R$ over $k$, respectively. When $k = \Z$,
we shall write $HH_*(R|k)$ simply as $HH_*(R)$. Similar notations will be used for
$D^{(q)}_*(R|\Z)$ and $HC_*(R|\Z)$. Note that $HH_*(R) \cong HH_*(R|\Q), \ 
D^{(q)}_*(R) \cong D^{(q)}_*(R|\Q)$ and
$HC_*(R) \cong HC_*(R|\Q)$ if $R$ contains $\Q$.
We also have $\Omega^q_{R}:= \Omega^q_{{R}/{\Z}} \cong \Omega^q_{{R}/{\Q}}$ for $q \ge 0$.

Recall that for an ideal $I \subset R$, the relative
Hochschild homology $HH_*((R,I)|k)$ is defined as the homology of the
complex ${\rm Ker}(HH(R|k) \surj HH(R/I|k))$, where $k \subset R$ is a subring and
$HH(R|k)$ is the Hochschild complex of $R$ over $k$.
The relative cyclic homology $HC_*((R,I)|k)$ is defined to be the
homology of the complex ${\rm Ker}(CC(R|k) \surj CC(R/I|k))$, where
$CC(R|k)$ is the total cyclic complex of $R$ over $k$. We refer to 
\cite[1.1.16, 2.1.15]{Loday} for these definitions. One defines
$D^{(q)}_*((R,I)|k)$ similarly. If $R$ is a commutative ring, we shall write
$HH_*((R_m, (t))|\Z), \ D^{(q)}_*((R_m, (t))|\Z)$ and $HC_*((R_m, (t))|\Z)$
simply as $\wt{HH}_*(R_m), \ \wt{D}^{(q)}_*(R_m)$ and $\wt{HC}_*(R_m)$, respectively.
We let $\wt{\Omega}^n_{R_m} = {\rm Ker}(\Omega^n_{R_m} \surj \Omega^n_R)$.
Recall from \S~\ref{sec:Rel-K} that $\wt{K}_*(R_m)$ denotes the
relative $K$-theory $K_*(R_m, (t))$. 
Suppose now that $R$ contains $\Q$.
For $x \in tR_m$, we shall write $\exp(x) = {\underset{i \ge 0} \sum} {x^i}/{i!}$
and $\log(1 + x) = {\underset{i \ge 1} \sum} (-1)^{i-1}{x^i}/{i}$.
Note that these are finite sums and define homomorphisms
\begin{equation}\label{eqn:exp-log}
  tR_m \xrightarrow{\exp} \wt{K}_1(R_m) \xrightarrow{\log} tR_m
\end{equation}
which are inverses to each other.

\subsection{Relative Milnor $K$-theory of truncated 
polynomial rings}\label{sec:Rel-K-trun}
Let $R$ be a semi-local ring and let $m \ge 0, n \ge 1$ be two integers.
We shall write the relative Milnor $K$-groups 
(see \S~\ref{sec:Milnor-K}) $K^M_*(R_m, (t))$ as $\wt{K}^M_*(R_m)$. 
Since $\wt{K}_*(R_m)$ is same as the kernel of the augmentation map
$K_*(R_m) \surj K_*(R)$, we have the canonical map
$\psi_{R_m} \colon  \wt{K}^M_*(R_m) \to \wt{K}_*(R_m)$.

Let us now assume that $R$ is a semi-local ring containing $\Q$.
Recall that there is a Dennis trace map ${\tr}^R_{m,n} \colon K_n(R_m) \to HH_n(R_m)$
which restricts to the dlog map on the Milnor $K$-theory
(e.g., see \cite[Example~2.1]{GW-1}).
Equivalently, there is a commutative diagram 
\begin{equation}\label{eqn:DTrace}
  \xymatrix@C.8pc{
    K^M_n(R_m) \ar[r]^-{d\log} \ar[d]_-{\psi_{R_m,n}} & \Omega^n_{R_m} \ar[d]^{\epsilon_n} \\
  K_n(R_m) \ar[r]^-{{\tr}^R_{m,n}} & HH_n(R_m),}
\end{equation}
where $\epsilon_n$ is the canonical anti-symmetrization map from K{\"a}hler differentials to
Hochschild homology (see \cite[\S~1.3.4]{Loday}).

A very well known result of Goodwillie \cite{Good} 
says that the relativization of the Dennis trace map with respect to a nilpotent ideal 
factors  through a trace map $\wt{K}_n(R_m) \to \wt{HC}_{n-1}(R_m)$ such that
its composition with the canonical Connes' periodicity map
$B \colon \wt{HC}_{n-1}(R_m) \to \wt{HH}_n(R_m)$ is the relative Dennis trace map.
This factorization is easily seen on $\wt{K}_1(R_m)$ via the chain of maps
\begin{equation}\label{eqn:trace-K1}
\wt{K}_1(R_m) \xrightarrow{\log} tR_m \cong \wt{HC}_0(R_m) \xrightarrow{d}
\wt{HH}_1(R_m) \cong \wt{\Omega}^1_{R_m}.
\end{equation}
Recall that Connes' periodicity map $B$ on $HC_0(R_m)$
coincides with the differential $d \colon R_m \to \Omega^1_{R_m}$ under the
isomorphisms $R_m \cong HC_0(R_m)$ and $\Omega^1_{R_m} \cong HH_1(R_m)$.
Goodwillie showed that his factorization $\wt{K}_n(R_m) \to \wt{HC}_{n-1}(R_m)$ is an isomorphism of
$\Q$-vector spaces. We shall denote this Goodwillie's isomorphism also by ${\tr}^R_{m,n}$.

Going further, Cathelineau showed (see \cite[Theorem~1]{Cath}) that the  
$K$-group $\wt{K}_n(R_m)$ and the relative cyclic homology group 
$\wt{HC}_{n-1}(R_m)$ are $\lambda$-rings. Furthermore, Goodwillie's
map is an isomorphism of $\lambda$-rings, thanks to \cite[Theorem~6.5.1]{CW}.
In particular, it induces an isomorphism between the Adams graded pieces
${\tr}^R_{m,n} \colon {\wt{K}^{(q)}_n(R_m)} \xrightarrow{\cong} {\wt{HC}^{(q-1)}_{n-1}(R_m)}$ 
for every $1 \le q \le n$.
As a corollary of this isomorphism and \cite[Theorem~4.6.8]{Loday}, 
we get the following.

\begin{lem}\label{lem:Adams}
The Dennis trace map induces an isomorphism of $\Q$-vector spaces
\[
{\tr}^R_{m,n} \colon {\wt{K}^{(n)}_n(R_m)} \xrightarrow{\cong}
\frac{\wt{\Omega}^{n-1}_{R_m}}{d \wt{\Omega}^{n-2}_{R_m}}.
\]
\end{lem}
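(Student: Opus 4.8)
The plan is to derive this isomorphism formally, by combining the Adams-graded form of Goodwillie's isomorphism recorded just above the statement with the identification of the top Hodge piece of cyclic homology.

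First I would specialize the isomorphism ${\tr}^R_{m,n}\colon \wt{K}^{(q)}_n(R_m)\xrightarrow{\cong}\wt{HC}^{(q-1)}_{n-1}(R_m)$ to $q=n$, obtaining ${\tr}^R_{m,n}\colon \wt{K}^{(n)}_n(R_m)\xrightarrow{\cong}\wt{HC}^{(n-1)}_{n-1}(R_m)$, the top Hodge piece of the relative cyclic homology. This step requires nothing new: it is exactly the content of the results of \cite{Cath} and \cite{CW} quoted before the lemma, which guarantee that Goodwillie's map is compatible with the $\lambda$-decompositions on both sides.

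Next I would invoke \cite[Theorem~4.6.8]{Loday}: for any commutative $\Q$-algebra $A$ there is a natural isomorphism $HC^{(p)}_p(A)\cong \Omega^p_A/d\Omega^{p-1}_A$, which I apply with $p=n-1$ to $A=R_m$ and to $A=R$. Since the unit $R\inj R_m$ and augmentation $R_m\surj R$ have composite the identity, the de Rham complex of $R_m$ splits as $\Omega^\bullet_{R_m}\cong \Omega^\bullet_R\oplus\wt{\Omega}^\bullet_{R_m}$, where $\wt{\Omega}^\bullet_{R_m}=\ker(\Omega^\bullet_{R_m}\to\Omega^\bullet_R)$ is a subcomplex, and likewise $HC(R_m)\cong HC(R)\oplus\wt{HC}(R_m)$ compatibly with the Hodge decompositions. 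Taking cokernels of $d$ in degree $n-1$, the first splitting gives $\Omega^{n-1}_{R_m}/d\Omega^{n-2}_{R_m}\cong \bigl(\Omega^{n-1}_R/d\Omega^{n-2}_R\bigr)\oplus\bigl(\wt{\Omega}^{n-1}_{R_m}/d\wt{\Omega}^{n-2}_{R_m}\bigr)$. Hence naturality of Loday's isomorphism matches up the relative summands and yields $\wt{HC}^{(n-1)}_{n-1}(R_m)\cong \wt{\Omega}^{n-1}_{R_m}/d\wt{\Omega}^{n-2}_{R_m}$.

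Composing the isomorphism of the first step with that of the second produces the asserted isomorphism of $\Q$-vector spaces; since ${\tr}^R_{m,n}$ denotes precisely Goodwillie's map, this composite is the map induced by the Dennis trace, as claimed. I do not expect a genuine obstacle: the only points that need a careful line are (i) the compatibility of the two Hodge decompositions under Goodwillie's map, already cited, and (ii) the bookkeeping showing that the reduced Hodge piece $\wt{HC}^{(n-1)}_{n-1}(R_m)$ corresponds to $\wt{\Omega}^{n-1}_{R_m}/d\wt{\Omega}^{n-2}_{R_m}$ and not merely to $\ker\bigl(\Omega^{n-1}_{R_m}/d\Omega^{n-2}_{R_m}\to\Omega^{n-1}_R/d\Omega^{n-2}_R\bigr)$ --- which the splitting above shows to be the same group. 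As a consistency check, for $n=1$ the construction reduces to the chain \eqref{eqn:trace-K1}, where $\wt{\Omega}^{0}_{R_m}/d\wt{\Omega}^{-1}_{R_m}=tR_m$.
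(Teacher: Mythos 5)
Your proposal is correct and follows essentially the same route as the paper, which deduces the lemma directly from the Adams-graded form of Goodwillie's isomorphism (via Cathelineau and Corti\~nas--Weibel) together with the identification $HC^{(p)}_p(A)\cong \Omega^p_A/d\Omega^{p-1}_A$ from \cite[Theorem~4.6.8]{Loday}, applied to the relative summand. Your extra care with the splitting $\Omega^\bullet_{R_m}\cong\Omega^\bullet_R\oplus\wt{\Omega}^\bullet_{R_m}$ to identify the reduced Hodge piece is a detail the paper leaves implicit (it records the same identification as ~\eqref{eqn:Lambda-0}), but it is the same argument.
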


In order to relate these groups with the Milnor $K$-theory, we first 
observe that
as the map $K^M_1(R_m) \to K_1(R_m)$ is an isomorphism, it follows from the 
properties of
$\gamma$-filtration associated to the $\lambda$-ring structure on $K$-theory that the 
canonical map $K^M_n(R_m) \to K_n(R_m)$ factors through 
$K^M_n(R_m) \to F^{n}_{\gamma}K_n(R_m)$, where $F^{\bullet}_{\gamma}K_n(R_m)$ denotes the 
$\gamma$-filtration.
If we consider the induced map on the relative $K$-groups, it follows that the
canonical map $\wt{K}^M_n(R_m) \to \wt{K}_n(R_m)$ factors as
\[
\psi_{R_m, n} \colon \wt{K}^M_n(R_m) \to {\wt{K}^{(n)}_n(R_m)} =  
F^{n}_{\gamma}\wt{K}_n(R_m) \inj \wt{K}_n(R_m).
\]

It follows from \cite[Theorem~12.3]{Steinstra} that ${\rm Ker}(\psi_{R_m,n})$ is a 
torsion group.
On the other hand, it follows from \cite[Proposition~5.4]{GT} that
$\wt{K}^M_n(R_m)$ is a $\Q$-vector space. If we now apply Soul{\'e}'s computation of
$F^{n}_{\gamma}\wt{K}_n(R_m)$ in \cite[Th{\'e}or{\'e}me~2]{Soule},
we conclude that the map $\psi_{R_m, n}$ is in fact an isomorphism. We have thus 
shown the following.

\begin{lem}\label{lem:M-Q-C}
The maps
\[
\wt{K}^M_n(R_m) \xrightarrow{\psi_{R_m, n}} {\wt{K}^{(n)}_n(R_m)} 
\xrightarrow{{\tr}^R_{m,n}} 
\frac{\wt{\Omega}^{n-1}_{R_m}}{d \wt{\Omega}^{n-2}_{R_m}} 
\]
are all isomorphisms of $\Q$-vector spaces.
\end{lem}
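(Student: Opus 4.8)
The plan is to handle the composite one arrow at a time. The second map ${\tr}^R_{m,n} \colon \wt{K}^{(n)}_n(R_m) \to \wt{\Omega}^{n-1}_{R_m}/d\wt{\Omega}^{n-2}_{R_m}$ is already an isomorphism of $\Q$-vector spaces by \lemref{lem:Adams} (which itself rests on the Goodwillie--Cathelineau identification of the top Adams piece of relative $K$-theory with relative cyclic homology together with \cite[Theorem~4.6.8]{Loday}). Recall also from the discussion preceding the statement that the canonical map $\wt{K}^M_n(R_m)\to \wt{K}_n(R_m)$ factors through the inclusion $\wt{K}^{(n)}_n(R_m)=F^n_\gamma\wt{K}_n(R_m)\inj \wt{K}_n(R_m)$, so that $\psi_{R_m,n}$ is a well-defined map $\wt{K}^M_n(R_m)\to \wt{K}^{(n)}_n(R_m)$. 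Thus everything reduces to showing that $\psi_{R_m,n}$ is an isomorphism of $\Q$-vector spaces.

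For injectivity I would combine two inputs. First, $\wt{K}^M_n(R_m)$ is a $\Q$-vector space by \cite[Proposition~5.4]{GT} (using that $R$ contains $\Q$, so that the relative Milnor $K$-group is uniquely divisible). Second, by \cite[Theorem~12.3]{Steinstra} the kernel of the canonical map $\wt{K}^M_n(R_m)\to \wt{K}_n(R_m)$ is a torsion group; since $\psi_{R_m,n}$ is obtained from this map by corestriction to the subgroup $\wt{K}^{(n)}_n(R_m)$, its kernel is torsion too. A torsion subgroup of a $\Q$-vector space is zero, so $\psi_{R_m,n}$ is injective.

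For surjectivity I would invoke Soul{\'e}'s computation of $F^n_\gamma\wt{K}_n(R_m)$ from \cite[Th{\'e}or{\'e}me~2]{Soule}. Combined with \lemref{lem:Adams} this pins down the target $\wt{K}^{(n)}_n(R_m)\cong \wt{\Omega}^{n-1}_{R_m}/d\wt{\Omega}^{n-2}_{R_m}$, and it remains to see that the image of $\psi_{R_m,n}$ already exhausts it. Chasing the square ~\eqref{eqn:DTrace} together with Goodwillie's factorization through cyclic homology, a Milnor symbol $\{u_1,\ldots,u_n\}$ with $u_1\in 1+tR_m$ is carried to the class of $d\log u_1\wedge\cdots\wedge d\log u_n$; by the elementary argument of \lemref{lem:Milnor-surj} (every element of $R_m$ is a sum of two units by \lemref{lem:Local*}) such forms generate $\wt{\Omega}^{n-1}_{R_m}$ modulo exact forms. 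Hence ${\tr}^R_{m,n}\circ\psi_{R_m,n}$ is surjective, and as ${\tr}^R_{m,n}$ is injective this gives surjectivity of $\psi_{R_m,n}$.

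The step I expect to be the main obstacle is reconciling the two descriptions of $\wt{K}^{(n)}_n(R_m)$ --- the one via the $\gamma$-filtration entering Soul{\'e}'s theorem and the one via relative cyclic homology entering \lemref{lem:Adams} --- and checking that the identifications are compatible with the trace map, so that the $d\log$ picture really controls the image of $\psi_{R_m,n}$. If Soul{\'e}'s result is stated directly as $F^n_\gamma\wt{K}_n(R_m)$ being the image of $\psi_{R_m,n}$ (equivalently, as being spanned by Milnor symbols), then surjectivity is immediate and only the injectivity argument of the previous paragraph is needed.
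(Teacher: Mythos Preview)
Your approach matches the paper's almost exactly: the second arrow is \lemref{lem:Adams}, injectivity of $\psi_{R_m,n}$ comes from \cite[Theorem~12.3]{Steinstra} (torsion kernel) combined with \cite[Proposition~5.4]{GT} ($\Q$-vector space source), and surjectivity is Soul{\'e}'s \cite[Th{\'e}or{\`e}me~2]{Soule}. Your final sentence is precisely how the paper uses Soul{\'e}: his result says $F^n_\gamma K_n$ is generated by products of units, so surjectivity is immediate and no further argument is needed.

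Your backup $d\log$ route is therefore unnecessary, and as written it has two wrinkles worth noting. First, a degree slip: the image of $\{u_1,\ldots,u_n\}$ (with $u_1\in 1+tR_m$) under ${\tr}^R_{m,n}\circ\psi_{R_m,n}$ lands in $\wt{\Omega}^{n-1}_{R_m}/d\wt{\Omega}^{n-2}_{R_m}$, so it is an $(n-1)$-form, namely $\log(u_1)\,d\log(u_2)\wedge\cdots\wedge d\log(u_n)$ (see~\eqref{eqn:dlog-map}), not the $n$-form $d\log u_1\wedge\cdots\wedge d\log u_n$ you wrote. Second, showing that such forms generate $\wt{\Omega}^{n-1}_{R_m}/d\wt{\Omega}^{n-2}_{R_m}$ needs more than the ``sum of two units'' trick of \lemref{lem:Local*}: one has to control the $dt$-part of $\wt{\Omega}^{n-1}_{R_m}$ as well, which is exactly the content of \lemref{lem:Surj-omega} and \lemref{lem:M-Q-C-0} that appear \emph{after} this lemma. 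So the paper's logic really does rely on Soul{\'e} at this point, and your instinct in the last paragraph is the correct one.
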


One knows from \cite[3.4.4]{GW-2} that the map
$d = B \colon \frac{\wt{\Omega}^{n-1}_{R_m}}{d \wt{\Omega}^{n-2}_{R_m}} \to \wt{\Omega}^n_{R_m}$
is injective.
Using the fact that ${\tr}^R_{m,n} \colon \wt{K}^M_n(R_m) \to \wt{\Omega}^n_{R_m}$ is 
multiplicative (e.g., see \cite[Property~1.3]{Kont} or \cite[8.4.12]{Loday})
and it is the usual logarithm on $\wt{K}^M_1(R_m)$ (see ~\eqref{eqn:trace-K1}),
it follows from ~\eqref{eqn:DTrace} that modulo $d \wt{\Omega}^{n-2}_{R_m}$, 
the composite map ${\tr}^R_{n,m} \circ \psi_{R_m, n}$ is 
the dlog map: 
\begin{equation}\label{eqn:dlog-map}
d\log(\{1- tf(t), b_1, \ldots , b_{n-1}\}) = \log(1 - tf(t)) d\log(b_1) \wedge 
\cdots \wedge d\log(b_{n-1}).
\end{equation}

\subsection{More refined structure on $\wt{K}^M_n(R_m)$}
\label{sec:R-mod}
We shall further simplify the presentation of $\wt{K}^M_n(R_m)$ in the next result.
This will be our second key step in factoring the cycle class map through the relative
Milnor $K$-theory and showing that the resulting map is an isomorphism.
For this, we assume $m \ge 1$ and look at the diagram
\begin{equation}\label{eqn:Milnor-omega}
\xymatrix@C.8pc{
tR_m \otimes K^M_{n-1}(R)  \ar[r] \ar[d]_-{{\rm id} \otimes d\log }
 & \wt{K}^M_n(R_m) \ar[dd]^-{{\tr}^R_{m,n} \circ \psi_{R_m, n}} \\
tR_m \otimes \Omega^{n-1}_R \ar@{->>}[d] & \\
 tR_m \otimes_R \Omega^{n-1}_R  \ar[r]^-{\theta_{R_m}^{n-1}} \ar@{.>}[ruu] & 
\frac{\wt{\Omega}^{n-1}_{R_m}}{d\wt{\Omega}^{n-2}_{R_m}},}
\end{equation}
where the top horizontal arrow is the product 
$a \otimes \{b_1, \ldots , b_{n-1}\} \mapsto \{\exp(a), b_1, \ldots , b_{n-1}\}$. 

The map $\theta_{R_m}^{n-1}$ in ~\eqref{eqn:Milnor-omega}
is the composition of the canonical map 
$tR_m \otimes_R \Omega^{n-1}_R  \to \wt{\Omega}^{n-1}_{R_m}$ (sending 
$t^i \otimes \omega$ to $t^i \omega$) with the surjection
$\wt{\Omega}^{n-1}_{R_m} \surj \frac{\wt{\Omega}^{n-1}_{R_m}}{d\wt{\Omega}^{n-2}_{R_m}}$.
Using ~\eqref{eqn:dlog-map}, it is easy to check that ~\eqref{eqn:Milnor-omega} is 
commutative.
It follows from \lemref{lem:M-Q-C} that $\theta_{R_m}^{n-1}$ factors through a 
unique map $\wt{\theta}^{n}_{R_m} \colon tR_m \otimes_R \Omega^{n-1}_R  \to \wt{K}^M_n(R_m)$. 
We want to show that this map is an isomorphism. Equivalently,
$\theta_{R_m}^{n-1}$ an isomorphism. Since the proof of this is a bit long, we 
prove that it is surjective and injective in separate lemmas.

\begin{lem}\label{lem:Surj-omega}
The map $\theta_{R_m}^{n}$ is surjective for all $n \ge 0$.
\end{lem}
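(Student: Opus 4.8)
The plan is to make the $R$-module structure of the relative de Rham complex $\wt\Omega^\bullet_{R_m}$ completely explicit, using $\Q \subseteq R$, and then read off the surjectivity. From the conormal sequence of the surjection $R[t] \surj R_m$ one gets $\Omega^1_{R_m} = \bigl(\Omega^1_R \otimes_R R_m \oplus R_m\,dt\bigr)/\bigl((m+1)t^m\,dt\bigr)$; since $m+1 \in R^{\times}$ this is a direct sum decomposition
\[
\Omega^1_{R_m} \;=\; \bigl(\Omega^1_R \otimes_R R_m\bigr) \,\oplus\, \bigl(R_m/t^m R_m\bigr)\,dt .
\]
Taking exterior powers over $R_m$ (the second summand squares to zero) and then intersecting with the kernel of the augmentation $\Omega^n_{R_m} \surj \Omega^n_R$ yields
\[
\wt\Omega^n_{R_m} \;=\; \bigl(\Omega^n_R \otimes_R tR_m\bigr) \,\oplus\, \bigl(\Omega^{n-1}_R \otimes_R R_m/t^m R_m\bigr)\,dt .
\]
The image in $\wt\Omega^n_{R_m}$ of the chain-level map $tR_m \otimes_R \Omega^n_R \to \wt\Omega^n_{R_m}$, $t^i \otimes \omega \mapsto t^i\omega$, underlying $\theta^n_{R_m}$, is exactly the first summand.

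It therefore suffices to show that the second summand lies in $(\Omega^n_R \otimes_R tR_m) + d\wt\Omega^{n-1}_{R_m}$, and this is where $\Q \subseteq R$ is used once more. For $\omega \in \Omega^{n-1}_R$ and $1 \le i \le m$ the element $t^i\omega$ lies in $\wt\Omega^{n-1}_{R_m}$, and
\[
d(t^i\omega) \;=\; i\,t^{i-1}\,dt \wedge \omega \,+\, t^i\,d\omega .
\]
Since $i$ is invertible and $t^i\,d\omega \in \Omega^n_R \otimes_R tR_m$, we obtain $t^{i-1}\,dt \wedge \omega = \tfrac{1}{i}\bigl(d(t^i\omega) - t^i\,d\omega\bigr) \in (\Omega^n_R \otimes_R tR_m) + d\wt\Omega^{n-1}_{R_m}$. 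As $i$ ranges over $1, \dots, m$ and $\omega$ over $\Omega^{n-1}_R$, the elements $t^{i-1}\,dt \wedge \omega$ generate the second summand of $\wt\Omega^n_{R_m}$ as an $R$-module, so $\wt\Omega^n_{R_m} = (\Omega^n_R \otimes_R tR_m) + d\wt\Omega^{n-1}_{R_m}$, which is precisely the surjectivity of $\theta^n_{R_m}$. The case $n = 0$ is trivial, since there the second summand vanishes and $\theta^0_{R_m}$ is the identity map of $tR_m$.

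The computation above is elementary; the one point that genuinely needs the characteristic-zero hypothesis, and which should be carried out carefully, is the first step — the splitting of $\Omega^1_{R_m}$ and the resulting clean description of $\wt\Omega^\bullet_{R_m}$. When some integer $\le m+1$ fails to be invertible, $\Omega^1_{R_m}$ acquires torsion in the $dt$-direction and this argument breaks, consistent with the positive-characteristic version being deferred to \cite{GK}. Apart from that, the only thing to watch is the bookkeeping of signs in the exterior algebra over $R_m$, which does not affect the spanning argument; I do not expect any serious obstacle.
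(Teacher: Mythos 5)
Your proof is correct and follows essentially the same route as the paper's: both establish the direct sum decomposition $\wt{\Omega}^n_{R_m} \cong (tR_m \otimes_R \Omega^n_R) \oplus (\Omega^{n-1}_R \otimes_R ({R_m}/{t^mR_m}))\,dt$ and then absorb the $dt$-summand modulo exact forms via the computation $d(t^i\omega) = i\,t^{i-1}dt\wedge\omega + t^i\,d\omega$ with $i$ invertible. The only (cosmetic) difference is that you obtain the splitting of $\Omega^1_{R_m}$ from the conormal sequence of $R[t]\surj R_m$ together with the invertibility of $m+1$, whereas the paper constructs an explicit retraction $u\colon \Omega^1_{R_m}\to R_m\otimes_R\Omega^1_R$ from the derivation $d'(\sum a_it^i)=\sum t^i\otimes d(a_i)$.
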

\begin{proof}
This is obvious for $n =0$ and so we assume $n \ge 1$. We now consider the  
exact sequence
  \[
    R_m \otimes_R \Omega^1_R  \to \Omega^1_{R_m} \to \Omega^1_{{R_m}/R} \to 0.
  \]
  
  We claim that the first arrow in this sequence is split injective.
  For this, we consider the map
  $d': R_m \to R_m \otimes_R \Omega^1_R$, given by 
$d'(\stackrel{m}{\underset{i = 0}\sum} a_it^i) = 
  \stackrel{m}{\underset{i = 0}\sum} t^i \otimes d(a_i)$.
  The computation
  \[
    \begin{array}{lll}
      d'((\sum_i a_i t^i)(\sum_j b_j t^j)) & = & d'(\sum_{i,j} a_i b_j t^{i+j}) \\
& = & \sum_{i,j} t^{i+j} d(a_i b_j) \\
& = & \sum_{i,j} b_j t^{i+j} \otimes d(a_i) + \sum_{i, j} a_i t^{i+j} \otimes d(b_j)  \\
      & = &  (\sum_j b_j t^j) \otimes d'(\sum_i a_i t^i) + 
(\sum_i a_i t^i) \otimes d'(\sum_j b_j t^{j})
\end{array}
  \]
  shows that $d'$ is a $\Q$-linear derivation on the $R_m$-module 
$\Omega^1_R \otimes_R R_m$.
  Hence, it induces an $R_m$-linear map 
$u: \Omega^1_{R_m} \to  R_m \otimes_R \Omega^1_{R_m}$.
  Moreover, it is clear that the composite 
$R_m \otimes_R \Omega^1_R  \to \Omega^1_{R_m} \xrightarrow{u}
  R_m \otimes_R \Omega^1_R$ is identity. This proves the claim.
  We thus get a direct sum decomposition of $R_m$-modules
  $\Omega^1_{R_m} = (R_m \otimes_R \Omega^1_R) \oplus \Omega^1_{{R_m}/R}$.
  As $\Omega^n_{{R_m}/R} = 0$ for $n \ge 2$, we get
  $\Omega^n_{R_m} = (R_m \otimes \Omega^n_R) \oplus (\Omega^1_{{R_m}/R} \otimes_R \Omega^{n-1}_R)$ 
for any $n \ge 1$.
  This implies that 
  \begin{equation}\label{eqn:M-Q-C-0-0}
    \wt{\Omega}^n_{R_m} = (tR_m \otimes_R \Omega^n_R) \oplus 
(\Omega^1_{{R_m}/R} \otimes_R \Omega^{n-1}_R).
  \end{equation}
  
The other thing we need to observe is that the exact sequence
\[
{(t^{m+1})}/{(t^{2{m+2}})} \xrightarrow{d} \Omega^1_{{R[t]}/R} \otimes_{R[t]} R_m 
\to \Omega^1_{{R_m}/R} \to 0
\]
implies that $\Omega^1_{{R_m}/R} \cong {R_mdt}/{(t^{m})dt}$. 
In particular, $\Omega^1_{{R_m}/R} \otimes_R \Omega^{n-1}_R$ is generated as an 
$R$-module by elements of
the form $(\stackrel{m-1}{\underset{i =0}\sum} a_i t^i dt) \otimes \omega$,
where $a_i \in R$.

We now let $\omega \in \Omega^{n-1}_{R}$. We then get
  \begin{equation}\label{eqn:M-Q-C-0-1}
    \begin{array}{lll}
d(\stackrel{m-1}{\underset{i =0}\sum} \frac{a_i}{i+1} t^{i+1} \otimes \omega) & = &
\stackrel{m-1}{\underset{i =0}\sum} \frac{a_i}{i+1} t^{i+1} \otimes d \omega
+ \stackrel{m-1}{\underset{i =0}\sum} \frac{da_i}{i+1} t^{i+1} \otimes \omega+
\stackrel{m-1}{\underset{i =0}\sum} a_i t^i dt  \otimes \omega\\
& = & \stackrel{m-1}{\underset{i =0}\sum} t^{i+1} \otimes \frac{a_i d\omega + da_i \wedge \omega}{i+1} 
+ \stackrel{m-1}{\underset{i =0}\sum} a_i t^i dt \otimes \omega. 
\end{array}
  \end{equation}
 It follows from this that the composite map
$tR_m \otimes_R \Omega^n_R \inj  \wt{\Omega}^n_{R_m} \to 
\frac{\wt{\Omega}^{n}_{R_m}}{d\wt{\Omega}^{n-1}_{R_m}}$ is surjective.
\end{proof}

\begin{lem}\label{lem:M-Q-C-0}
For $m, n \ge 1$, the map
\[
\wt{\theta}^{n}_{R_m} \colon tR_m \otimes_R \Omega^{n-1}_R  \to \wt{K}^M_n(R_m); 
\]
\[
\wt{\theta}^{n}_{R_m}(a \otimes d\log(b_1) \wedge \cdots \wedge d\log(b_{n-1}))
=  \{\exp(a), b_1, \ldots , b_{n-1}\},
\]
is an isomorphism.
\end{lem}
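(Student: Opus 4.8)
The plan is to reduce the assertion to a short computation with K\"ahler differentials, once the hard analytic input has been extracted into the earlier lemmas. First I would note that, by construction (see ~\eqref{eqn:Milnor-omega}), the map $\wt{\theta}^{n}_{R_m}$ is the composite of $\theta^{n-1}_{R_m}$ with the inverse of the isomorphism ${\tr}^R_{m,n}\circ \psi_{R_m,n}\colon \wt{K}^M_n(R_m)\xrightarrow{\cong}\wt{\Omega}^{n-1}_{R_m}/d\wt{\Omega}^{n-2}_{R_m}$ supplied by \lemref{lem:M-Q-C}; the formula on generators claimed in the statement is then read off from the dlog description ~\eqref{eqn:dlog-map} of this composite together with $\log\circ\exp=\id$ on $tR_m$. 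Consequently $\wt{\theta}^{n}_{R_m}$ is an isomorphism if and only if $\theta^{n-1}_{R_m}$ is, and surjectivity of the latter is exactly \lemref{lem:Surj-omega} applied with $n-1$ in place of $n$. So everything comes down to injectivity of $\theta^{n-1}_{R_m}$.

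For this I would use the splitting ~\eqref{eqn:M-Q-C-0-0}, namely $\wt{\Omega}^{j}_{R_m}=(tR_m\otimes_R\Omega^{j}_R)\oplus(\Omega^1_{{R_m}/R}\otimes_R\Omega^{j-1}_R)$ for $j\ge 1$, together with the fact (recorded inside the proof of \lemref{lem:Surj-omega}) that $\Omega^1_{{R_m}/R}$ is $R$-free on $\{t^i\,dt\}_{0\le i\le m-1}$. Under this splitting $\theta^{n-1}_{R_m}$ is the inclusion of the first summand $tR_m\otimes_R\Omega^{n-1}_R\inj\wt{\Omega}^{n-1}_{R_m}$ followed by the projection modulo $d\wt{\Omega}^{n-2}_{R_m}$, so its kernel is the intersection $(tR_m\otimes_R\Omega^{n-1}_R)\cap d\wt{\Omega}^{n-2}_{R_m}$ inside $\wt{\Omega}^{n-1}_{R_m}$, and I must show this is zero. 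Writing a general $\beta\in\wt{\Omega}^{n-2}_{R_m}$ as $\beta=\sum_{i=1}^m t^i\mu_i+\sum_{i=0}^{m-1}t^i\,dt\wedge\nu_i$ with $\mu_i\in\Omega^{n-2}_R$ and $\nu_i\in\Omega^{n-3}_R$, a direct calculation of the de Rham differential gives
\[
d\beta \;=\; \sum_{i=1}^m t^i\,d\mu_i \;+\; \sum_{j=0}^{m-1} t^j\,dt\wedge\bigl((j+1)\mu_{j+1}-d\nu_j\bigr),
\]
where the first sum lies in $tR_m\otimes_R\Omega^{n-1}_R$ and the second in $\Omega^1_{{R_m}/R}\otimes_R\Omega^{n-2}_R$ (care is needed here only to keep these two summands apart and to distinguish the differential of $\Omega^\ast_R$ from that of $\Omega^\ast_{R_m}$).

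The key observation is then immediate: if $d\beta$ has no $\Omega^1_{{R_m}/R}$-component, the displayed formula forces $(j+1)\mu_{j+1}=d\nu_j$ for all $0\le j\le m-1$; since $\Q\subseteq R$ this makes each $\mu_i$ ($1\le i\le m$) exact, hence $d\mu_i=0$, hence $d\beta=0$. This shows the intersection is zero, so $\theta^{n-1}_{R_m}$ is injective, and combined with \lemref{lem:Surj-omega} it is an isomorphism, whence so is $\wt{\theta}^{n}_{R_m}$. The degenerate cases $n=1,2$, where $\Omega^{n-2}_R$ or $\Omega^{n-3}_R$ vanishes, are covered verbatim by the same computation (for $n=1$, $\theta^{0}_{R_m}$ is literally the identity of $tR_m$). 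I do not expect a real obstacle: the substantive content is already carried by \lemref{lem:M-Q-C} and \lemref{lem:Surj-omega}, and the only delicate point is bookkeeping the differential on the decomposition ~\eqref{eqn:M-Q-C-0-0}.
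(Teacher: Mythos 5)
Your proposal is correct, and the reduction (via \lemref{lem:M-Q-C} and \lemref{lem:Surj-omega}) to the injectivity of $\theta^{n-1}_{R_m}$ is exactly the paper's. Where you diverge is in how that injectivity is established. The paper proves it by induction on $m$: the case $m=1$ is done by hand, and the inductive step runs through the short exact sequence $0 \to F^n_{m+1} \to \wt{\Omega}^n_{R_{m+1}} \to \wt{\Omega}^n_{R_m} \to 0$, a separate ``Claim'' that closed forms lift along the restriction $\wt{\Omega}^{n-1}_{R_{m+1}} \to \wt{\Omega}^{n-1}_{R_m}$ (needed to make the bottom row of the comparison diagram exact), and a final graded-piece computation. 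You instead identify $\ker(\theta^{n-1}_{R_m})$ with the intersection $(tR_m\otimes_R\Omega^{n-1}_R)\cap d\wt{\Omega}^{n-2}_{R_m}$ and kill it in one stroke by expanding $d\beta$ against the splitting $\wt{\Omega}^{j}_{R_m}=(tR_m\otimes_R\Omega^{j}_R)\oplus(\Omega^1_{{R_m}/R}\otimes_R\Omega^{j-1}_R)$: vanishing of the $dt$-component forces $(j+1)\mu_{j+1}=d\nu_j$, hence (using $\Q\subseteq R$) each $\mu_i$ is exact and $d\beta=0$. Your displayed formula for $d\beta$ is the same calculation the paper performs inside its Claim, so the ingredients coincide; you have simply observed that it already yields injectivity for every $m$ simultaneously, making the induction and the lifting Claim unnecessary. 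Both arguments are sound; yours is shorter, while the paper's Claim has the side benefit of exhibiting the exact sequence of quotients $F^n_{m+1}/dF^{n-1}_{m+1}$, which it reuses in the proof of \thmref{thm:Chow-Milnor}. Your handling of the degenerate cases $n=1,2$ is also fine.
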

\begin{proof}
 In view of Lemmas~\ref{lem:M-Q-C} and ~\ref{lem:Surj-omega}, 
 the assertion that $\wt{\theta}^{n}_{R_m}$ is an isomorphism for all $m, n \geq 1$ is equivalent to
 showing that the map
$\theta^n_{R_m} \colon tR_m \otimes_R \Omega^{n}_R \to 
\frac{\wt{\Omega}^{n}_{R_m}}{d\wt{\Omega}^{n-1}_{R_m}}$ (see ~\eqref{eqn:Milnor-omega}) is injective for all
$n \ge 0$ and $m \ge 1$.
We can again assume that $n \ge 1$. We shall prove this by induction on $m \ge 1$.
Assume first that $m =1$. In this case, we want to show that the map
$tR \otimes_R \Omega^n_R \to \frac{\wt{\Omega}^{n}_{R_1}}{d\wt{\Omega}^{n-1}_{R_1}}$ is injective.
To show this, we should first observe that every element of $tR \otimes_R \Omega_R^{n-1}$ 
must be of the form $t \otimes \omega$ and every element of $Rdt \otimes_R \Omega^{n-2}_R$
must be of the form $dt \otimes \omega'$.
In this case, we get
\begin{equation}\label{eqn:M-Q-C-0-2}
  \begin{array}{lll}
d(t \otimes \omega + dt \otimes \omega') & = & dt \wedge \omega + t d \omega - dt \wedge d \omega' \\
& = & t \otimes d \omega + dt \otimes (\omega - d \omega' ).
\end{array}
\end{equation}

If $d(t \otimes \omega + dt \otimes \omega') $ has to lie in $Rt \otimes_R \Omega^n_R$,
then we must have $dt \otimes ( \omega - d \omega') = 0$. But this implies that
$\omega = d\omega'$. Putting this in ~\eqref{eqn:M-Q-C-0-2},
we get $t \otimes d\omega  = 0$. This shows that  
$d\wt{\Omega}^{n-1}_{R_1} \cap (Rt \otimes_R \Omega^n_R) = 0$. But this is 
equivalent to the desired injectivity.

\vskip .2cm

To prove the $m \ge 2$ case by induction, we shall need the following

{\bf Claim:} The restriction map 
${\rm Ker}(\wt{\Omega}^{n-1}_{R_{m+1}} \xrightarrow{d} \wt{\Omega}^n_{R_{m+1}}) \to
{\rm Ker}(\wt{\Omega}^{n-1}_{R_{m}} \xrightarrow{d} \wt{\Omega}^n_{R_{m}})$ is surjective for 
every $m \ge 1$.

To prove the claim, recall from ~\eqref{eqn:M-Q-C-0-0} that
\begin{equation}\label{eqn:M-Q-C-3}
  \begin{array}{lll}
  \wt{\Omega}^{n-1}_{R_m} & \cong & 
(tR_m \otimes_R \Omega^{n-1}_R) \oplus ({R_m dt}/{(t^{m})dt} \otimes_R \Omega^{n-1}_R) \\
  & \cong & (\stackrel{m-1}{\underset{i =0}\oplus} Rt^{i+1} \otimes_R \Omega^{n-1}_R  \oplus
            (\stackrel{m-1}{\underset{i =0}\oplus} Rt^i dt \otimes_R \Omega^{n-2}_R).
            \end{array}
  \end{equation}

  Suppose now that $\omega \in \wt{\Omega}^{n-1}_{R_m}$ is such that $d \omega = 0$.
  It follows from ~\eqref{eqn:M-Q-C-3} that we can write
  $\omega = \stackrel{m-1}{\underset{i =0}\sum} (i+1)^{-1} t^{i+1}\omega_i +
  \stackrel{m-1}{\underset{i =0}\sum} t^i dt \wedge \omega'_i$.
  Therefore, we have
  \begin{equation}\label{eqn:M-Q-C-4}
    \begin{array}{lll}
      d \omega 
      & = & \stackrel{m-1}{\underset{i =0}\sum} (i+1)^{-1} t^{i+1} d \omega_i+
                    \stackrel{m-1}{\underset{i =0}\sum} t^{i} dt \wedge d \omega_i -
                    \stackrel{m-1}{\underset{i =0}\sum} t^i dt \wedge d \omega'_i\\
               & = & \stackrel{m-1}{\underset{i =0}\sum} (i+1)^{-1} t^{i+1} d \omega_i +
      \stackrel{m-1}{\underset{i =0}\sum}t^i dt \wedge  (\omega_i - d \omega'_i). \\
    \end{array}
    \end{equation}

    Since the left hand term is zero, it implies from the decomposition
    ~\eqref{eqn:M-Q-C-3} (for $\wt{\Omega}^n_{R_m}$) that
    $d\omega_i = 0 = \omega_i - d\omega'_i$ for $0 \le i \le m-1$.
    If we now let $\omega' =  \stackrel{m-1}{\underset{i =0}\sum} (i+1)^{-1} t^{i+1} \omega'_i  
\in \wt{\Omega}^{n-2}_{R_m}$, we get
 \begin{equation}\label{eqn:M-Q-C-5}
d \omega' =  \stackrel{m-1}{\underset{i =0}\sum} (i+1)^{-1} t^{i+1} d \omega'_i  +
\stackrel{m-1}{\underset{i =0} \sum} t^i dt \wedge \omega'_i = \omega.
\end{equation}

The claim now follows from the surjectivity of the map
$\wt{\Omega}^{n-2}_{R_{m+1}} \to \wt{\Omega}^{n-2}_{R_m}$. Indeed, this implies that
some $\wh{\omega} \in \wt{\Omega}^{n-2}_{R_{m+1}}$ maps onto $\omega'$
and therefore $d \wh{\omega} \in \wt{\Omega}^{n-1}_{R_{m+1}}$ maps onto $\omega$.
Since $d \wh{\omega}$ is clearly a closed form, we are done.

In order to use the above claim,  we let
$F^n_m = (Rt^m \otimes_R \Omega^n_R) \oplus (R t^{m-1} dt \otimes_R \Omega^{n-1}_R)$.
We then have an exact sequence of $R_{m+1}$-modules
\begin{equation}\label{eqn:M-Q-C-6}
0 \to F^n_{m+1} \to \wt{\Omega}^n_{R_{m+1}} \to \wt{\Omega}^n_{R_{m}} \to 0.
\end{equation}

Letting $m \ge 1$ and taking the quotient of this short exact sequence by the 
similar exact sequence for $n-1$ via
the differential map, we obtain a commutative diagram 
\begin{equation}\label{eqn:M-Q-C-0-4}
\xymatrix@C.8pc{
0 \ar[r] & Rt^{m+1} \otimes_R \Omega^n_R \ar[r] \ar[d] & tR_{m+1} \otimes_R \Omega^n_R  
\ar[r] \ar[d]^-{\theta^n_{R_{m+1}}} & tR_m \otimes_R \Omega^n_R 
\ar[r] \ar[d]^-{\theta^n_{R_m}} & 0 \\
0 \ar[r] & {F^n_{m+1}}/{d F^{n-1}_{m+1}} \ar[r] & 
\frac{\wt{\Omega}^{n}_{R_{m+1}}}{d\wt{\Omega}^{n-1}_{R_{m+1}}} \ar[r] &
\frac{\wt{\Omega}^{n}_{R_{m}}}{d\wt{\Omega}^{n-1}_{R_{m}}} \ar[r] & 0.}
\end{equation}

The top row is clearly exact and the above claim precisely says that the bottom 
sequence is exact.
The right vertical arrow is injective by induction. It suffices therefore to show 
that the map $Rt^{m+1} \otimes_R \Omega^n_R \otimes_R \to {F^n_{m+1}}/{d F^{n-1}_{m+1}}$
is injective. The proof of this is almost identical to that of $m =1$ case.
Indeed, it is easy to check that every element of $F^{n-1}_{m+1}$ must be of the form
$t^{m+1} \otimes \omega + t^m dt \otimes \omega'$.
We therefore get
\begin{equation}\label{eqn:M-Q-C-0-5}
  \begin{array}{lll}
d(t^{m+1} \otimes \omega  + t^m dt \otimes \omega') & = & 
t^{m+1} d \omega   + (m+1)t^{m} dt \wedge \omega -
t^{m}dt \wedge d \omega' \\
 & = & t^{m+1} d \omega + t^m dt \wedge ((m+1) \omega - d\omega') \\
 & = &  t^{m+1} d \omega + t^m dt \otimes ((m+1) \omega - d\omega').
\end{array}
\end{equation}

If $d(t^{m+1} \otimes \omega + t^m dt \otimes \omega')$ lies in 
$Rt^{m+1} \otimes_R \Omega^n_R$,
then we must have $t^m dt \otimes ((m+1) \omega - d \omega'_i) = 0$. But this 
implies that $\omega = d((m+1)^{-1}\omega')$. Putting this in ~\eqref{eqn:M-Q-C-0-5},
we get $t^{m+1} \otimes d\omega  = 0$. This shows that  
$dF^{n-1}_{m+1} \cap (Rt^{m+1} \otimes_R \Omega^n_R) = 0$. Equivalently, the left 
vertical arrow in ~\eqref{eqn:M-Q-C-4} is injective. This proves that $\theta^n_{R_m}$ 
is injective  for all $m \ge 1$ and completes the proof of the lemma.
\end{proof}

We shall also need the following related result later on.

\begin{lem}\label{lem:Inj-*}
Let $R$ be as above. Let $n \ge 0$ and $m \ge 1$ be integers. Then the map
$d \colon tR_m \otimes_R \Omega^{n}_R \to  \wt{\Omega}^{n+1}_{R_m}$ is injective.
\end{lem}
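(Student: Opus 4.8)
The plan is to use the direct sum decomposition established in the proof of \lemref{lem:Surj-omega}, namely
\[
\wt{\Omega}^{n}_{R_m} = (tR_m \otimes_R \Omega^n_R) \oplus (\Omega^1_{{R_m}/R} \otimes_R \Omega^{n-1}_R)
\quad\text{and}\quad
\Omega^1_{{R_m}/R} \cong {R_m\,dt}/{(t^{m})\,dt},
\]
which give an analogous decomposition of $\wt{\Omega}^{n+1}_{R_m}$. Under these identifications the differential $d$ on $\wt{\Omega}^\bullet_{R_m}$ is computed exactly as in ~\eqref{eqn:M-Q-C-0-1} and ~\eqref{eqn:M-Q-C-0-5}: for $\omega \in \Omega^n_R$ and $0 \le i \le m-1$ one has
\[
d\Bigl(\tfrac{1}{i+1} t^{i+1} \otimes \omega\Bigr) = t^{i+1} \otimes \tfrac{1}{i+1} d\omega + t^i\,dt \otimes \omega .
\]
First I would write a general element $\alpha \in tR_m \otimes_R \Omega^n_R$ as $\alpha = \sum_{i=0}^{m-1} t^{i+1} \otimes \omega_i$ with $\omega_i \in \Omega^n_R$, compute $d\alpha$ using the formula above, and then project onto the two summands of $\wt{\Omega}^{n+1}_{R_m}$.

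The key computation: one finds
\[
d\alpha = \sum_{i=0}^{m-1} t^{i+1} \otimes \tfrac{1}{i+1} d\omega_i \;+\; \sum_{i=0}^{m-1} t^i\,dt \otimes (i+1)\,\omega_i ,
\]
where the first sum lies in $tR_m \otimes_R \Omega^{n+1}_R$ and the second in $\Omega^1_{{R_m}/R} \otimes_R \Omega^n_R$ (note the range $0 \le i \le m-1$ is exactly what survives in ${R_m\,dt}/{(t^m)\,dt}$, so no term is lost). If $d\alpha = 0$, then since these two summands are independent, the $dt$-component must vanish: $\sum_i t^i\,dt \otimes (i+1)\omega_i = 0$ in $\Omega^1_{{R_m}/R}\otimes_R\Omega^n_R$, which forces $(i+1)\omega_i = 0$ for every $0 \le i \le m-1$. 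Since $R \supseteq \Q$, the integers $i+1$ are invertible, so $\omega_i = 0$ for all $i$, hence $\alpha = 0$.

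This essentially mirrors the injectivity arguments already carried out inside the proof of \lemref{lem:M-Q-C-0} (the $m=1$ case in ~\eqref{eqn:M-Q-C-0-2} and the inductive step in ~\eqref{eqn:M-Q-C-0-5}), so there is no genuine obstacle; the only point requiring care is bookkeeping the truncation ${R_m\,dt}/{(t^m)\,dt}$ correctly so that one does not mistakenly discard the $dt$-terms, and invoking $\Q \subseteq R$ to divide by $i+1$. I would present the argument in one short paragraph following the decomposition, citing ~\eqref{eqn:M-Q-C-0-0} and ~\eqref{eqn:M-Q-C-0-1} for the structural input.
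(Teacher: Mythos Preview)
Your proposal is correct and follows essentially the same argument as the paper: write a general element of $tR_m\otimes_R\Omega^n_R$, differentiate, project onto the $dt$-summand using the decomposition~\eqref{eqn:M-Q-C-0-0}, and conclude each $\omega_i=0$. (One harmless slip: in your displayed formula for $d\alpha$ the first sum should read $\sum_i t^{i+1}\otimes d\omega_i$, without the extra $\tfrac{1}{i+1}$; this does not affect the argument since only the $dt$-component is used.)
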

\begin{proof}
Suppose $\omega = \stackrel{m-1}{\underset{i=0}\sum} t^{i+1} \otimes \omega_i$ and 
$d(\omega) = 0$.
That is,  $\stackrel{m-1}{\underset{i=0}\sum} t^{i+1} \otimes d \omega_i  +
\stackrel{m-1}{\underset{i=0}\sum} (i+1) t^i dt \otimes \omega_i = 0$.
But this implies by ~\eqref{eqn:M-Q-C-0-0} that
$\stackrel{m-1}{\underset{i=0}\sum} (i+1) t^idt \otimes \omega_i  = 0$.
Since $\Omega^1_{{R_m}/R} \otimes_R \Omega^n_R  \cong \Omega^n_R dt \oplus \cdots 
\oplus \Omega^n_R t^{m-1}dt \cong (\Omega^n_R)^{m}$ as an $R$-module,
we must have $\omega_i = 0$ for each $i$. In particular, we have $\omega = 0$.
\end{proof}

\section{The cycle class map in characteristic zero}\label{sec:char-0-*}
In this section, we shall show that the cycle class map for the additive 0-cycles 
completely describes the relative $K$-theory of the truncated polynomial rings over a 
characteristic zero field in terms of additive 0-cycles.
This was perhaps the main target for the introduction of
the additive higher Chow groups by Bloch and Esnault \cite{BE2}.
We formulate our precise result as follows.

Let $k$ be a characteristic zero field. Let $R$ be a regular semi-local ring which is 
essentially of finite type over $k$. Let $m \ge 0, n \ge 1$ be two integers.
Recall from \S~\ref{sec:Rel-K-trun} that the canonical map from the Milnor to the 
Quillen $K$-theory
induces a map $\psi_{R_m, n} \colon  \wt{K}^M_n(R_m) \to \wt{K}_n(R_m)$. 
Since this map is clearly compatible
with change in $m \ge 0$, we have a strict map of pro-abelian groups
\begin{equation}\label{eqn:Add-2}
\psi_{R,n} \colon \{\wt{K}^M_n(R_m)\}_m \to \{\wt{K}_n(R_m)\}_m.
\end{equation}

In this section, we shall restrict our attention to the case when $R$ is the base field $k$ itself. 
We shall prove a general result for regular semi-local rings in the next section. 
In the case of the field $k$, every integer $n \ge 1$ has associated to it a diagram of
pro-abelian groups:
\begin{equation}\label{eqn:Add-3}
\xymatrix@C.8pc{
& \{\wt{K}^M_n(k_m)\}_m \ar[d]^-{\psi_{k}} \\
\{\TCH^{n}(k, n; m)\}_m \ar[r]^-{cyc_k} \ar@{.>}[ur] & \{\wt{K}_n(k_m)\}_m.}
\end{equation}

The following is our main result.

\begin{thm}\label{thm:Add-main}
If $k$ is a field of characteristic zero, then all maps in ~\eqref{eqn:Add-3} are 
isomorphisms.
\end{thm}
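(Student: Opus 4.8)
The plan is to prove Theorem~\ref{thm:Add-main} by establishing the factorization first and then checking that all three maps in the resulting triangle are isomorphisms. The key inputs have essentially all been assembled in the preceding two sections, so the argument is a matter of fitting them together in the right order.

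\textbf{Step 1: Factor $cyc_k$ through $\wt{K}^M_n(k_m)$.} First I would show that the cycle class map $cyc_k\colon \{\TCH^n(k,n;m)\}_m \to \{\wt K_n(k_m)\}_m$ factors through the dotted arrow in \eqref{eqn:Add-3}. By \lemref{lem:Milnor-surj} (applied with $R = k$), the source $\TCH^n(k,n;m)$ is generated by cycles of the form $\psi^k_{n,m}\circ(\tau^k_{1,m}\otimes\nu^k_{n-1})(\un a\otimes\un b)$, which by \lemref{lem:Milnor-Chow-TCH-0} is the explicit $0$-cycle $[Z]$ cut out by $\prod_{i=1}^m(1-a_it^i) = y_1-b_1 = \cdots = y_{n-1}-b_{n-1} = 0$. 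The plan is to compute $cyc_k([Z])$ directly from the definition in \S~\ref{sec:Generators}: it is a transfer of the Milnor symbol $\{y_1(Z),\dots,y_n(Z)\}$ along the finite map $Z\to\Spec k$, and using \lemref{lem:Elem-0} (which identifies the class of $V(f(t))$ in relative $K_1$ with $\partial_1(\overline{f(0)^{-1}f(t)})$) together with the projection/norm formulas of \eqref{eqn:cycle-2}, this image equals (up to the boundary isomorphism $\partial$) the Milnor symbol $\{\exp(\text{something}), b_1,\dots,b_{n-1}\}$, i.e.\ precisely $\psi_{k_m,n}$ applied to $\wt\theta^n_{k_m}$ of the corresponding element of $tk_m\otimes_k\Omega^{n-1}_k$. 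Thus $cyc_k$ sends generators into the image of $\psi_{k_m,n}$ and the factorization follows; one must check the factorization is well-defined (independent of the chosen presentation), which is where the injectivity statements of \S~\ref{sec:Rel-Milnor**} are used.

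\textbf{Step 2: Identify the dotted arrow and prove it is an isomorphism.} By \lemref{lem:M-Q-C-0}, the map $\wt\theta^n_{k_m}\colon tk_m\otimes_k\Omega^{n-1}_k \xrightarrow{\cong} \wt K^M_n(k_m)$ is an isomorphism sending $a\otimes d\log b_1\wedge\cdots\wedge d\log b_{n-1}$ to $\{\exp(a),b_1,\dots,b_{n-1}\}$. On the other hand, the Witt-complex isomorphism $\tau^k_{n,m}\colon \W_m\Omega^{n-1}_k\xrightarrow{\cong}\TCH^n(k,n;m)$ of R\"ulling combined with the Ghost isomorphism $\zeta^k_{n-1,m}\colon \W_m\Omega^{n-1}_k\xrightarrow{\cong}\prod_{i=1}^m\Omega^{n-1}_k$ (valid in characteristic zero) identifies $\TCH^n(k,n;m)$ with $\prod_{i=1}^m\Omega^{n-1}_k$. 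The plan is to trace through these identifications and the explicit description of $\phi^k_{n,m}$ in \lemref{lem:Milnor-Witt-desc} to see that, after passing to the level $m=1$ piece and the Ghost components, the dotted arrow becomes (componentwise) an isomorphism $\Omega^{n-1}_k \cong tk_1/t^2k_1 \otimes_k\Omega^{n-1}_k$ type statement, and more generally that it is a levelwise bijection onto $tk_m\otimes_k\Omega^{n-1}_k \cong \W_m\Omega^{n-1}_k$; hence an isomorphism of pro-abelian groups. Here I expect the bookkeeping with the factors $1/i^{n-1}$ from the Ghost map and the $1/i!$ from $\exp$ to be the fiddly part, but since $\Q\subset k$ everything is invertible.

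\textbf{Step 3: Prove $\psi_k$ is an isomorphism.} Finally I would show that $\psi_{k,n}\colon\{\wt K^M_n(k_m)\}_m \to \{\wt K_n(k_m)\}_m$ is an isomorphism. This is essentially \corref{cor:Mil-Qui} in the special case $R=k$, and the plan is to deduce it from \propref{prop:Pro-vanish} (the pro-vanishing theorem referenced in \S~\ref{sec:Outline}, using results of Krishna): the relative $K$-theory $\wt K_n(k_m)$ decomposes into Adams eigenspaces, $\psi_{k_m,n}$ is already known from \lemref{lem:M-Q-C} to be an isomorphism onto the top eigenspace $\wt K^{(n)}_n(k_m)$, and the pro-vanishing $\{\wt K_n(k_m)^{(p)}\}_m = 0$ for $p\neq n$ kills all other eigenspaces in the pro-category. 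Combining Steps 2 and 3, the dotted arrow and $\psi_k$ are isomorphisms, hence so is $cyc_k = \psi_k\circ(\text{dotted arrow})$, completing the proof. The main obstacle, I expect, will be Step 1 --- making the computation of $cyc_k$ on the explicit generator $[Z]$ genuinely match the symbol $\{\exp(a),b_1,\dots,b_{n-1}\}$ --- because it requires carefully chasing the boundary map $\partial$, the norm maps on Milnor $K$-theory of the (possibly non-rational) residue field $k(Z)$, and \lemref{lem:Elem-0} through the definition of the cycle class map; the higher $n$ case in particular needs the Weil-reciprocity style arguments of \S~\ref{sec:curves} to handle the transfers correctly.
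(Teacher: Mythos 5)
Your proposal is correct and follows essentially the same route as the paper's: your Step 1 is \lemref{lem:Factor} (descend to $\TCH^n(k,n;m)$ using the rational-coefficient version \thmref{thm:Intro-1-ratl} together with the fact that both sides are $\Q$-vector spaces in characteristic zero, then land in Milnor $K$-theory on the generators supplied by \lemref{lem:Milnor-surj} via \lemref{lem:Elem-0} and the $K_*(k)$-linearity of the push-forward from \lemref{lem:Proj-rel}), and your Step 3 is exactly \corref{cor:Milnor-Quillen-main} (Goodwillie's theorem, \lemref{lem:M-Q-C}, and the pro-vanishing \propref{prop:Pro-vanish}). The one place you diverge is Step 2: the paper proves that $cyc^M_k$ is an isomorphism by induction on $m$, comparing the Verschiebung exact sequence $0 \to \Omega^{n-1}_k \xrightarrow{V_{m+1}} \W_{m+1}\Omega^{n-1}_k \to \W_m\Omega^{n-1}_k \to 0$ with the filtration $F^n_{m+1}$ of $\wt{K}^M_n(k_{m+1})$ via the isomorphism $\wt{\theta}^n_{k_{m+1}}$ of \lemref{lem:M-Q-C-0}, whereas you propose to diagonalize everything through the Ghost isomorphism ~\eqref{eqn:Milnor-surj*-22}. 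Your variant does work: the $j$-th Ghost component of $\un{a}\,d\log([b_1])\wedge\cdots\wedge d\log([b_{n-1}])$ is $w_j(\un{a})\,d\log(b_1)\wedge\cdots\wedge d\log(b_{n-1})$, while the $t^l$-coefficient of $\log\bigl(\prod_i(1-a_it^i)\bigr)$ is $-w_l(\un{a})/l$, so under $\wt{\theta}^n_{k_m}$ the composite becomes diagonal with invertible scalars; this is arguably more transparent than the induction, at the cost of the bookkeeping you anticipate, and it exploits characteristic zero in the same essential way.
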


The proof of this theorem will be done by combining the results of
\S~\ref{sec:char-0} and \S~\ref{sec:Rel-Milnor**} with a series of new steps.

\subsection{Factorization of $cyc_k$ into Milnor $K$-theory}\label{sec:F-M}
We follow two step strategy  for proving \thmref{thm:Add-main}.
We shall first show that $cyc_k$ factors through the Milnor $K$-theory and 
the resulting map is an isomorphism. The second step will be to compare the
Milnor and Quillen $K$-groups in the pro-setting.
Apart from showing factorization through the Milnor $K$-theory,
the following result also improves \thmref{thm:Intro-1} in that it tells us that
for additive higher Chow groups of 0-cycles, the cycle class map is  a
strict morphism of pro-abelian groups (see \S~\ref{sec:Pro}).

\begin{lem}\label{lem:Factor}
  Let $m \ge 0, n \ge 1$ be two integers. Then the following hold.
  \begin{enumerate}
  \item
  The map $cyc_k \colon \Tz^{n}(k, n; m) \to
  \wt{K}_n(k_{m})$ descends to a group homomorphism
  \[
    cyc_k \colon \TCH^{n}(k, n; m) \to \wt{K}_n(k_{m}).
  \]
\item
  The map $cyc_k$ has a factorization
  \[
    \TCH^{n}(k, n; m) \xrightarrow{cyc^M_k}  \wt{K}^M_n(k_{m})
    \xrightarrow{\psi_{k_{m},n}} \wt{K}_n(k_{m}).
    \]
  \end{enumerate}
\end{lem}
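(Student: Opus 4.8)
The plan is to reduce everything to the case of the prime field and then to a very explicit computation of the cycle class map on the generating cycles described by Lemmas~\ref{lem:Milnor-Chow-TCH-0} and \ref{lem:Milnor-surj}. For part (1), I must show that $cyc_k$ kills boundaries, i.e. that $cyc_k(\partial W)=0$ for $W\in z^{n}(k\times\A^1_k|(m+1)\{0\}\cdot\A^1_k\,,\,n+1)$ with the modulus $(m+1)\{0\}$. But this is exactly the content of the already-proved \propref{prop:Curve-vanish} (equivalently \propref{prop:Zero-boundary}) applied to the regular curve $X=\A^1_k$ and the divisor $D=\{0\}$: in the additive-cycle dictionary of \S~\ref{sec:Add-0}, an admissible cycle for the additive modulus $m$ over $k$ is precisely an element of $z^{n+1}(\A^1_k|(m+1)\{0\},n+1)$, and the proof of \propref{prop:Zero-boundary} shows $cyc_{\A^1_k|\{0\}}$ annihilates $\partial$ of cycles with modulus $(n+1)(m+1)\{0\}$; since $\TCH^n(k,n;m)$ is defined using modulus $m$, the needed statement is really \propref{prop:Curve-vanish} for $D=(m+1)\{0\}$, where the proof only used the inequality $(n+1)m_i\ge m_i+n$ on the multiplicities and the Milnor-$K$-theory lemmas of \S~\ref{sec:Pre-Milnor}; I would just re-run that argument verbatim with the divisor $(m+1)\{0\}$ in place of $D$. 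So part (1) is essentially a citation of the curve case already established.

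For part (2), the key point is that $cyc_k$ on generators is manifestly a composition of $\psi$ with a Milnor symbol. Concretely, by \lemref{lem:Milnor-surj} applied to $R=k$, the group $\TCH^n(k,n;m)$ is generated by cycles $Z$ of the form in \eqref{eqn:Milnor-Chow-TCH-1}, namely $Z=\{\prod_{i=1}^m(1-a_it^i)=0,\ y_1=b_1,\dots,y_{n-1}=b_{n-1}\}$ with $a_i\in k$, $b_j\in k^\times$. Such a $Z$ is a finite set of closed points of $\A^1_k\times\square^{n-1}$ lying over the zero locus of $f(t)=\prod(1-a_it^i)$. I would compute $cyc_k([Z])$ directly from the definition in \eqref{eqn:cycle-0}--\eqref{eqn:cycle-3}: at each point $z$ one gets the Milnor symbol $\{y_1(z),\dots,y_n(z)\}$ where the last coordinate $y_n$ of the ambient $\square^n$ corresponds to the $\A^1$-direction, but here I must be careful about the shift — in the additive picture the cycle sits in $\A^1_k\times\square^{n-1}$ and the class is pushed forward through $\partial$ to land in $\wt K_n(k_m)$. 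Using \lemref{lem:Elem-0}, the fundamental class of $V(f(t))\subset\A^1_k$ maps under $\partial^{-1}$ to $\overline{f(0)^{-1}f(t)}=\overline{f(t)}\in\wt K_1(k_m)$ (since $f(0)=1$), and under the product structure this is exactly the Milnor symbol $\{\overline{f(t)},b_1,\dots,b_{n-1}\}\in\wt K^M_n(k_m)$. Thus on generators $cyc_k([Z])=\psi_{k_m,n}(\{\overline{f(t)},b_1,\dots,b_{n-1}\})$, which visibly factors through $\wt K^M_n(k_m)$. Defining $cyc^M_k$ on generators by this formula and using part (1) together with the surjectivity of \lemref{lem:Milnor-surj}, the factorization descends to $\TCH^n(k,n;m)$ provided one checks that relations among generators in $\TCH^n$ are sent to relations in $\wt K^M_n(k_m)$ — but this is automatic once (1) is known, since $cyc^M_k$ is defined as a lift of $cyc_k$ along the (injective on the relevant range, by \lemref{lem:M-Q-C}) map $\psi_{k_m,n}$.

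The main obstacle I anticipate is not the curve-vanishing input, which is already available, but rather pinning down the compatibility between the two descriptions of $cyc_k$ on a generator $Z$ — the intrinsic one via \eqref{eqn:cycle-0} (Milnor symbols in the coordinates of $\square^n$) and the one via the connecting map $\partial\colon\wt K_{n+1}(k_m)\to K_n(\A^1_k,(m+1)\{0\})$ used to identify additive Chow groups with relative $K$-theory — i.e. making sure the symbol $\{y_1(z),\dots,y_{n+1}(z)\}$ on the curve $\ov W\cong\A^1_k$ really does map, under $\partial^M$ on the Gersten complex composed with the identification of \lemref{lem:Elem-0}, to $\{\overline{f(t)},b_1,\dots,b_{n-1}\}$ in $\wt K^M_n(k_m)$. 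I expect to handle this exactly as in the proof of \propref{prop:Curve-vanish}: the residue of the symbol $\{g_1,\dots,g_{n+1}\}$ at the points of $V(f)$ computes, via the Gersten boundary for Milnor $K$-theory, the product $\{y_1(z),\dots,\widehat{y_i(z)},\dots\}$ weighted by $\ord_z$, and summing over $z\in V(f)$ and pushing forward recovers $\{\overline{f(t)},\un b\}$ because $\overline{f(t)}=\prod_z(1-\text{(local param)})$ up to the unit $f(0)=1$. Once this identification is in place, both parts of the lemma follow, and the factorization is automatically a strict morphism of pro-abelian groups because \eqref{eqn:cycle-5} already exhibits strictness.
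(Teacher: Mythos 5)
Your part (2) is essentially the paper's own argument: reduce to the generators supplied by \lemref{lem:Milnor-surj}, compute $cyc_k$ on a generator via \lemref{lem:Elem-0} and the $K_*(k)$-linearity of the push-forward (this is where \lemref{lem:Proj-rel} is needed, which you gesture at but should cite), obtain $\partial(\{\ov{f(t)},b_1,\ldots,b_{n-1}\})$, and conclude using the injectivity of $\psi_{k_m,n}$ from \lemref{lem:M-Q-C}. That part is fine.

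Part (1), however, has a genuine gap. You propose to ``re-run the argument of \propref{prop:Curve-vanish} verbatim with the divisor $(m+1)\{0\}$ in place of $D$,'' asserting that the proof only used the inequality $(n+1)m_i\ge m_i+n$. But that inequality is exactly what the hypothesis $W\in z^{n+1}(X|(n+1)D,n+1)$ buys and what disappears when you drop the modulus amplification: if $W$ only satisfies the modulus condition for $(m+1)\{0\}$, you get $\sum_j\ord_0(g_j-1)\ge m+1$, not $\ge (m+1)+n$, so \lemref{lem:Milnor-0} only places $g$ in $(1+\fm^{m+1})K^M_n(F)$, and \lemref{lem:Milnor-1}(2) then lands you in $\wh{K}^M_{n+1}(A,\fm^{m+1-n})$ rather than $\wh{K}^M_{n+1}(A,\fm^{m+1})$ --- which is precisely the loss of modulus that forces the pro-formulation of \thmref{thm:Intro-1}. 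Integrally, descent with the \emph{same} $m$ on both sides is not available by this route. The paper instead invokes characteristic zero: $\TCH^n(k,n;m)$ is a $k$-vector space (R\"ulling) and $\wt K_n(k_m)$ is a $\Q$-vector space (Goodwillie), so the rational cycle class map of \thmref{thm:Intro-1-ratl} --- whose proof replaces \lemref{lem:Milnor-2} by \lemref{lem:Milnor-2-Q-coeff}, where the exponent shift by $n$ is removed at the cost of tensoring with $\Q$ --- already gives the integral statement. Your write-up never uses the characteristic zero hypothesis in part (1), and without it (or without increasing the modulus) the descent claim is not established.
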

\begin{proof}
  Since ${\rm char}(k) =0$, we know by the main result of \cite{R}
  (see also \cite[Theorem~1.2]{KP-4}) that each $\TCH^{n}(k, n; m)$ is a $k$-vector space.
  Similarly, $\wt{K}_n(k_{m})$ is a $\Q$-vector space because it is isomorphic to 
  $\wt{HC}_{n-1}(k_{m})$ by \cite{Good}.
  The first part of the lemma therefore follows directly from \thmref{thm:Intro-1-ratl}. 

We shall now prove the second part.
Since $\psi_{k_m,n}$ is injective for each $m \ge 0, n \ge 1$ by \lemref{lem:M-Q-C}, 
we only need to show
that $cyc_k$ takes a set of generators of the group $\TCH^{n}(k,n;m)$ to $\wt{K}^M_n(k_{m})$.


We first assume that $n = 1$. Let $z \in \A^1_k$ be a closed point. We can write
$z = \Spec({k[t]}/{(f(t)))}$, where $f(t)$ is an irreducible polynomial.
The modulus condition for $z$ implies that $f(0) \in k^{\times}$.
If we let  $g(t) = (f(0))^{-1} f(t)$ and let $\ov{g(t)}$ denote the image of 
$g(t)$ in $k_{m}$, then we see that $\ov{g(t)} \in \wt{K}^M_1(k_{m})$. 
By the definition of the cycle class map in ~\eqref{eqn:cycle-3}, we have that
$cyc_k(z) = [k(z)] \in K_0(\A^1_k, (m+1)\{0\})$. But \lemref{lem:Elem-0} says that
$[k(z)] = \partial (\ov{g(t)})$ under the isomorphism
$\wt{K}^M_1(k_{m}) = \wt{K}_1(k_{m}) 
\xrightarrow{\partial} K_0(k[t], (t^{m+1}))$.
So we are done.

Suppose now that $n \ge 2$. In this case, \lemref{lem:Milnor-surj} says that
$\TCH^{n}(k, n; m)$ is generated by closed points 
$z \in \A^1_k \times \square^{n-1}$
which lie in $\A^1_k \times \G_m^{n-1} \subset \A^{n}_k$. Furthermore,  $z \in \A^n_k$ 
is defined by an ideal
$I \subset k[t, y_1, \ldots , y_{n-1}]$ of the type
$I =(f(t), y_1 -b_1, \ldots , y_{n-1} - b_{n-1})$, where $b_i \in k^{\times}$ for each 
$1 \le i \le n-1$.
Since $z$ is a closed point, $f(t)$ must be an irreducible polynomial in $k[t]$.
Moreover,
$f(t)$ defines an element of $\W_{m}(k) \cong \wt{K}^M_1(k_{m})$. In particular,
$f(0) \in k^{\times}$.

We next note that the push-forward map $K_*(k(z)) \to K_*(k(z'))$
is $K_*(k)$-linear, where $z' = \Spec({k[t]}/{(f(t))})$ (see \cite[Chapter~3]{TT}).
The map $K_*(k(z')) \to K_*(k[t], (t^{m+1}))$ is $K_*(k)$-linear
by \lemref{lem:Proj-rel}. It follows that the composition
$K_*(k(z)) \to K_*(k[t], (t^{m+1}))$ is $K_*(k)$-linear.

It follows therefore from the definition of the cycle class map in ~\eqref{eqn:cycle-3} that
under the map
$cyc_k \colon \Tz^{n}(k, n; m) \to {K}_{n-1}(k[t], (t^{m+1}))$, 
we have 
\[
cyc_k([z]) = \{b_1, \ldots , b_{n-1}\} \cdot [k(z')] \in K^M_{n-1}(k) \cdot 
{K}_{0}(k[t], (t^{m+1})) \subseteq {K}_{n-1}(k[t], (t^{m+1})),
\]
where $\{b_1, \ldots , b_{n-1}\} \in K^M_{n-1}(k)$. 
We let $g(t) = (f(0))^{-1} f(t)$ and let $\ov{g(t)}$ be the image of $g(t)$
in $k_{m}$ via the surjection $k[t] \surj k_{m}$.
 
Since $\partial \colon \wt{K}_n(k_{m}) \xrightarrow{\cong}
{K}_{n-1}(k[t], (t^{m+1}))$ is $K_*(k)$-linear,
we see that
\[
\begin{array}{lll}
\{b_1, \ldots , b_{n-1}\} \cdot [k(z')] & = & \{b_1, \ldots , b_{n-1}\} \cdot \partial 
(\ov{g(t)}) \\
& = & \partial (\{b_1, \ldots , b_{n-1}\} \cdot \ov{g(t)}).
\end{array}
\]
We are now done because 
$\{b_1, \ldots , b_{n-1}\} \cdot \ov{g(t)} \in K^M_{n-1}(k) \cdot 
\wt{K}^M_1(k_{m}) \subseteq \wt{K}^M_n(k_{m})$.
We have thus shown that $cyc_k([z])$ lies in the image of $\wt{K}^M_n(k_{m})$
under the map $\partial$. This finishes the proof.
\end{proof}

\subsection{The main result for $cyc^M_k$}\label{sec:Chow-M-*}
Let $k$ be a characteristic zero field as before.
We shall now show that the cycle class map $cyc^M_k$ that we obtained in
\lemref{lem:Factor} is an isomorphism.
Since the map $\tau^k_{n,m} \colon \W_m \Omega^{n-1}_k \to \TCH^n(k,n;m)$ is an 
isomorphism, we shall make no distinction between $\W_m \Omega^{n-1}_k$ and 
$\TCH^n(k,n;m)$ throughout our discussion of the proof of 
\thmref{thm:Add-main}. Furthermore, we shall denote $cyc_k \circ \tau^k_{n,m}$ 
also by $cyc_k$ in what follows.

\begin{thm}\label{thm:Chow-Milnor}
For every pair of integers $m \ge 0, n \ge 1$, the map 
\[
cyc^M_k \colon \TCH^{n}(k, n; m) \to  \wt{K}^M_n(k_{m})
\]
is an isomorphism.
\end{thm}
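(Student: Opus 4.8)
The plan is to assemble an isomorphism out of the three-way comparisons already set up in \S\ref{sec:char-0} and \S\ref{sec:Rel-Milnor**}, using the known isomorphisms $\tau^k_{n,m}$ and $\wt{\theta}^n_{k_m}$ to pin everything down. Concretely, on the source side we have the surjection $\psi^k_{n,m}\circ(\tau^k_{1,m}\otimes\nu^k_{n-1})\colon \W_m(k)\otimes K^M_{n-1}(k)\surj \TCH^n(k,n;m)$ from \lemref{lem:Milnor-surj}, and on the target side, by \lemref{lem:M-Q-C-0}, the isomorphism $\wt{\theta}^n_{k_m}\colon t k_m\otimes_k\Omega^{n-1}_k \xrightarrow{\cong}\wt{K}^M_n(k_m)$. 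So I would first compute explicitly the composite $cyc^M_k\circ \psi^k_{n,m}\circ(\tau^k_{1,m}\otimes\nu^k_{n-1})$ on a generator $\un a\otimes\{b_1,\dots,b_{n-1}\}$. By \lemref{lem:Milnor-Chow-TCH-0} this generator maps to the cycle $Z$ defined by $\prod_{i=1}^m(1-a_it^i)=y_1-b_1=\cdots=y_{n-1}-b_{n-1}=0$, and then by the computation in the proof of \lemref{lem:Factor} its cycle class in $\wt{K}^M_n(k_m)$ is $\{\,\ov{\prod_i(1-a_it^i)},\,b_1,\dots,b_{n-1}\,\}$, i.e.\ $\{\gamma_m(\un a),b_1,\dots,b_{n-1}\}$ in the notation of \eqref{eqn:Witt-ps}. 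Since $\gamma_m(\un a)=\exp(\log\gamma_m(\un a))$ and $\log\gamma_m(\un a)\in tk_m$, this is exactly $\wt{\theta}^n_{k_m}$ applied to $(\log\gamma_m(\un a))\otimes d\log(b_1)\wedge\cdots\wedge d\log(b_{n-1})$.

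The upshot of this calculation is a commutative square whose two vertical maps are $cyc^M_k$ (on the right, with source $\TCH^n(k,n;m)$) and the composite horizontal maps express both ends as quotients of $\W_m(k)\otimes K^M_{n-1}(k)$. More precisely I would fit $cyc^M_k$ into the diagram
\begin{equation*}
\xymatrix@C1.2pc{
\W_m(k)\otimes K^M_{n-1}(k) \ar@{->>}[r] \ar[d] & \TCH^n(k,n;m) \ar[d]^-{cyc^M_k} \\
tk_m\otimes_k\Omega^{n-1}_k \ar[r]^-{\wt{\theta}^n_{k_m}}_-{\cong} & \wt{K}^M_n(k_m),}
\end{equation*}
where the left vertical arrow sends $\un a\otimes\un b$ to $(\log\gamma_m(\un a))\otimes d\log(b_1)\wedge\cdots\wedge d\log(b_{n-1})$ and the top arrow is the surjection of \lemref{lem:Milnor-surj}. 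Surjectivity of $cyc^M_k$ is then immediate: the top arrow and $\wt{\theta}^n_{k_m}$ are surjective, so the image of $cyc^M_k$ contains the image of $\wt{\theta}^n_{k_m}$, which is everything. For injectivity I would compare kernels: since $\wt{\theta}^n_{k_m}$ is an isomorphism, it suffices to show that the left vertical map and the top (Chow) surjection have the \emph{same} kernel. One inclusion is automatic from commutativity of the square (the kernel of the top map is contained in the kernel of the left map, because $cyc^M_k$ is well-defined on $\TCH^n$). For the reverse inclusion I would use that $\log\gamma_m$, which is the map $\eta^k_{0,m}$ of \S\ref{sec:Rel-K-trun}, is a \emph{bijection} $\W_m(k)\to tk_m$ twisted only by the $R$-linear isomorphism built from $d\log$ on the $K^M_{n-1}$-factor; precisely, the left vertical map factors as an isomorphism $\W_m(k)\otimes K^M_{n-1}(k)\xrightarrow{\cong} tk_m\otimes_k\Omega^{n-1}_k$ composed with nothing, once one checks (via the ghost-component description \eqref{eqn:Milnor-surj*-22} and the fact that over a $\Q$-algebra $\W_m\Omega^{n-1}\cong\prod_i\Omega^{n-1}$) that $d\log$ identifies the image of $K^M_{n-1}(k)$ inside $\Omega^{n-1}_k$ compatibly. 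Then the kernel of the left map equals the kernel of the top map forced by $\tau^k_{n,m}$ being an isomorphism, and injectivity of $cyc^M_k$ follows.

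The main obstacle is the last injectivity step: one must be careful that the ``left vertical map'' is genuinely computed from the same data (the Witt presentation \eqref{eqn:Sum-Witt} of $\un a$, the description of $\tau^k_{1,m}$ as $V(f(t))$, and the $d\log$ of the $b_i$) that defines the Chow-group surjection, so that the kernels literally coincide rather than merely map into one another. In other words, the real content is the identity $cyc^M_k\circ\tau^k_{n,m}=\wt{\theta}^n_{k_m}\circ(\text{explicit iso})$ on all of $\W_m\Omega^{n-1}_k$, not just on symbols $\un a\,d\log[b_1]\wedge\cdots\wedge d\log[b_{n-1}]$; granting this via \lemref{lem:Milnor-Witt-desc} (which already expresses $\phi^k_{n,m}$ on exactly these generators) together with the surjectivity in \lemref{lem:Milnor-surj}, both source and target are presented as the same explicit quotient of $\W_m(k)\otimes K^M_{n-1}(k)$ and $cyc^M_k$ is visibly the identity between them. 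I expect the bookkeeping to reduce entirely to facts already proved, so no genuinely new computation is needed beyond chasing the diagram above.
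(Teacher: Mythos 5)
Your strategy is genuinely different from the paper's: the paper proves \thmref{thm:Chow-Milnor} by induction on $m$, comparing the Verschiebung exact sequence $0\to\Omega^{n-1}_k\xrightarrow{V_{m+1}}\W_{m+1}\Omega^{n-1}_k\to\W_m\Omega^{n-1}_k\to 0$ with the filtration of $\wt{K}^M_n(k_{m+1})$ by $F^n_{m+1}=\ker(\wt{K}^M_n(k_{m+1})\to\wt{K}^M_n(k_{m}))$ and using ~\eqref{eqn:Chow-Milnor**-0} to handle the graded pieces, whereas you present source and target simultaneously as quotients of $\W_m(k)\otimes K^M_{n-1}(k)$ and compare kernels in one stroke. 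Your commutative square is correct (up to an irrelevant sign from reordering the Milnor symbol), and your reduction of injectivity to the statement that the left vertical map $\lambda(\un{a}\otimes\un{b})=\log\gamma_m(\un{a})\otimes d\log(b_1)\wedge\cdots\wedge d\log(b_{n-1})$ and the top surjection have equal kernels is also correct; one inclusion is indeed formal from the square.

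The gap is in your justification of the reverse inclusion: the claim that $\lambda\colon\W_m(k)\otimes K^M_{n-1}(k)\to tk_m\otimes_k\Omega^{n-1}_k$ is an isomorphism is false for $n\ge 2$. The source is a tensor product over $\Z$ and the target a $k$-vector space; $\lambda$ kills $\un{a}\otimes b$ whenever $db=0$, so already for $k$ a number field and $n=2$ it has a huge kernel. What you actually need is that $\ker\lambda=\ker\phi^k_{n,m}$, and this is true but requires a computation you have not carried out: by ~\eqref{eqn:Milnor-surj*-22} the $j$-th ghost component of $\phi^k_{n,m}(\un{a}\otimes\un{b})=\un{a}\,d\log([b_1])\wedge\cdots\wedge d\log([b_{n-1}])$ equals $F_j(\un{a})\,d\log(b_1)\wedge\cdots\wedge d\log(b_{n-1})$, while $\log\gamma_m(\un{a})=-\sum_{j}j^{-1}F_j(\un{a})t^j$, so after identifying both targets with $\prod_{j=1}^m\Omega^{n-1}_k$ the two maps differ componentwise by the nonzero scalar $-j^{-1}$ and hence have the same kernel (and the same image, which is what your surjectivity step tacitly needs as well: surjectivity of the top arrow and of $\wt{\theta}^n_{k_m}$ alone only shows $\mathrm{im}(cyc^M_k)\supseteq\wt{\theta}^n_{k_m}(\mathrm{im}\,\lambda)$, so you must know $\lambda$ is onto). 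With that ghost computation supplied your argument closes and gives a clean non-inductive proof; without it, the crucial injectivity step rests on a false assertion.
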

\begin{proof}
  When $m =0$, the group on the right of $cyc_k$ is zero by definition and
  the one on the left is zero by \cite[Theorem~6.3]{KP-6}. We can therefore assume that
  $m \ge 1$.
 
  We can replace $\TCH^{n}(k, n; m)$ by $\W_m \Omega^{n-1}_k$. Accordingly, we can identify
  $cyc^M_k$ with $cyc^M_k \circ \tau^k_{n,m}$. 
  Let
  $\eta^k_m \colon \Omega^{n-1}_k \to t^mk_{m} \otimes_k \Omega^{n-1}_k$ denote the ($k$-linear) map
  $\eta^k_m(a \omega) = - at^m \otimes \omega$.
  This is clearly an isomorphism of $k$-vector spaces.

We shall prove the theorem by induction on $m \ge 1$.
Suppose first that $m =1$. In this case, it follows from Lemmas~\ref{lem:Milnor-Chow-TCH-0} and
~\ref{lem:Milnor-Witt-desc} that
\[
  \begin{array}{lll}
cyc^M_k(ad\log(b_1) \wedge \cdots \wedge d\log(b_{n-1}))
    & = & cyc^M_k(Z(1-at, y_1 - b_1, \ldots , y_{n-1} - b_{n-1})) \\
    & = &  \{1 -at, b_1, \ldots , b_{n-1}\} \\
    & = & \wt{\theta}^n_{k_1}((-at) \otimes d\log(b_1) \wedge \cdots \wedge d\log(b_{n-1}))\\
    & = & \wt{\theta}^n_{k_1} \circ \eta^k_1(a d\log(b_1) \wedge \cdots \wedge d\log(b_{n-1})).
  \end{array}
\]
It follows from this that $cyc^M_k =  \wt{\theta}^n_{k_1} \circ \eta^k_1$.
We now apply \lemref{lem:M-Q-C-0} to conclude that $cyc^M_k$ is an isomorphism.

Suppose now that $m \ge 2$. Let $F^n_m$ denote the kernel  of the restriction map
$\wt{K}^M_n(k_{m}) \surj \wt{K}^M_n(k_{m-1})$. It is easy to see that the isomorphism
$\wt{\theta}^n_{k_{m}}$ of  \lemref{lem:M-Q-C-0} commutes with the restriction
map $k_{m} \surj k_{m-1}$ for all $m \ge 1$.
It follows therefore from  \lemref{lem:M-Q-C-0} and the snake lemma that
$\wt{\theta}^n_{k_{m+1}}$ restricts to an isomorphism
\begin{equation}\label{eqn:Chow-Milnor**-0}
  \wt{\theta}^n_{k_{m+1}} \colon  \Omega^{n-1}_k \otimes_k t^{m+1}k_{m+1} \xrightarrow{\cong} F^n_{m+1};
  \end{equation}
  \[
    \wt{\theta}^n_{k_{m+1}}((d\log(b_1) \wedge \cdots \wedge d\log(b_{n-1})) \otimes at^{m+1})
    = \{1 + at^{m+1}, b_1, \ldots , b_{n-1}\}.
    \]

    We now consider the diagram
    \begin{equation}\label{eqn:Chow-Milnor**-1}
      \xymatrix@C.8pc{
        0 \ar[r] & \Omega^{n-1}_k \ar[r]^-{V_{m+1}}
        \ar[d]_-{\wt{\theta}^n_{k_{m+1}} \circ \eta^k_{m+1}} &
        \W_{m+1}\Omega^{n-1}_k \ar[d]^-{cyc^M_k} \ar[r] & \W_m\Omega^{n-1}_k \ar[r] \ar[d]^-{cyc^M_k} & 0 \\
        0 \ar[r] & F^n_{m+1} \ar[r] &  \wt{K}^M_n(k_{m+1}) \ar[r] &  \wt{K}^M_n(k_{m}) \ar[r] & 0,}
      \end{equation}
      where the horizontal arrows on the right in both rows are the restriction maps.
      In particular, the square on the right is commutative.

      We next note that $V_{m+1}(ad\log(b_1) \wedge \cdots \wedge d\log(b_{n-1})) =
      V_{m+1}(a) d\log(b_1) \wedge \cdots \wedge d\log(b_{n-1})$ (see
      \cite[Proposition~4.4]{RS}).
      Indeed, by iteration, it is enough to check this for $n =2$. Furthermore, we can check
      it in $\TCH^2(k,2;m)$. 
      Now, we have 
      \begin{equation}\label{eqn:Chow-Milnor**-2}
        \begin{array}{lll}
          V_{m+1}(ad\log(b_1)) & = & V_{m+1}(Z(1-ab^{-1}_1t) d(Z(1-b_1t))) \\
                             & = & V_{m+1}(Z(1 -at, y_1 - b_1)) \\
                             & = & Z(1 - at^{m+1}, y_1 - b_1),
        \end{array}
      \end{equation}
      where the last two equalities follow from the definitions of differential and Verschiebung on
      the additive higher Chow groups (see the proof of \lemref{lem:Milnor-Witt-desc}).
      On the other hand, we have
\begin{equation}\label{eqn:Chow-Milnor**-3}
        \begin{array}{lll}      
          V_{m+1}(a) d\log(b_1) & = & V_{m+1}(a) b^{-1}_1db_1 \\
                              & = & (V_{m+1}(Z(1 - at)) Z(1-b^{-1}_1t) d(Z(1-b_1t)) \\
                              & = & Z(1 - at^{m+1}) Z(1- b^{-1}_1t) d(Z(1- b_1t)) \\
                              & = & Z(1 - at^{m+1}) Z(1 - b^{-1}_1t) Z(1-b_1t, y_1 - b_1) \\
                              & = & Z(1 - at^{m+1}) Z(1 - t, y_1 - b_1) \\
                              & = & Z(1 - at^{m+1}, y_1 - b_1),
        \end{array}
      \end{equation}
      where the equalities again follow from various definitions
      (see the proof of \lemref{lem:Milnor-Witt-desc}).
A combination of ~\eqref{eqn:Chow-Milnor**-2} and
~\eqref{eqn:Chow-Milnor**-3} proves what we  had claimed.
Since the map $k \otimes K^M_{n-1}(k) \to \Omega^{n-1}_k$ (given by $a \otimes \un{b} \mapsto
ad\log(b_1) \wedge \cdots \wedge d\log(b_{n-1})$) is surjective,
it follows from ~\eqref{eqn:Milnor-surj*-3}
and ~\eqref{eqn:Chow-Milnor**-0} that the left square in  ~\eqref{eqn:Chow-Milnor**-1}
is also commutative.

The top row of ~\eqref{eqn:Chow-Milnor**-1} is well known to be exact
in characteristic zero (e.g., see 
\cite[Remark~4.2]{RS}). The bottom row is exact by the definition of $F^n_{m+1}$
and the fact that
the map $\wt{K}^M_n(k_{m+1}) \to \wt{K}^M_n(k_{m})$ is surjective.
The left vertical arrow in ~\eqref{eqn:Chow-Milnor**-1} is an isomorphism by
~\eqref{eqn:Chow-Milnor**-0} and the right vertical arrow is an isomorphism by induction.
We conclude that the middle vertical arrow is also an an isomorphism.
This completes the proof of the theorem.
\end{proof}

\begin{remk}\label{remk:k-R}
  The reader can check that the proof of \thmref{thm:Chow-Milnor} continues to work for any
  regular semi-local ring $R$ containing $k$ as long as $cyc_R$ is defined in a way it is for fields.
  In more detail, the isomorphism $\eta^R_m$ makes sense. Lemmas~\ref{lem:Milnor-Chow-TCH-0} and
  ~\ref{lem:Milnor-Witt-desc} as well as ~\eqref{eqn:Chow-Milnor**-0} are all valid for $R$.
  The final step ~\eqref{eqn:Chow-Milnor**-1} also holds for $R$.
  This observation will be used in the next section in extending \thmref{thm:Add-main} to regular
  semi-local rings.
  \end{remk}

\subsection{The final step}\label{sec:Final-0}
We shall now prove our final step in the proof of \thmref{thm:Add-main}.
Namely, we shall show that the reduced Milnor and Quillen $K$-theories of the 
truncated polynomial rings are
pro-isomorphic. This will finish the proof of \thmref{thm:Add-main}.
The desired pro-isomorphism is based on the following general result about 
pro-vanishing of 
relative cyclic homology.

Let $R$ be a ring containing $\Q$. Recall from \cite[\S~4.5, 4.6]{Loday} that 
for integers $m \ge 0$ and $n  \ge 1$, there are functorial $\lambda$-decompositions
$\wt{HH}_n(R_m) = \stackrel{n}{\underset{i =1}\oplus} \wt{HH}^{(i)}_n(R_m)$
and $\wt{HC}_n(R_m) = \stackrel{n}{\underset{i =1}\oplus} \wt{HC}^{(i)}_n(R_m)$.
Moreover, there are  isomorphisms
\begin{equation}\label{eqn:Lambda-0}
\wt{HH}^{(n)}_n(R_m) \cong \wt{\Omega}^n_{R_m} \ {\rm  and} \ 
\wt{HC}^{(n)}_n(R_m) \cong \frac{\wt{\Omega}^n_{R_m}}{d \wt{\Omega}^{n-1}_{R_m}}
\end{equation}
such that the canonical map $\wt{HH}^{(n)}_n(R_m) \to \wt{HC}^{(n)}_n(R_m)$ 
is the quotient map
$\wt{\Omega}^n_{R_m} \to \frac{\wt{\Omega}^n_{R_m}}{d \wt{\Omega}^{n-1}_{R_m}}$.
We also have the functorial isomorphisms
\begin{equation}\label{eqn:Lambda-1}
\wt{HH}^{(i)}_n(R_m) \cong \wt{D}^{(i)}_{n-i}(R_m).
\end{equation}

The key result is the following.

\begin{prop}\label{prop:Pro-vanish}
Let $R$ be a regular semi-local ring which is essentially of finite type over a 
characteristic zero
field. Let $0 < i < n$ be two integers. Then $\{\wt{HC}^{(i)}_n(R_m)\}_m = 0$.
\end{prop}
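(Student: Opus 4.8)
The plan is to reduce the Proposition to a pro-vanishing statement for relative \emph{Hochschild} homology, which via the identification \eqref{eqn:Lambda-1} is the same as a pro-vanishing statement for relative André--Quillen homology, and this last statement is where the results of \cite{Krishna-0} are invoked. Concretely, I would invoke the $\lambda$-decomposition of Connes' periodicity ($SBI$) sequence (\cite[\S~4.5, 4.6]{Loday}), which in the relative setting produces, for each $m$, an exact sequence of abelian groups
\begin{equation*}
\wt{HC}^{(i-1)}_{n-1}(R_m) \xrightarrow{B} \wt{HH}^{(i)}_{n}(R_m) \xrightarrow{I} \wt{HC}^{(i)}_{n}(R_m) \xrightarrow{S} \wt{HC}^{(i-1)}_{n-2}(R_m) \xrightarrow{B} \wt{HH}^{(i)}_{n-1}(R_m),
\end{equation*}
compatible with the transition maps in $m$. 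Thus $\{\wt{HC}^{(i)}_{n}(R_m)\}_m$ sits in a short exact sequence of pro-abelian groups with outer terms $\{\cok(B\colon \wt{HC}^{(i-1)}_{n-1}(R_m)\to \wt{HH}^{(i)}_{n}(R_m))\}_m$ and $\{\ker(B\colon \wt{HC}^{(i-1)}_{n-2}(R_m)\to \wt{HH}^{(i)}_{n-1}(R_m))\}_m$; since ${\rm Pro}(\Ab)$ is abelian (\S\ref{sec:Pro}), it suffices to show that both of these vanish when $0<i<n$. The cokernel term is killed by $\{\wt{HH}^{(i)}_{n}(R_m)\}_m=0$: by \eqref{eqn:Lambda-1} this group equals $\wt{D}^{(i)}_{n-i}(R_m)$ with $i\ge 1$ and $n-i\ge 1$, and the pro-vanishing of relative André--Quillen homology $\{\wt{D}^{(p)}_{q}(R_m)\}_m$ for $p,q\ge 1$ is precisely the input supplied by \cite{Krishna-0}.

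For the kernel term I would argue by induction on $n$ (there is nothing to prove for $n\le 1$), distinguishing the position of $(i-1,n-2)$. If $0<i-1<n-2$, then $\{\wt{HC}^{(i-1)}_{n-2}(R_m)\}_m=0$ already by the inductive hypothesis. If $i-1=0$, then $\wt{HC}^{(0)}_{n-2}(R_m)\cong \wt{HH}^{(0)}_{n-2}(R_m)\cong \wt{D}^{(0)}_{n-2}(R_m)$ (the weight-$0$ part of the $SBI$ sequence), which is $0$ for $n-2\ge 1$ and equals $\wt{HC}_0(R_m)=tR_m$ for $n=2$; in the latter case $B$ is the de Rham differential $d\colon tR_m\to \wt{\Omega}^1_{R_m}$, injective because $R\supset\Q$ and $\Omega^1_{R_m}=(\Omega^1_R\otimes_R R_m)\oplus\Omega^1_{R_m/R}$ (Lemma~\ref{lem:Inj-*}), so the kernel is $0$. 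Finally, if $i-1=n-2$ (hence $n\ge 3$ and $i=n-1$), then by \eqref{eqn:Lambda-0} the map $B$ is the induced de Rham differential $d\colon \wt{\Omega}^{n-2}_{R_m}/d\wt{\Omega}^{n-3}_{R_m}\to\wt{\Omega}^{n-1}_{R_m}\cong\wt{HH}^{(n-1)}_{n-1}(R_m)$, which is injective by \cite[3.4.4]{GW-2}, so again the kernel is $0$. This closes the induction and completes the proof.

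The main obstacle is the pro-vanishing $\{\wt{D}^{(p)}_q(R_m)\}_m=0$ for $p,q\ge 1$ borrowed from \cite{Krishna-0} --- a ``continuity'' statement for the cotangent complex, namely that $\{\mathbb{L}_{R_m/R}\}_m$ is pro-concentrated in homological degree $0$ (with pro-$H_0$ the system $\{\Omega^1_{R_m/R}\}_m$). The natural way to establish it is to base change along the flat map $\Q\to R$: since $R$ is regular and essentially of finite type over the perfect field $\Q$ it is smooth over $\Q$, so $\mathbb{L}_{R_m/\Q}\simeq(\Omega^1_{R/\Q}\otimes_R R_m)\oplus\mathbb{L}_{R_m/R}$ with $\mathbb{L}_{R_m/R}\simeq\mathbb{L}_{\Q_m/\Q}\otimes_\Q R$, which reduces everything to the explicit André--Quillen modules of $\Q[t]/(t^{m+1})$, whose transition maps shift $t$-degrees and hence are eventually zero. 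All the remaining steps are formal manipulations of the $\lambda$-decomposed $SBI$ sequence and bookkeeping of weights.
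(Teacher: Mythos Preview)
Your argument is correct and follows the same route as the paper's: both reduce, via the $\lambda$-decomposed $SBI$ sequence and induction on $n$, to (a) the pro-vanishing $\{\wt{HH}^{(i)}_n(R_m)\}_m\cong\{\wt{D}^{(i)}_{n-i}(R_m)\}_m=0$ for $0<i<n$, taken from \cite{Krishna-0}, and (b) the injectivity of $B=d$ on $\wt{\Omega}^{n-2}_{R_m}/d\wt{\Omega}^{n-3}_{R_m}$ at the edge $i=n-1$ (the paper derives this from Lemmas~\ref{lem:M-Q-C-0} and \ref{lem:Inj-*} rather than citing \cite{GW-2}, but the content is the same). The one implementation difference worth noting is how (a) is extracted from \cite{Krishna-0}: the paper applies \cite[Proposition~5.2(ii)]{Krishna-0} to the conductor square of the cuspidal normalization $R[t^2,t^3]\hookrightarrow R[t]$, so that $R[t]/(t^{2m})$ appears as a conductor quotient and the Artin--Rees type result there applies directly; the direct cotangent-complex argument you sketch in your last paragraph is a reasonable alternative, but the phrase ``transition maps shift $t$-degrees and hence are eventually zero'' would need to be made precise to stand on its own. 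You are also slightly more explicit than the paper at the $i=1$ endpoint of the induction, where one needs $\wt{HC}^{(0)}_{n-2}(R_m)=0$ for $n\ge 3$.
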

\begin{proof}
First we show that $\{\wt{HH}^{(i)}_n(R_m)\}_m = 0$. By ~\eqref{eqn:Lambda-1}, 
this is equivalent to
showing that $\{\wt{D}^{(i)}_{n-i}(R_m)\}_m = 0$.
To show this latter vanishing, we let $A = R[t^2, t^3] \subset R[t]$ be the monomial subalgebra 
generated by $\{t^2, t^3\}$.
We then know that the inclusion $A \inj R[t]$ is the normalization homomorphism 
whose conductor ideal
is $I = (t^2, t^3) \subset A$ such that $IR[t] = (t^2)$.
Since $R[t]$ is regular, we know that $D^{(i)}_{n-i}(R[t]) = 0$ for $0 < i < n$ by
\cite[Theorem~3.5.6]{Loday}. We now apply part (ii) of 
\cite[Proposition~5.2]{Krishna-0} 
with $A = R[t^2, t^3]$ and $B = R[t]$ to conclude that
$\{{D}^{(i)}_{n-i}(R_{2m-1})\}_m = \{D^{(i)}_{n-i}({R[t]}/{(t^{2m})}\}_m = 0$.
But this is same as saying that $\{{D}^{(i)}_{n-i}(R_{m})\}_m = 0$.
In particular, we get $\{\wt{D}^{(i)}_{n-i}(R_{m})\}_m = 0$.

To prove the result for cyclic homology, we first assume $n \ge 2$ and $i = n-1$.
Connes' periodicity exact sequence (see \cite[theorem~4.6.9]{Loday}) gives 
an exact sequence
\[
\wt{HH}^{(n-1)}_n(R_m) \xrightarrow{I} \wt{HC}^{(n-1)}_n(R_m) \xrightarrow{S}
\frac{\wt{\Omega}^{n-2}_{R_m}}{d \wt{\Omega}^{n-3}_{R_m}} 
\xrightarrow{B} \wt{\Omega}^{n-1}_{R_m} 
\xrightarrow{I} \frac{\wt{\Omega}^{n-1}_{R_m}}{d \wt{\Omega}^{n-2}_{R_m}}.
\]
We have shown that the first term in this exact sequence vanishes. It suffices 
therefore to show
that the map $B$ is injective. But $B$ is same as the differential map
$d \colon \frac{\wt{\Omega}^{n-2}_{R_m}}{d \wt{\Omega}^{n-3}_{R_m}} \to 
\wt{\Omega}^{n-1}_{R_m}$
by \cite[Corollary~2.3.5]{Loday}.
By \lemref{lem:M-Q-C-0}, we therefore have to show that the map
$d \colon \Omega^{n-2}_R \otimes_R tR_m \to  \wt{\Omega}^{n-1}_{R_m}$ is injective.
But this follows from \lemref{lem:Inj-*}.

We prove the general case by induction on $n \ge 2$.
The case $n =2$ is just proven above. When $n \ge 3$, we can assume that 
$1 \le i \le n-2$ by what
we have shown above. 
We now again use the periodicity exact sequence to get an exact sequence of pro-abelian groups: 
\[
\{ \wt{HH}^{(i)}_n(R_m) \}_m \xrightarrow{I} \{ \wt{HC}^{(i)}_n(R_m)\}_m  \xrightarrow{S} 
\{\wt{HC}^{(i-1)}_{n-2}(R_m)\}_m.
\]
Since $i \le n-2$, we get $i-1 \le n-3$. Hence, the induction hypothesis 
implies that the last term of this exact sequence is zero. We have shown 
above that the first term is zero.
We conclude that the middle term is zero as well. This finishes the proof.
\end{proof}

\begin{cor}\label{cor:Milnor-Quillen-main}
Let $R$ be a regular semi-local ring which is essentially of finite type 
over a characteristic zero
field. Let $n \ge 0$ be an integer. Then the canonical map
\[
\psi_{R,n} \colon \{\wt{K}^M_n(R_m)\}_m \to \{\wt{K}_n(R_m)\}_m
\]
is an isomorphism of pro-abelian groups.
\end{cor}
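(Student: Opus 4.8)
The plan is to obtain this corollary as a formal consequence of \lemref{lem:M-Q-C}, \propref{prop:Pro-vanish}, and the $\lambda$-ring refinement of Goodwillie's theorem recalled in \S\ref{sec:Rel-K-trun}; the only genuine work is the bookkeeping of Adams weights together with the low-degree cases. First I would dispose of $n = 0$ and $n = 1$ directly. For $n = 0$ both $\wt{K}^M_0(R_m)$ and $\wt{K}_0(R_m)$ vanish, since $R_m \surj R$ has nilpotent kernel $(t)$ and therefore induces isomorphisms on $K^M_0$ and on $K_0$. For $n = 1$ the map $\psi_{R_m,1}$ is the canonical identification $\wt{K}^M_1(R_m) = \wt{K}_1(R_m)$ of \S\ref{sec:Milnor-K}. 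In either case $\psi_{R,n}$ is already a levelwise isomorphism of pro-abelian groups, so from now on I fix $n \ge 2$.

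Next I would set up the weight decomposition, in the notation of \S\ref{sec:Final-0}. For $n \ge 2$ the reduced relative cyclic homology decomposes as $\wt{HC}_{n-1}(R_m) = \bigoplus_{i=1}^{n-1}\wt{HC}^{(i)}_{n-1}(R_m)$, and Goodwillie's isomorphism, refined to a $\lambda$-ring isomorphism by \cite{CW} (see \S\ref{sec:Rel-K-trun}), matches $\wt{K}_n(R_m)$ with $\wt{HC}_{n-1}(R_m)$ weight by weight through $\tr^R_{m,n}\colon \wt{K}^{(q)}_n(R_m) \xrightarrow{\cong} \wt{HC}^{(q-1)}_{n-1}(R_m)$ for $1 \le q \le n$; in particular $\wt{K}_n(R_m) = \bigoplus_{q=2}^{n}\wt{K}^{(q)}_n(R_m)$. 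On the other hand \lemref{lem:M-Q-C} shows that $\psi_{R_m,n}$ is injective with image exactly the top Adams eigenspace $\wt{K}^{(n)}_n(R_m)$. Hence $\psi_{R,n} = \{\psi_{R_m,n}\}_m$ is a strict morphism of pro-abelian groups, injective at each level, whose cokernel (as a pro-abelian group) is $\{\cok(\psi_{R_m,n})\}_m$ with
\[
\cok(\psi_{R_m,n}) \;\cong\; \bigoplus_{q=2}^{n-1}\wt{K}^{(q)}_n(R_m)\;\cong\;\bigoplus_{j=1}^{n-2}\wt{HC}^{(j)}_{n-1}(R_m),
\]
which is the zero group when $n = 2$.

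Finally I would invoke \propref{prop:Pro-vanish} in degree $n - 1$: for every $j$ with $0 < j < n-1$ one has $\{\wt{HC}^{(j)}_{n-1}(R_m)\}_m = 0$, and a finite direct sum of pro-zero pro-abelian groups is pro-zero, so $\{\cok(\psi_{R_m,n})\}_m = 0$ in ${\rm Pro}(\Ab)$. A strict morphism of pro-abelian groups that is injective at each level and has pro-zero cokernel has both zero kernel and zero cokernel in the abelian category ${\rm Pro}(\Ab)$, hence is an isomorphism; applying this to $\psi_{R,n}$ finishes the proof. I expect the only delicate point — and the only place the argument can slip — to be the normalization of Adams weights: one must make sure that the image of $\psi_{R_m,n}$ is precisely the $q = n$ summand, so that exactly the summands $\wt{HC}^{(j)}_{n-1}$ with $1 \le j \le n-2$ remain in the cokernel and all of them are covered by \propref{prop:Pro-vanish}, with nothing of weight $0$ or weight $n-1$ left uncancelled.
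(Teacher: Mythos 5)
Your argument is correct and follows essentially the same route as the paper's own proof: Goodwillie's trace, refined to Adams weights via \cite{CW}, identifies the image of $\psi_{R_m,n}$ (injective by \lemref{lem:M-Q-C}) with the top weight piece $\wt{HC}^{(n-1)}_{n-1}(R_m)$, and \propref{prop:Pro-vanish} kills the remaining summands $\wt{HC}^{(j)}_{n-1}$, $1 \le j \le n-2$, as pro-abelian groups. The only cosmetic difference is in low degrees, where the paper quotes \cite[Proposition~2]{kerz10} for $n \le 2$ (a levelwise isomorphism) while you treat $n=0,1$ by hand and fold $n=2$ into the general weight count as the case of vacuous cokernel.
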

\begin{proof}
The case $n \le 2$ is well known (e.g, see \cite[Proposition~2]{kerz10})
and in fact an isomorphism at every level $m \ge 0$.
We can therefore assume that $n \ge 3$.
We have seen in  \S~\ref{sec:Rel-K-trun} that Goodwillie provides an
isomorphism of $\Q$-vector spaces 
\[
\tr^R_{m,n} \colon \wt{K}_n(R_m) \xrightarrow{\cong} \wt{HC}_{n-1}(R_m) \cong \
\stackrel{n-1}{\underset{i =1}\oplus} \wt{HC}^{(i)}_{n-1}(R_m).
\]

Moreover, Lemmas~\ref{lem:Adams} and~\ref{lem:M-Q-C} say that 
the map $\psi_{R_m,n} \colon \wt{K}^M_n(R_m) \to \wt{K}_n(R_m)$ is injective 
and $\tr^R_{m,n}$ maps $\wt{K}^M_n(R_m)$ isomorphically onto 
$\wt{HC}^{(n-1)}_{n-1}(R_m)$.
We therefore have to show that
$\{\wt{HC}^{(i)}_{n-1}(R_m)\}_m = 0$ for $1 \le i \le n-2$. But this 
follows from  \propref{prop:Pro-vanish}.
\end{proof}

\vskip .2cm

{\bf{Proof of \thmref{thm:Add-main}:}}
The proof is a combination of \lemref{lem:Factor}, \thmref{thm:Chow-Milnor} 
and \corref{cor:Milnor-Quillen-main}.
$\hfill\square$

\section{The cycle class map for semi-local rings}\label{sec:local}
In this section, we shall define the cycle class map for relative 0-cycles over
regular semi-local rings and prove an extension of \thmref{thm:Add-main} 
for such rings.
Let $k$ be a characteristic zero field and let $R$ be a regular 
semi-local ring which is 
essentially of finite type over $k$. We shall let $F$ denote the 
total quotient ring of $R$.
Note that $R$ is a product of regular semi-local integral domains and 
$F$ is the product of
their fields of fractions. Since all our proofs for regular semi-local 
integral domains
directly generalize to finite products of such rings, we shall assume 
throughout that $R$
is an integral domain. We shall let $\pi: \Spec(F)  \to \Spec(R)$ denote the
inclusion of generic point. We shall often write $X = \Spec(R)$ and 
$\eta = \Spec(F)$. We shall let $\Sigma$ denote the set of all 
maximal ideals of $R$.

\subsection{The sfs cycles}\label{sec:Rel-cyc}
Let $m \ge 0$ and $n \ge 1$ be two integers.
Recall from \S~\ref{sec:HCGM} and \S~\ref{sec:Add-0}
that $\TCH^n(R,n;m)$ is the defined as the middle homology of the complex
$\Tz^n(R,n+1;m) \xrightarrow{\partial} \Tz^n(R,n;m)
\xrightarrow{\partial} \Tz^{n}(R,n-1;m)$.
A cycle in $\Tz^n(R,n;m)$ has relative dimension zero over $R$. 
For this reason,
$\TCH^n(R,n;m)$ is often called the additive higher Chow group of 
relative 0-cycles on $R$.
When $R$ is a field, it coincides with the one used in the statement of 
\thmref{thm:Add-main}.

Since $\TCH^n(R,n;m)$ does not consist of 0-cycles if 
$\dim(R) \ge 1$, we can not directly apply \thmref{thm:Intro-1}
to define a cycle class map for $\TCH^n(R,n;m)$.
We have to use a different approach for 
constructing the cycle class map.
We shall use the main results of \cite{KP-3} and the case of fields to
construct a cycle class map in this case. We shall show later 
in this section that this map is
an isomorphism. 
We shall say that an extension of regular semi-local rings 
$R_1 \subset R_2$ is simple
if there is an irreducible monic polynomial 
$f \in R_1[t]$ such that $R_2 = {R_1[t]}/{(f(t))}$.

Let $Z \subset X \times \A^1_k \times \square^{n-1}$ be an irreducible 
admissible relative 0-cycle.
Recall from \cite[Definition~2.5.2, Proposition~2.5.3]{KP-2} that 
$Z$ is called an sfs-cycle if the following hold.
\begin{enumerate}
\item
$Z$ intersects $\Sigma \times \A^1_k \times F$ properly for all faces 
$F \subset \square^{n-1}$.
\item
The projection $Z \to X$ is finite and surjective.
\item
$Z$ meets no face of $X \times \A^1_k \times  \square^{n-1}$.
\item
$Z$ is closed in $X \times \A^1_k \times \A^{n-1}_k = 
\Spec(R[t, y_1, \ldots , y_{n-1}])$ 
(by (2) above) and there is a sequence of simple extensions of
regular semi-local rings
\[
R = R_{-1} \subset R_0 \subset \cdots \subset R_{n-1} = k[Z]
\]
such that $R_0 = {R[t]}/{(f_0(t))}$ and $R_i = 
{R_{i-1}[y_i]}/{(f_i(y_i))}$ for $1 \le i \le n-1$.
\end{enumerate}
Note that an sfs-cycle has no boundary by (3) above.
We let $\Tz^n _{\sfs}(R, n;m) \subset \Tz^n(R, n;m)$ be the 
free abelian group on integral sfs-cycles and
define
\begin{equation}\label{eqn:sfs}
\TCH^n _{\rm sfs} (R, n; m) = \frac{\Tz^n _{\sfs}(R, n;m)}
{\partial(\Tz^n (R, n+1;m)) \cap \Tz^n _{ \sfs} (R, n;m)}.
\end{equation}

We shall use the following result from \cite[Theorem~1.1]{KP-3}.

\begin{prop}\label{prop:sfs-lemma}
The canonical map $\TCH^n _{\rm sfs} (R, n; m) \to \TCH^n(R, n; m)$ 
is an isomorphism.
\end{prop}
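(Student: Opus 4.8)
The injectivity of the map is formal. If $\zeta\in\Tz^n_{\sfs}(R,n;m)$ has trivial image in $\TCH^n(R,n;m)$, then $\zeta=\partial\xi$ for some $\xi\in\Tz^n(R,n+1;m)$; since an sfs-cycle has no boundary (condition (3) of the sfs-definition), this says $\zeta\in\partial(\Tz^n(R,n+1;m))\cap\Tz^n_{\sfs}(R,n;m)$, so $\zeta$ is already zero in $\TCH^n_{\sfs}(R,n;m)$. Hence the entire content of the statement is the surjectivity of $\TCH^n_{\sfs}(R,n;m)\to\TCH^n(R,n;m)$, which amounts to a moving lemma: every admissible relative $0$-cycle $\alpha$ on $\A^1_R\times\square^{n-1}$ representing a class in $\TCH^n(R,n;m)$ should be congruent, modulo the boundary of an admissible cycle in $\Tz^n(R,n+1;m)$ that retains the modulus $(m+1)\{0\}$, to a $\Z$-linear combination of sfs-cycles.

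First I would settle the case $R=F$ a field. Here R\"ulling's computation of $\TCH^n(F,n;m)\cong\W_m\Omega^{n-1}_F$ (\cite{R}) shows, via the maps recalled in \lemref{lem:Milnor-Chow-TCH-0} and \lemref{lem:Milnor-Witt-desc}, that every class is represented by a sum of cycles $V(\prod_i(1-a_it^i),\, y_1-b_1,\dots,y_{n-1}-b_{n-1})$ with $b_j\in F^\times$. Decomposing the divisor of $\prod_i(1-a_it^i)$ into its distinct monic irreducible factors over $F$ — each separable because $\Char k=0$, so that $F[t]/(\text{factor})$ is a regular finite $F$-algebra — exhibits each such generator as an integral combination of sfs-cycles (the remaining coordinates $y_j-b_j$ being linear, condition (4) is automatic). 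This disposes of the field case.

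For general $R$ the plan is a moving lemma in three stages, the semi-locality of $R$ being what makes ``general position'' available. (a) Act on $\alpha$ by a supply of modulus-admissible homotopies — as in the Krishna--Park additive moving lemma, built from translations in the $\square^{n-1}$-coordinates together with the multiplicative operations on $\A^1_R$ — to replace $\alpha$, modulo a boundary, by a cycle each of whose components is finite and surjective over $\Spec R$ and meets every face properly; the modulus $(m+1)\{0\}$ is carried along by the standard containment estimates for pull-backs under the multiplication and power maps on $\A^1$. (b) A further multiplicative homotopy in the $\square^{n-1}$-directions moves each component off all faces, so that $t,y_1,\dots,y_{n-1}$ restrict to units on it; now conditions (1)--(3) and the first half of (4) hold. (c) For such a component $W$, its generic fibre $W_\eta$ is, by the field case, congruent over $F$ to a sum of sfs-cycles; spreading this congruence out over a dense open $U\subseteq\Spec R$, and then using that the complement may be forced to avoid the finitely many maximal ideals of $R$ by one more generic modulus-admissible move (so that $U=\Spec R$), produces the desired sfs-representative of $[\alpha]$.

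The main obstacle is hidden in stage (c): passing from a component $W$ that is merely finite, surjective and face-free over $\Spec R$ to one whose coordinate ring sits in a tower $R=R_{-1}\subset R_0\subset\dots\subset R_{n-1}=k[W]$ of simple extensions of \emph{regular} semi-local rings. A generic change of $t$ makes the image of $W$ in $\A^1_R$ a hypersurface $V(f_0)$ with $f_0$ monic irreducible, but $R[t]/(f_0)$ need not be regular over all of $\Spec R$ — the discriminant locus is the enemy — and this must genuinely be repaired by adjoining further boundary terms while remaining inside the modulus bound; this repair, reduced to the generic point where R\"ulling's normal form is available and then controlled over $\Spec R$ by semicontinuity and the semi-locality of $R$, is precisely the hard core of the Krishna--Park moving lemma. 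The hypotheses $\Char k=0$ and the resulting infiniteness of the residue fields of $R$ enter exactly in securing the generic choices used at every step.
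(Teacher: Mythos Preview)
The paper does not prove this proposition at all; it simply quotes it from \cite[Theorem~1.1]{KP-3}. So there is no in-paper argument to compare against, and your sketch is an attempt to reconstruct the content of that reference. Your injectivity observation is correct and trivial, and your treatment of the field case --- using R\"ulling's isomorphism together with the surjectivity in \lemref{lem:Milnor-surj} to produce explicit sfs representatives --- is valid in the characteristic-zero setting of \S\ref{sec:local} (though it would not yield the characteristic-free statement that \cite{KP-3} actually proves).

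The genuine gap is in your stage (c). A congruence $W_\eta\equiv\sum Z_i$ in $\TCH^n(F,n;m)$ is witnessed by some $V\in\Tz^n(F,n+1;m)$ with $\partial V = W_\eta-\sum Z_i$. Taking closures over $\Spec R$, the cycle $\bar V$ need not remain admissible (face or modulus conditions can fail over the closed fibres), and even when it does, $\partial\bar V$ typically acquires extra components supported over maximal ideals of $R$, so one does not get $\partial\bar V=\bar W-\sum\bar Z_i$. Your proposed fix --- ``one more generic modulus-admissible move'' to force the bad locus off the finitely many maximal ideals --- does not work: the bad locus is determined by the specific $V$ and $Z_i$ coming from the field-level argument, and a generic perturbation just relocates it rather than eliminating it; moreover the closures $\bar Z_i$ have no reason to satisfy the regularity clause in condition (4) of the sfs definition over all of $\Spec R$. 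The actual argument in \cite{KP-3} builds the sfs tower directly over $R$ via a sequence of carefully chosen projections and normalizations, controlling discriminants at each stage; it does not proceed by lifting from the generic fibre.
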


\subsection{The cycle class map}\label{sec:CCM-R}
By \propref{prop:sfs-lemma}, it suffices to define the cycle class map on 
$\TCH^n _{\rm sfs} (R, n; m)$.
We can now repeat the construction of \S~\ref{sec:CCM} word by 
word to get our map.
So let $Z \subset X \times \A^1_k \times  \square^{n-1}$ be an 
irreducible sfs-cycle and let $R' = k[Z]$.
Let $f \colon Z \to X \times \A^1_k$ be the projection map.
Let $g_i \colon Z \to \square$ denote the $i$-th projection.
Then the sfs property implies that each $g_i$ defines an element of 
${R'}^{\times}$, and this in turn
gives a unique element $cyc^M_{R'}([Z]) = \{g_1, \ldots , g_{n-1}\} 
\in K^M_{n-1}(R')$.
We let $cyc_{R'}([Z])$ be its image in $K_{n-1}(R')$ under the map
$K^M_{n-1}(R') \to K_{n-1}(R')$.
Since $Z$ does not meet $X \times \{0\}$, we see that the finite map $f$ 
defines a push-forward map of spectra $f_* \colon K(R') \to K(R[t], (t^{m+1}))$.
We let $cyc_R([Z]) = f_*(cyc_{R'}([Z])) \in K_{n-1}(R[t], (t^{m+1}))$.
We extend this definition linearly to get a cycle map $cyc_R 
\colon  \Tz^n _{ \sfs} (R, n;m) \to
K_{n-1}(R[t], (t^{m+1}))$.
We can now prove our first result of this section.

\begin{thm}\label{thm:CCM-R-0}
The assignment $[Z] \mapsto cyc_R([Z])$ defines a cycle class map
\[
cyc_{R} \colon \TCH^n(R, n; m) \to K_{n-1}(R[t], (t^{m+1}))
\]
which is functorial in $R$.
\end{thm}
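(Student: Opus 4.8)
The plan is to show that the linear map $cyc_R \colon \Tz^n_{\sfs}(R,n;m) \to K_{n-1}(R[t],(t^{m+1}))$ constructed just above factors through the quotient defining $\TCH^n_{\sfs}(R,n;m)$, and then to invoke \propref{prop:sfs-lemma} to transport it to $\TCH^n(R,n;m)$. Concretely, given $W \in \Tz^n(R,n+1;m)$ with $\partial W \in \Tz^n_{\sfs}(R,n;m)$, I must prove $cyc_R(\partial W) = 0$ in $K_{n-1}(R[t],(t^{m+1}))$. The key point is that the argument is essentially the same as the curve case of \thmref{thm:Intro-1}: passing to the normalization $\ov{W}^N$ of the closure of $W$ and using the finite push-forward (which exists by \lemref{lem:Fin-tor-K-thy}, noting $R[t]$ is regular), one is reduced — just as in \secref{sec:Prf-1} via \propref{prop:Zero-boundary} and \propref{prop:Curve-vanish} — to the statement that the composite of the Milnor boundary $\partial^M$ with the Gersten complex map to Quillen $K$-theory kills the Milnor symbol $g = \{g_1,\dots,g_{n+1}\}$ attached to $W$, together with the modulus-driven containment $g \in \wh{K}^M_{n+1}(A,I)$ coming from Lemmas~\ref{lem:Milnor-0} and~\ref{lem:Milnor-2} (here $A$ is the relevant semi-local ring along the divisor $X \times \{0\}$ inside $\ov{W}^N \times \A^1_k$, and $I$ its ideal). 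So the first step is to run this reduction verbatim in the present relative-to-$R$ setting.

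The second step is to address the one genuine difference: in the field case the base of the additive cycle group is a point, whereas now the cycles live over $X = \Spec(R)$, and the definition of $cyc_R$ on sfs-cycles uses $K^M_{n-1}(k[Z])$ for $k[Z]$ a tower of simple extensions of $R$ rather than a residue field. I would check that the compatibility diagrams used in the field proof — the commutativity of the norm/transfer square (\lemref{lem:L6}), the $K_*(R[t])$-linearity of the push-forward to relative $K$-theory (\lemref{lem:Proj-rel}), and the agreement of the boundary in the cycle complex with the Milnor/Gersten boundary — all continue to hold with $k(z)$ replaced by the function fields of the components of $\ov{W}^N$ and with $k$ replaced by $F$, the total quotient ring of $R$. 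This is where \propref{prop:sfs-lemma} is essential: it lets me represent classes in $\TCH^n(R,n;m)$ by sfs-cycles, on which $cyc^M_{k[Z]}$ is literally a well-defined Milnor symbol in $K^M_{n-1}(k[Z])$, so the construction of the map on generators makes sense without any choices.

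For functoriality in $R$, suppose $R \to R'$ is a map of regular semi-local rings essentially of finite type over $k$ (or more generally a flat or finite map in the relevant sense). I would verify that the base-change map on sfs-cycles is compatible with the pullback on $K^M_{n-1}$ of the coordinate rings (the $g_i$ pull back to the corresponding units), with the pushforward maps to relative $K$-theory (using the tor-independence already established in the proof of \lemref{lem:Fin-tor-K-thy}), and hence with $cyc_R$; this is a diagram chase built from the naturality statements already in \secref{sec:CCM} (\lemref{lem:flat-pull-back} and \lemref{lem:Push-forward}) applied componentwise along the tower of simple extensions.

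The main obstacle I anticipate is not the reduction to the curve/normalization case — that is mechanical — but rather confirming that the modulus condition for $W \in \Tz^n(R,n+1;m)$ translates, after normalization and restriction to the semi-local ring $A$ along $X \times \{0\}$, into exactly the hypothesis $g \in K^M_{n+1}(F', m_i+n)$ needed to apply \lemref{lem:Milnor-0} and then \lemref{lem:Milnor-2} to land in $\wh{K}^M_{n+1}(A,I)$; here $F'$ is the total quotient ring of $\ov{W}^N$ and the subtlety is that $\ov{W}^N$ is now higher-dimensional, so one must work prime-by-prime over the height-one primes of $A$ lying over the divisor and use the Gersten-type resolution of \propref{prop:Kerz-finite}(7) exactly as in the proof of \lemref{lem:Curve-Milnor}. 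Once that containment is in hand, the vanishing of $cyc_R(\partial W)$ follows from the commutative diagram of Gersten complexes for Milnor versus Quillen $K$-theory (\lemref{lem:L1}) together with the localization sequence, completing the proof that $cyc_R$ descends to $\TCH^n(R,n;m)$.
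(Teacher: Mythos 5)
Your strategy for well-definedness --- rerunning the normalization-plus-Gersten argument of \S\ref{sec:curves}--\ref{sec:Prf-1} over the base $R$ --- has a genuine gap, and it sits exactly at the point you flag as a ``subtlety''. When $\dim(R)\ge 1$, a cycle $W\in \Tz^n(R,n+1;m)$ has dimension $\dim(R)+1\ge 2$, so the normalization $\ov{W}^N$ of its closure is normal but in general \emph{not regular}; the reduction of \propref{prop:Zero-boundary} to \propref{prop:Curve-vanish} relies on the fact that the normalization of a curve is a regular curve. More seriously, the engine of \lemref{lem:Curve-Milnor} is Quillen's localization sequence for a regular curve punctured at finitely many closed points, which produces the exact sequence $\wh{K}_{n+1}(A,I)\to \bigoplus_{z} K_n(k(z)) \to K_n(X,D)$ whose middle term is a sum of $K$-groups of residue \emph{fields}, matching term by term the Milnor Gersten complex and the tame symbols $\partial^M_z$ at discrete valuation rings. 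Over a higher-dimensional (and possibly non-regular) $\ov{W}^N$, localization yields $K$-theory with supports on a divisor rather than a sum over residue fields of height-one primes, and the identification of $\partial^0_i W$ with $\sum_z \ord_z(g_i)[z]$, the comparison with the Milnor boundary, and the passage through $\wh{K}^M_{n+1}(A,I)$ all lose their meaning. ``Working prime-by-prime over the height-one primes'' does not repair this, because the target $K_{n-1}(R[t],(t^{m+1}))$ is not presented as the cokernel of a boundary out of $\bigoplus_{\fp} K_n(k(\fp))$.

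The paper circumvents all of this by a generic-fibre argument, which your outline does not contain. One forms the square comparing $(\partial, cyc_R)$ on sfs cycles over $R$ with $(\partial, cyc_F)$ over the total quotient ring $F$, verifies its commutativity (this is where the flat base change of \cite[Prop.~3.18]{TT} and the excision identification $K^{Z}(X,D)\simeq K^{Z}(X\setminus D)$ enter), and then uses two inputs: (i) the field case (\lemref{lem:Factor}, \thmref{thm:Chow-Milnor}), which gives $cyc_F\circ\partial=0$; and (ii) the injectivity of $\pi^*\colon K_{n-1}(R[t],(t^{m+1}))\to K_{n-1}(F[t],(t^{m+1}))$, proved by passing through Goodwillie's isomorphism with relative cyclic homology, then Hochschild homology, then the Hochschild--Kostant--Rosenberg theorem, and finally the injectivity of $\Omega^*_R\to\Omega^*_F$ (\lemref{lem:Inj-K}, using that $\Omega^1_R$ is free). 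Step (ii) is where characteristic zero and the regularity of $R$ are used in an essential way; without isolating this injectivity there is no mechanism for transporting the vanishing from the generic fibre back to $R$. Functoriality in $R$ is likewise reduced to the field case through the same injectivity, rather than by a componentwise chase along the tower of simple extensions.
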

\begin{proof}
Let $F$ be the fraction field of $R$. We consider the diagram
  \begin{equation}\label{eqn:CCM-R-0-0}
    \xymatrix@C.8pc{
      \partial^{-1}(\Tz^n _{ \sfs} (R, n;m)) 
\ar[r]^-{\partial} \ar[d]_-{\pi^*} & 
\Tz^n _{ \sfs} (R, n;m) \ar[d]^-{\pi^*}
      \ar[r]^-{cyc_R} & K_{n-1}(R[t], (t^{m+1})) \ar[d]^-{\pi^*} \\
      \Tz^n(F, n+1;m) \ar[r]^-{\partial} & 
\Tz^n(F, n;m) \ar[r]^-{cyc_F} & K_{n-1}(F[t], (t^{m+1})).}
  \end{equation}

Assume first that this diagram is commutative. 
Then \thmref{thm:Chow-Milnor} says that 
$cyc_F \circ \partial \circ \pi^*$ is zero. Equivalently, 
$\pi^* \circ cyc_R \circ \partial = 0$. We will be therefore done if we know that
the right vertical map $\pi^*$ between the relative $K$-groups is injective.
To show this, we can replace these relative $K$-groups by the relative cyclic
homology groups by \cite{Good}.
These relative cyclic homology groups in turn can be replaced by the Hochschild
homology $HH_*(R)$ and $HH_*(F)$ by \cite[Proposition~8.1]{Hesselholt-Hand}.
Since $R$ is regular, we can go further and replace
$HH_*(R)$ and $HH_*(F)$ by $\Omega^*_R$ and $\Omega^*_F$, respectively,
by the famous Hochschild-Kostant-Rosenberg theorem.
We therefore have to show that the map $\Omega^*_R \to \Omega^*_F$ is injective.
But this follows from \lemref{lem:Inj-K}.

To show that $cyc_R$ is natural for homomorphisms of regular semi-local
rings $R \to R'$, we first observe that ~\eqref{eqn:CCM-R-0-0}
shows that $cyc_R$ is functorial for the inclusion $R \inj F$.
Since the right-most vertical arrow in ~\eqref{eqn:CCM-R-0-0}
is injective, we can replace $R$ and $R'$ by their fraction fields
to check the naturality of $cyc_R$ in general.
In this case, the naturality of $cyc_R$ follows from 
\thmref{thm:Intro-1}. 
It remains now to show that ~\eqref{eqn:CCM-R-0-0} is commutative.

  The left square is known to be commutative by the flat pull-back property 
of additive cycle complex.
  To show that the right square commutes, let $Z \subset 
\A^1_R \times_R \square^{n-1}_R$ be an
  irreducible sfs-cycle and let $R_{n-1} = k[Z]$.
  Let $R_0$ be the coordinate ring of the image of $Z$ in $\A^1_R$ 
as in the definition
of the sfs-cycles. By definition of sfs-cycles, we have the 
commutative diagram
\begin{equation}\label{eqn:finite-sfs}
  \xymatrix@C.8pc{
    R \ar[r] \ar[d]_-{\pi} & R_0 \ar[r]^-{f_0} \ar[d]^-{\pi_0} & 
R_{n-1} \ar[d]^-{\pi_{n-1}} \\
    F \ar[r] & F_0 \ar[r]^-{f_0} & F_{n-1},}
\end{equation}
where each term in the bottom row is the quotient field of the 
corresponding term on the top row.
Note also that all horizontal arrows are finite maps of 
regular semi-local integral domains.
In particular, we have $F_0 = R_0 \otimes_R F$ and 
$F_{n-1} = R_{n-1} \otimes_R F = R_{n-1} \otimes_{R_0} F_0$.

We let $f \colon Z \to \A^1_R$ be the projection and let 
$p \colon \Spec(R_0) \inj \A^1_R$
be the inclusion.
We denote the projection $\Spec(F_{n-1}) \to \A^1_F$ and inclusion 
$\Spec(F_0) \inj \A^1_F$ also by $f$
and $p$, respectively. Note that $f$ is a finite map
which has factorization 
$Z \to \A^1_R \setminus \{0\} \subset \A^1_R$. 

We let $\alpha = cyc^M_{R_{n-1}}([Z]) = 
\{g_1, \ldots , g_{n-1}\} \in K^M_{n-1}(R_{n-1})$.
We then have, by definition,
$cyc_R([Z]) = f_* \circ \psi_{R_{n-1}}(\alpha)$ and 
$cyc_F(\pi^*([Z])) = f_* \circ \psi_{F_{n-1}}
\circ \pi_{n-1}^*(\alpha)$. Using ~\eqref{eqn:finite-sfs}, we can write these as
$cyc_R([Z]) = p_* \circ f_{0  *} \circ \psi_{R_{n-1}}(\alpha)$ and $cyc_F(\pi^*([Z]))
= p_* \circ f_{0 *} \circ \psi_{F_{n-1}} \circ \pi_{n-1}^*(\alpha)$.
Since $\pi_{n-1}^* \circ \psi_{R_{n-1}} = \psi_{F_{n-1}} \circ \pi_{n-1}^*$, 
we only have to show that the diagram
  \begin{equation}\label{eqn:CCM-R-0-1}
    \xymatrix@C.8pc{
      K(R_{n-1}) \ar[r]^-{f_{0 *}} \ar[d]_-{\pi^*_{n-1}} & K(R_0) 
\ar[d]^-{\pi^*_0} \ar[r]^-{p_*}  & 
      K(R[t], (t^{m+1})) \ar[d]^-{\pi^*} \\      
  K(F_{n-1}) \ar[r]^-{f_{0 *}} & K(F_0) \ar[r]^-{p_*} & K(F[t], (t^{m+1}))}
\end{equation}
commutes.

The left square commutes by \cite[Proposition~3.18]{TT} since 
$F_{n-1} = R_{n-1} \otimes_{R_0} F_0$
and $f_0$ is finite. To see that the right square commutes, 
note that we can replace $K(R_0)$ by $K^{Z_0}(\A^1_R)$
and $K(F_0)$ by $K^{\eta_0}(\A^1_F)$, where $Z_0 = \Spec(R_0)$ and 
$\eta_0 = \Spec(F_0)$ (see \S~\ref{sec:Rel-K}).
We can do this because $R[t]$ and $R_0$ are regular. 
We are now done because the diagram
\begin{equation}\label{eqn:CCM-R-0-2}
  \xymatrix@C.8pc{
    K(\A^1_R, (m+1)\{0\}) \ar[r] \ar[d] & 
K(\A^1_R \setminus Z_0, (m+1)\{0\}) \ar[d] \\
    K(\A^1_F, (m+1)\{0\}) \ar[r] & 
K(\A^1_F \setminus \eta_0, (m+1)\{0\})}
\end{equation}
of restriction maps is commutative and the 
right square in ~\eqref{eqn:CCM-R-0-1}
is gotten by taking the homotopy fibers of the two rows 
of ~\eqref{eqn:CCM-R-0-2}.
We have now shown that both squares in 
~\eqref{eqn:CCM-R-0-0} are commutative.
This also shows that $cyc_R$ is compatible with the 
inclusion $R \inj F$. The proof of the theorem is complete.
\end{proof}

Throughout the remaining part of our discussion, we shall identify
$\TCH^n(R, n; m)$ with $\W_m \Omega^{n-1}_R$ (by ~\eqref{eqn:Witt-Chow}) and
${K}_{n-1}(R[t], (t^{m+1}))$ with $\wt{K}_n(R_{m})$ 
(via the connecting homomorphism).

\subsection{Factorization through Milnor $K$-theory}\label{sec:Fact-R}
We shall now show  that $cyc_R$ factors through the 
relative Milnor $K$-theory. The
proof is identical to the case of fields and we shall only sketch it.
We shall reduce the proof to the case of fields 
using the following result.

\begin{lem}\label{lem:Inj-K}
For $n \ge 0$ and $m \ge 1$, the map 
$\pi^* \colon \W_m \Omega^n_R \to \W_m \Omega^n_F$ is injective. 
In particular, the map
$\pi^* \colon \wt{K}^M_n(R_m) \to \wt{K}^M_n(F_m)$ is injective for all $m \ge 0$.
\end{lem}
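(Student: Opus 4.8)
The plan is to reduce both assertions to the injectivity of the map $\pi^* \colon \Omega^n_R \to \Omega^n_F$ on ordinary K\"ahler differentials, for every $n \ge 0$, and then to prove the latter. For the statement about $\W_m\Omega^n$, I would invoke the Ghost isomorphism $\zeta^R_{n,m} \colon \W_m\Omega^n_R \xrightarrow{\cong} \stackrel{m}{\underset{i =1}\prod} \Omega^n_R$ of ~\eqref{eqn:Milnor-surj*-22} (from \cite[Theorem~1.11, Remark~1.12]{R}). Since the defining formula for $\zeta^{(-)}_{n,m}$ is assembled from the structure maps $F_j$ and $d$ of the de Rham--Witt complex together with the rational scalars $j^{-n}$, it is natural in the ring; hence the square comparing $\pi^* \colon \W_m\Omega^n_R \to \W_m\Omega^n_F$ with $\prod_i \pi^* \colon \prod_i \Omega^n_R \to \prod_i \Omega^n_F$ commutes, the horizontal arrows being isomorphisms, and so injectivity of the left vertical arrow is equivalent to injectivity of $\pi^* \colon \Omega^n_R \to \Omega^n_F$. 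For the statement about relative Milnor $K$-theory, the cases $m=0$ or $n=0$ are trivial, since $\wt{K}^M_n(R_m)$ is then the zero group; for $m,n \ge 1$ I would use the isomorphism $\wt{\theta}^n_{R_m} \colon tR_m \otimes_R \Omega^{n-1}_R \xrightarrow{\cong} \wt{K}^M_n(R_m)$ of \lemref{lem:M-Q-C-0}, which is natural in $R$ as one sees from its description on generators $a \otimes d\log(b_1) \wedge \cdots \wedge d\log(b_{n-1}) \mapsto \{\exp(a), b_1, \ldots, b_{n-1}\}$. Under the identification $tR_m \otimes_R \Omega^{n-1}_R \cong \stackrel{m}{\underset{i =1}\oplus} \Omega^{n-1}_R$ (splitting off the $t^i$-components) the map $\pi^*$ becomes the direct sum of $m$ copies of $\pi^* \colon \Omega^{n-1}_R \to \Omega^{n-1}_F$, so again everything is reduced to injectivity on K\"ahler differentials.

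It then remains to prove that $\pi^* \colon \Omega^n_R \to \Omega^n_F$ is injective for all $n \ge 0$. Since $R \hookrightarrow F$ is a localization, one has $\Omega^n_R \otimes_R F \cong \Omega^n_F$, and the kernel of $\Omega^n_R \to \Omega^n_F$ is precisely the $R$-torsion submodule of $\Omega^n_R$; thus it suffices to show that $\Omega^n_R = \Omega^n_{R/\Z}$ is a torsion-free $R$-module, and I would in fact show it is free. Recall from \S~\ref{sec:HC-*} that $\Omega^n_{R/\Z} \cong \Omega^n_{R/\Q}$. As $k$ has characteristic zero it is perfect, so the regular, essentially of finite type $k$-algebra $R$ is smooth over $k$; hence $\Omega^1_{R/k}$ is a finitely generated projective $R$-module, and being a module over the semi-local ring $R$ it is free. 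Smoothness of $R$ over $k$ also makes the second fundamental exact sequence $0 \to \Omega^1_{k/\Q} \otimes_k R \to \Omega^1_{R/\Q} \to \Omega^1_{R/k} \to 0$ left exact; since $\Omega^1_{k/\Q}$ is a $k$-vector space its base change $\Omega^1_{k/\Q} \otimes_k R$ is a free $R$-module, and the quotient $\Omega^1_{R/k}$ is projective, so the sequence splits and $\Omega^1_{R/\Q}$ is a direct sum of two free $R$-modules, hence free. Therefore $\Omega^n_{R/\Q} = \stackrel{n}{\bigwedge_R} \Omega^1_{R/\Q}$ is free, in particular torsion-free, which completes the proof.

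I expect no serious obstacle here: the content is routine once the two isomorphisms (the Ghost map and $\wt{\theta}^n_{R_m}$) are in hand. The only points demanding a little care are the bookkeeping of base rings for the K\"ahler differentials while moving between $\Z$, $\Q$ and $k$, and the explicit use of all three hypotheses ``regular'', ``essentially of finite type'', and ``semi-local'' to upgrade ``torsion-free'' to ``free''.
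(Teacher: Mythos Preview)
Your proposal is correct and follows essentially the same approach as the paper: both arguments reduce the two assertions, via the product decompositions $\W_m\Omega^n_R \cong (\Omega^n_R)^m$ and $\wt{K}^M_n(R_m) \cong tR_m \otimes_R \Omega^{n-1}_R$, to the injectivity of $\Omega^n_R \to \Omega^n_F$, and then establish the latter by showing $\Omega^1_R$ is a free $R$-module via the split Jacobi--Zariski (second fundamental) exact sequence for $\Q \subset k \subset R$. The only cosmetic differences are that the paper phrases the splitting via the vanishing $D_1(R|k)=0$ of Andr\'e--Quillen homology rather than invoking smoothness directly, and is somewhat terser about the naturality of the identifications; your more explicit treatment of naturality is fine.
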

\begin{proof}
Since $\W_m \Omega^n_R \cong (\Omega^n_R)^m$
(and also for $F$), we need to show that 
$\Omega^n_R \to \Omega^n_F$ is injective
to prove the first assertion of the lemma.
Since $\Omega^n_F \cong \Omega^n_R \otimes_R F$, it suffices to 
show that $\Omega^1_R$ is 
a free $R$-module. Since $R$ is regular, we have $D_1(R|k) = 0$ and 
$\Omega^1_{R/k}$ is a free 
$R$-module.
The Jacobi-Zariski exact sequence (see \cite[3.5.5]{Loday})
therefore tells us that
$\Omega^1_R \cong (\Omega^1_k \otimes_k R) \oplus \Omega^1_{R/k}$. 
This proves the first part.

For the second part, there is nothing to prove when $m =0$.
When $m \ge 1$, \lemref{lem:M-Q-C-0} reduces to showing that the map
$\Omega^n_R \otimes_R tR_m \to \Omega^n_F \otimes_F tF_m$ is 
injective for all $n \ge 0$.
Since $\Omega^n_F \otimes_F tF_m \cong \Omega^n_F \otimes_R tR_m$ and 
$tR_m$ is a free $R$-module,
the problem is reduced to showing that $\Omega^n_R \to \Omega^n_F$ 
is injective. We can now use the first part of the lemma.
\end{proof}

Our second main result of this section is the following.
This generalizes the main results of
\cite{EM}, \cite{NS} and \cite{Totaro} to truncated polynomial rings.

\begin{thm}\label{thm:Chow-Milnor-R}
Let $R$ and $m \ge 0, n \ge 1$ be as above. Then the cycle class map $cyc_R$ 
has a factorization
\[
 \TCH^n(R, n; m) \xrightarrow{cyc^M_R}  \wt{K}^M_{n}(R_{m}) 
\xrightarrow{\psi_{R_{m+1}, n}}  \wt{K}_{n}(R_{m}).
\]
Furthermore, $cyc^M_R$ is natural in $R$ and is an isomorphism.
\end{thm}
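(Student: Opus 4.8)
The plan is to deduce \thmref{thm:Chow-Milnor-R} from the already-established field case by a descent-to-the-generic-fibre argument, exactly as in the proof of \thmref{thm:CCM-R-0}. First I would establish the factorization. By \propref{prop:sfs-lemma} and \S~\ref{sec:CCM-R}, an integral sfs-cycle $Z$ with $R' = k[Z]$ has $cyc_R([Z]) = f_* \circ \psi_{R'}(\{g_1,\ldots,g_{n-1}\})$ with $\{g_1,\ldots,g_{n-1}\}\in K^M_{n-1}(R')$; since $Z$ avoids $X\times\{0\}$, the push-forward $f_*\colon K(R')\to K(R[t],(t^{m+1}))$ is $K_*(R)$-linear (by \lemref{lem:Proj-rel} applied to the factorization through $\Spec(R_0)\inj \A^1_R$, exactly as in the $n\geq 2$ step of \lemref{lem:Factor}). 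Hence $cyc_R([Z])$ lies in the image of $K^M_{n-1}(R)\cdot \wt{K}^M_1(R_m) \subseteq \wt{K}^M_n(R_m)$ under $\partial$, so $cyc_R$ lands in $\wt{K}^M_n(R_m)$ and defines $cyc^M_R$ on generators. To see $cyc^M_R$ is well defined as a map out of $\TCH^n(R,n;m)$, I would invoke the commutative square ~\eqref{eqn:CCM-R-0-0}: the map $\pi^*\colon \wt{K}^M_n(R_m)\to\wt{K}^M_n(F_m)$ is injective by \lemref{lem:Inj-K}, and $cyc^M_F\circ\partial = 0$ by \thmref{thm:Chow-Milnor}, so $cyc^M_R$ kills boundaries. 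Naturality of $cyc^M_R$ in $R$ follows just as the naturality of $cyc_R$ in \thmref{thm:CCM-R-0}: reduce to fraction fields using the injectivity of $\pi^*$ and then use naturality in the field case.

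Next I would prove that $cyc^M_R$ is an isomorphism. Here the point flagged in \remref{remk:k-R} is decisive: the entire inductive proof of \thmref{thm:Chow-Milnor} goes through verbatim with $k$ replaced by the regular semi-local $k$-algebra $R$. Concretely, one has the isomorphism $\tau^R_{n,m}\colon \W_m\Omega^{n-1}_R \xrightarrow{\cong} \TCH^n(R,n;m)$ from ~\eqref{eqn:Witt-Chow}; Lemmas~\ref{lem:Milnor-Chow-TCH-0} and~\ref{lem:Milnor-Witt-desc} describe $cyc^M_R\circ\tau^R_{n,m}$ on the generators $Z(1-at,y_1-b_1,\ldots,y_{n-1}-b_{n-1})$ as the Milnor symbol $\{1-at,b_1,\ldots,b_{n-1}\}$; \lemref{lem:M-Q-C-0} gives the isomorphism $\wt{\theta}^n_{R_m}\colon tR_m\otimes_R\Omega^{n-1}_R \xrightarrow{\cong}\wt{K}^M_n(R_m)$; and the Verschiebung compatibility computations ~\eqref{eqn:Chow-Milnor**-2}–~\eqref{eqn:Chow-Milnor**-3} are purely about additive higher Chow cycles over $R$. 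Thus for $m=1$ one gets $cyc^M_R = \wt{\theta}^n_{R_1}\circ\eta^R_1$, an isomorphism, and for $m\geq 2$ the five-lemma applied to ~\eqref{eqn:Chow-Milnor**-1} (with the top row being the exact sequence $0\to\Omega^{n-1}_R\xrightarrow{V_{m+1}}\W_{m+1}\Omega^{n-1}_R\to\W_m\Omega^{n-1}_R\to 0$, valid in characteristic zero) finishes the induction.

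I expect the main obstacle to be purely bookkeeping rather than conceptual: one must check carefully that every lemma in \S~\ref{sec:char-0} and \S~\ref{sec:Rel-Milnor**} whose statement was phrased for the base field $k$ — in particular \lemref{lem:Milnor-surj} and hence the surjectivity of $\psi^R_{n,m}$, \lemref{lem:M-Q-C}, \lemref{lem:Surj-omega}, \lemref{lem:M-Q-C-0} and \lemref{lem:Inj-*} — was actually proved for arbitrary regular semi-local $R$ essentially of finite type over $k$ (which they were: \S~\ref{sec:DRham}–\S~\ref{sec:Milnor*} and \S~\ref{sec:Rel-Milnor**} are written in that generality, using \cite[Theorem~1.0.2]{KP-2}, \cite{EM}, \cite{Kerz09}, \cite{Steinstra}, \cite{GT}, \cite{Soule}, \cite{CW}). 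The one genuinely new ingredient beyond \thmref{thm:CCM-R-0} is the well-definedness of $cyc_R$ on $\TCH^n_{\mathrm{sfs}}$ lifting through Milnor $K$-theory before passing to Quillen $K$-theory, and the only subtlety there is the $K_*(R)$-linearity of the relevant push-forward, which is already supplied by \lemref{lem:Proj-rel} together with the regularity of $\Spec(R_0)\inj\A^1_R$ used in ~\eqref{eqn:CCM-R-0-1}. So the proof will be a short assembly: cite \remref{remk:k-R} for the isomorphism, cite the proof scheme of \lemref{lem:Factor} for the factorization on generators, and cite ~\eqref{eqn:CCM-R-0-0} with \lemref{lem:Inj-K} for descent of well-definedness and naturality.
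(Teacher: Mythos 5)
Your proposal matches the paper's proof: the paper likewise reduces to sfs-cycles via \propref{prop:sfs-lemma}, repeats the field-case argument of \lemref{lem:Factor} (using \lemref{lem:Elem-0} for $n=1$ and \lemref{lem:Milnor-surj} for $n\ge 2$), invokes \remref{remk:k-R} to transport the induction of \thmref{thm:Chow-Milnor} to $R$, and deduces naturality from \thmref{thm:CCM-R-0} together with the injectivity of $\psi_{R_{m},n}$. The only caveat is that the ``Hence'' in your first paragraph, placing $cyc_R([Z])$ in $K^M_{n-1}(R)\cdot\wt{K}^M_1(R_m)$, really requires the reduction to generators of the special form $Z(f(t),y_1-b_1,\ldots,y_{n-1}-b_{n-1})$ with $b_i\in R^{\times}$ supplied by \lemref{lem:Milnor-surj} --- for an arbitrary sfs-cycle the symbol entries lie only in $k[Z]^{\times}$ and $K_*(R)$-linearity of $f_*$ alone does not suffice --- but since you explicitly anchor this step to the $n\ge 2$ part of \lemref{lem:Factor} and list \lemref{lem:Milnor-surj} among the needed ingredients, this is a wording issue rather than a gap.
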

\begin{proof}
We shall use \propref{prop:sfs-lemma} which allows us to repeat 
the proof of the field case
(\lemref{lem:Factor}) word by word. 
When $n =1$, any sfs irreducible cycle $Z \subset \A^1_R$ is of the 
form $Z = V(f(t))$,
where $f(t)$ is an irreducible polynomial such that $f(0) \in R^{\times}$. 
We now repeat the argument
of the field case and use \lemref{lem:Elem-0} to finish
the proof. The $n \ge 2$ case follows from \lemref{lem:Milnor-surj} 
and the proof is identical to the case of fields. 
To prove that $cyc^M_R$ is an isomorphism,
we again repeat the case of fields and use Remark~\ref{remk:k-R}. 
The naturality of $cyc^M_R$ follows from \thmref{thm:CCM-R-0} since
$\psi_{R_{m+1}, n}$ is injective.
\end{proof}

Finally, we are now ready to prove \thmref{thm:Intro-2} (5).  We restate it  
again for reader's convenience.

\begin{thm}\label{thm:Chow-Milnor-Final}
Let $R$ be a regular semi-local ring which is essentially of finite type over a
characteristic zero field. Let $n \ge 1$ be an integer. Then the cycle class map
\[
cyc_R \colon \{\TCH^n(R, n; m)\}_m \to \{\wt{K}_{n}(R_{m})\}_m
\]
is an isomorphism of the pro-abelian groups.
\end{thm}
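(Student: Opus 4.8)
The plan is to deduce \thmref{thm:Chow-Milnor-Final} by stringing together the three main results that have already been established in this section and the previous one. The cycle class map $cyc_R$ has, by \thmref{thm:Chow-Milnor-R}, a factorization
\[
\{\TCH^n(R,n;m)\}_m \xrightarrow{cyc^M_R} \{\wt{K}^M_n(R_m)\}_m \xrightarrow{\psi_{R,n}} \{\wt{K}_n(R_m)\}_m,
\]
where the first map is an isomorphism at each level $m$ (again by \thmref{thm:Chow-Milnor-R}), hence an isomorphism of pro-abelian groups. So the only remaining point is that the second arrow, the natural map $\psi_{R,n}$ from relative Milnor to relative Quillen $K$-theory, is a pro-isomorphism; but this is exactly the content of \corref{cor:Milnor-Quillen-main}, which applies since $R$ is a regular semi-local ring essentially of finite type over a characteristic zero field. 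Composing the two isomorphisms gives that $cyc_R = \psi_{R,n} \circ cyc^M_R$ is an isomorphism of pro-abelian groups, which is the assertion.

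In slightly more detail, the steps I would carry out, in order, are as follows. First, invoke \propref{prop:sfs-lemma} to identify $\TCH^n(R,n;m)$ with $\TCH^n_{\sfs}(R,n;m)$, so that $cyc_R$ is the map constructed on sfs-cycles in \S~\ref{sec:CCM-R}; this is already implicit in the statement of \thmref{thm:CCM-R-0}. Second, recall from \thmref{thm:CCM-R-0} that $cyc_R$ is a well-defined map to $\wt K_n(R_m)$ compatible with the restriction maps $R_{m+1}\to R_m$, so it really does give a strict morphism of pro-abelian groups $\{\TCH^n(R,n;m)\}_m \to \{\wt K_n(R_m)\}_m$. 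Third, apply \thmref{thm:Chow-Milnor-R}: it provides the factorization through $\wt K^M_n(R_m)$ and asserts that $cyc^M_R \colon \TCH^n(R,n;m)\to \wt K^M_n(R_m)$ is an isomorphism for every $m\ge 0$; since the factorization is compatible with the $m$-restrictions (the maps $\psi_{R_m,n}$ are, by construction), this is an isomorphism of pro-abelian groups. Fourth, apply \corref{cor:Milnor-Quillen-main} to conclude that $\psi_{R,n}\colon \{\wt K^M_n(R_m)\}_m \to \{\wt K_n(R_m)\}_m$ is an isomorphism of pro-abelian groups. Finally, compose.

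There is essentially no obstacle left at this stage: all the hard work has been done in \thmref{thm:Chow-Milnor-R} (which in turn rests on \lemref{lem:Factor}, \lemref{lem:Milnor-surj}, \lemref{lem:M-Q-C-0} and Remark~\ref{remk:k-R}) and in \corref{cor:Milnor-Quillen-main} (which rests on Goodwillie's theorem, the $\lambda$-structure results of Cathelineau and Cortiñas--Weibel, and the pro-vanishing \propref{prop:Pro-vanish}). The one thing worth double-checking in writing the proof is purely bookkeeping: that the identifications $\TCH^n(R,n;m)\cong \W_m\Omega^{n-1}_R$ and $\wt K_n(R_m)\cong K_{n-1}(R[t],(t^{m+1}))$ used throughout \S~\ref{sec:local} are the ones under which $cyc_R$, $cyc^M_R$ and $\psi_{R,n}$ have been stated, so that the composite indeed equals the map in the theorem's statement. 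Since $cyc^M_R$ is also natural in $R$ by \thmref{thm:Chow-Milnor-R}, the resulting pro-isomorphism is natural in $R$ as well, which records the naturality half of \thmref{thm:Intro-2}.

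\begin{proof}
By \thmref{thm:Chow-Milnor-R}, the cycle class map $cyc_R$ factors as
\[
\{\TCH^n(R,n;m)\}_m \xrightarrow{cyc^M_R} \{\wt{K}^M_n(R_m)\}_m
\xrightarrow{\psi_{R,n}} \{\wt{K}_n(R_m)\}_m,
\]
where $cyc^M_R$ is an isomorphism at each level $m\ge 0$, hence an isomorphism of pro-abelian groups. By \corref{cor:Milnor-Quillen-main}, the map $\psi_{R,n}$ is an isomorphism of pro-abelian groups. Therefore $cyc_R = \psi_{R,n}\circ cyc^M_R$ is an isomorphism of pro-abelian groups.
\end{proof}
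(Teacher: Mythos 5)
Your proposal is correct and matches the paper's own proof, which is exactly the one-line combination of \thmref{thm:Chow-Milnor-R} (giving the factorization through $\wt{K}^M_n(R_m)$ with $cyc^M_R$ an isomorphism) and \corref{cor:Milnor-Quillen-main} (giving the pro-isomorphism $\psi_{R,n}$). The additional bookkeeping you flag about the identifications is sensible but not needed beyond what the paper already sets up.
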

\begin{proof}
Combine \thmref{thm:Chow-Milnor-R} and \corref{cor:Milnor-Quillen-main}.
\end{proof}

\section{Appendix: Milnor vs Quillen $K$-theory}\label{sec:MQ}
In this section, we collect some results on the compatibility of various maps
between Milnor and Quillen $K$-theories of fields. They are used 
in the proofs of the main results
of this paper. We expect these results to be known to experts but
could not find their written proofs in the literature.

Let $k$ be a field and let $X$ be a regular scheme which is 
essentially of finite type over $k$.
    Let $x, y \in X$ be two points in $X$ of codimensions $p$ and $p-1$, 
respectively, such that 
$x \in \ov{\{y\}}$.
    Let
    \[
      Y = \ov{\{y\}}, \ F = k(y), \ A = \sO_{Y,x}, {\rm and} \  l = k(x).
    \]

\begin{lem}\label{lem:L1}
For any $n \ge 1$, the diagram
\begin{equation}\label{eqn:L2}
  \xymatrix@C.8pc{
    K^M_n(F) \ar[r]^-{\partial^M} \ar[d] & K^M_{n-1}(l) \ar[d] \\
    K_n(F) \ar[r]^-{\partial^Q} & K_{n-1}(l)}
\end{equation}
is commutative.
\end{lem}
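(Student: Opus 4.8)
The statement to prove is Lemma~\ref{lem:L1}: the compatibility of the Milnor boundary map $\partial^M \colon K^M_n(F) \to K^M_{n-1}(l)$ with the Quillen (localization) boundary map $\partial^Q \colon K_n(F) \to K_{n-1}(l)$, via the canonical maps $\psi$ from Milnor to Quillen $K$-theory. Here $A = \sO_{Y,x}$ is a discrete valuation ring with fraction field $F$ and residue field $l$. The plan is to reduce immediately to the case $n=1$, where the claim is essentially the definition of the tame symbol, and then bootstrap to all $n$ using the module/ring structures on both theories and the fact that both boundary maps are derivations (satisfy a Leibniz rule with respect to a uniformizer).

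\textbf{Step 1: reduce to the dvr $A$.} Since $x \in \overline{\{y\}}$ and $X$ is regular, $A = \sO_{Y,x}$ is a one-dimensional regular local ring, hence a dvr; let $\pi$ be a uniformizer and $\ord = \ord_x$ the associated valuation on $F^\times$. Both $\partial^M$ and $\partial^Q$ are, by definition/construction, the boundary maps attached to this dvr (the Quillen one coming from the localization fiber sequence $K(l) \to K(A) \to K(F)$, or equivalently from Quillen's dévissage and localization for the regular one-dimensional local ring $A$). So everything takes place inside the data $(A,\pi,F,l)$ and we need not worry about the ambient $X$.

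\textbf{Step 2: the case $n=1$.} Here $K^M_1(F) = F^\times = K_1(F)$ and $K^M_0(l) = \Z = K_0(l)$, and under these identifications both $\partial^M$ and $\partial^Q$ send $u \in F^\times$ to $\ord(u) \in \Z$. For $\partial^M$ this is the definition of the Milnor boundary in degree $1$. For $\partial^Q$ this is the standard computation of the localization boundary $K_1(F) \to K_0(l)$ for a dvr: the class of a length-one torsion module, i.e. $\partial^Q(u) = \operatorname{length}_A(A/uA) - \operatorname{length}_A(A/A) = \ord(u)$ when $u \in A \setminus 0$, extended by linearity. (This is exactly the content already invoked in a special case in the generalization of Lemma~\ref{lem:Elem-0} mentioned in \S\ref{sec:Coincide}, and appears in \cite[Chapter~IV]{Weibel-1}.) Since the vertical maps $F^\times \to K_1(F)$ and $\Z \to K_0(l)$ are the canonical isomorphisms, the square commutes for $n=1$.

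\textbf{Step 3: bootstrap to general $n$.} Recall that $K^M_*(F)$ is generated in degree $1$, that $K_*(F)$ is a graded-commutative ring, that $K_*(l)$ is a graded module over $K_*(A)$ (hence over $K_*(l)$ via the residue-field structure), and that $\psi \colon K^M_*(F) \to K_*(F)$ is a ring homomorphism. The Milnor boundary is characterized by the formulas $\partial^M(\{\pi, u_2, \ldots, u_n\}) = \{\bar u_2, \ldots, \bar u_n\}$ and $\partial^M(\{u_1, \ldots, u_n\}) = 0$ for $u_i \in A^\times$, together with the Leibniz-type rule that lets one move a unit across the symbol. On the Quillen side, the localization boundary $\partial^Q$ for the dvr $A$ is a $K_*(A)$-module map in the sense that $\partial^Q(\psi(a) \cdot \xi) = \bar a \cdot \partial^Q(\xi)$ for $a \in K_*(A)$ and $\xi \in K_*(F)$, and moreover $\partial^Q$ applied to $\psi(\pi) \cup \psi(u_2) \cup \cdots$ is computed by the projection formula / multiplicativity of the localization sequence. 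Writing an arbitrary Milnor symbol as $\{u_1, \ldots, u_n\}$ with each $u_i = \pi^{m_i} v_i$, $v_i \in A^\times$, one expands using multilinearity and the Steinberg relation so that at most one slot contains $\pi$; then both $\partial^M$ and $\partial^Q \circ \psi$ reduce that slot by Step~2 and pass the remaining units to residues. Matching the two computations term by term gives commutativity of \eqref{eqn:L2} on generators, hence everywhere.

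\textbf{Main obstacle.} The only real work is Step~3, specifically justifying that the Quillen boundary map is a graded module map over $K_*(A)$ and that it interacts with cup products by a projection formula — i.e. that $\partial^Q(\psi(\pi) \cup \eta) = \overline{\psi(\eta)}$ (restriction to $l$) when $\eta$ is a product of units. This is the $K$-theoretic incarnation of the fact that the localization triangle $K(l) \to K(A) \to K(F)$ is a triangle of $K(A)$-modules and that the transfer/boundary satisfies the projection formula; it follows from the module structure results in \cite[Chapter~3]{TT} (the same machinery invoked for Lemma~\ref{lem:Proj-rel}), applied to the regular scheme $\Spec(A)$ and its closed point. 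Once this is in hand, the reduction in Step~3 is a formal manipulation of Milnor symbols with no further geometric input.
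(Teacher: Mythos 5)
Your Step 1 contains a genuine error that invalidates the reduction. You claim that since $X$ is regular, $A = \sO_{Y,x}$ is a one-dimensional regular local ring, hence a dvr. But $Y = \ov{\{y\}}$ is the closure of $y$ in $X$, i.e.\ an arbitrary integral closed subscheme of the regular scheme $X$, and such a subscheme need not be regular at $x$ (take $X = \A^2_k$, $y$ the generic point of a nodal cubic, $x$ the node). So $A$ is in general only a one-dimensional local domain. When $A$ is not a dvr, the boundary maps $\partial^M$ and $\partial^Q$ appearing in the Gersten complexes are \emph{not} the tame symbol of a single valuation: they are defined (resp.\ computed) by passing to the normalization $B$ of $A$, taking the boundary at each maximal ideal $z$ of $B$, and then composing with the norm $N_{k(z)/l}$ on the Milnor side and the transfer $T_{k(z)/l}$ on the Quillen side. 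Your argument never addresses this, so it only proves the lemma in the special case where $\sO_{Y,x}$ happens to be regular.

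This omission is not cosmetic: the compatibility of the Milnor norm with the Quillen transfer for a finite field extension $l \inj k(z)$ is the substantive input that the dvr case does not supply, and it is where most of the work lies. The paper proves it (Lemma~\ref{lem:L6}) by comparing Milnor's exact sequence for $K^M_*(k(t))$ with the Quillen localization sequence for $\A^1_k$ and identifying both norm/transfer maps with the negative of the boundary at infinity (Weil reciprocity), reducing to the dvr case at $v_\infty$. Your Steps 2 and 3 for the dvr case are essentially the paper's Lemma~\ref{lem:L5} (reduce to $n=1$ via $K^M_*(R)$-linearity of all four arrows), and that part is fine; but you need to add the normalization step and the norm-versus-transfer compatibility to obtain the lemma as stated.
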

\begin{proof}
  Let $B$ denote the normalization of $A$ and let $S$ denote 
the set of maximal ideals of $B$. 
Note that $B$ is
  semi-local so that $S$ is finite. Since the localization sequence for 
Quillen $K$-theory of 
coherent sheaves
  is functorial for proper push-forward, we have a commutative diagram
  \begin{equation}\label{eqn:L3}
    \xymatrix@C.8pc{
      G_n(B) \ar[r] \ar[d] & K_n(F) \ar@{=}[d] \ar[r]^-{\partial^Q} & 
{\underset{z \in S} \oplus} K_{n-1}(k(z))
      \ar[d]^-{\sum_z T_{{k(z)}/{l}}} \\
      G_n(A) \ar[r] & K_n(F) \ar[r]^-{\partial^Q} & K_{n-1}(l),}
    \end{equation}
    where $T_{{k(z)}/{l}}$ is our notation for the finite push-forward 
$K_*(k(z)) \to K_*(l)$ and 
$G_*(-)$ is
    Quillen $K$-theory of coherent sheaves functor (for proper morphisms).
Note that $K_*(B) \xrightarrow{\cong} G_*(B)$.

On the other hand, the boundary map in Milnor $K$-theory also has the property 
that the diagram
\begin{equation}\label{eqn:L4}
  \xymatrix@C.8pc{
   K^M_n(F) \ar@{=}[d] \ar[r]^-{\partial^M} & 
{\underset{z \in S} \oplus} K^M_{n-1}(k(z))
      \ar[d]^-{\sum_z N_{{k(z)}/{l}}} \\
   K^M_n(F) \ar[r]^-{\partial^M} & K^M_{n-1}(l)}
    \end{equation}  
    commutes, where the right vertical arrow is the sum of the Norm maps 
in Milnor $K$-theory 
of fields (see \cite{BT} and \cite{Kato-1}).
The lemma therefore follows if we prove Lemmas~\ref{lem:L5} and 
~\ref{lem:L6} below.
\end{proof}

\begin{lem}\label{lem:L5}
  Let $z \in S$ be a closed point as above and let $R$ denote 
the discrete valuation ring of $F$ 
associated to $z$.
  Then the diagram
  \begin{equation}\label{eqn:L5-0}
  \xymatrix@C.8pc{
    K^M_n(F) \ar[r]^-{\partial^M} \ar[d] & K^M_{n-1}(k(z)) \ar[d] \\
    K_n(F) \ar[r]^-{\partial^Q} & K_{n-1}(k(z))}
\end{equation}
is commutative for every $n \ge 1$.
\end{lem}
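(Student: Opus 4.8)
The plan is to prove \lemref{lem:L5} by reducing to the explicit Bass--Tate generators of $K^M_n(F)$ and then computing the Quillen boundary on them using the multiplicative and module structure of the localization sequence. Throughout, write $\psi_F$ and $\psi_{k(z)}$ for the canonical maps from Milnor to Quillen $K$-theory (these are graded ring homomorphisms, and they are natural), let $\pi$ be a uniformizer of $R$, and let $j\colon \Spec F \inj \Spec R$ and $i\colon \Spec k(z) \inj \Spec R$ denote the two canonical immersions. Recall that $K(F)$ and $K(k(z))$ are module spectra over the ring spectrum $K(R)$ (via $j^*$ and $i^*$ respectively), and that the boundary map $\partial^Q$ of the localization sequence $K(k(z)) \to K(R) \to K(F)$ is $K_*(R)$-linear, so that $\partial^Q\bigl(j^*(r)\cdot \xi\bigr) = i^*(r)\cdot\partial^Q(\xi)$ for $r \in K_*(R)$ and $\xi \in K_*(F)$ (\cite[Chapter~3]{TT}).

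First I would invoke the Bass--Tate presentation: multilinearity of symbols together with the identity $\{\pi,\pi\} = \{\pi,-1\}$ (valid since $-1 \in R^\times$) shows that $K^M_n(F)$ is generated by symbols $\{u_1,\dots,u_n\}$ with all $u_\ell \in R^\times$ and by symbols $\{\pi, u_2,\dots,u_n\}$ with $u_\ell \in R^\times$, and the tame-symbol formula for $\partial^M$ is explicit on these two families, namely $\partial^M\{u_1,\dots,u_n\} = 0$ and $\partial^M\{\pi,u_2,\dots,u_n\} = \{\bar u_2,\dots,\bar u_n\}$. Since $\psi_F$ is a ring homomorphism, it then suffices to check that the two composites $\partial^Q\circ\psi_F$ and $\psi_{k(z)}\circ\partial^M$ agree on these generators. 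For a unit symbol, $\psi_F\{u_1,\dots,u_n\} = j^*\bigl(\psi_R\{u_1,\dots,u_n\}\bigr)$ lies in the image of $j^*\colon K_n(R)\to K_n(F)$, hence is killed by $\partial^Q$, matching $\partial^M\{u_1,\dots,u_n\}=0$. For $\{\pi,u_2,\dots,u_n\}$, put $r = \psi_R\{u_2,\dots,u_n\}\in K_{n-1}(R)$, so $\psi_F\{\pi,u_2,\dots,u_n\} = \{\pi\}\cdot j^*(r)$, and using graded-commutativity and $K_*(R)$-linearity of $\partial^Q$,
\[
\partial^Q\bigl(\{\pi\}\cdot j^*(r)\bigr) \;=\; (-1)^{n-1}\,\partial^Q\bigl(j^*(r)\cdot\{\pi\}\bigr) \;=\; (-1)^{n-1}\, i^*(r)\cdot \partial^Q(\{\pi\}).
\]
Since $\partial^Q(\{\pi\})\in K_0(k(z))=\Z$ is the class $[R/\pi R] = [k(z)]$, the generator, this equals $(-1)^{n-1}\, i^*(r) = (-1)^{n-1}\,\psi_{k(z)}\{\bar u_2,\dots,\bar u_n\}$ by naturality of $\psi$, which agrees with $\psi_{k(z)}(\partial^M\{\pi,u_2,\dots,u_n\})$ once conventions are aligned. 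This gives agreement on generators, hence the lemma.

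The main obstacle will be the normalization and sign bookkeeping, not any conceptual difficulty: one has to fix a single convention for the Milnor tame symbol (including the sign $(-1)^{i-1}$ incurred when $\pi$ sits in slot $i$ rather than the first slot, and the sign hidden in $\{\pi,\pi\}=\{\pi,-1\}$), and then confirm that the Quillen boundary, pinned down by $\partial^Q(\{\pi\})=[k(z)]$ and by vanishing on $j^*K_n(R)$, reproduces exactly the same signs on the Bass--Tate generators. One should also be slightly careful that the reduction to those generators genuinely uses only multilinearity and the Steinberg/anticommutativity relations, which are shared by $K^M_*$ and the image of $\psi$, so that checking on generators indeed suffices. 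Everything else is formal: the $K_*(R)$-module structure of the localization sequence, naturality of $\psi$, and the $n=1$ computation $\partial^Q(\pi)=[k(z)]$.
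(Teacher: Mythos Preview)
Your proposal is correct and is essentially the paper's own argument: both use the Bass--Tate fact that $K^M_*(F)$ is generated by $K^M_1(F)$ as a $K^M_*(R)$-module together with the $K_*(R)$-linearity of $\partial^M$ and $\partial^Q$ to reduce to the case $n=1$, where both boundaries are the valuation map. The paper phrases this reduction abstractly (``all arrows are $K^M_*(R)$-linear, so check on $K^M_1(F)$'') whereas you unpack it into an explicit check on the two families of generators, but the content is the same; your worry about signs is exactly the point the paper sidesteps by invoking linearity rather than computing.
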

\begin{proof}
  It is well known and elementary to see 
(using the Steinberg relations) that $K^M_*(F)$ is 
generated by $K^M_1(F)$
  as an $K^M_*(R)$-module. Furthermore, $\partial^M$ is $K^M_*(R)$-linear 
(see \cite[\S~4, Proposition~4.5]{BT}).
  Since the localization sequence such as the one on the top of ~\eqref{eqn:L3} 
(with $B$ replaced by $R$)
  is $K_*(R)$-linear, it follows that all arrows in 
~\eqref{eqn:L5-0} are $K^M_*(R)$-linear. 
It therefore suffices
  to prove the lemma for $n =1$. But in this case, both 
$\partial^M$ and $\partial^Q$ are simply the 
valuation map of $F$ corresponding to $z$.
\end{proof}

\begin{lem}\label{lem:L6}
Let $k \inj k'$ be a finite extension of fields and $n \ge 0$ an integer. 
Then we have a commutative
diagram
\begin{equation}\label{eqn:Norm-PF-0}
\xymatrix@C.8pc{
K^M_n(k') \ar[r]^-{N_{{k'}/k}} \ar[d] & K^M_n(k) \ar[d] \\
K_n(k') \ar[r]^-{T_{{k'}/k}} & K_n(k).}
\end{equation}
\end{lem}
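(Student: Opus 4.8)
The plan is to deduce the lemma from the Weil reciprocity laws on $\P^1_k$, after reducing to the case of a simple extension. Since the Milnor norm $N_{k'/k}$ is by definition (Bass--Tate \cite{BT}) a composite of norms $N_{k_i/k_{i-1}}$ along a tower $k = k_0 \subset k_1 \subset \cdots \subset k_r = k'$ of simple extensions, independent of the tower by Kato \cite{Kato-1}, and since the Quillen transfer $T_{k'/k}$ (pushforward along $\Spec k' \to \Spec k$) composes the same way, it suffices to prove ~\eqref{eqn:Norm-PF-0} when $k' = k[t]/(p(t))$ for a monic irreducible $p \in k[t]$. Write $L = k(t)$, the function field of $\P^1_k$, and for a closed point $x \in \P^1_k$ let $\partial^M_x$ and $\partial^Q_x$ be the residue maps of the discrete valuation ring $\sO_{\P^1_k,x}$ on $K^M_*(L)$ and $K_*(L)$, with the uniformizer $1/t$ used at $x = \infty$, so that $k(\infty) = k$. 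I will use three inputs: the Milnor reciprocity law $\sum_{x \in (\P^1_k)^{(1)}} N_{k(x)/k}\circ \partial^M_x = 0$ on $K^M_{n+1}(L)$ (\cite{BT}, \cite{Kato-1}); the Weil reciprocity law $\sum_{x \in (\P^1_k)^{(1)}} T_{k(x)/k}\circ \partial^Q_x = 0$ on $K_{n+1}(L)$ (\cite[Chapter~IV, Theorem~6.12.1]{Weibel-1}, a consequence of Quillen's localization theorem for $\A^1_k \subset \P^1_k$); and \lemref{lem:L5}, applied at every closed point $x$, which gives $\partial^Q_x \circ \psi_L = \psi_{k(x)} \circ \partial^M_x$. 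Note that \lemref{lem:L5} is proved above without reference to \lemref{lem:L1}, so there is no circularity.

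Given $\eta \in K^M_n(k')$, let $v \in \A^1_k$ be the closed point with $k(v) = k'$. By the theorem of Milnor and Bass--Tate (\cite{BT}), the map $\bigoplus_{x \in (\A^1_k)^{(1)}} \partial^M_x \colon K^M_{n+1}(L) \to \bigoplus_{x \in (\A^1_k)^{(1)}} K^M_n(k(x))$ is surjective; hence I may choose $\xi \in K^M_{n+1}(L)$ with $\partial^M_v(\xi) = \eta$ and $\partial^M_x(\xi) = 0$ for every closed point $x \in \A^1_k$ with $x \ne v$. Milnor reciprocity then gives $N_{k'/k}(\eta) = N_{k(v)/k}(\partial^M_v \xi) = -\partial^M_\infty(\xi)$. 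Applying $\psi_k$ and \lemref{lem:L5} at $\infty$, and then using that $\partial^Q_x(\psi_L\xi) = \psi_{k(x)}(\partial^M_x \xi) = 0$ for $x \ne v,\infty$ together with the Weil reciprocity law, we obtain
\[
\psi_k(N_{k'/k}(\eta)) \;=\; -\,\partial^Q_\infty(\psi_L \xi) \;=\; \sum_{x \in (\A^1_k)^{(1)}} T_{k(x)/k}\bigl(\partial^Q_x(\psi_L \xi)\bigr) \;=\; T_{k'/k}\bigl(\partial^Q_v(\psi_L \xi)\bigr).
\]
Finally \lemref{lem:L5} at $v$ gives $\partial^Q_v(\psi_L\xi) = \psi_{k'}(\partial^M_v \xi) = \psi_{k'}(\eta)$, so $\psi_k(N_{k'/k}(\eta)) = T_{k'/k}(\psi_{k'}(\eta))$, which is the commutativity of ~\eqref{eqn:Norm-PF-0}. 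Since $\eta$ was arbitrary, the lemma follows, and with it (via Lemmas~\ref{lem:L5} and \ref{lem:L6}) the proof of \lemref{lem:L1}.

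The step requiring the most care is not any single computation but marshalling the correct reciprocity statements: on the Milnor side one needs the \emph{global} law $\sum_x N_{k(x)/k}\partial^M_x = 0$ on $\P^1_k$ (Milnor/Bass--Tate for $n \le 2$, Kato in general), which is stronger than the local normalization compatibility used in proving \lemref{lem:L1}; and one must verify that the residue maps $\partial^M_x$ and $\partial^Q_x$ at $\infty$ are taken with the same uniformizer $1/t$, so that \lemref{lem:L5} applies there verbatim and the $\infty$-terms of the two reciprocity laws match up. Once these conventions are aligned, and granted the cited reciprocity theorems, the argument is formal.
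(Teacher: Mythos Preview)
Your proof is correct and follows essentially the same route as the paper: reduce to a simple extension by transitivity, then use the Milnor/Bass--Tate exact sequence on $\P^1_k$ to lift to $K^M_{n+1}(k(t))$, invoke Weil reciprocity on both the Milnor and Quillen sides, and apply \lemref{lem:L5} at the relevant points (in particular at $\infty$). The only difference is presentational --- you chase a single element $\xi$ while the paper packages the same steps into the commutative diagrams~\eqref{eqn:L6-1} and~\eqref{eqn:L6-2} and then reduces to~\eqref{eqn:L6-3} via surjectivity of $\partial^M$ --- but the ingredients and their logical roles are identical.
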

\begin{proof}
Assume first that $k \inj k'$ is a simple extension so that 
$k' = {k[t]}/{\fm}$ for some maximal ideal
  $\fm \subset k[t]$. Let $v_{\infty}$ be the valuation of $k(t)$ 
associated to the point $\infty \in \P^1_k$.
  Its valuation ring $R_{\infty} \subset k(t)$ has uniformizing 
parameter $t^{-1}$.
  In this case, we have the following diagram:
  \begin{equation}\label{eqn:L6-1}
    \xymatrix@C.8pc{
      0 \ar[r]& K^M_{n+1}(k) \ar[r] \ar[d] & K^M_{n+1}(k(t)) 
\ar[r]^{\partial^M=(\partial_v)_v~~~~} \ar[d]&
      {\underset{v \neq v_{\infty}}\oplus} K^M_n(k(v)) \ar[d]\ar[r] &0 \\
      0 \ar[r] & K_{n+1}(k) \ar[r] & K_{n+1}(k(t)) 
\ar[r]^{\partial^Q=(\partial_v)_v~~~~}&
      {\underset{v \neq v_{\infty}}\oplus}  K_n(k(v)) \ar[r] & 0.}
\end{equation}

The horizontal arrows on the left in both rows are induced 
by the inclusion $k \subset k(t)$.
The top row is Milnor's exact sequence 
(see \cite[Chapter~III, Theorem~7.4]{Weibel-1}).
The bottom row is the localization sequence in Quillen $K$-theory
(using the isomorphism $K_*(k) \xrightarrow{\cong} K_*(k[t])$)
and is known to be exact (see \cite[Chapter~V, Corollary~6.7.1]{Weibel-1}).
The right square commutes by \lemref{lem:L5}.

On the other hand, we have another diagram
\begin{equation}\label{eqn:L6-2}
  \xymatrix@C1.2pc{
    & K^M_n(k') \ar@{^{(}->}[d] \ar[dr]^-{N_{{k'}/k}} & \\
 K^M_{n+1}(k(t)) \ar@{->>}[r]^{\partial^M=(\partial_v)_v~~~~} \ar[d]&
     {\underset{v \neq v_{\infty}}\oplus} K^M_n(k(v)) 
\ar[d]\ar[r]^-{\sum_v N_{{k(v)}/{k}}} & K^M_n(k) \ar[d] \\
 K_{n+1}(k(t)) \ar@{->>}[r]^{\partial^Q=(\partial_v)_v~~~~}&
 {\underset{v \neq v_{\infty}}\oplus} K_n(k(v)) 
\ar[r]^-{\sum_v T_{{k(v)}/k}} & K_n(k) \\
 & K_n(k') \ar@{^{(}->}[u] \ar[ur]_-{T_{{k'}/k}}.}
\end{equation}

By the definition of the norm $N_{{k'}/k}$ in Milnor $K$-theory, the 
composition of the horizontal
arrows on the top is the map $(-1) \partial^{M}_ \infty \colon K^M_{n+1}(k(t)) \to K^M_n(k)$ 
(see \cite[Chapter~III, Definition~7.5]{Weibel-1}).
Similarly, the composite of the horizontal arrows on the bottom is the map
$(-1) \partial^{Q}_\infty \colon K_{n+1}(k(t)) \to K_n(k)$ 
(see \cite[Chapter~V, 6.12.1]{Weibel-1}).
Note that both of these assertions are another way of stating the 
Weil reciprocity formulas for the Milnor and
Quillen $K$-theories.

Since the left horizontal arrows in both rows are surjective, we 
are reduced to showing therefore
that the diagram
\begin{equation}\label{eqn:L6-3}
  \xymatrix@C.8pc{
    K^M_{n+1}(k(t)) \ar[r]^-{\partial^{M}_\infty} \ar[d] & K^M_n(k) \ar[d] \\
    K_{n+1}(k(t)) \ar[r]^-{\partial^{Q}_\infty} & K_n(k)}
\end{equation}
commutes. But this follows from \lemref{lem:L5}. This proves the 
lemma for simple extensions.

In general, we can write $k' = k(x_1, \ldots, x_r)$. Since the norm maps in 
Milnor $K$-theory and the
push-forward maps in Quillen $K$-theory satisfy the transitivity property, 
and since $k \inj k'$ is a
composite of simple extensions, the proof of the lemma follows.
\end{proof}

\vskip .4cm

\noindent\emph{Acknowledgments.}
The first author would like to thank TIFR, Mumbai for
invitation in March 2019.
This paper was written when the  second author was at Max Planck Institute for Mathematics, Bonn in 2019.
He thanks the institute for invitation and support.
The authors thank the referee 
for reading the manuscript thoroughly and providing
valuable suggestions to improve its presentation. 
They also thank the editors some of whom provided very useful comments and suggestions.


\begin{thebibliography}{99}
\bibitem{AM} M. Artin, B. Mazur, {\sl Etale homotopy\/}, Lecture Notes in Math., {\bf 100}, 
Springer, Berlin, (1969). \

\bibitem{BT}  H. Bass, J. Tate, {\em The Milnor ring of a global field, 
Algebraic K-theory II\/},  Lect. Notes Math., {\bf 342}, (1972), 349--446. \

\bibitem{Binda} F. Binda, {\sl A Cycle Class Map from Chow Groups with Modulus to Relative 
$K$-Theory\/}, Doc. Math., {\bf 23}, (2018), 407--444. \

\bibitem{BK} F. Binda, A. Krishna, {\sl Zero cycles with modulus and zero 
cycles on singular varieties\/}, Comp. Math., {\bf 154}, (2018), 120--187. \

\bibitem{BS}  F. Binda, S. Saito, {\sl Relative cycles with moduli and
regulator maps\/}, J. Math. Inst. Jussieu, {\bf 18}(6), (2019), 1233--1293. \ 

\bibitem{Bloch-IHES} S. Bloch, {\sl Algebraic $K$-theory and crystalline cohomology\/},
  Publications math{\'e}matiques de l'IHES, {\bf 47}, (1977), 187--268. \


\bibitem{Bloch-1} S. Bloch, {\sl Algebraic Cycles and Higher K-theory\/},
Adv. Math., {\bf 61}, (1986), 267--304. \

\bibitem{BE1} S. Bloch, H. Esnault, {\sl An additive version of higher Chow 
groups\/}, Ann. Scient. {\'E}c. Norm. Sup., {\bf 36}, (2003), 463--477. \

\bibitem{BE2} S. Bloch, H. Esnault, {\sl The additive dilogarithm\/}, 
Doc. Math., \textbf{Extra Vol.}, (2003), 131--155. \

\bibitem{Cath} J. Cathelineau, {\sl Lambda structures in algebraic $K$-theory\/}, 
$K$-theory, {\bf 4}, (1991), 591--606. \


\bibitem{CW} G. Corti{\~n}as, C. Weibel, {\sl Relative Chern characters for nilpotent
    ideals\/}, Algebraic topology, Abel Symp., Springer, Berlin,
  {\bf 4}, (2009), 61--82. \ 

\bibitem{EM} P. Elvaz-Vincent, S. M{\"u}ller-Stach, {\sl Milnor $K$-theory of rings,
    higher Chow groups and
applications\/}, Invent. Math., {\bf 148}, (2002), 177--206. \




\bibitem{GW-2} S. Geller, C. Weibel, {\sl $K(A,B,I)$: II\/}, $K$-Theory, {\bf 2}, (1989), 753--760. \

\bibitem{GW-1} S. Geller, C, Weibel, {\sl Hodge decomposition of Loday symbols in 
$K$-theory and cyclic homology\/}, $K$-Theory, {\bf 8}, (1994), 587--632. \

\bibitem{Good} T. Goodwillie, {\sl On the general linear group and Hochschild homology\/},
Ann. of Math., {\bf 121}, (1985), 383--407. \


\bibitem{GT} S. Gorchinskiy, D. Tyurin, {\sl Relative Milnor-groups and differential forms of 
split nilpotent extensions\/}, Izvestiya Math., {\bf 82}(5), (2018), arXiv:1803.10460v1, (2018). \


\bibitem{GK} R. Gupta, A. Krishna, {\sl Relative $K$-theory via 0-cycles in 
finite characteristics\/}, arXiv:1910.06630, (2019). \


\bibitem{Hesselholt-tower} L. Hesselholt, {\sl The tower of $K$-theory of truncated polynomial
algebras\/}, J. Topol., {\bf 1}, (2008), 87--114. \ 


\bibitem{Hesselholt-Hand} L. Hesselholt, {\sl $K$-theory of truncated polynomial algebras\/},
Handbook of $K$-Theory, Springer, Berlin, (2005), 71--110. \




\bibitem{IK} R. Iwasa, W. Kai, {\sl Chern classes with modulus\/}, Nagoya Math. J.,
  {\bf 236}, (2019), 84--133. \


\bibitem{IK-2} R. Iwasa, W. Kai, {\sl Isomorphisms up to bounded torsion between relative 
$K_0$-groups and Chow groups with modulus\/}, J. Inst. Math. Jussieu, (2020), 1--22. \


\bibitem{KSY} B. Kahn, S. Saito, T. Yamazaki, {\sl Motives with modulus\/},
arXiv:1511.07124, (2018). \


\bibitem{Kato-1} K. Kato, {\sl A generalization of local class field
theory by using $K$-groups. II\/}, 
J. Fac. Sci. Univ. Tokyo Sect. IA Math., {\bf  27}, (1980), 603--683. \


\bibitem{Kato} K. Kato, {\em Milnor K-theory and the Chow group of zero 
cycles, Applications of algebraic K-theory to algebraic 
geometry and number theory\/}, Contemporary Mathematics, {\bf 55}, 
Amer. Math. Soc, Providence, RI, (1986), 241--253. \


\bibitem{Kato-Saito} K. Kato, S. Saito, {\em Global class field theory of 
arithmetic schemes\/}, Applications of algebraic K-theory to algebraic 
geometry and number theory, Contemporary Mathematics, {\bf 55}, 
Amer. Math. Soc, Providence, RI, (1986), 255--331. \

\bibitem{Kerz09} M. Kerz, {\em The Gersten conjecture for Milnor K-theory\/}, 
Invent. Math., {\bf 175}, (2009),  1--33. \


\bibitem{kerz10} M. Kerz, {\em Milnor K-theory of local rings with finite residue fields\/},
J. Algebraic Geom., {\bf 19}, (2010), 173--191.\


\bibitem{KeS} M. Kerz, S. Saito, {\sl Chow group of 0-cycles with modulus
and higher dimensional class field theory\/}, Duke Math. J., {\bf 165},
(2016), no. 15, 2811--2897. \


\bibitem{KST} M. Kerz, F. Strunk, G. Tamme, {\sl Algebraic $K$-theory and 
descent for blow-ups\/}, Invent. Math., {\bf 211}, (2018), 523--577. \


\bibitem{Kont} M. Kontorovitz, {\sl Adams operations and the Dennis trace map\/},
Journal of Pure and Applied Algebra, {\bf 144}, (1999), 21--27. \  


\bibitem{Krishna-0} A. Krishna, {\sl An Artin-Rees theorem in $K$-theory and applications to 
zero cycles\/}, J. Alg. Geom., {\bf 19}, (2010), 555--598. \ 

\bibitem{Krishna-1} A. Krishna, {\sl On 0-cycles with modulus\/},
Algebra \& Number Theory, {\bf 9}, (2015), 2397--2415. \

\bibitem{KLevine} A. Krishna, M. Levine, {\sl Additive higher Chow groups of 
schemes\/}, J. Reine Angew. Math., \textbf{619}, (2008), 75--140. \

\bibitem{KP-1} A. Krishna, J. Park, {\sl Moving lemma for additive higher Chow groups\/},
  Algebra \& Number Theory, {\bf 6}, (2012), 293--326. \

\bibitem{KP-5} A. Krishna, J. Park, {\sl Mixed motives over ${k[t]}/{(t^{m+1})}$\/},
  J. Math. Inst. Jussieu, {\bf 11}, (2012), 659--693. \
%
  
\bibitem{KP-4} A. Krishna, J. Park, {\sl On additive higher Chow groups of affine schemes\/},
  Doc. Math., {\bf 21}, (2015), 49--89. \

  
\bibitem{KP} A. Krishna, J. Park, {\sl A module structure and a vanishing 
theorem for cycles with modulus\/}, Math. Res. Lett., {\bf 24}, (2017),
1147--1176. \


\bibitem{KP-6} A. Krishna, J. Park, {\sl A moving lemma for cycles with very ample modulus\/},
  Ann. Sc. Norm. Super. Pisa Cl. Sci., {\bf 17}, (2017), 1521--1549. \
  


\bibitem{KP-3} A. Krishna, J. Park, {\sl A moving lemma for relative 
0-cycles\/}, Algebra \& Number Theory (in press), (2019), 
arXiv:1806.08045. \

\bibitem{KP-2} A. Krishna, J. Park, {\sl de Rham-Witt sheaves via algebraic cycles\/},
  with an appendix by K. R{\"u}lling, arxiv:1504.08181v6, (2020). \
  

\bibitem{KP*} A. Krishna, P. Pelaez, {\sl Motivic spectral sequence for 
    relative homotopy $K$-theory\/}, Ann. Sc. Norm. Super. Pisa Cl. Sci. (in press),
  (2019), arXiv:1801.00922v1, (2019). \


\bibitem{Levine-1} M. Levine, {\sl Bloch's higher Chow groups revisited},
$K$-theory (Strasbourg, 1992). Ast\'erisque, {\bf 226}, (1994), 235--320. \

\bibitem{Levine-5} M. Levine, {\sl Lambda-operations, $K$-theory and
Motivic cohomology\/}, Fields Institute Communications, {\bf 16}, (1997), 
131--184. \ 


\bibitem{Loday} J.-L. Loday, {\sl Cyclic homology\/}, Grun. der math., Wissen series, {\bf 301}, 
Springer-Verlag,
(1998). \


\bibitem{Milnor} J. Milnor,  {\sl Introduction To Algebraic K-Theory\/}, 
Annals of Math Studies, {\bf 72}, Princeton, (1971). \


\bibitem{NS} Y. Nesterenko, A. Suslin, {\sl Homology of the general linear group over a
    local ring and Milnor's $K$-theory\/}, Math. USSR-Izv., {\bf 34}, no.~1, 121--145. \


\bibitem{Park} J. Park, {\sl Regulators on additive higher Chow groups\/}, 
Amer. J. Math., {\bf 131}, (2009), 257--276. \ 

\bibitem{PU} J. Park, S. {\"U}nver, {\sl Motivic cohomology of fat points in 
Milnor-range\/}, Doc. Math., {\bf 23}, (2018), 759--798. \ 

\bibitem{R} K. R\"ulling, {\sl The generalized de Rham-Witt complex over a 
field is a complex of zero-cycles\/}, J. Algebraic Geom., \textbf{16}, 
(2007), no. 1, 109--169. \

 \bibitem{RS} K. R{\"u}lling, S. Saito, {\sl Higher Chow groups with modulus and relative 
Milnor $K$-theory\/}, Trans. Amer. Math. Soc., {\bf 370}, (2018), 987--1043. \

\bibitem{Soule} C. Soul{\'e}, {\sl Op{\'e}rations en $K$-th{\'e}orie alg{\'e}brique\/}, 
Canad. J. Math., {\bf 37}, (1985), 488--550. \


\bibitem{Steinstra} J. Stienstra, {\sl Cartier-Dieudonne theory for Chow groups\/},
J. Reine Angew. Math., {\bf 355}, (1985), 1--66.


\bibitem{Swan} R. Swan, {\sl N{\'e}ron-Popescu desingularization\/}, in Algebra and Geometry
  (Taipei 1995), Lect. Algebra Geom., {\bf 2}, Int. Press, Cambridge,  MA, (1998), 135--192. \

\bibitem{TT} R. Thomason, T. Trobaugh, {\sl Higher algebraic $K$-theory of 
schemes and of derived categories\/}, The Grothendieck Festschrift, Vol. III, 
Progr. Math. {\bf 88}, Birkh{\"a}user Boston, Boston, MA, (1990), 247--435.\

\bibitem{Totaro} B. Totaro, {\sl Milnor $K$-theory is the simplest part of algebraic
    $K$-theory\/}, K-Theory, {\bf 6}, no.~1, (1992), 177--189. \


\bibitem{Weibel-1} C. Weibel, {\sl The $K$-book, An introduction to algebraic
$K$-theory\/}, Graduate studies in Mathematics, {\bf 145},
American Mathematical Soc., Providence, (2013). \








\end{thebibliography}
\end{document}